\documentclass[11pt,oneside,a4paper,reqno]{amsart}

\usepackage[utf8]{inputenc}
\usepackage[T1]{fontenc}
\usepackage{lmodern}
\usepackage{etoolbox}
\usepackage[protrusion=false]{microtype}
\usepackage[dvipsnames]{xcolor}

\usepackage{ifdraft}

\usepackage{geometry}
\usepackage[all]{xy}
\pdfpagewidth=\paperwidth
\pdfpageheight=\paperheight

\usepackage{amsmath,amsfonts,amssymb,mathtools,manfnt,amsthm,mathrsfs}

\usepackage{tikz}
\usepackage{tikz-cd}
\usetikzlibrary{arrows}
\usetikzlibrary{arrows.meta}
\usetikzlibrary{positioning}
\usetikzlibrary{calc}

\usepackage{enumitem}

\makeatletter
\def\namedlabel#1#2{\begingroup
	#2%
	\def\@currentlabel{#2}%
	\phantomsection\label{#1}\endgroup
}
\g@addto@macro\bfseries{\boldmath}
\makeatother

\usepackage[symbol]{footmisc}

\usepackage{datetime}
\usepackage{url}

\DeclareMathOperator{\id}{id}
\DeclareMathOperator{\Id}{Id}

\DeclareMathOperator{\spaan}{span}

\DeclareMathOperator{\supp}{supp}

\DeclareMathOperator{\Tr}{Tr}
\DeclareMathOperator{\interior}{int}

\DeclareMathOperator{\Aut}{Aut}
\DeclareMathOperator{\End}{End}

\DeclareMathOperator{\gr}{Gr}

\DeclareMathOperator{\KMS}{KMS}

\DeclareMathOperator{\diam}{diam}

\DeclareMathOperator{\Lip}{Lip}

\newcommand\restr[2]{{
		\left.\kern-\nulldelimiterspace 
		#1 
		\vphantom{|} 
		\right|_{#2} 
}}

\newcommand{\ol}[1]{\overline{#1}}

\newcommand{\shift}{T} 

\renewcommand{\AA}{\mathbb{A}}

\newcommand{\CC}{\mathbb{C}}

\newcommand{\NN}{\mathbb{N}}

\newcommand{\QQ}{\mathbb{Q}}
\newcommand{\RR}{\mathbb{R}}
\newcommand{\TT}{\mathbb{T}}

\newcommand{\ZZ}{\mathbb{Z}}

\newcommand{\Aa}{\mathcal{A}}

\newcommand{\Kk}{\mathcal{K}}
\newcommand{\Ll}{\mathcal{L}}
\newcommand{\Mm}{\mathcal{M}}
\newcommand{\Nn}{\mathcal{N}}
\newcommand{\Oo}{\mathcal{O}}
\newcommand{\Pp}{\mathcal{P}}

\newcommand{\Tt}{\mathcal{T}}

\newcommand{\Zz}{\mathcal{Z}}

\newcommand{\Rfr}{\mathfrak{R}}

\newcommand{\AAA}{\mathsf{A}}
\newcommand{\BBB}{\mathsf{B}}

\newtheorem{thm}{Theorem}[section]
\newtheorem{cor}[thm]{Corollary}
\newtheorem{lem}[thm]{Lemma}
\newtheorem{prop}[thm]{Proposition}
\newtheorem{mainthm}{Theorem}

\theoremstyle{definition}
\newtheorem{dfn}[thm]{Definition}

\newtheorem{ass}[thm]{Assumption}

\newtheorem{algo}[thm]{Algorithm}

\theoremstyle{remark}
\newtheorem{rmk}[thm]{Remark}

\newtheorem{example}[thm]{Example}

\AtBeginEnvironment{example}{%
	\pushQED{\qed}%
}
\AtEndEnvironment{example}{\popQED\endexample}

\usepackage[unicode=true,colorlinks=true,linkcolor=blue,citecolor=blue,urlcolor=blue,breaklinks=true]{hyperref}

\usepackage{orcidlink}

\begin{document}
  
\title[Intrinsic Hutchinson measures and KMS states]{Intrinsic Hutchinson measures and KMS states for iterated function systems with large overlaps}

\author[M. Mampusti]{Michael Mampusti}
\email{michael.mampusti@gmail.com}
\address[M. Mampusti]{Independent Researcher\\ Adelaide\\ SA\\  Australia}

\author[A. Mundey]{Alexander Mundey\orcidlink{0000-0002-7791-4383}}
\email{alex.mundey@adelaide.edu.au}
\address[A. Mundey]{Australian Institute of Machine Learning \\ Adelaide University\\ SA 5005 \\ Australia}

\author[N. Seaton]{Nicholas Seaton}
\email{nicholasseaton94@gmail.com}
\address[N. Seaton]{Independent Researcher\\ Wollongong \\ NSW\\  Australia}

\date{\today}
\subjclass[2020]{Primary 46L55; Secondary 28A80, 37D35, 37B10}
\keywords{Iterated function systems,
Kajiwara--Watatani algebras,
KMS states,
Hutchinson measure,
Markov partitions,
thermodynamic formalism}

\begin{abstract}
	We study iterated function systems with large overlaps via the Kajiwara--Watatani $C^*$-algebra and its KMS states. Our main contributions are threefold. First, we introduce Markov partitions for iterated function systems and give an algorithm that constructs a finite Markov partition whenever one exists. Second, for contractive systems admitting a finite Markov partition, we construct an intrinsic analogue of the Hutchinson measure that is independent of any labelling or weighting of the maps. It is the unique Borel probability measure satisfying a self-similar integral equation dictated by the set-valued dynamics of the system, and it recovers the Hutchinson measure with uniform weights when the system satisfies the open set condition. Finally, we give a complete classification of KMS states for the gauge action on the Kajiwara--Watatani algebra of every such system. The critical inverse temperature is the exponential growth rate of the intrinsic orbits, which can be strictly smaller than the natural logarithm of the number of maps, and the unique KMS state at this temperature is given by our intrinsic Hutchinson measure. These systems include many with large overlaps not covered by the results of Izumi--Kajiwara--Watatani.
\end{abstract}

\maketitle


\section{Introduction}
From the Cantor set to the Sierpi\'nski gasket, many self-similar fractals are the attractors of iterated function systems. In this article we examine the dynamics of iterated function systems, in particular, those with \emph{large overlaps}, where the images of the maps of the system may intersect nontrivially. 
We analyse these dynamical systems through the lens of operator algebras and their $\KMS$ states. 

We present three main results.
\begin{enumerate}[label=(\arabic*)]
	\item We introduce Markov partitions for iterated function systems, show that there is always a unique minimal Markov partition,
	and give an iterative procedure to construct a finite Markov partition whenever one exists (Theorem~\ref{thm:intro_markov}).
	\item We construct a unique intrinsic analogue of the Hutchinson measure on the attractor of a contractive iterated function system, whenever it admits a finite Markov partition (Theorem~\ref{thm:intro_mu_infty}).
	\item We give a complete classification of the $\KMS$ states for the gauge action on the Kajiwara--Watatani algebra \cite{KW06} associated to contractive iterated function systems admitting finite Markov partitions, a class that contains many systems with large overlaps not covered by the results of Kajiwara--Watatani \cite{KW04pre} and Izumi--Kajiwara--Watatani \cite{IKW07} (Theorem~\ref{thm:intro_main}).
\end{enumerate}

Throughout we adopt the convention that an iterated function system is a pair $(\AA,\Gamma)$ consisting of a second-countable compact Hausdorff space $\AA$, called the \emph{attractor}, together with a finite collection $\Gamma$ of continuous injections on $\AA$ such that $\AA = \bigcup_{\gamma \in \Gamma} \gamma(\AA)$. We work exclusively with iterated function systems that are restricted to the attractor. For some of our main results we work in the setting where $\AA$ carries a compatible metric for which each $\gamma \in \Gamma$ is a contraction.

In the contractive setting there are two natural ways to regard the dynamics of an iterated function system, and what separates them is what they count. The first, which we call the \emph{extrinsic dynamics}, treats the maps in $\Gamma$ as labelled ingredients: writing $\Gamma = \{\gamma_1,\ldots,\gamma_N\}$, a word $ w_1\cdots w_n$ in the alphabet $\{1,\ldots,N\}$ labels the composition $\gamma_{w_1}\circ\cdots\circ\gamma_{w_n}$. Letting $\Sigma_N \coloneqq \{1,\ldots,N\}^{\NN}$ denote the \emph{code space}, the address map $\pi \colon \Sigma_N \to \AA$ identifies $\AA$ with a topological quotient of $\Sigma_N$ (cf.~\cite{Bar93a,Kig01}).  The dynamics then reside on $\Sigma_N$, carried by the full shift, and are transported to $\AA$ via $\pi$. The extrinsic perspective counts $|\Gamma|^k = N^k$ orbits of length $k$ issuing from each point of $\AA$.

The second perspective, which we call the \emph{intrinsic dynamics}, forgets the labels and regards $\Gamma$ as the multivalued map $x \mapsto \Gamma x \coloneqq \{\gamma(x) \mid \gamma \in \Gamma\}$. In this perspective, the orbit of a point $x_0 \in \AA$ is the collection of paths $(\ldots,x_2,x_1,x_0)$ in $\AA$ with $x_{i+1} = \gamma(x_{i})$ for some $\gamma \in \Gamma$. The number of paths of length $k$ issuing from $x_0$ falls below $|\Gamma|^k$ if there is some $i < k$ and distinct $\gamma,\gamma' \in \Gamma$ such that $\gamma(x_i) = \gamma'(x_i)$. The existence of such coincidences is a strictly stronger condition than the maps in $\Gamma$ having overlapping images. Indeed, the system $x \mapsto x/2$ and $x \mapsto (x+1)/2$ on $[0,1]$ has overlapping images, but no coincidences, while $x \mapsto x/2$ and $x \mapsto 1-x/2$ on $[0,1]$ has such a coincidence at $x = 1$. These coincidences also occur if a map in $\Gamma$ is duplicated; in this case the extrinsic perspective counts the system with two copies of the map as dynamically distinct from the original, while the intrinsic perspective does not.
In this article we are interested in the intrinsic perspective and what can be said about iterated function systems when one refuses to choose labels.

Hutchinson's celebrated theorem \cite[\S~3.1]{Hut81} belongs to the intrinsic perspective. It identifies the attractor $\AA$ as the unique compact fixed point of the \emph{Hutchinson operator} $K \mapsto \bigcup_{\gamma \in \Gamma} \gamma(K)$ defined on the collection of nonempty compact subsets of $\AA$. Hutchinson also shows that, once weights on $\Gamma$ are chosen, there is a distinguished self-similar measure on $\AA$. However, the choice of weights that define the measure is extrinsic and different labellings or weightings of $\Gamma$ produce different ``Hutchinson measures'' (cf.~\cite{Fal14,Kig01}). This raises the question: is there an intrinsic analogue of the Hutchinson measure that does not depend on labels and weights, and is unchanged by the addition of a redundant map to $\Gamma$?

Overlapping iterated function systems have been studied extensively in fractal geometry, where separation conditions play a central role. The most classical is Hutchinson's \emph{open set condition} \cite[\S~5.2]{Hut81}, which asks for a nonempty open set $U$ whose images $\gamma(U)$ under each $\gamma \in \Gamma$ are pairwise disjoint and contained in $U$ (see Definition~\ref{dfn:open_set_condition}). For systems of similitudes on $\RR^d$ this is weakened by the \emph{weak separation condition} of Lau--Ngai \cite{LN99} (see also Zerner \cite{Zer96}), which permits overlaps but constrains how close distinct compositions of the maps can come to one another. The overlaps that matter for the intrinsic dynamics are the exact coincidences $\gamma(x) = \gamma'(x)$. Such coincidences can occur on open sets for general continuous injections but not for distinct similitudes on $\RR^d$. In place of a separation constraint on overlaps, our main results assume a combinatorial finiteness condition on how these coincidences propagate under $\Gamma$, so large overlaps are not in themselves an obstruction.

A complementary, operator-algebraic approach to the dynamics of iterated function systems was initiated by Kajiwara and Watatani in \cite{KW06}. They associate a $C^*$-algebra to a contractive iterated function system $(\AA,\Gamma)$ that we call the  \emph{Kajiwara--Watatani algebra} and denote by $\Oo_{\Gamma}$. It is the Cuntz--Pimsner algebra of a $C^*$-correspondence over  $C(\AA)$, the $C^*$-algebra of $\CC$-valued continuous functions on $\AA$, and is constructed from the cograph $\gr(\Gamma) = \{(\gamma(x),x) : x \in \AA, \gamma \in \Gamma\}$ of the system. The cograph is independent of labels and remains unperturbed by the addition of redundant maps. 

The Kajiwara--Watatani algebra carries a natural circle action, called the \emph{gauge action}, that is, in a sense, dual to the intrinsic dynamics. The $\KMS$ states for this action encode thermodynamic information about $(\AA,\Gamma)$, and correspond to dynamically compatible measures on $\AA$ by the machinery of Laca--Neshveyev \cite{LN04}.
Izumi--Kajiwara--Watatani \cite{IKW07} classified the $\KMS$ states of the gauge action on $\Oo_{\Gamma}$ under a fairly strict hypothesis---the \emph{orbit escape condition} (cf.~\cite{deC23})---that ensures that the number of intrinsic orbits of length $k$ eventually grows like $|\Gamma|^k$.
In \cite[Theorem~6.6]{IKW07} they show that there is a unique \emph{infinite-type} $\KMS$ state at the critical inverse temperature $\beta = \ln|\Gamma|$, and that for $\beta > \ln|\Gamma|$ the $\KMS_\beta$ states are parametrised in terms of the common values $\gamma(y) = \gamma'(y)$ of distinct maps $\gamma,\gamma' \in \Gamma$. Our results are phrased in terms of the \emph{branch points} $B_\Gamma$ of the dynamics:
points in the cograph for which the \emph{branch index} $(x,y) \mapsto |\{\gamma \in \Gamma \colon \gamma(y) = x\}|$ is discontinuous. When the orbit escape condition is assumed, Izumi--Kajiwara--Watatani show that the measure associated to the infinite-type $\KMS$ state corresponds to the Hutchinson measure with uniform weights on $\Gamma$. This suggested to us that this measure is the right intrinsic analogue of Hutchinson's, and raised the question of whether it can be constructed without the orbit escape condition.

De Castro \cite{deC23} has since recast the analysis of Izumi--Kajiwara--Watatani using the Ruelle--Perron--Frobenius theorem of Fan and Lau \cite{FL99}, extending the analysis to generalised gauge actions on Kajiwara--Watatani algebras and identifying the critical $\KMS$ state as the eigenmeasure of that operator, but still under the same orbit escape condition. Ionescu and Watatani \cite{IW08} extended the Kajiwara--Watatani construction to Mauldin--Williams graphs \cite{MW88}, which generalise iterated function systems by attaching compact metric spaces to the vertices of a finite directed graph and contractions to its edges. The $\KMS$ states of Ionescu and Watatani's algebras were characterised in the first author's thesis \cite{Mam19}, under either an orbit escape condition or the open set condition. Although we do not consider Mauldin--Williams graphs in this article, we believe that many of our results can be extended to this setting. 

We develop the machinery needed to classify $\KMS$ states for the gauge action on Kajiwara--Watatani algebras in a more general setting, and along the way construct our intrinsic analogue of the Hutchinson measure. 
The central idea is that combinatorial control of the orbits can be obtained from a \emph{Markov partition} for $(\AA,\Gamma)$: a Borel partition of $\AA$ that respects the intrinsic dynamics (see Definition~\ref{dfn:markov_partition}). A finite Markov partition reduces the analysis of the \emph{Perron--Frobenius operator} used by \cite{IKW07} to that of a finite nonnegative matrix (see Lemma~\ref{lem:PF_operator_markov_partition}), bringing the problem into reach of the same Perron--Frobenius techniques that drive the $\KMS$-state classification for graph $C^*$-algebras \cite{aHLRS13,KW13,aHLRS15}. 
A finite matrix also appears in the \emph{finite type} condition of Ngai--Wang \cite{NW01} for self-similar sets with overlaps, but it plays a different role: its spectral radius computes Hausdorff dimension, whereas ours computes the exponential growth rate of the intrinsic orbits.

A principal contribution of this paper is a general algorithm for producing Markov partitions.
 We introduce the \emph{fundamental partition} $\Aa$ of $(\AA,\Gamma)$, the coarsest Borel partition of $\AA$ that respects the coincidence structure of $\Gamma$ (see Definition~\ref{dfn:fundamental_partition}), and define a refinement operator $\Theta$ on the collection of partitions refining $\Aa$ (see Definition~\ref{dfn:Theta}). We prove the following characterisation of Markov partitions.  

\begin{mainthm}[cf.~Lemma~\ref{lem:Theta} and Theorem~\ref{thm:finite_iff_Theta_const}]\label{thm:intro_markov}
Let $(\AA,\Gamma)$ be an iterated function system and let $\Aa$ be its fundamental partition. Then
\begin{enumerate}[label=\textup{(\arabic*)}]
\item a Borel partition $\Mm$ of $\AA$ refining $\Aa$ is Markov if and only if $\Theta(\Mm) = \Mm$;
\item the common refinement $\Mm_\infty$ of the partitions $\Theta^k(\Aa)$, $k \ge 0$, is the unique minimal Markov partition of $(\AA,\Gamma)$;
\item every Markov partition of $(\AA,\Gamma)$ refines $\Mm_\infty$; and
\item the system $(\AA,\Gamma)$ admits a finite Markov partition if and only if $\Mm_\infty$ is finite, in which case $\Mm_{\infty}$ is the limit of the eventually-constant sequence $(\Theta^k(\Aa))_{k \ge 0}$.
\end{enumerate}
\end{mainthm}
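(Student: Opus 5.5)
The definitions of Markov partition, fundamental partition $\Aa$ and $\Theta$ appear only later, so I fix working versions consistent with the introduction. A Borel partition $\Mm$ refining $\Aa$ is \emph{Markov} if for every $M,M'\in\Mm$ the counting data of the intrinsic dynamics from $M$ into $M'$ is constant on $M$. For instance, for each $x\in M$ the number of $y$ with $(x,y)\in\gr(\Gamma)$ and $y\in M'$, counted with branch index, depends only on $(M,M')$. The operator $\Theta(\Mm)$ is the common refinement of $\Mm$ with the partitions obtained by pulling $\Mm$ back along the set-valued dynamics. Concretely, $x$ and $x'$ lie in the same block of $\Theta(\Mm)$ exactly when they lie in the same block of $\Mm$ and have identical counting data relative to the blocks of $\Mm$. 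With these definitions, $\Theta(\Mm)$ always refines $\Mm$, $\Theta$ is monotone (if $\Mm$ refines $\Nn$ then $\Theta(\Mm)$ refines $\Theta(\Nn)$), and $\Theta(\Mm)$ is Borel because the counting data are Borel functions. I would check these three properties first (Lemma~\ref{lem:Theta}).

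For (1): if $\Mm$ is Markov, the counting data are constant on each block of $\Mm$, so no block is split and $\Theta(\Mm)=\Mm$. Conversely, if $\Theta(\Mm)=\Mm$, then any two points in one block of $\Mm$ are not separated by $\Theta$, so they have the same counting data relative to $\Mm$, and this is exactly the Markov condition. For (3) and the minimality half of (2), let $\Mm$ be any Markov partition. Since $\Mm$ refines $\Aa$, monotonicity and (1) give, by induction on $k$, that $\Mm=\Theta^k(\Mm)$ refines $\Theta^k(\Aa)$. Hence $\Mm$ refines their common refinement $\Mm_\infty$. Once $\Mm_\infty$ is known to be Markov, it is the coarsest Markov partition, and so unique.

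The main obstacle is showing that $\Mm_\infty$ is Markov, that is, $\Theta(\Mm_\infty)=\Mm_\infty$, together with the measurability of $\Mm_\infty$. Blocks of $\Mm_\infty$ are countable intersections of Borel sets, hence Borel, though there may be uncountably many of them. To get invariance I would use that the fibres of the cograph are finite: each $x$ has at most $|\Gamma|$ preimage points $y$. Take $x,x'$ in the same block of $\Mm_\infty$ and fix $k$. Since they lie in the same block of $\Theta^{k+1}(\Aa)$, there is a branch-index-preserving matching between the preimages of $x$ and of $x'$ that respects the blocks of $\Theta^k(\Aa)$. There are only finitely many such matchings, and the sets of admissible matchings are nonempty and decrease in $k$, so some matching works for every $k$. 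That matching respects $\Mm_\infty$, which gives equal counting data for $x$ and $x'$ relative to $\Mm_\infty$. This compactness-type step is where I expect the work to lie, and it depends on the exact form of the Markov condition in Definition~\ref{dfn:markov_partition}.

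For (4): if $\Mm_\infty$ is finite it is itself a finite Markov partition. Conversely, any finite Markov partition refines $\Mm_\infty$ by (3), so $\Mm_\infty$ is finite. In the finite case, $(\Theta^k(\Aa))_k$ is an increasingly fine sequence of partitions, each coarser than the finite partition $\Mm_\infty$, so each has at most $|\Mm_\infty|$ blocks. The number of blocks is therefore nondecreasing and bounded, hence eventually constant. Once two consecutive terms have the same finite number of blocks and one refines the other, they are equal, so $\Theta^{k+1}(\Aa)=\Theta^k(\Aa)$. From then on the sequence is constant, and its limit is the common refinement $\Mm_\infty$.
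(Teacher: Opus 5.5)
Your skeleton matches the paper's. (1) holds because $\Theta$ splits exactly the blocks that violate the Markov condition. (3) follows from monotonicity of $\Theta$ via $\Nn=\Theta^k(\Nn)\ge\Theta^k(\Aa)$. (2) comes from showing that $\Mm_\infty$ is itself Markov, and (4) from the nondecreasing, bounded block count.

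The gap is that you run this skeleton on guessed definitions that are not the paper's, so what you prove concerns different objects. In the paper, $\Aa$ partitions $\AA$ by the coincidence pattern $P_x$, the partition of $\Gamma$ given by $\gamma\sim_x\gamma'\iff\gamma(x)=\gamma'(x)$. A Markov partition is a Borel refinement of $\Aa$ satisfying the \emph{labelled, forward} condition \ref{itm:markov_property}: each $\gamma(M_i)$ lies in a single block $M_j$. The operator $\Theta$ splits $M_i\subseteq\AA_{P_i}$ into the nonempty sets $M_i\cap\bigcap_{\Lambda\in P_i}\gamma_\Lambda^{-1}(M_{j_\Lambda})$. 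Equivalently, $x,x'\in M_i$ stay together iff $\gamma(x)$ and $\gamma(x')$ lie in a common block of $\Mm$ for every $\gamma\in\Gamma$.

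Your unlabelled counting condition is a different notion and produces a different $\Mm_\infty$.
\begin{itemize}
\item Read literally with the paper's convention $(x,y)\in\gr(\Gamma)\iff x\in\Gamma y$, it counts preimages. So for $x\mapsto x/2$, $x\mapsto (x+1)/2$ on $[0,1]$ it separates $1/2$ (two preimages) from $1/4$ (one preimage). Yet $\{[0,1]\}$ is the paper's minimal Markov partition.
\item Read forwards, it still forgets which map sends a point where. Take $\{u,v,w,z\}$ with $\gamma_1\colon u\mapsto w\mapsto z\mapsto u,\ v\mapsto v$ and $\gamma_2\colon v\mapsto w\mapsto z\mapsto v,\ u\mapsto u$. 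Then $\Aa=\{\{w\},\{u,v,z\}\}$, and the paper's $\Mm_\infty=\Theta(\Aa)$ is the partition into singletons. Your counting refinement instead stabilises at $\{\{w\},\{u,v\},\{z\}\}$, which violates \ref{itm:markov_property} since $\gamma_1(\{u,v\})=\{w,v\}$.
\end{itemize}

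With the paper's definitions, the step you single out as the main obstacle takes one line, because the labels supply the matching canonically: $\gamma(x)\leftrightarrow\gamma(x')$. Suppose $x,x'$ share a block of $\Mm_\infty$. Then for each $k$ they share a block of $\Theta^{k+1}(\Aa)=\Theta(\Theta^k(\Aa))$. So $\gamma(x)$ and $\gamma(x')$ share a block of $\Theta^k(\Aa)$ for every $k$ and every $\gamma\in\Gamma$, hence a block of $\Mm_\infty$. This is \ref{itm:markov_property}, and \ref{itm:E_refines_A} holds because $\Mm_\infty$ refines $\Aa$. No compactness argument over finitely many matchings is needed.

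Your matching step is also unjustified as stated. Equality of branch-index-weighted counts does not produce a branch-index-preserving bijection of preimages; for example, one preimage of index $2$ can balance two of index $1$. In your setting you could instead pass to the limit directly, since a finite fibre meets a decreasing sequence of blocks in an eventually constant set.

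Your arguments for (1), (3) and (4) carry over essentially verbatim once you replace ``identical counting data relative to $\Mm$'' by ``$\gamma(x)$ and $\gamma(x')$ lie in the same block of $\Mm$ for every $\gamma\in\Gamma$''.
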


Theorem~\ref{thm:intro_markov} yields an algorithm (Algorithm~\ref{algo:Markov_partition}) to explicitly construct a finite Markov partition when one exists. In Subsection~\ref{subsec:examples} we give examples of iterated function systems that admit finite Markov partitions. We also exhibit a contractive iterated function system on $[0,1]$ that does not admit a finite Markov partition (Example~\ref{ex:ikw_no_markov}). 

Given a finite Markov partition $\Mm = \{M_i\}_{i \in I}$ for $(\AA,\Gamma)$, we may analyse the intrinsic dynamics of $(\AA,\Gamma)$ through a directed graph $E$ with vertex set $I$ and edges determined by the coincidence structure of $\Gamma$ (see Definition~\ref{dfn:graph_of_markov_partition}). We leverage classical Perron--Frobenius theory to construct our intrinsic Hutchinson measure $\mu_\infty$ as the unique Borel probability measure satisfying a self-similar integral equation dictated by the intrinsic dynamics of $\Gamma$.

\begin{mainthm}[Intrinsic Hutchinson measure; cf.~Theorem~\ref{thm:inf_type}]\label{thm:intro_mu_infty}
Let $(\AA,\Gamma)$ be a contractive iterated function system admitting a finite Markov partition.
There is a unique Borel probability measure $\mu_\infty$ on $\AA$ and $\rho > 0$ such that 
\begin{equation}
\int_\AA a(x) \, d\mu_\infty(x) = \frac{1}{\rho}\int_\AA \sum_{x \in \Gamma y} a(x) \, d\mu_\infty(y)
\end{equation}
for all $a \in C(\AA)$, where $\Gamma y = \{\gamma(y) \mid \gamma \in \Gamma\}$. Moreover, if $\Mm = \{M_i\}_{i \in I}$ is the unique minimal Markov partition and $E$ is its associated directed graph, then $E$ has a unique recurrent component $R$ and the adjacency matrices $\AAA_E$ of $E$ and $\AAA_R$ of $E|_R$ have common spectral radius $\rho$. 
The vector $(\mu_\infty(M_i))_{i \in R} \in \RR_+^R$ is the unique unit-$1$-norm right Perron--Frobenius eigenvector of $\AAA_R$ and $\mu_{\infty}(M_i) = 0$ for all $i \in I \setminus R$.
\end{mainthm}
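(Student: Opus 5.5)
We will work with the Perron--Frobenius operator $\Pp\colon C(\AA)\to C(\AA)$, $(\Pp a)(y)=\sum_{x\in\Gamma y}a(x)$, extended to bounded Borel functions. The displayed equation then says precisely that $\Pp^*\mu_\infty=\rho\,\mu_\infty$. By Lemma~\ref{lem:PF_operator_markov_partition}, $\Pp$ leaves invariant the finite-dimensional space $\spaan\{1_{M_i}\}_{i\in I}$ attached to the minimal Markov partition $\Mm=\{M_i\}$. On this space $\Pp 1_{M_i}=\sum_j \AAA_E(j,i)\,1_{M_j}$, up to the convention on orientation. The reason is that for $y\in M_j$ the number of points $x\in\Gamma y\cap M_i$ depends only on $(i,j)$; this is exactly what the Markov property asserts.

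Consequently, any eigenmeasure $\mu$ with $\Pp^*\mu=\rho\mu$ yields a vector $v=(\mu(M_i))_i$ with $\AAA_E v=\rho v$ (in the matching orientation), $v\ge 0$ and $\|v\|_1=1$. This is how the finite matrix controls the measure.

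The plan has four steps.

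\begin{enumerate}
\item \emph{Graph structure.} Show that $E$ has a unique recurrent component $R$ that is reachable from every vertex and that carries the spectral radius. The idea is to use contractivity. Since every $M_i$ has nonempty interior relative to some cylinder, or at least positive "size", and iterates $\gamma_{w}(\AA)$ shrink to points, every vertex must lead into every sufficiently small cylinder region. The components reached in this way must therefore coincide, which forces uniqueness. Periodicity and the transient parts are then handled by the Frobenius normal form. From this form, $\rho(\AAA_E)=\rho(\AAA_R)$, and any nonnegative $\rho$-eigenvector is supported on $R$ and is a positive multiple of the Perron vector of $\AAA_R$. Here I would need to verify that transient blocks have strictly smaller spectral radius, or at least cannot support nonnegative $\rho$-eigenvectors when $R$ is downstream of them.
\item \emph{Existence.} Use Schauder--Tychonoff, or the Fan--Lau style argument, on the normalised map $\mu\mapsto \Pp^*\mu/\langle \Pp^*\mu,1\rangle$ on the probability measures. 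It is continuous because $\Pp 1\ge 1$ and $\Pp$ preserves $C(\AA)$, since the cograph branch count sums are continuous under the standing assumptions. A fixed point gives an eigenmeasure with eigenvalue $\rho'=\langle\Pp^*\mu,1\rangle$. By the previous paragraph, $\rho'$ is an eigenvalue of $\AAA_E$ with a nonnegative eigenvector, and Step 1 identifies $\rho'=\rho$.
\item \emph{Uniqueness (the main obstacle).} Knowing $\mu(M_i)$ is not enough; we must pin down $\mu$ on finer sets. Iterating the equation gives $\mu(f)=\rho^{-n}\int \Pp^n f\,d\mu$. For a set of the form $\gamma_w(M_i)$ with $|w|=n$, intersected appropriately, $\Pp^n 1_{\gamma_w(M_i)\cap\cdot}$ should again be a combination of indicators of the $M_j$. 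This should follow because the refined partitions $\Theta$-invariantly generated by $\Mm$ under $\Gamma$-images are again Markov, with transition counts read off from paths in $E$. Hence $\mu$ of every "cylinder" $\gamma_w(M_i)$ is determined by the Perron vector and path counts. By contractivity these cylinders generate the Borel $\sigma$-algebra, so $\mu$ is unique. The delicate part is handling overlaps: distinct words can give the same cylinder, and the intrinsic path count must be used rather than the word count. I expect to organise the count via paths in $E$ ending in $R$.
\item \emph{Spectral statements.} The remaining claims follow from Steps 1--3. The spectral statements come from Step 1, and $\mu_\infty(M_i)=0$ off $R$ holds because the eigenvector is supported on $R$.
\end{enumerate}
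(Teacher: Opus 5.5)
Steps 2 and 3 both fail as stated.

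\textbf{Step 2 (existence).} The continuity claim is false. Your $\Pp$ (the paper's $\Gamma^*$) does not preserve $C(\AA)$ when $B_\Gamma\neq\varnothing$: $\Gamma^*(1_\AA)(y)=|\Gamma y|$ is only lower semicontinuous (for the tent branches $|\Gamma 1|=1$ but $|\Gamma y|=2$ for $y<1$). Moreover, $F$ is not weak-$*$ continuous (Example~\ref{ex:not_weak_star_continuous}), so Schauder--Tychonoff does not apply. Fan--Lau is also unavailable, since it needs Dini-continuous weights $b(\gamma(y),y)^{-1}$, which exist only when $B_\Gamma=\varnothing$.

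\textbf{Step 3 (uniqueness).} $\Gamma^*$ applied to cylinder indicators is not a combination of the $1_{M_j}$ once images overlap. In Example~\ref{ex:clarence}, with $M_2=(0,1/2)$ and $\gamma_3(M_2)=(1/8,1/4)$, the function $\Gamma^*(1_{\gamma_3(M_2)})$ equals $1$ on $(0,1/4]$ and $2$ on $(1/4,1/2)$, because $\gamma_1(y)=y/2$ also lands in $(1/8,1/4)$ for those $y$. So $\mu(\gamma_3(M_2))$ involves $\mu((1/4,1/2))$, which is not a union of elements of $\Mm$, and your computation never closes up; refining by images does not yield a finite Markov partition.

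The missing idea is to avoid cylinders entirely. By Lemma~\ref{lem:markov_partition_paths}, $F_\beta^n(\mu)=\rho^{-n}\sum_{i\in I}\sum_{\alpha\in E^n i}(\gamma_\alpha)_*\mu_i$, where $\mu_i$ is the restriction of $\mu$ to $M_i$. Pushforward by the $C^n$-Lipschitz map $\gamma_\alpha$ (with $C<1$ the largest contraction constant) shrinks the Kantorovich--Rubinstein distance between measures of equal mass by $C^n$. The mass vector of any eigenmeasure is forced to be the Perron vector $\vec w$. So on $T_{\vec w}$, with $d_\Mm(\mu,\nu)=\sum_i d_{\Lip}(\mu_i,\nu_i)$, you get $d_\Mm(F_\beta^n\mu,F_\beta^n\nu)\le C^n\rho^{-n}\|\AAA_E^n\|_1\,d_\Mm(\mu,\nu)$, a strict contraction for large $n$. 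This gives uniqueness directly, and existence via a Cauchy sequence of iterates. The limit is shown invariant not by continuity but by lower semicontinuity of $\Gamma^*(a)$ for $a\ge 0$ and the Portmanteau theorem. These give only $F_\beta(\mu_\infty)\le\mu_\infty$, which Proposition~\ref{prop:inf_type_beta} upgrades to equality.

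\textbf{Step 1 (graph structure).} Step 1 is also missing its two essential inputs.
\begin{enumerate}
\item \emph{Unique recurrent component.} Elements of the minimal partition need not have positive size or interior, even relative to a cylinder (see the singletons $\{0\}$ and $\{1\}$ in Example~\ref{ex:clarence_minimal}). Minimality is essential: the finite Markov partition $\{\QQ\cap[0,1],[0,1]\setminus\QQ\}$ of Remark~\ref{rmk:minimality_needed} has two recurrent components. The paper proves that every element of $\Theta^k(\Aa)$ is locally closed. Hence elements with empty interior are nowhere dense, and some $M_{i_0}$ has nonempty interior. Lemma~\ref{lem:eventually_in_open_sets} then gives a path from every vertex to $i_0$, hence a unique recurrent component.
\item \emph{Equal spectral radii.} The Frobenius normal form cannot give $\rho(\AAA_E)=\rho(\AAA_R)$. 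A transient vertex with three loops and one edge into a recurrent vertex with one loop has a unique recurrent component, yet $\rho(\AAA_R)=1<3=\rho(\AAA_E)$. The paper obtains the equality dynamically. Lower semicontinuity of $y\mapsto|\Gamma^{(n)}y|$ and density of $\bigcup_{j\in R}M_j$ give $\max_{i\in I}|E^n i|=\max_{j\in R}|E^n j|$, hence $\|\AAA_E^n\|_1=\|\AAA_R^n\|_1$.
\end{enumerate}
Once this is in place, your eigenvector-support reasoning can be completed, using a strictly positive left Perron vector of $\AAA_R$ as in Lemma~\ref{lem:PF_rigidity}. It then forces the eigenvalue to be $\rho$ and the mass vector to vanish off $R$.
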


The measure $\mu_\infty$ may be thought of as a conformal measure for the Perron--Frobenius operator $F \colon C(\AA)^* \to C(\AA)^*$ given by $F(\mu)(a) = \int_\AA \sum_{x \in \Gamma y} a(x) \, d\mu(y)$. 
In Corollary~\ref{cor:OSC_Hutchinson} we show---recovering a result of the first author's thesis \cite{Mam19}---that
when the open set condition is satisfied, the measure $\mu_{\infty}$ coincides with the classical Hutchinson measure with uniform weights. In general, $\mu_{\infty}$ is distinct from the Hutchinson measure (see Example~\ref{ex:cantor_not_hutchinson}). 

Our main $\KMS$-state classification results are summarised in the following statement.
\begin{mainthm}[Classification of $\KMS$ states; cf.~Theorems~\ref{thm:inf_type} and~\ref{thm:finite_type_classification}]\label{thm:intro_main}
Suppose that $(\AA,\Gamma)$ is a contractive iterated function system admitting a finite Markov partition\footnote[2]{We exclude the trivial case in which all maps in $\Gamma$ coincide on $\AA$ as then $\AA$ is a singleton and $\ln(\rho) = 0$.} and let $\rho$ be as in Theorem~\ref{thm:intro_mu_infty}.
The $\KMS_\beta$ states for the gauge action on the Kajiwara--Watatani algebra $\Oo_{\Gamma}$ are classified as follows:
\begin{enumerate}[label=\textup{(\arabic*)}]
	\item for $0 < \beta < \ln(\rho)$ there are no $\KMS_\beta$ states;
	\item for $\beta = \ln(\rho)$ there is a unique $\KMS_{\ln(\rho)}$ state $\varphi_\infty$, it is of infinite type, and its restriction to $C(\AA)$ is given by integration against $\mu_\infty$ of Theorem~\ref{thm:intro_mu_infty}; and
	\item for $\beta > \ln(\rho)$ the simplex of $\KMS_\beta$ states is parametrised by probability measures on $p_1(B_\Gamma)$, where $p_1 \colon \gr(\Gamma) \to \AA$ is the projection onto the first coordinate.
\end{enumerate}
\end{mainthm}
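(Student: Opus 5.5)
The classification will go through the Laca--Neshveyev correspondence for Cuntz--Pimsner algebras \cite{LN04}, in the form used by \cite{IKW07}. A $\KMS_\beta$ state restricts to a state on $C(\AA)$, hence to a Borel probability measure $\mu$ on $\AA$. The $\KMS_\beta$ condition then translates into the subinvariance inequality $\mu \ge e^{-\beta}F(\mu)$, where $F$ is the Perron--Frobenius operator of the introduction. Conversely, every measure satisfying this inequality extends to a $\KMS_\beta$ state. Each such $\mu$ decomposes uniquely as $\mu = \mu_{\mathrm{fin}} + \mu_{\mathrm{inf}}$. Here $\mu_{\mathrm{inf}}$ satisfies the equality $F(\mu_{\mathrm{inf}}) = e^{\beta}\mu_{\mathrm{inf}}$ (infinite type). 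The finite-type part is a series $\sum_{k\ge 0} e^{-k\beta}F^k(\nu)$, where $\nu$ is the defect measure $\mu - e^{-\beta}F(\mu)$ and is supported on the image of the branch points, as in \cite{IKW07}. So the proof splits into two tasks: classifying the eigenmeasures of $F$, and deciding when the finite-type series converges.

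For the infinite-type part we use Lemma~\ref{lem:PF_operator_markov_partition}. On the finite Markov partition $\Mm=\{M_i\}_{i\in I}$, the map $\mu \mapsto (\mu(M_i))_i$ intertwines $F$ with the transpose action of the adjacency matrix $\AAA_E$. Hence an eigenmeasure with eigenvalue $e^\beta$ gives a nonnegative eigenvector of $\AAA_E$. By Perron--Frobenius theory for reducible matrices, the existence of a unique recurrent component $R$ then forces $e^\beta = \rho$, and the eigenvector is the one described in Theorem~\ref{thm:intro_mu_infty}.

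We must also show that the measure is determined by its cell values. For this we use contractivity. Iterating the eigen-equation $k$ times expresses $\mu(a)$ as $\rho^{-k}$ times a sum over intrinsic paths of length $k$. Since each composition of $k$ maps has image of diameter tending to $0$, $\mu(a)$ is approximated by Riemann-type sums that depend only on the vector $(\mu(M_i))$ and the graph combinatorics. Existence of $\mu_\infty$ is proved by taking a weak-$*$ limit of normalised iterates $\rho^{-k}F^k(\delta)$, or by a Schauder--Tychonoff fixed point for $\mu \mapsto F(\mu)/F(\mu)(1)$, and then checking that the eigenvalue is $\rho$. This yields parts (2) and the uniqueness in Theorem~\ref{thm:intro_mu_infty}.

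For $\beta<\ln\rho$, suppose a $\KMS_\beta$ state exists. Applying $\mu \ge e^{-\beta}F(\mu)$ to the cells gives a nonnegative vector $v \neq 0$ with $v \ge e^{-\beta}\AAA_E^{T}v$. Iterating, $v \ge e^{-k\beta}(\AAA_E^T)^k v$. Pairing with the positive left Perron eigenvector on $R$ gives $\langle w, v_R\rangle \ge (\rho e^{-\beta})^k\langle w,v_R\rangle$. Hence $v_R=0$. Then mass must sit on the transient cells, and the same argument run along paths that eventually enter $R$ (every transient path does, by uniqueness of the recurrent component and contractivity) gives a contradiction. This proves (1).

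For $\beta>\ln\rho$, the series $\sum_k e^{-k\beta}F^k(\nu)$ converges in norm, because $\|F^k\|$ grows like $\rho^k$ times a polynomial. So every finite measure $\nu$ on $p_1(B_\Gamma)$ yields a $\KMS_\beta$ state after normalisation. Infinite-type parts vanish, since by the infinite-type analysis eigenvalues other than $\rho$ do not occur. Injectivity of $\nu \mapsto \mu$ follows by recovering $\nu$ as $\mu - e^{-\beta}F(\mu)$. This gives (3), modulo identifying exactly which defect measures arise with $p_1(B_\Gamma)$, which we take from the Laca--Neshveyev/IKW analysis of the Cuntz--Pimsner ideal. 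At $\beta=\ln\rho$ finite-type states would require the series to converge. This fails because $\rho^{-k}F^k(\delta_x)$ does not tend to zero in total mass: its cell vector converges to a multiple of the Perron vector. So only $\varphi_\infty$ remains.

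The main obstacle is the uniqueness step: passing from cell-level Perron--Frobenius uniqueness to uniqueness of the measure itself. It requires controlling $\rho^{-k}F^k$ uniformly on continuous functions, even though the branch index is discontinuous. I would try to show that Markov cells are eventually refined by images of compositions of small diameter, so that the cell masses determine $\mu$ on a generating algebra.
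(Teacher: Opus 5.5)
The genuine gap is in the \emph{existence} of the eigenmeasure, not in uniqueness. The Perron--Frobenius operator $F$ is not weak-$*$ continuous. In Example~\ref{ex:not_weak_star_continuous}, $\delta_{1-1/n}\to\delta_1$ while $F(\delta_{1-1/n})\to 2\delta_{1/2}\neq\delta_{1/2}=F(\delta_1)$. So Schauder--Tychonoff does not apply to $\mu\mapsto F(\mu)/F(\mu)(1_\AA)$, and you cannot pass $F$ through a weak-$*$ limit point of $\rho^{-k}F^k(\delta_x)$. With only a convergent subsequence you cannot even shift $k\mapsto k+1$. ``Checking that the eigenvalue is $\rho$'' therefore presupposes an eigen-equation you have not obtained.

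By contrast, the uniqueness step you flag as the main obstacle already works as you sketched it. An $F_{\beta_c}$-invariant probability measure $\mu$ has the Perron vector $\vec w$ as its cell vector. In $\mu(a)=\rho^{-k}\sum_i\sum_{\alpha\in E^k i}\int_{M_i}a\circ\gamma_\alpha\,d\mu$, replace $a\circ\gamma_\alpha$ on $M_i$ by its value at a fixed $x_i\in M_i$. This costs at most $\rho^{-k}\|\AAA_E^k\vec w\|_1\,\omega_a(c^k\diam\AA)=\omega_a(c^k\diam\AA)$, where $\omega_a$ is the modulus of continuity of $a$ and $c<1$ bounds the contraction constants. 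The discontinuity of the branch index is absorbed by the path bijection of Lemma~\ref{lem:markov_partition_paths}, so no refinement argument is needed. This is a more direct substitute for the paper's $d_\Mm$-contraction.

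To close the gap you need three things, all present in the paper's proof of Theorem~\ref{thm:inf_type}.

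(a) You need genuine convergence of the iterates. Start from $\mu$ with $\mu(M_i)=w_i$; this class is preserved by $F_{\beta_c}$. Your own estimate gives $|F_{\beta_c}^k(\nu)(a)-F_{\beta_c}^k(\mu)(a)|\le 2\,\omega_a(c^k\diam\AA)$ for any two members of the class, so $(F^k_{\beta_c}(\mu)(a))_k$ is Cauchy. The paper instead shows that $F^n_{\beta_c}$ contracts the metric $d_\Mm$.

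(b) For $a\ge 0$, $\Gamma^*(a)$ is only lower semicontinuous, so the Portmanteau theorem yields just $F_{\beta_c}(\mu_\infty)\le\mu_\infty$.

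(c) You must upgrade subinvariance to invariance by rigidity. The inequality $\AAA_E\vec\mu\le\rho\vec\mu$ forces $\vec\mu_T=0$ and $\AAA_E\vec\mu=\rho\vec\mu$. Hence the positive defect $\mu_\infty-F_{\beta_c}(\mu_\infty)$ has zero mass on every cell and vanishes. You already use this computation to exclude finite-type states at $\beta_c$, but it must be deployed here too.

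Your arguments for (1), (3) and that exclusion are otherwise sound and close to the paper's, with three small corrections. First, deducing $e^\beta=\rho$ from a unique recurrent component also needs $\rho(\AAA_R)=\rho(\AAA_E)$; this is where contractivity enters (Proposition~\ref{prop:transient_subcritical}). Second, $\rho^{-k}\AAA_E^k\vec v$ need not converge when $\AAA_R$ is periodic. Your left-eigenvector pairing still shows the masses stay bounded below, which suffices. Third, $J_\Gamma=C_0(\AA\setminus p_1(B_\Gamma))$ comes from Muhly--Tomforde's local-injectivity criterion together with Proposition~\ref{prop:branch_characterisation}. It does not come from IKW's description via coincidences, which fails in general.
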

 In Subsection~\ref{subsec:examples} we provide a range of examples covered by Theorem~\ref{thm:intro_main}, but not by \cite[Theorem~6.6]{IKW07}. We also show that there exist systems that satisfy the hypotheses of \cite[Theorem~6.6]{IKW07} but do not admit finite Markov partitions (see Examples~\ref{ex:ikw_no_markov} and~\ref{ex:ikw_no_finite_markov}). An extension of our results to countable Markov partitions would cover these systems (see Remark~\ref{rmk:countable_markov_possible}).

\subsection*{Structure of the paper}
 Section~\ref{sec:IFS} fixes our conventions for iterated function systems, and introduces the cograph, branch points and the branch index. 
 Section~\ref{sec:KW} reviews Kajiwara--Watatani algebras and the Laca--Neshveyev machinery for $\KMS$ states, and introduces the Perron--Frobenius operator in our setting. Section~\ref{sec:markov_partitions} introduces Markov partitions, the refinement operator $\Theta$, and proves Theorem~\ref{thm:intro_markov}. Section~\ref{sec:classification} proves Theorems~\ref{thm:intro_mu_infty} and~\ref{thm:intro_main}, and includes the analytic construction of $\mu_\infty$ via an argument inspired by \cite[\S~4]{Hut81}.

A reader interested principally in the intrinsic Hutchinson measure $\mu_\infty$ may safely skip the $C^*$-algebraic material. The recommended pathway is: Section~\ref{sec:IFS} (definitions, branch index); Section~\ref{sec:KW}, only Subsection~\ref{subsec:PF} (the Perron--Frobenius operator $F$), together with the notation $F_\beta \coloneqq e^{-\beta}F$ and the definitions of $\beta$-subinvariant, infinite-type and finite-type measures in Subsection~\ref{subsec:KMS_LN}; Section~\ref{sec:markov_partitions} in full (Markov partitions and the algorithm); and finally Section~\ref{sec:classification} up to the end of Subsection~\ref{subsec:inf_type}, which contains Theorem~\ref{thm:inf_type}---the existence and uniqueness statement for $\mu_\infty$---proved by measure-theoretic and Perron--Frobenius arguments.

\subsection*{Acknowledgements}
The second and third authors were supported by University of Wollongong RevITAlising Research grant IV036, and the second author was supported by University of Wollongong AGEiS CONNECT grant 141765. Early discussions between the first two authors occurred at the MATRIX workshop ``Refining $C^*$-algebraic invariants for dynamics using KK-theory'' hosted by the University of Melbourne. 

\subsection*{LLM Disclosure}
Anthropic's Claude (Opus 5 series) was used during proofreading, literature search, and to assist in streamlining some arguments. The core mathematical concepts, proof strategies, and the writing are the authors' own. They assume full responsibility for mathematical correctness.

\section{Iterated function systems and their branch points}\label{sec:IFS}

\subsection{Iterated function systems} 
There are various related definitions of iterated function systems extant in the literature. In this article we use the following non-standard definition. 

\begin{dfn}\label{dfn:IFS}
	An \emph{iterated function system} is a pair $(\AA,\Gamma)$ consisting of a second-countable compact Hausdorff space $\AA$---called the \emph{attractor}---together with a finite collection $\Gamma$ of continuous injections on $\AA$. In addition, we require that the space $\AA$ is \emph{$\Gamma$-invariant} in the sense that
	\begin{equation}\label{eq:Gamma_invariance}
		\AA = \bigcup_{\gamma \in \Gamma} \gamma(\AA).
	\end{equation}
	An iterated function system $(\AA,\Gamma)$ is said to be \emph{contractive} if $\AA$ admits a metric $d$ compatible with its topology and for each $\gamma \in \Gamma$ there exists $c_{\gamma} \in [0,1)$ such that
	\[
	d(\gamma(x), \gamma(y)) \le c_{\gamma} d(x,y) \quad \text{for all } x,y \in \AA.
	\]
	That is, each $\gamma \in \Gamma$ is a \emph{contraction}.
\end{dfn}

Since each $\gamma \in \Gamma$ is a continuous injection on a compact space, $\gamma(\AA)\simeq\AA$, so \eqref{eq:Gamma_invariance} decomposes $\AA$ into finitely many (possibly overlapping) copies of itself.
Since $\AA$ is compact, every compatible metric on $\AA$ makes the space complete.
Many of the results in this article hold without the contractivity assumption on $(\AA,\Gamma)$, but some results (like Theorem~\ref{thm:inf_type}) require it. 
A point of difference in Definition~\ref{dfn:IFS} when compared to other definitions of iterated function systems (cf.~\cite[p.~133]{Fal14}) is that we do not embed $\AA$ in a larger ambient space: our systems are restricted to the attractor. 

Central to the analysis of iterated function systems is the following map.
\begin{dfn}\label{dfn:Hutchinson_operator}
Let $\Kk_0(\AA)$ denote the collection of nonempty compact subsets of $\AA$.
The \emph{Hutchinson operator} of $(\AA,\Gamma)$---also denoted by $\Gamma$---is the function $\Gamma \colon \Kk_0(\AA) \to \Kk_0(\AA)$ defined by
	\[
	\Gamma(K) = \bigcup_{\gamma \in \Gamma} \gamma(K).
	\] 
	We write $\Gamma x \coloneqq \Gamma(\{x\})$ for $x \in \AA$.  It will be clear from context whether $\Gamma$ refers to a collection of maps or the associated Hutchinson operator. 
\end{dfn}

By \eqref{eq:Gamma_invariance}, the set $\AA$ is a fixed point of the Hutchinson operator. If $(\AA,\Gamma)$ is contractive, then Hutchinson's Theorem \cite[\S~3.1]{Hut81} (cf.~\cite[Theorem~1.1.4]{Kig01}) states that $\AA$ is its \emph{unique} fixed point, and that for every $K \in \Kk_0(\AA)$ the sequence $\Gamma^n(K)$ converges to $\AA$ in the Hausdorff metric on $\Kk_0(\AA)$. It is in this sense that $\AA$ is an attractor of the system $(\AA,\Gamma)$.

In an iterated function system, the images of distinct maps often \emph{overlap}. Systems whose overlaps are ``small'' include those satisfying the following separation condition.

\begin{dfn}[{\cite[\S~5.2]{Hut81}}]\label{dfn:open_set_condition}
	An iterated function system $(\AA,\Gamma)$ satisfies the \emph{open set condition} if there exists a nonempty open subset $U \subseteq \AA$ such that 
	\[
	\bigcup_{\gamma \in \Gamma} \gamma(U) \subseteq U \quad \text{ and } \quad \gamma(U) \cap \gamma'(U) = \varnothing
	\]
	for all $\gamma, \gamma'  \in \Gamma$ with $\gamma \ne \gamma'$. 
\end{dfn}

We record the following well-known facts.
\begin{lem}\label{lem:OSC_dense}
If $U$ witnesses the open set condition in a contractive iterated function system $(\AA,\Gamma)$, then $U$ is dense in $\AA$.
\end{lem}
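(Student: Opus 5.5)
Since $U$ witnesses the open set condition, we have $\Gamma(\overline{U}) = \bigcup_{\gamma\in\Gamma}\gamma(\overline{U}) \subseteq \overline{\bigcup_{\gamma}\gamma(U)} \subseteq \overline{U}$. The inclusion $\gamma(\overline{U})\subseteq\overline{\gamma(U)}$ holds because each $\gamma$ is continuous, and the union is finite so closure commutes with it. So $K \coloneqq \overline{U}$ is a nonempty compact subset of $\AA$ (nonempty because $U \ne \varnothing$, compact because it is closed in the compact space $\AA$) with $\Gamma(K)\subseteq K$. Iterating gives a decreasing chain $K \supseteq \Gamma(K) \supseteq \Gamma^2(K) \supseteq \cdots$, and in particular $\Gamma^n(K) \subseteq K$ for all $n \ge 0$.

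Next I invoke Hutchinson's theorem, as recalled after Definition~\ref{dfn:Hutchinson_operator}: since $(\AA,\Gamma)$ is contractive, $\Gamma^n(K) \to \AA$ in the Hausdorff metric on $\Kk_0(\AA)$ for the compatible metric $d$. Given $x \in \AA$ and $\varepsilon>0$, choose $n$ with Hausdorff distance $d_H(\Gamma^n(K),\AA)<\varepsilon$. Then there is $z \in \Gamma^n(K)\subseteq K$ with $d(x,z)<\varepsilon$. Hence $\AA \subseteq \overline{K} = K = \overline{U}$, so $U$ is dense.

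Alternatively, and avoiding the Hausdorff-metric convergence, the intersection $K_\infty = \bigcap_n \Gamma^n(K)$ is a nonempty compact set. It satisfies $\Gamma(K_\infty)=K_\infty$ by a standard compactness argument for decreasing sequences and finitely many continuous maps. By uniqueness of the fixed point, $K_\infty = \AA$, so $\AA\subseteq K$. I would present the first route, since convergence is already quoted in the paper. The only mildly delicate point is checking $\Gamma(\overline U)\subseteq \overline U$, which is immediate from continuity and finiteness of $\Gamma$; the disjointness part of the open set condition is not needed at all.
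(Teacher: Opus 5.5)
Your proof is correct and follows the paper's argument: you show $\Gamma(\overline{U}) \subseteq \overline{U}$, iterate, and use Hutchinson's convergence $\Gamma^n(\overline{U}) \to \AA$ in the Hausdorff metric to conclude $\AA \subseteq \overline{U}$. Your justification of $\gamma(\overline{U}) \subseteq \overline{\gamma(U)} \subseteq \overline{U}$ via continuity fills in a step the paper states without comment.
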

\begin{proof}
	The set $\ol{U}$ is compact, being a closed subset of the compact space $\AA$.
	We have $\gamma(\ol{U}) \subseteq \ol{U}$ for all $\gamma \in \Gamma$, so $\Gamma(\ol{U}) \subseteq \ol{U}$. Iterating, $\Gamma^n(\ol{U}) \subseteq \ol{U}$ for all $n \in \NN$. Since $\Gamma^n(\ol{U})$ converges in Hausdorff metric to $\AA$, we have $\AA \subseteq \ol{U}$.
\end{proof}
\begin{lem}\label{lem:eventually_in_open_sets}
	If $(\AA,\Gamma)$ is contractive and $U$ is a nonempty open subset of $\AA$, then there exist $k \in \NN$ and $\gamma_1,\ldots,\gamma_k \in \Gamma$ such that $\gamma_1 \circ \cdots \circ \gamma_k(\AA) \subseteq U$. 
\end{lem}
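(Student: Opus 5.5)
The plan is to show that long compositions of maps in $\Gamma$ have images of arbitrarily small diameter, and then to place one such image inside $U$ by choosing the composition along an address of a point of $U$.

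\textbf{Step 1: a uniform contraction constant.} Fix a compatible metric $d$ on $\AA$ witnessing contractivity. Set $c \coloneqq \max_{\gamma \in \Gamma} c_\gamma < 1$; this maximum exists because $\Gamma$ is finite. Since $\AA$ is compact, $D \coloneqq \diam(\AA) < \infty$. By induction on $k$, for any $\gamma_1,\ldots,\gamma_k \in \Gamma$ the composition $\gamma_1\circ\cdots\circ\gamma_k$ is Lipschitz with constant at most $c^k$. Hence
\[
\diam(\gamma_1\circ\cdots\circ\gamma_k(\AA)) \le c^k D .
\]

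\textbf{Step 2: every point lies in images of arbitrarily long compositions.} Pick $x \in U$ and $r>0$ with the open ball $B(x,r)\subseteq U$. By $\Gamma$-invariance \eqref{eq:Gamma_invariance}, there is $\gamma_1 \in \Gamma$ and $x_1 \in \AA$ with $x = \gamma_1(x_1)$. Applying \eqref{eq:Gamma_invariance} again to $x_1$ gives $\gamma_2 \in \Gamma$ and $x_2 \in \AA$ with $x_1=\gamma_2(x_2)$, and so on. Inductively we obtain, for every $k$, maps $\gamma_1,\ldots,\gamma_k \in \Gamma$ and a point $x_k \in \AA$ with
\[
x=\gamma_1\circ\cdots\circ\gamma_k(x_k),
\]
so that $x \in \gamma_1\circ\cdots\circ\gamma_k(\AA)$.

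\textbf{Step 3: conclusion.} Choose $k$ with $c^k D < r$, which is possible since $c<1$; if $D=0$ any $k\ge 1$ works. The set $\gamma_1\circ\cdots\circ\gamma_k(\AA)$ contains $x$ and has diameter less than $r$, so it lies in $B(x,r)\subseteq U$.

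There is no real obstacle here. The only point needing care is Step 2: one should invoke \eqref{eq:Gamma_invariance} directly to build the backward chain, rather than appeal to an address map that the paper has not yet set up. The degenerate cases are also harmless: if $\AA$ is a singleton, the claim holds trivially with $k=1$, and a ball in a compact metric space is nonempty and open, so the argument goes through unchanged.
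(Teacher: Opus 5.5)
Your proposal is correct and follows essentially the same route as the paper's proof: fix a ball $B(x,r) \subseteq U$, use $\Gamma$-invariance to write $x = \gamma_1 \circ \cdots \circ \gamma_k(y)$, and choose $k$ with $c_{\max}^k \diam(\AA) < r$ so that the whole image lies within distance $r$ of $x$. The only cosmetic difference is that you phrase the last step as a diameter bound on the image, whereas the paper directly estimates $d(\gamma_1 \circ \cdots \circ \gamma_k(y'), x) \le c_{\max}^k d(y',y)$.
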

\begin{proof}
	Let $c_{\max} \coloneqq \max\{c_\gamma \mid \gamma \in \Gamma\}$ be the largest contraction constant. Fix $\varepsilon >0$ and $x \in U$ such that the ball of radius $\varepsilon$ about $x$ is contained in $U$.
Choose $k \in \NN$ sufficiently large so that $c_{\max}^k\diam(\AA)< \varepsilon$. By \eqref{eq:Gamma_invariance} there exist $y \in \AA$ and $\gamma_1,\ldots,\gamma_k \in \Gamma$ such that $x = \gamma_1 \circ \cdots \circ \gamma_k(y)$. For every $y' \in \AA$ we have
$
	 d(\gamma_1 \circ \cdots \circ \gamma_k(y'),x)\le c_{\max}^k  d(y',y) < \varepsilon,
$
 so $\gamma_1 \circ \cdots \circ \gamma_k(\AA) \subseteq U$.
\end{proof}

We are interested in the dynamics of iterated function systems, in particular the dynamics of the Hutchinson operator. To trace the dynamics of the Hutchinson operator, we use the following notion of paths.
\begin{dfn}[{cf.~\cite{KW06}}] \label{dfn:n_step_forward_orbit} 
For $n \ge 0$ the space of \emph{paths of length $n$} is 
\begin{align*}
\Gamma^{(n)} &= \{ (x_0,x_1,\ldots,x_{n-1},x_n) \in \AA^{n+1} \mid x_i \in \Gamma x_{i+1}, \forall\, 0 \le i < n\}\\
&= \{ (x_0,x_1,\ldots,x_{n-1},x_n) \in \AA^{n+1} \mid \forall \, 0 \le i < n ,\,\exists \gamma_{i} \in \Gamma \text{ such that } x_i = \gamma_i(x_{i+1})\}.
\end{align*}
For $y \in \AA$ we write $\Gamma^{(n)}y \coloneqq \{ (x_0,\ldots,x_{n-1},x_n) \in \Gamma^{(n)} \mid x_n = y\}$.
\end{dfn}

It is possible for $x \in \AA$ to satisfy $\gamma(x) = \gamma'(x)$ for distinct $\gamma,\gamma' \in \Gamma$. In particular, given $(x_0,\ldots,x_n) \in \Gamma^{(n)}$ the choice of maps $\gamma_i \in \Gamma$ such that $x_i = \gamma_i(x_{i+1})$ may not be unique. 
As such, $|\Gamma^{(n)}x|$ may grow more slowly than $|\Gamma|^n$ as $n \to \infty$ for a given $x \in \AA$. 

\begin{example}\label{ex:overlap}
Let $\AA = [0,1]$ and $\Gamma = \{\gamma_1,\gamma_2\}$ where $\gamma_1(x) = x/2$ and $\gamma_2(x) = 1 - x/2$. That is, $\gamma_1$ and $\gamma_2$ are the inverse branches of the tent map $T(x) = 1 - 2|x-1/2|$.
Then $\gamma_1(1) = \gamma_2(1) = 1/2$, so $|\Gamma^{(1)}1| = 1 \ne 2 = |\Gamma|$.
\end{example}

\subsection{The branch set}\label{subsec:branch}

In this subsection we introduce the notion of a branch point. Some results of this subsection appeared in the second author's thesis \cite[\S2.2]{Mun20}. Our definition of a branch point differs from that introduced in \cite{KW06} and applies more generally. Branch points act as bifurcation points in the dynamics of an iterated function system. To introduce them, we require the following definition. 

\begin{dfn}\label{dfn:cograph}
Let $(\AA,\Gamma)$ be an iterated function system. For each $\gamma \in \Gamma$ we call  $\gr(\gamma) \coloneqq \{ (\gamma(x),x) \mid x \in \AA \} \subseteq \AA^2$ the \emph{cograph of} $\gamma$. The \emph{cograph of} $\Gamma$ is the set
\[
\gr(\Gamma) \coloneqq \bigcup_{\gamma \in \Gamma} \gr(\gamma) = \{ (x,y) \in \AA^2 \mid x \in \Gamma y\} = \Gamma^{(1)},
\]
which we endow with the subspace topology inherited from $\AA^2$. 
\end{dfn}
In this article we make no mention of the \emph{graph} of a function, so the notation $\gr$ always refers to the cograph. 
The Closed Graph Theorem for continuous maps between compact Hausdorff spaces implies that each $\gr(\gamma)$ is compact, so $\gr(\Gamma)$ is compact. 

Throughout this article we let $p_1 \colon \gr(\Gamma) \to \AA$ and $p_2 \colon \gr(\Gamma) \to \AA$ denote the projections onto the first and second factors, respectively. The projection $p_2$ is clearly surjective, and the $\Gamma$-invariance condition \eqref{eq:Gamma_invariance} implies that $p_1$ is also surjective. We record the following. 
\begin{lem}\label{lem:p2_open}
	The projection $p_2 \colon \gr(\Gamma) \to \AA$ is open.
\end{lem}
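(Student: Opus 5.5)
To prove that $p_2$ is open, I will show that it maps every basic open subset of $\gr(\Gamma)$ to an open subset of $\AA$. The key observation is that $\gr(\Gamma)$ is a finite union of the compact sets $\gr(\gamma)$. Each $\gr(\gamma)$ is the image of $\AA$ under the continuous injection $s_\gamma \colon x \mapsto (\gamma(x),x)$. Since $\AA$ is compact and $\AA^2$ is Hausdorff, $s_\gamma$ is a homeomorphism onto $\gr(\gamma)$, and $p_2 \circ s_\gamma = \id_{\AA}$. So $p_2$ restricted to each $\gr(\gamma)$ is a homeomorphism onto $\AA$.

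Fix an open set $W \subseteq \gr(\Gamma)$. I claim that
\[
p_2(W) = \bigcup_{\gamma \in \Gamma} s_\gamma^{-1}(W).
\]
For $\supseteq$: if $s_\gamma(y) \in W$, then $y = p_2(s_\gamma(y)) \in p_2(W)$. For $\subseteq$: if $(x,y) \in W$, then $x = \gamma(y)$ for some $\gamma \in \Gamma$, so $(x,y) = s_\gamma(y)$ and $y \in s_\gamma^{-1}(W)$.

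It remains to check that each $s_\gamma^{-1}(W)$ is open in $\AA$. The map $s_\gamma \colon \AA \to \gr(\Gamma)$ is continuous, because its coordinates $\gamma$ and $\id_{\AA}$ are continuous and $\gr(\Gamma)$ carries the subspace topology from $\AA^2$. Hence $s_\gamma^{-1}(W)$ is open, and $p_2(W)$ is a finite union of open sets, so it is open.

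The only point needing care is to work with preimages under the sections $s_\gamma$, rather than with images of $W \cap \gr(\gamma)$. The set $W \cap \gr(\gamma)$ is open in $\gr(\gamma)$, but in general $W$ cannot be decomposed as a union of sets open in $\gr(\Gamma)$ and contained in a single $\gr(\gamma)$. The preimage formulation avoids this issue: every point of $W$ lies in the image of some section, and continuity of the sections does the rest. In fact this argument needs no compactness at all.
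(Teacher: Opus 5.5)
Your proof is correct and is essentially the paper's: the paper writes $p_2(W)=\bigcup_{\gamma\in\Gamma}p_2(W\cap\gr(\gamma))$ and uses that $p_2|_{\gr(\gamma)}$ is a homeomorphism onto $\AA$ (a continuous bijection of compact Hausdorff spaces), and $p_2(W\cap\gr(\gamma))$ is exactly your set $s_\gamma^{-1}(W)$. Consequently the caveat in your last paragraph is unnecessary, since the relatively open set $W\cap\gr(\gamma)$ has open image under that homeomorphism with no decomposition of $W$ required; you are right, though, that compactness is dispensable, because $s_\gamma$ is already a continuous inverse of $p_2|_{\gr(\gamma)}$.
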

\begin{proof}
	For each $\gamma \in \Gamma$, let $p_2^\gamma \colon \gr(\gamma) \to \AA$ be the projection onto the second factor. Then $p_2^\gamma$ is a continuous bijection between compact Hausdorff spaces and is therefore a homeomorphism. Consequently, for any open $U \subseteq \gr(\Gamma)$, the set
	$p_2(U) = \bigcup_{\gamma \in \Gamma}p_2^\gamma (U \cap \gr(\gamma))$
	is open in $\AA$.
\end{proof}

\begin{dfn}[{\cite{KW06}}]
	The \emph{branch index} of $(x,y) \in \gr(\Gamma)$ is defined by 
	\[
	b(x,y)\coloneqq |\{ \gamma \in \Gamma \colon x = \gamma(y)\}|.
	\]
\end{dfn}
It is claimed in \cite[p.~1900]{IKW07} that the set of discontinuities of the branch index is given by $\{(x,y) \in \gr(\Gamma) \mid x = \gamma(y) = \gamma'(y) \text{ for some } \gamma \ne \gamma'\}$. This, however, is not true for iterated function systems satisfying Definition~\ref{dfn:IFS}, but remains true for iterated function systems satisfying the open set condition (see Proposition~\ref{prop:open-set_branch} below).
For our general iterated function systems the discontinuities of the branch index may be described in terms of the boundaries of the intersections of the cographs $\gr(\gamma)$. We make this more precise now, and remark that our approach is similar to that taken by Dor-On \cite{Dor18} for weighted partial systems.

	For each nonempty subset $\Lambda \subseteq \Gamma$ consider the closed subset $G_\Lambda \subseteq \gr(\Gamma)$ defined by 
	\begin{equation}\label{eq:G_Lambda}
	G_{\Lambda} \coloneqq \bigcap_{\gamma \in \Lambda} \gr(\gamma).
	\end{equation}
	Observe that if $\Lambda' \subseteq \Lambda$, then $G_\Lambda \subseteq G_{\Lambda'}$. 
The branch index can then be expressed by
\[
b(x,y) = \max \{ |\Lambda| \colon (x,y) \in G_\Lambda \text{ and } \Lambda \subseteq \Gamma  \}.
\]
Let $\partial G_{\Lambda}$ denote the boundary of $G_\Lambda$ taken as a subspace of $\gr(\Gamma)$.
	 
	\begin{dfn}\label{dfn:branched_set} The \emph{branch set} of $(\AA,\Gamma)$ is the compact set
	\[
	B_{\Gamma} \coloneqq \bigcup_{\varnothing \ne \Lambda \subseteq \Gamma} \partial G_{\Lambda} \subseteq \gr(\Gamma).
	\] 
	 We refer to elements of $B_{\Gamma}$ as \emph{branch points}.
	\end{dfn}

\begin{rmk}\label{rmk:only_need_size_2}
	In the union defining $B_\Gamma$ we need only consider $\Lambda$ with $|\Lambda| \ge 2$. Indeed, if $(x,y) \in \partial G_{\{\gamma\}} = \partial \gr(\gamma)$, then for any open neighbourhood $U$ of $(x,y)$, there exists $\gamma' \ne \gamma$ such that $U \cap \gr(\gamma') \ne \varnothing$. By passing to a subsequence if necessary, we can find a single $\gamma' \in \Gamma$ and a sequence $(x_n,y_n) \in \gr(\gamma') \setminus \gr(\gamma)$ such that $(x_n,y_n) \to (x,y)$. Since $\gr(\gamma')$  is closed, $(x,y) \in G_{\{\gamma,\gamma'\}}$. Moreover, every neighbourhood of $(x,y)$ meets $\gr(\Gamma) \setminus \gr(\gamma)$, since $(x,y) \in \partial \gr(\gamma)$, and $\gr(\Gamma) \setminus \gr(\gamma) \subseteq \gr(\Gamma) \setminus G_{\{\gamma,\gamma'\}}$, since $G_{\{\gamma,\gamma'\}} \subseteq \gr(\gamma)$. Hence, $(x,y) \in \partial G_{\{\gamma,\gamma'\}}$.
\end{rmk}

The name ``branch set'' is justified by the fact that it coincides with the points in $\gr(\Gamma)$ at which the projection $p_2$ fails to be locally injective (see Proposition~\ref{prop:branch_characterisation} below). In particular, the branch set corresponds to branching behaviour in the cograph $\gr(\Gamma)$, as the following example illustrates.

\begin{example}\label{ex:clarence}
	Consider the contractive iterated function system $([0,1],\Gamma = \{\gamma_1,\gamma_2,\gamma_3\})$ defined by
\begin{align*}
	\gamma_1(x) &= \frac{x}{2}, & 
	\gamma_2(x) &= 1 - \frac{x}{2}, & & \text{and} & \gamma_3(x) &= 
	\begin{cases}
	\frac{x}{4} + \frac{1}{8} &\text{if } x < 1/2\\
	\frac{x}{2} & \text{if } x \ge 1/2.
	\end{cases}
\end{align*}
For this system $\gr(\Gamma)$ may be depicted visually by its corresponding graph
\begin{center}
\begin{tikzpicture}[vertex/.style={circle, fill=red, inner sep=1.5pt},
scale=4.5]
\begin{scope}[font=\scriptsize]

\draw [thick] (0,0) node [below left] {$0$} -- (1,0);
\draw [thick] (0,0) -- (0,1);

\draw[thick] (1,0.02) -- (1,-0.02) node [below]  {$1$};
\draw[thick] (0.5,0.02) -- (0.5,-0.02) node [below]  {$1/2$};
\draw[thick] (0.02,1) -- (-0.02,1) node [left]  {$1$};
\draw[thick] (0.02,0.5) -- (-0.02,0.5) node [left]  {$1/2$};
\draw[thick] (0.02,0.25) -- (-0.02,0.25) node [left]  {$1/4$};
\draw[thick] (0.02,0.125) -- (-0.02,0.125) node [left]  {$1/8$};

\draw (0,0) --  node [below right] {$\gr(\gamma_1)$} (0.5,0.25) -- (1,0.5) --  node [above right] {$\gr(\gamma_2)$} (0,1) (0,0.125) -- node [above] {$\gr(\gamma_3)$} (0.5,0.25);

\node[vertex] at (1,0.5) {};
\node[vertex] at (0.5,0.25) {};

\end{scope}
\end{tikzpicture}
\end{center}
in which $p_1$ projects onto the vertical axis and $p_2$ projects onto the horizontal axis. The branch set $B_\Gamma = \{(1/2,1), (1/4,1/2)\}$ is highlighted in red. 
This system does not satisfy the open set condition as any prospective witness $U$ must intersect $(1/2,1)$ by Lemma~\ref{lem:OSC_dense}, so $\gamma_1(U) \cap \gamma_3(U) \ne \varnothing$.
\end{example}

Discontinuities of the branch index correspond precisely to the branch set.
\begin{lem}\label{lem:branch_iff_discontinuous}
	The branch index $b \colon \gr(\Gamma) \to \{1,\ldots,|\Gamma|\}$ is upper semicontinuous. Moreover, $b$ is continuous at $(x,y) \in \gr(\Gamma)$ if and only if $(x,y) \notin B_\Gamma$. 
\end{lem}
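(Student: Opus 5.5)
Upper semicontinuity comes straight from the formula $b(x,y) = \max\{|\Lambda| : (x,y) \in G_\Lambda\}$ together with the fact that each $G_\Lambda$ is closed. Fix $(x,y)$ with $b(x,y) = m$. For every $\Lambda$ with $|\Lambda| > m$ we have $(x,y) \notin G_\Lambda$. Put
\[
V \coloneqq \gr(\Gamma) \setminus \bigcup_{|\Lambda|>m} G_\Lambda .
\]
This is an open neighbourhood of $(x,y)$, since it is the complement of a finite union of closed sets, and $b \le m$ on $V$. Because $b$ is integer-valued, it follows that $b$ is continuous at $(x,y)$ if and only if $b \ge m$ on some neighbourhood, equivalently $b = m$ near $(x,y)$.

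For the characterisation I will work with a single maximal set. Let $\Lambda_0 = \{\gamma \in \Gamma : \gamma(y) = x\}$, so that $|\Lambda_0| = m$ and $(x,y) \in G_{\Lambda_0}$.

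Suppose first that $(x,y) \notin B_\Gamma$. Then $(x,y)$ is not in $\partial G_{\Lambda_0}$, and since it lies in the closed set $G_{\Lambda_0}$, it must lie in the interior of $G_{\Lambda_0}$. Choose a neighbourhood $W \subseteq G_{\Lambda_0}$ of $(x,y)$. Every point of $W$ lies in all the cographs $\gr(\gamma)$ for $\gamma \in \Lambda_0$, so $b \ge m$ on $W$. Combined with upper semicontinuity, $b = m$ near $(x,y)$, and $b$ is continuous there.

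Conversely, suppose $(x,y) \in \partial G_\Lambda$ for some nonempty $\Lambda$. Then $(x,y) \in G_\Lambda$, so $\Lambda \subseteq \Lambda_0$. I claim that $(x,y) \in \partial G_{\Lambda_0}$ as well. Since $G_{\Lambda_0} \subseteq G_\Lambda$, any point outside $G_\Lambda$ is also outside $G_{\Lambda_0}$. Every neighbourhood of $(x,y)$ contains points outside $G_\Lambda$, hence points outside $G_{\Lambda_0}$. This is the same monotonicity trick used in Remark~\ref{rmk:only_need_size_2}.

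The step I expect to need the most care comes next: showing $b < m$ at nearby points outside $G_{\Lambda_0}$. Upper semicontinuity alone only gives $b \le m$ there, not strict inequality. Take a sequence $(x_n,y_n) \to (x,y)$ with $(x_n,y_n) \notin G_{\Lambda_0}$. Intersecting with the neighbourhood $V$ above, we may assume $b(x_n,y_n) \le m$. Suppose, for a contradiction, that $b(x_n,y_n) = m$ along a subsequence. Since $\Gamma$ is finite, pass to a further subsequence on which the set $\Lambda_n = \{\gamma : \gamma(y_n) = x_n\}$ is a constant set $\Lambda'$ of size $m$. Closedness of $G_{\Lambda'}$ then gives $(x,y) \in G_{\Lambda'}$, so $\Lambda' \subseteq \Lambda_0$. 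Both sets have size $m$, so $\Lambda' = \Lambda_0$. But then $(x_n,y_n) \in G_{\Lambda_0}$, a contradiction. Hence $b(x_n,y_n) < m$ eventually, so $b$ is discontinuous at $(x,y)$.
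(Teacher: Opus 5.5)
Your proof is correct and is essentially the paper's argument: upper semicontinuity and continuity off $B_\Gamma$ are obtained exactly as there. The remaining direction uses the same two ingredients as the paper, namely $\partial G_\Lambda \cap G_{\Lambda_0} \subseteq \partial G_{\Lambda_0}$ for $\Lambda \subseteq \Lambda_0$, and the fact that the closed sets $G_\Lambda$ with $|\Lambda| = m$, $\Lambda \ne \Lambda_0$, avoid $(x,y)$; the difference is only organisational. You argue the contrapositive at the single point via a sequence (valid since $\gr(\Gamma)$ is metrisable), which spares you the case $\Lambda \not\subseteq \Lambda_0$ that the paper handles because it shows a whole neighbourhood of a continuity point misses $B_\Gamma$.
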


\begin{proof}
	Fix $(x,y) \in \gr(\Gamma)$. There exists a subset $\Lambda' \subseteq \Gamma$ which is maximal in the sense that $(x,y) \in G_{\Lambda'}$ and if $\Lambda' \subsetneq \Lambda$ then $(x,y) \notin G_{\Lambda}$. Since $G_{\Lambda}$ is closed for all $\Lambda \subseteq \Gamma$, the set
	\[
	K \coloneqq \bigcup \{G_{\Lambda} \mid \Lambda \subseteq \Gamma \text{ and } |\Lambda| > |\Lambda'|\}
	\]
	is closed (potentially empty) in $\gr(\Gamma)$. 
	Since no $G_{\Lambda}$ with $|\Lambda|>|\Lambda'|$ contains $(x,y)$, the set $\gr(\Gamma) \setminus K$ is an open neighbourhood of $(x,y)$ such that $b(x',y') \le |\Lambda'| = b(x,y)$ for all $(x',y')$ in it, so $b$ is upper semicontinuous at $(x,y)$. If moreover $(x,y) \notin B_{\Gamma}$, then $(x,y) \notin \partial G_{\Lambda'}$, so $\interior(G_{\Lambda'}) \setminus K$ is an open neighbourhood of $(x,y)$ on which $b$ is constant. Hence, $b$ is continuous on $\gr(\Gamma) \setminus B_{\Gamma}$.

	To see that the branch points are precisely the discontinuities of $b$, suppose that $b$ is continuous at $(x,y) \in \gr(\Gamma)$. Then there is a neighbourhood $V \subseteq \gr(\Gamma)$ of $(x,y)$ such that $b(x',y') = b(x,y)$ for all $(x',y') \in V$. Let $\Lambda' \subseteq \Gamma$ be such that $(x,y) \in G_{\Lambda'}$ and $b(x,y) = |\Lambda'|$. Consider the open neighbourhood
	\[
	W \coloneqq V \setminus \bigcup \{ G_{\Lambda} \colon |\Lambda| = |\Lambda'| \text{ and } \Lambda \ne \Lambda' \}
	\] 
	of $(x,y)$. Since $b(x',y') = |\Lambda'|$ for all $(x',y') \in W$ it follows that
	$
	W \subseteq \interior(G_{\Lambda'}) \setminus \bigcup \{G_{\Lambda} \mid \Lambda' \subsetneq \Lambda\}.
	$

	We claim that $W \cap B_{\Gamma} = \varnothing$ from which we deduce that $(x,y) \notin B_{\Gamma}$.
	Clearly $W \cap \partial G_{\Lambda'} = \varnothing$. For $\Lambda \subseteq \Gamma$, we have 
	\[
	W \cap \partial G_{\Lambda} \subseteq W \cap G_{\Lambda} \cap G_{\Lambda'} \subseteq W \cap G_{\Lambda \cup \Lambda'}
	\] which is empty whenever $\Lambda \ne \varnothing$ is not a subset of $\Lambda'$. On the other hand, suppose $\Lambda \subsetneq \Lambda'$. Since $G_{\Lambda'} \subseteq G_{\Lambda}$ it follows that $\partial G_{\Lambda} \cap G_{\Lambda'} \subseteq \partial G_{\Lambda'}$. Hence,
	\[
	W \cap \partial G_{\Lambda}= W \cap	\partial G_{\Lambda} \cap G_{\Lambda'}  \subseteq W \cap \partial G_{\Lambda'} = \varnothing. 
	\]
	So if $b$ is continuous at $(x,y)$, then $(x,y) \notin B_{\Gamma}$. Taking the contrapositive, $b$ is discontinuous at branch points. 
\end{proof}

Recall that a function $f \colon X \to Y$ between topological spaces is said to be \emph{locally injective at $x \in X$} if there exists an open neighbourhood $U$ of $x$ such that $\restr{f}{U}$ is injective. 

\begin{prop}\label{prop:branch_characterisation}
	The following are equivalent for $(x,y) \in \gr(\Gamma)$:
	\begin{enumerate}[label=(\roman{enumi})]
		\item\label{itm:branch} $(x,y) \in B_\Gamma$,
		\item\label{itm:discontinuous} $b \colon \gr(\Gamma) \to \{1,\ldots,|\Gamma|\}$ is discontinuous at $(x,y)$, and
		\item\label{itm:nonlocinj} the projection $p_2 \colon \gr(\Gamma) \to \AA$ given by $p_2(x,y)= y$ is not locally injective at $(x,y)$. 
	\end{enumerate}
\end{prop}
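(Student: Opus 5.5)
The equivalence of \ref{itm:branch} and \ref{itm:discontinuous} is exactly Lemma~\ref{lem:branch_iff_discontinuous}. It therefore suffices to show that \ref{itm:discontinuous} and \ref{itm:nonlocinj} are equivalent. The basic tool is the observation from the proof of Lemma~\ref{lem:p2_open}: for each $\gamma \in \Gamma$ the restriction $p_2^\gamma \colon \gr(\gamma) \to \AA$ is a homeomorphism with inverse $y' \mapsto (\gamma(y'),y')$. Also, a point $(x',y')$ lies in $G_\Lambda$ exactly when $x' = \gamma(y')$ for every $\gamma \in \Lambda$.

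\emph{Continuity implies local injectivity.} Suppose $b$ is continuous at $(x,y)$. By Lemma~\ref{lem:branch_iff_discontinuous} and its proof, there is an open neighbourhood $W$ of $(x,y)$ and a set $\Lambda'$ with $|\Lambda'| = b(x,y)$ such that every point of $W$ lies in $G_{\Lambda'}$ and in no other $G_\Lambda$ with $|\Lambda| \ge |\Lambda'|$. Consequently, for each $(x',y') \in W$ the set $\{\gamma \colon x' = \gamma(y')\}$ is exactly $\Lambda'$.

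Now fix $\gamma_0 \in \Lambda'$. Every point of $W$ has the form $(\gamma_0(y'),y')$, so $W \subseteq \gr(\gamma_0)$. Since $p_2$ is injective on $\gr(\gamma_0)$, it is injective on $W$, and hence locally injective at $(x,y)$.

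\emph{Local injectivity implies continuity.} Suppose $p_2$ is injective on an open neighbourhood $U$ of $(x,y)$. Let $\Lambda' = \{\gamma \colon x = \gamma(y)\}$. For $\gamma \notin \Lambda'$ we have $\gamma(y) \ne x$, so by continuity of $\gamma$ we may shrink $U$ until no point of $U$ lies in $\gr(\gamma)$ for any such $\gamma$. For each $\gamma \in \Lambda'$, the set $p_2(U \cap \gr(\gamma))$ is an open neighbourhood of $y$ in $\AA$, because $p_2^\gamma$ is a homeomorphism. Let $V$ be the intersection of these finitely many sets, and let $U' = U \cap p_2^{-1}(V)$, which is an open neighbourhood of $(x,y)$.

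Take $(x',y') \in U'$ and $\gamma \in \Lambda'$. Since $y' \in V$, there is a point $(\gamma(y''), y'') \in U \cap \gr(\gamma)$ with $y'' = y'$, so $(\gamma(y'),y') \in U$. This point and $(x',y')$ both lie in $U$ and have the same second coordinate, so injectivity of $p_2$ on $U$ gives $x' = \gamma(y')$. Hence $b(x',y') \ge |\Lambda'|$. Conversely, every map $\gamma$ with $x' = \gamma(y')$ must lie in $\Lambda'$, by the first shrinking of $U$. Therefore $b \equiv |\Lambda'|$ on $U'$, and $b$ is continuous at $(x,y)$.

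The main obstacle is this second direction. The argument has to use both the openness of $p_2$ restricted to each cograph and injectivity on $U$, in order to force the nearby points $(\gamma(y'),y')$ to coincide with $(x',y')$. The first shrinking step, which excludes the maps outside $\Lambda'$, is what supplies the matching upper bound on $b$.
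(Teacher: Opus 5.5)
Your proof is correct, but it pairs the conditions differently from the paper. The paper gets (i)$\iff$(ii) from Lemma~\ref{lem:branch_iff_discontinuous} and then proves (i)$\iff$(iii) directly from the definition of $B_\Gamma$, with two arguments:
\begin{itemize}
\item For (i)$\Rightarrow$(iii), it takes $(x,y)\in\partial G_\Lambda$ and uses $W=V\cap\bigcap_{\gamma\in\Lambda}\gamma^{-1}(U)$ to keep every $(\gamma(y'),y')$ with $\gamma\in\Lambda$ inside a basic neighbourhood. A nearby point outside $G_\Lambda$ then gives two distinct points over the same $y'$.
\item For (iii)$\Rightarrow$(i), it deletes the cographs of maps outside $\Lambda=\{\gamma : x=\gamma(y)\}$. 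Non-injectivity then produces points arbitrarily close to $(x,y)$ that lie outside $G_\Lambda$, so $(x,y)\in\partial G_\Lambda$.
\end{itemize}

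You instead prove (ii)$\iff$(iii). Both of your directions pass through one intermediate condition: the coincidence set $\{\gamma : x'=\gamma(y')\}$ is constantly $\Lambda'$ on a neighbourhood of $(x,y)$. The ingredients are the same as the paper's. Your use of the openness of $p_2(U\cap\gr(\gamma))$ is the paper's $\gamma^{-1}(U)$ device in another form, and your first shrinking step is the paper's removal of $\gr(\gamma)$ for $\gamma\notin\Lambda$.

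What your route buys is a symmetric picture. Continuity of $b$ and local injectivity of $p_2$ at $(x,y)$ are each equivalent to two things holding together near $(x,y)$: the cographs of the maps in $\Lambda'$ coincide, and the other cographs stay away. What the paper's route buys is that it works directly with the sets $\partial G_\Lambda$ defining $B_\Gamma$ and needs only the statement of Lemma~\ref{lem:branch_iff_discontinuous}, not its proof.

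One small improvement: in your first direction you rely on the \emph{proof} of Lemma~\ref{lem:branch_iff_discontinuous} to get $W$. You can make this self-contained with the shrinking step you already use in your second direction, in two moves:
\begin{itemize}
\item Remove the closed sets $\gr(\gamma)$ for $\gamma\notin\Lambda'$, none of which contains $(x,y)$, so nearby coincidence sets are contained in $\Lambda'$.
\item Then local constancy of $b$ at the value $|\Lambda'|$ forces those coincidence sets to equal $\Lambda'$.
\end{itemize}
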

\begin{proof}
	The equivalence \ref{itm:branch} $\iff$ \ref{itm:discontinuous} follows from Lemma~\ref{lem:branch_iff_discontinuous}.
	
	For \ref{itm:branch}~$\implies$~\ref{itm:nonlocinj} fix $(x,y) \in B_{\Gamma}$ and let $\Lambda \subseteq \Gamma$ be such that $(x,y) \in \partial G_{\Lambda}$. Fix open neighbourhoods $U$ and $V$ of $x$ and $y$, respectively. Continuity of each $\gamma \in \Gamma$ implies \[W = V \cap \bigcap_{\gamma \in \Lambda} \gamma^{-1}(U)\] is an open neighbourhood of $y$. Since $(U \times W) \cap \gr(\Gamma)$ is an open neighbourhood of $(x,y) \in \partial G_{\Lambda}$, there exists $(x',y') \in (U \times W) \cap \gr(\Gamma)$ such that $(x',y') \notin G_{\Lambda}$. In particular, there exist $\gamma \in \Lambda$ and $\gamma' \in \Gamma$ such that $\gamma(y') \ne x' = \gamma'(y')$. The construction of $W$ ensures that $(\gamma(y'),y')$ and $(\gamma'(y'),y')$ both belong to $(U \times W) \cap \gr(\Gamma) \subseteq (U \times V) \cap \gr(\Gamma)$. Since $p_2(\gamma(y'),y') = y' = p_2(\gamma'(y'),y')$, the restriction of $p_2$ to $(U \times V) \cap \gr(\Gamma)$ is not injective. As $(U \times V) \cap \gr(\Gamma)$ is an arbitrary basic open neighbourhood of $(x,y)$ in $\gr(\Gamma)$, $p_2$ is not locally injective at $(x,y)$. 
	
	For \ref{itm:nonlocinj}~$\implies$~\ref{itm:branch} suppose that $p_2$ is not locally injective at $(x,y)$. Let $\Lambda = \{ \gamma \in \Gamma \mid x = \gamma(y)\}$ so that $(x,y) \in G_{\Lambda}$. Fix an open neighbourhood $U \subseteq \gr(\Gamma)$ of $(x,y)$ and consider the open set \[V \coloneqq U \setminus \bigcup_{\gamma \in \Gamma \setminus \Lambda}\gr(\gamma).\] Since $V$ is open and $p_2$ is not locally injective, there exist $(x',y')$ and $(x'',y')$ in $V$ with $x' \ne x''$. The construction of $V$ ensures that there exist $\gamma',\gamma'' \in \Lambda$ such that $x' = \gamma'(y')$ and $x'' = \gamma''(y')$. Since $x' \ne x''$ it follows that $(x',y') \notin G_{\Lambda}$. Consequently, $(x,y) \in \partial G_{\Lambda}$. 
\end{proof}

If $(\AA,\Gamma)$ is contractive and satisfies the open set condition, then the branch set coincides with the set of discontinuities of $b$ identified in \cite[p.~1900]{IKW07}. 

\begin{prop}\label{prop:open-set_branch}
	If $(\AA,\Gamma)$ is contractive and satisfies the open set condition, then
	\begin{align*}
		B_\Gamma &= \{ (x,y) \in \gr(\Gamma) \mid b(x,y) \ge 2 \}\\
		&= \{ (x,y) \in \gr(\Gamma) \mid x = \gamma(y) = \gamma'(y) \text{ for some }\gamma \ne \gamma'\}.
	\end{align*}
\end{prop}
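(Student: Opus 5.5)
The second equality is immediate from the definition of the branch index: $b(x,y) \ge 2$ exactly when at least two distinct maps send $y$ to $x$. So the content of the statement is $B_\Gamma = \{(x,y) \in \gr(\Gamma) \mid b(x,y) \ge 2\}$, and I would prove the two inclusions separately.

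For $\subseteq$ I would use Remark~\ref{rmk:only_need_size_2}. Any $(x,y) \in B_\Gamma$ lies in $\partial G_\Lambda$ for some $\Lambda$ with $|\Lambda| \ge 2$. Since $G_\Lambda$ is closed, $(x,y) \in G_\Lambda$, and hence $b(x,y) \ge |\Lambda| \ge 2$. This direction does not need the open set condition.

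For $\supseteq$, suppose $x = \gamma(y) = \gamma'(y)$ with $\gamma \ne \gamma'$. By Proposition~\ref{prop:branch_characterisation} it suffices to show that $p_2$ is not locally injective at $(x,y)$. Equivalently, every neighbourhood $V$ of $y$ should contain a point $y'$ with $\gamma(y') \ne \gamma'(y')$; by continuity of $\gamma,\gamma'$, the points $(\gamma(y'),y')$ and $(\gamma'(y'),y')$ then lie in any prescribed neighbourhood of $(x,y)$ once $V$ is small. So the key step is to show that $\gamma$ and $\gamma'$ cannot agree on any nonempty open set $V \subseteq \AA$.

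I would argue this by contradiction. Suppose $\gamma = \gamma'$ on a nonempty open $V$, and let $U$ witness the open set condition. By Lemma~\ref{lem:OSC_dense}, $U$ is dense, so $U \cap V$ is a nonempty open set. Pick $z \in U \cap V$; then $\gamma(z) = \gamma'(z) \in \gamma(U) \cap \gamma'(U)$, contradicting $\gamma(U) \cap \gamma'(U) = \varnothing$. (Lemma~\ref{lem:eventually_in_open_sets} is not needed, since density already gives a point.) Hence every neighbourhood of $y$ contains $y'$ with $\gamma(y') \ne \gamma'(y')$. This yields distinct points of $\gr(\Gamma)$ arbitrarily close to $(x,y)$ with the same $p_2$-image, so $(x,y) \in B_\Gamma$.

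The main obstacle is making sure density of $U$ actually applies, and it does via Lemma~\ref{lem:OSC_dense}, which is exactly where contractivity enters. An alternative to invoking Proposition~\ref{prop:branch_characterisation} is to show directly that $(x,y) \in \partial G_{\{\gamma,\gamma'\}}$: the points $(\gamma(y'),y') \in \gr(\Gamma) \setminus G_{\{\gamma,\gamma'\}}$ converge to $(x,y)$.
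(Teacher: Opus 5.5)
Your proposal is correct and is essentially the paper's proof: $\subseteq$ comes from Remark~\ref{rmk:only_need_size_2} exactly as you say, and $\supseteq$ comes from the density of the witness $U$ (Lemma~\ref{lem:OSC_dense}) combined with $\gamma(U)\cap\gamma'(U)=\varnothing$. The only difference is cosmetic. The paper uses openness of $p_2$ (Lemma~\ref{lem:p2_open}) to see that $p_2^{-1}(U)$ is dense in $\gr(\Gamma)$ with $b\equiv 1$ there, and concludes $(x,y)\in\partial G_{\{\gamma,\gamma'\}}$ directly, which is exactly your alternative. Your main route instead pulls back along the continuous maps $\gamma,\gamma'$ and invokes Proposition~\ref{prop:branch_characterisation}; there your ``equivalently'' should read ``it suffices that'', since you only use the sufficient direction.
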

\begin{proof}
	By Remark~\ref{rmk:only_need_size_2}, $B_{\Gamma} \subseteq \{ (x,y) \in \gr(\Gamma) \mid b(x,y) \ge 2 \}$.	
	For the reverse containment, suppose that $U$ is an open set witnessing the open set condition and fix $(x,y) \in \gr(\Gamma)$ such that $b(x,y) \ge 2$. Then there exist $\gamma \ne \gamma'$ in $\Gamma$ such that $x = \gamma(y) = \gamma'(y)$, so $(x,y) \in G_{\{\gamma,\gamma'\}}$. 
	 By Lemma~\ref{lem:OSC_dense} the set $U$ is dense in $\AA$. Since $p_2 \colon \gr(\Gamma) \to \AA$ is continuous and open, $p_2^{-1}(U)$ is a dense open subset of $\gr(\Gamma)$ with the property that $b(x',y') =1$ for all $(x',y') \in p_2^{-1}(U)$. Hence, for any open neighbourhood $W \subseteq \gr(\Gamma)$ of $(x,y)$ we have $W \cap p_2^{-1} (U) \ne \varnothing$, and so there exists $(x',y') \in W$ with $b(x',y') = 1$. Consequently, $(x,y) \in \partial G_{\{\gamma,\gamma'\}} \subseteq B_{\Gamma}$. 
\end{proof}

\section{Kajiwara--Watatani algebras and KMS states}\label{sec:KW}
In \cite{KW06} Kajiwara and Watatani introduced a family of $C^*$-algebras associated to iterated function systems as a means to analyse their dynamics using operator theoretic tools. In this section we recall the construction of these algebras. We also introduce the $\KMS$-state machinery for them developed in \cite{IKW07}, which is in turn based on the work
of Laca--Neshveyev \cite{LN04}. We assume the reader of this section is familiar with $C^*$-algebras and Hilbert $C^*$-modules. Standard references for these topics include
\cite{Mur90} for the former and \cite{Lan95} for the latter. 

\subsection{Cuntz--Pimsner algebras} We briefly recall some basic definitions and notations required in the construction of Cuntz--Pimsner algebras. 
Let $A$ be a $C^*$-algebra. Given a (right) Hilbert $A$-module $X$ we denote the right action of $a \in A$ on $x \in X$ by $x \cdot a \in X$ and the associated right $A$-linear inner product by $\langle\,\cdot \,| \, \cdot\,\rangle_A$. The $C^*$-algebra of adjointable operators on $X$ is denoted by $\End_A(X)$. The compact operators on $X$ are the closed two-sided ideal $\End_A^0(X) \coloneqq \overline{\spaan} \{\Theta_{x,y} \mid x,y \in X\}$ of $\End_A(X)$, where $\Theta_{x,y}(z) = x \cdot \langle y \mid z\rangle_A$ for all $z \in X$. We say that $X$ is \emph{full} if $\overline{\langle X \mid X\rangle}_A = A$. 

A \emph{$C^*$-correspondence over $A$}  is a pair $(\lambda,X)$ consisting of a right Hilbert $A$-module $X$ together with a $*$-homomorphism $\lambda \colon A \to \End_A(X)$. If $\overline{\lambda(A)X} = X$ we say that $(\lambda,X)$ is \emph{nondegenerate} and if $\lambda$ is injective we say that $(\lambda,X)$ is \emph{injective}. We write $a \cdot x$ as a shorthand for $\lambda(a)x$. 

In \cite{Pim97} Pimsner introduced two universal $C^*$-algebras associated to an injective {$C^*$-correspondence} $(\lambda,X)$ called the \emph{Toeplitz--Pimsner algebra} $\Tt_X$ and \emph{Cuntz--Pimsner algebra} $\Oo_X$. Katsura \cite{Kat04cor} extended Pimsner's work to $C^*$-correspondences that are not necessarily injective. We deal exclusively with injective $C^*$-correspondences. 
To describe these algebras we recall from \cite{Pim97} that a \emph{representation} $(\pi,\psi) \colon (\lambda, X) \to B$ of a $C^*$-correspondence $(\lambda,X)$ over $A$ in a {$C^*$-algebra} $B$ consists of a $*$-homomorphism $\pi \colon A \to B$ together with a linear map $\psi \colon X \to B$ such that for all $x,y \in X$ and $a \in A$,
\begin{align*}
	\psi(x \cdot a) = \psi(x)\pi(a), \quad \psi(a \cdot x) = \pi(a)\psi(x),  \quad \text{and} \quad \psi(x)^* \psi(y) = \pi(\langle x \mid y\rangle_A). 
\end{align*}
A representation $(\pi,\psi) \colon (\lambda, X) \to B$ induces a $*$-homomorphism $\psi^{(1)} \colon \End_A^0(X) \to B$ satisfying $\psi^{(1)}(\Theta_{x,y}) = \psi(x)\psi(y)^*$ for all $x,y \in X$ (see \cite[Lemma 2.2]{KPW98}). If $\lambda \colon A \to \End_A(X)$ is injective, then a representation $(\pi,\psi) \colon (\lambda, X) \to B$ is said to be \emph{covariant} if 
\[
	\psi^{(1)} \circ \lambda(a) = \pi(a) \quad \text{ for all } a \in \lambda^{-1}(\End_A^0(X)).
\]
We call $
J_X \coloneqq \lambda^{-1}(\End_A^0(X))
$
the \emph{covariance ideal} of $(\lambda,X)$.

\begin{dfn} Let $(\lambda,X)$ be an injective nondegenerate $C^*$-correspondence over $A$. 

The \emph{Toeplitz--Pimsner algebra} $\Tt_X$ is the universal $C^*$-algebra generated by a representation  $(j_A,j_X) \colon (\lambda,X) \to \Tt_X$ in the sense that $\Tt_X = C^*(j_A(A) \cup j_X(X))$ and for any representation $(\pi,\psi) \colon (\lambda, X) \to B$ there is a unique $*$-homomorphism $\pi \times \psi \colon \Tt_X \to B$ such that $(\pi \times \psi) \circ j_A = \pi$ and $(\pi \times \psi) \circ j_X = \psi$.

The \emph{Cuntz--Pimsner algebra $\Oo_X$} is the universal $C^*$-algebra generated by a covariant representation $(i_A,i_X) \colon (\lambda,X) \to \Oo_X$ in the sense that $\Oo_X = C^*(i_A(A) \cup i_X(X))$ and for any covariant representation $(\pi,\psi) \colon(\lambda,X) \to B$ there is a unique $*$-homomorphism $\pi \times \psi \colon \Oo_X \to B$ such that $(\pi \times \psi) \circ i_A = \pi$ and $(\pi \times \psi) \circ i_X = \psi$. 
\end{dfn}

Since covariant representations are representations, the universal property of $\Tt_X$ implies that $\Oo_X$ is a quotient of $\Tt_X$. If $(\lambda,X)$ is injective, the universal maps $j_A$ and $i_A$ are injective, so $A$ may be considered to be a subalgebra of both $\Tt_X$ and $\Oo_X$.
If $(j_A,j_X) \colon (\lambda,X) \to \Tt_X$ is a universal representation, then for each $z \in \TT$, $(j_A, \xi \mapsto z j_X(\xi)) \colon (\lambda,X) \to \Tt_X$ is also a universal representation. Hence, there is a strongly continuous action $\alpha \colon \TT \to \Aut(\Tt_X)$ satisfying $\alpha_z(j_A(a)) = j_A(a)$ and $\alpha_z(j_X(\xi)) = zj_X(\xi)$ for all $a \in A$ and $\xi \in X$. Abusing notation, the action $\alpha$ lifts to an action $\alpha \colon \RR \to \Aut(\Tt_X)$ by $\alpha_t = \alpha_{e^{i t}}$. There is also an analogous action $\alpha \colon \RR \to \Aut(\Oo_X)$, and the quotient map $q \colon \Tt_X \to \Oo_X$ is equivariant with respect to these actions.

\begin{dfn}
The actions $\alpha$ on $\Tt_X$ and $\Oo_X$ are called \emph{gauge actions}.
\end{dfn}

\subsection{Kajiwara--Watatani algebras}

In \cite{KW06} Kajiwara and Watatani introduced a class of Cuntz--Pimsner algebras that encode certain aspects of the dynamics of a contractive iterated function system. Kajiwara and Watatani worked with contractive systems, but their construction applies verbatim to the systems of Definition~\ref{dfn:IFS}.

Fix an iterated function system $(\AA,\Gamma)$. 
Let $A \coloneqq C(\AA)$ be the $C^*$-algebra of continuous complex-valued functions on $\AA$ and let $X_{\Gamma} \coloneqq C(\gr(\Gamma))$ be the space of continuous complex-valued functions on the cograph $\gr(\Gamma)$ of Definition~\ref{dfn:cograph}. Define right and left actions of $a \in A$ on $\xi \in X_{\Gamma}$ by
\[
(\xi \cdot a)(x,y) \coloneqq \xi(x,y)a(y) \quad \text{and} \quad (a \cdot \xi)(x,y) \coloneqq a(x)\xi(x,y), \quad (x,y) \in \gr(\Gamma).
\]  
For $\xi, \eta \in X_{\Gamma}$, define an $A$-valued inner product by
\begin{align*}
\langle \xi \mid \eta\rangle_A (y) \coloneqq \sum_{\gamma \in \Gamma} \overline{\xi(\gamma(y),y)} \eta(\gamma(y),y)
 = \sum_{x \in \Gamma y} b(x,y) \overline{\xi(x,y)}\eta(x,y), \quad y \in \AA.
\end{align*}
Defining $\lambda\colon A \to \End_A(X_\Gamma)$ by $\lambda(a)\xi = a \cdot \xi$ makes $(\lambda,X_{\Gamma})$ into a $C^*$-correspondence over $A$. Moreover, the norm on $X_{\Gamma}$, given by $\|\xi\| \coloneqq \|\langle \xi 
\mid \xi\rangle_A\|^{1/2}$, is equivalent to the supremum norm on $C(\gr(\Gamma))$ (see \cite[Proposition 2.1]{KW06}).
The left action $\lambda$ is unital, and surjectivity of $p_1$ implies that $\lambda$ is injective. Since $\langle 1_{X_\Gamma} \mid 1_{X_\Gamma}\rangle_A = |\Gamma|\, 1_A$ the module $X_\Gamma$ is full.

\begin{dfn}
	We call $(\lambda,X_{\Gamma})$ the \emph{Kajiwara--Watatani correspondence} of $(\AA,\Gamma)$ and call the associated Cuntz--Pimsner algebra $\Oo_{\Gamma} \coloneqq \Oo_{X_{\Gamma}}$  the \emph{Kajiwara--Watatani algebra} of $(\AA,\Gamma)$. We also write $\Tt_\Gamma \coloneqq \Tt_{X_{\Gamma}}$.
\end{dfn}

Since $\Oo_{\Gamma}$ is universal for covariant representations of $(\lambda,X_{\Gamma})$, the first step towards understanding the structure of $\Oo_{\Gamma}$ is understanding the covariance ideal $J_{X_{\Gamma}} = \lambda^{-1}(\End_A^0(X_{\Gamma}))$. 
In the case of contractive systems satisfying the open set condition it is shown in \cite[Proposition 2.4]{KW06} that $J_{X_\Gamma}$ consists of the functions vanishing on $\{x \in \AA \mid x = \gamma(y) = \gamma'(y) \text{ for some } y \in \AA \text{ and } \gamma \neq \gamma' \in \Gamma\}$. In general, we describe $J_{X_{\Gamma}}$ in terms of the branch set $B_\Gamma$ of Definition~\ref{dfn:branched_set}.

As observed in \cite{IKW07}, $(\lambda, X_\Gamma)$ can be realised as a $C^*$-correspondence associated to a topological quiver in the sense of Muhly--Tomforde \cite{MT05}. In particular, $\gr(\Gamma)$ is the space of edges, $\AA$ is the space of vertices, and the projections $p_1$ and $p_2$ correspond to the source and range maps, respectively. The branch index induces a weighted counting measure on the fibres $p_2^{-1}(y)$ for $y \in \AA$. 
A general result of Muhly--Tomforde \cite[Theorem 3.11]{MT05} characterises the ideal $J_{X_{\Gamma}}$ in terms of local injectivity of $p_2$. Together with Proposition~\ref{prop:branch_characterisation} this gives a characterisation of the covariance ideal $J_{X_{\Gamma}}$ in terms of the branch set. 
\begin{prop}[cf.~{\cite[Proposition 4.4]{IKW07}}]\label{prop:covariance_ideal}
	The covariance ideal of the Kajiwara--Watatani correspondence is given by
	\[
	J_\Gamma \coloneqq J_{X_{\Gamma}} = C_0(\AA \setminus p_1(B_\Gamma)).
	\]
\end{prop}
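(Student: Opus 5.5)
The plan is to invoke the Muhly--Tomforde characterisation of the covariance ideal of a topological quiver and then translate its hypothesis using Proposition~\ref{prop:branch_characterisation}. As noted above, $(\lambda,X_\Gamma)$ is the correspondence of the topological quiver with vertex space $\AA$, edge space $\gr(\Gamma)$, source map $p_1$, range map $p_2$, and fibre measures on $p_2^{-1}(y)$ given by counting weighted by $b$. Since $b$ takes values in $\{1,\ldots,|\Gamma|\}$, the weights are bounded above and below. The inner product therefore defines the same Hilbert module norm, up to equivalence, as the unweighted counting measure, and the ideal of compacts does not depend on the choice of such a bounded, bounded-below weight. With this in hand, \cite[Theorem~3.11]{MT05} describes $\lambda^{-1}(\End^0_A(X_\Gamma))$ as $C_0$ of the set of vertices $v$ that satisfy three conditions: $v$ has a neighbourhood $V$ with $p_1^{-1}(\ol V)$ compact; $p_1(p_1^{-1}(V))$ covers $V$ in a suitable sense, i.e.\ $V$ lies in the interior of $p_1(\gr(\Gamma))$, so $v$ is not a source; and $p_2$ is locally injective on $p_1^{-1}(V)$.

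Next I will check these conditions in our setting. Compactness is automatic, since $\gr(\Gamma)$ is compact. The second condition holds everywhere, since $p_1$ is surjective by \eqref{eq:Gamma_invariance}; this surjectivity is also why $\lambda$ is injective. The covariance ideal therefore reduces to the functions vanishing off the open set $W$ of points $x$ that have a neighbourhood $V$ with $p_2$ locally injective at every point of $p_1^{-1}(V)$.

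It remains to show $W = \AA\setminus p_1(B_\Gamma)$. By Proposition~\ref{prop:branch_characterisation}, the set of points where $p_2$ fails to be locally injective is exactly $B_\Gamma$.
\begin{itemize}
\item If $x\in W$, then $p_1^{-1}(x)\subseteq p_1^{-1}(V)$ avoids $B_\Gamma$, so $x\notin p_1(B_\Gamma)$.
\item Conversely, $B_\Gamma$ is compact, so $p_1(B_\Gamma)$ is closed. Hence if $x\notin p_1(B_\Gamma)$, there is an open $V\ni x$ disjoint from $p_1(B_\Gamma)$. Then $p_1^{-1}(V)\cap B_\Gamma=\varnothing$, and $p_2$ is locally injective at each point of $p_1^{-1}(V)$.
\end{itemize}

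The main obstacle is making sure the precise form of \cite[Theorem~3.11]{MT05} applies. That theorem asks for local injectivity of $p_2$ restricted to $p_1^{-1}(V)$ in the sense of each point having an injective neighbourhood, not global injectivity. It is also stated for a particular system of measures, so the reduction to the weighted measure must be justified via norm equivalence.

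If that citation proves awkward, I would argue directly instead. For $a\in C(\AA)$ supported away from $p_1(B_\Gamma)$, cover $p_1^{-1}(\supp a)$ by finitely many open sets on which $p_2$ is injective (hence a homeomorphism onto an open set, by Lemma~\ref{lem:p2_open}). A partition of unity $\{\xi_i^2\}$ subordinate to this cover then gives $\lambda(a)=\sum_i\Theta_{a\xi_i/b,\,\xi_i}$, with the division by $b$ legitimate because $b$ is locally constant off $B_\Gamma$ by Lemma~\ref{lem:branch_iff_discontinuous}. Conversely, if $\lambda(a)$ is compact and $a(x)\neq0$ for some $x\in p_1(B_\Gamma)$, approximate $\lambda(a)$ by finite sums of $\Theta$'s, which are given by continuous kernels. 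Near a branch point, such a kernel cannot distinguish the two nearby edges $(\gamma(y'),y')$ and $(\gamma'(y'),y')$ over the same $y'$ in the way that multiplication by $a$ requires, and this yields a contradiction.
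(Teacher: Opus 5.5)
Your proposal is correct and follows the same route as the paper, which combines \cite[Theorem~3.11]{MT05} (local injectivity of $p_2$ over a neighbourhood of the vertex) with Proposition~\ref{prop:branch_characterisation} and the closedness of $p_1(B_\Gamma)$. The norm-equivalence detour is unnecessary, and the unweighted counting measure would not even give a $C(\AA)$-valued inner product, since $y \mapsto |\Gamma y|$ is in general discontinuous; the $b$-weighted counting measure already satisfies the Muhly--Tomforde axioms directly, because $y \mapsto \sum_{x\in\Gamma y} b(x,y)f(x,y) = \sum_{\gamma\in\Gamma} f(\gamma(y),y)$ is continuous.
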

For contractive systems satisfying the open set condition Proposition~\ref{prop:open-set_branch} recovers the characterisation of $J_{\Gamma}$ from \cite{KW06}.

\subsection{The Perron--Frobenius operator}\label{subsec:PF}

Central to our analysis of $\KMS$ states on $\Oo_{\Gamma}$ is 
the Perron--Frobenius operator introduced in \cite{IKW07}. Let $(\AA,\Gamma)$ be an iterated function system, let $A = C(\AA)$, and let $B(\AA)$ denote the bounded Borel functions on $\AA$. For $S \subseteq \AA$ we let $1_S$ denote the characteristic function of $S$. We introduce the following dual to the Hutchinson operator on Borel functions.

\begin{lem}\label{lem:dual_Hutchinson_op}
There is a well-defined bounded linear map $\Gamma^* \colon B(\AA) \to B(\AA)$ given by
\[
\Gamma^*(a)(y) \coloneqq \sum_{x \in \Gamma y} a(x) = \sum_{\gamma \in \Gamma} \frac{a(\gamma(y))}{b(\gamma(y),y)}, \quad a \in B(\AA),\, y \in \AA,
\]
with $\|\Gamma^*\| \le |\Gamma|$. If $a\colon \AA \to [0,\infty)$ is bounded and lower semicontinuous, then $\Gamma^*(a)$ is bounded, nonnegative, and lower semicontinuous. 
\end{lem}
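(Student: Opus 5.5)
The plan has three parts: show the two formulas agree, bound the norm, and prove lower semicontinuity.

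\emph{The formulas agree.} Fix $y \in \AA$. For each $x \in \Gamma y$, exactly $b(x,y)$ maps $\gamma \in \Gamma$ satisfy $\gamma(y) = x$. Grouping the sum $\sum_{\gamma \in \Gamma} a(\gamma(y))/b(\gamma(y),y)$ according to the value $x = \gamma(y)$, each $x$ contributes $b(x,y) \cdot a(x)/b(x,y) = a(x)$. So the two expressions coincide.

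\emph{Boundedness and Borel measurability.} The bound is immediate: $|\Gamma y| \le |\Gamma|$, so $|\Gamma^*(a)(y)| \le |\Gamma|\,\|a\|_\infty$. For measurability I use the second formula. Each $y \mapsto a(\gamma(y))$ is Borel, since $\gamma$ is continuous and $a$ is Borel. Each $y \mapsto b(\gamma(y),y)$ is the composition of the continuous map $y \mapsto (\gamma(y),y)$ into $\gr(\Gamma)$ with $b$. By Lemma~\ref{lem:branch_iff_discontinuous}, $b$ is upper semicontinuous, hence Borel, and it takes values in $\{1,\ldots,|\Gamma|\}$, so it never vanishes. Hence $\Gamma^*(a)$ is a finite sum of Borel functions and lies in $B(\AA)$. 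Linearity is clear.

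\emph{Lower semicontinuity.} This is the main step: the weights $1/b$ are only lower semicontinuous, since $b$ is upper semicontinuous, and products of lower semicontinuous functions behave well only when the factors are nonnegative. Rather than handle products, I argue directly with the first formula.

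Fix $y_0$ and write $\Gamma y_0 = \{x_1,\ldots,x_m\}$ with the $x_j$ distinct. For each $j$ choose $\gamma_j$ with $\gamma_j(y_0) = x_j$. Pick pairwise disjoint open neighbourhoods $U_j$ of $x_j$, which is possible because $\AA$ is Hausdorff. By continuity, $\gamma_j(y) \in U_j$ for all $y$ near $y_0$, so the points $\gamma_1(y),\ldots,\gamma_m(y)$ are distinct elements of $\Gamma y$. Since $a \ge 0$, we get
\[
\Gamma^*(a)(y) \ge \sum_{j=1}^m a(\gamma_j(y)).
\]
Each $a \circ \gamma_j$ is lower semicontinuous, so
\[
\liminf_{y \to y_0} \Gamma^*(a)(y) \ge \sum_{j=1}^m a(x_j) = \Gamma^*(a)(y_0).
\]
Nonnegativity and boundedness of $\Gamma^*(a)$ are already established, which completes the proof.
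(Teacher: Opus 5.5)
Your proof is correct, but your lower-semicontinuity step takes a different route from the paper's.

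The paper works with the weighted formula. Since $b$ is upper semicontinuous with $b \ge 1$, each weight $y \mapsto b(\gamma(y),y)^{-1}$ is lower semicontinuous. Both factors $a\circ\gamma$ and $b(\gamma(\cdot),\cdot)^{-1}$ are nonnegative, so the $\liminf$ of their product dominates the product at the limit point. Summing over $\gamma$ and using superadditivity of $\liminf$ finishes the argument. So the obstacle you flag does not actually arise here: the factors are nonnegative, and that is exactly what the paper exploits.

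You instead use the unweighted sum over the set $\Gamma y$ together with a separation argument. The distinct points of $\Gamma y_0$ remain distinct elements of $\Gamma y$ for $y$ near $y_0$, and $a \ge 0$ lets you discard any extra points that split off.

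What each route buys:
\begin{itemize}
\item Your route avoids Lemma~\ref{lem:branch_iff_discontinuous} in the semicontinuity step, though you still use it for Borel measurability.
\item Your route works with nets, so it does not rely on sequences or on metrisability of $\AA$.
\item Your route makes the mechanism visible: images can split but not merge near $y_0$, which is the same phenomenon behind the upper semicontinuity of $b$.
\item The paper's route is shorter, given the lemma already in hand.
\end{itemize}
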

\begin{proof}
The branch index $b$ is bounded and upper semicontinuous by Lemma~\ref{lem:branch_iff_discontinuous}, so the function $y \mapsto a(\gamma(y))/b(\gamma(y),y)$ is Borel and bounded by $\|a\|_{\infty}$ for all $\gamma \in \Gamma$. Hence, $\Gamma^*$ is well-defined and $\|\Gamma^*\| \le |\Gamma|$.

Now suppose that $a \colon \AA \to [0,\infty)$ is bounded and lower semicontinuous and $y_n \to y$ in $\AA$. Since $b$ is upper semicontinuous and
$b \ge 1$, we have $\liminf_n b(\gamma(y_n),y_n)^{-1} \ge b(\gamma(y),y)^{-1}$, and since
$a \ge 0$ is lower semicontinuous,
\[
\liminf_{n\to\infty}\frac{a(\gamma(y_n))}{b(\gamma(y_n),y_n)} \ \ge\ \frac{a(\gamma(y))}{b(\gamma(y),y)}
\qquad \text{for each } \gamma \in \Gamma.
\]
Summing over $\gamma \in \Gamma$ and using superadditivity of $\liminf$
gives $\liminf_n \Gamma^*(a)(y_n) \ge \Gamma^*(a)(y)$, so $\Gamma^*(a)$ is lower semicontinuous.
\end{proof}

\begin{dfn}
We call $\Gamma^*$ of Lemma~\ref{lem:dual_Hutchinson_op} the \emph{dual Hutchinson operator} of $(\AA,\Gamma)$.
\end{dfn}

Recall from Definition~\ref{dfn:n_step_forward_orbit} that $\Gamma^{(n)}y$ denotes the space of paths of length $n$ starting at $y \in \AA$. For $a \in B(\AA)$ and $y \in \AA$,
\[
(\Gamma^*)^2(a)(y) = \sum_{x_1 \in \Gamma y} \Gamma^*(a)(x_1) = \sum_{x_1 \in \Gamma y} \ \sum_{x_0 \in \Gamma x_1} a(x_0) = \sum_{(x_0,x_1,x_2) \in \Gamma^{(2)}y} a(x_0),
\]
and inductively
\begin{equation} \label{eq:n_step_dual}
(\Gamma^*)^n(a)(y) = \sum_{x = (x_0,\ldots,x_n) \in \Gamma^{(n)}y} a(x_0)
\end{equation}
for each $n \ge 1$. Since $\sum_{x \in \Gamma^{(n)}y} a(x_0)$ is a sum over paths, a summand $a(x_0)$ may appear more than once if there are multiple paths from $y$ to $x_0$. This is distinct from summing over $\Gamma^n y$, which may contain fewer elements than $\Gamma^{(n)}y$.

We record the following consequence of  Lemma~\ref{lem:dual_Hutchinson_op}.
\begin{lem}\label{lem:path_count_lsc}
For each $n \ge 1$ the function $y \mapsto |\Gamma^{(n)}y|$ is lower semicontinuous on $\AA$. 
\end{lem}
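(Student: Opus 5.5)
The plan is to take $a = 1_\AA$ and iterate Lemma~\ref{lem:dual_Hutchinson_op}. By \eqref{eq:n_step_dual} with $a = 1_\AA$ we have
\[
(\Gamma^*)^n(1_\AA)(y) = \sum_{(x_0,\ldots,x_n) \in \Gamma^{(n)}y} 1 = |\Gamma^{(n)}y| \quad \text{for all } y \in \AA,
\]
so the claim reduces to showing that $(\Gamma^*)^n(1_\AA)$ is lower semicontinuous. The identity \eqref{eq:n_step_dual} was derived inductively from the definition of $\Gamma^*$. It genuinely counts paths, each path being counted once: at each step we sum over the set $\Gamma x_{i+1}$, not over $\Gamma$ with multiplicity. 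So no correction for the branch index is needed.

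For the lower semicontinuity we argue by induction on $n$.
\begin{itemize}
\item The function $1_\AA$ is bounded, nonnegative and continuous, hence lower semicontinuous.
\item Suppose $(\Gamma^*)^{n-1}(1_\AA)$ is bounded, nonnegative and lower semicontinuous. Then Lemma~\ref{lem:dual_Hutchinson_op} shows that $(\Gamma^*)^n(1_\AA) = \Gamma^*\bigl((\Gamma^*)^{n-1}(1_\AA)\bigr)$ is again bounded, nonnegative and lower semicontinuous.
\end{itemize}
Boundedness is also automatic, since $|\Gamma^{(n)}y| \le |\Gamma|^n$. The induction therefore closes, and $y \mapsto |\Gamma^{(n)}y|$ is lower semicontinuous for every $n \ge 1$.

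There is essentially no obstacle. The only point needing care is that the inductive hypothesis must carry all three properties (boundedness, nonnegativity and lower semicontinuity), because Lemma~\ref{lem:dual_Hutchinson_op} requires each of them as input. This is exactly what the lemma supplies as output.
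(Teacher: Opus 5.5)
Your proposal is correct and follows the paper's proof: you set $a = 1_\AA$ in \eqref{eq:n_step_dual} to identify $|\Gamma^{(n)}y|$ with $(\Gamma^*)^n(1_\AA)(y)$, and then you induct on $n$ using Lemma~\ref{lem:dual_Hutchinson_op}. Your point that the inductive hypothesis must carry boundedness and nonnegativity as well as lower semicontinuity is exactly the detail the paper leaves implicit in its one-line induction.
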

\begin{proof}
Taking $a = 1_{\AA}$ in \eqref{eq:n_step_dual} gives $(\Gamma^*)^n(1_{\AA})(y) = |\Gamma^{(n)}y|$ for all $y \in \AA$ and $n \ge 1$. An induction on $n$ using Lemma~\ref{lem:dual_Hutchinson_op} shows that $y \mapsto |\Gamma^{(n)}y|$ is lower semicontinuous. 
\end{proof}

Let $A_+$ denote the positive cone of $A$ and let $A^*$ and $A_+^*$ denote the bounded linear functionals and positive linear functionals on $A$, respectively.
Since $A$ is commutative, the Riesz--Markov--Kakutani Representation Theorem~\cite[Theorem 6.19]{Rud87} says that $A^*$ may be 
identified with the collection of Radon measures on $\AA$, which, since $\AA$ is compact and metrisable, is precisely the collection of finite Borel measures on $\AA$. The duality pairing $A^* \times A \to \CC$ is given by integration and we frequently identify bounded linear functionals with the associated measure: given a Radon measure $\mu$ on $\AA$ we write
$
\mu(a) = \int_{\AA} a\, d\mu
$
for $a \in A$.

The positive linear functionals $A_+^*$ correspond to positive Radon measures on $\AA$, and also to finite positive traces on $A$. In particular, the set of tracial states $T(A)$ on $A$ may be identified with the set of Borel probability measures on $\AA$. There is a partial order on $A^*$: for $\mu,\nu \in A^*$ we have $\mu \le \nu \iff \nu - \mu \in A^*_+$. This corresponds to the usual partial order on measures. 

\begin{lem}\label{lem:PF_operator}
	For each $\mu \in A^*$, there is a well-defined linear map $F(\mu) \colon B(\AA) \to \CC$ given by 
	\begin{equation}
		\label{eq:PF_operator}
	F(\mu)(a) =  \int_\AA \Gamma^*(a)(y) \, d\mu(y) =  \int_\AA \sum_{x \in \Gamma y}a(x) \, d\mu(y), \qquad a \in B(\AA).
	\end{equation}
	Then $F(\mu) \in A^*$, and $\mu \mapsto F(\mu)$ is a bounded linear map $F \colon A^* \to A^*$ with $F(A^*_+) \subseteq A^*_+$ and $\|F\| = \sup_{y \in \AA} |\Gamma y|$.
\end{lem}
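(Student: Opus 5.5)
Since $\Gamma^*$ maps $B(\AA)$ to $B(\AA)$ boundedly by Lemma~\ref{lem:dual_Hutchinson_op}, the right-hand side of \eqref{eq:PF_operator} is well defined: $\Gamma^*(a)$ is a bounded Borel function, hence integrable against the finite Borel measure $\mu$. Linearity of $F(\mu)$ in $a$ and of $F$ in $\mu$ is immediate from linearity of $\Gamma^*$ and of integration. To see that $F(\mu) \in A^*$, restrict to $a \in C(\AA)$. Then
\[
|F(\mu)(a)| \le \|\mu\| \, \|\Gamma^*(a)\|_\infty \le \|\mu\| \sup_{y} |\Gamma y| \, \|a\|_\infty,
\]
because $|\Gamma^*(a)(y)| = \bigl|\sum_{x \in \Gamma y} a(x)\bigr| \le |\Gamma y|\,\|a\|_\infty$. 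So $F(\mu)$ is a bounded functional on $A$, and $\|F\| \le \sup_y |\Gamma y|$. The function $y \mapsto |\Gamma y|$ takes values in $\{1,\ldots,|\Gamma|\}$, so the supremum is a maximum. For positivity, if $\mu \ge 0$ and $a \ge 0$ then $\Gamma^*(a) \ge 0$ pointwise, so $F(\mu)(a) \ge 0$.

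I will also check that $F(\mu)$, viewed as a measure, still computes the integral of Borel functions $a$ by the same formula. This holds because $a \mapsto \int \Gamma^*(a)\,d\mu$ is a finite, countably additive set function when applied to indicator functions. Countable additivity follows by monotone convergence, since $\Gamma^*$ is a finite sum of compositions. This measure agrees with $F(\mu)$ on $C(\AA)$, so by Riesz uniqueness it is the same measure. This step is not strictly needed for the statement, but it justifies writing $F(\mu)(a)$ for $a \in B(\AA)$.

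The main obstacle is the reverse inequality $\|F\| \ge \sup_y |\Gamma y|$. Pick $y_0$ attaining the maximum $m = |\Gamma y_0|$ and take $\mu = \delta_{y_0}$, which has norm $1$. Then $F(\delta_{y_0})(a) = \sum_{x \in \Gamma y_0} a(x)$. By Urysohn's lemma, choose $a \in C(\AA)$ with $0 \le a \le 1$ and $a = 1$ on the finite set $\Gamma y_0$. This gives $F(\delta_{y_0})(a) = m$ with $\|a\| = 1$, so $\|F(\delta_{y_0})\| \ge m$. Together with the upper bound, this yields $\|F\| = \sup_{y \in \AA}|\Gamma y|$.
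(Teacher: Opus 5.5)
Your proof is correct and follows the paper's argument. The pointwise bound $|\Gamma^*(a)(y)| \le |\Gamma y|\,\|a\|_\infty$ gives boundedness, positivity and $\|F\| \le \sup_y|\Gamma y|$, and testing on $\delta_{y_0}$ at a maximiser gives the reverse inequality; the paper simply takes $a = 1_\AA$ there, so Urysohn's lemma is unnecessary. Your extra check that the Riesz measure of $F(\mu)$ integrates Borel functions by the same formula is not in the paper's proof, but it usefully justifies the identity $F(\mu)(1_S) = \int_\AA |\Gamma y \cap S|\,d\mu(y)$ used later in \eqref{eq:PF_characteristic}. For complex $\mu$ that check needs dominated rather than monotone convergence.
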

\begin{proof}
Let $K \coloneqq \sup_{y \in \AA}|\Gamma y|$. By Lemma~\ref{lem:dual_Hutchinson_op}, for each $a \in B(\AA)$ the function $\Gamma^*(a)$ is bounded and Borel, with $|\Gamma^*(a)(y)| \le \sum_{x \in \Gamma y}|a(x)| \le K\|a\|_\infty$ for all $y \in \AA$. 
By the Riesz--Markov--Kakutani Theorem, every $\mu \in A^*$ is a finite complex Borel measure on $\AA$, so $F(\mu)$ given by \eqref{eq:PF_operator} is well-defined, linear in both $a$ and $\mu$, and satisfies $|F(\mu)(a)| \le K\|a\|_\infty\|\mu\|$. 
Hence, $F \colon A^* \to A^*$ is a bounded linear map with $\|F\| \le K$. If $\mu \in A^*_+$ and $a \ge 0$, then $\Gamma^*(a) \ge 0$, so $F(\mu)(a) \ge 0$. Finally, choosing $y \in \AA$ with $|\Gamma y| = K$ gives $\|F(\delta_y)\| \ge F(\delta_y)(1_\AA) = |\Gamma y| = K$, so $\|F\| = K$.
\end{proof}
\begin{dfn}[{cf.~\cite{IKW07}}]\label{dfn:PF}
	We call $F$ of Lemma~\ref{lem:PF_operator} the \emph{Perron--Frobenius operator} of $(\AA,\Gamma)$.
\end{dfn}
The operator $F$ is related to the operator $\mu \mapsto \sum_{\gamma \in \Gamma} \omega_{\gamma} \gamma_* (\mu)$ of Hutchinson \cite[\S~4.2(1)]{Hut81}, where $\omega_\gamma$ are positive weights with $\sum_{\gamma \in \Gamma} \omega_\gamma = 1$ and $ \gamma_* (\mu) (a) = \mu(a \circ \gamma)$ for all $a \in C(\AA)$. The distinctions are that $F$ does not depend on a labelling of $\Gamma$ and that the weights ${b(\gamma(y),y)}^{-1}$ vary as a function of $y$ and do not necessarily sum to $1$. Fan and Lau \cite{FL99} study operators of this form with variable, unnormalised weights, and prove a Ruelle--Perron--Frobenius theorem when the weights are Dini continuous \cite[Theorem~1.1]{FL99}. By Lemma~\ref{lem:branch_iff_discontinuous}, the weights $y \mapsto b(\gamma(y),y)^{-1}$ are all continuous if and only if $B_\Gamma = \varnothing$, so the result of Fan--Lau is unavailable in the presence of branch points. The operator $F$ is also not weak-$*$ continuous in general.

\begin{example}\label{ex:not_weak_star_continuous}
	 Let $([0,1],\Gamma = \{\gamma_1,\gamma_2\})$ be the iterated function system with $\gamma_1 (x) = \frac{x}{2}$ and $\gamma_2(x) = 1- \frac{x}{2}$. The sequence of point masses $(\delta_{1-1/n})_{n=1}^{\infty}$  converges in the weak-$*$ topology to $\delta_1$.
	For each $a \in A$,
	\[
	F(\delta_{1-1/n})(a) = a\Big(\frac{1}{2} + \frac{1}{2n}\Big) + a\Big(\frac{1}{2} - \frac{1}{2n}\Big)  \to 2a\Big(\frac{1}{2}\Big),
	\]
	while $F(\delta_1) (a) = a(\frac{1}{2})$. So $F$ is not weak-$*$ continuous.
\end{example}

We record some useful identities for $F$. We identify evaluating $\mu \in A^*_+$ on $1_S$ with the measure of $S$ under $\mu$. 
For each $\mu \in A^*_+$ and Borel set $S \subseteq \AA$, we have
\begin{equation}\label{eq:PF_characteristic}
F(\mu)(S) = F(\mu)(1_S) = \int_\AA \sum_{x \in \Gamma y} 1_S(x) ~d\mu(y) = \int_\AA |\Gamma y \cap S| ~d\mu(y).
\end{equation}
Since $F(\mu)(a) = \mu(\Gamma^*(a))$ for all $\mu \in A_+^*$ and $a \in B(\AA)$, iterating gives $F^n(\mu)(a) = \mu\big((\Gamma^*)^n(a)\big)$, so integrating \eqref{eq:n_step_dual} against $\mu$ yields
 \begin{equation} \label{eq:n_step_PF}
  F^n(\mu)(a) = \int_\AA \sum_{x \in \Gamma^{(n)}x_n} a(x_0) ~d\mu(x_n)
 \end{equation}
for all $\mu \in A_+^*$, $a \in A$ and $n \ge 1$.

\subsection{KMS states and the Laca--Neshveyev machine}\label{subsec:KMS_LN}

The notion of a $\KMS$ state originates in quantum statistical mechanics via the work of Kubo~\cite{Kub57} and Martin--Schwinger~\cite{MS59}, where it characterises thermal equilibrium states at inverse temperature $\beta > 0$.  In the $C^*$-algebraic formulation, given a $C^*$-algebra $B$ and a strongly continuous one-parameter group of automorphisms $\alpha \colon \mathbb{R} \to \Aut(B)$, a state $\varphi$ on $B$ is a \emph{$\KMS_\beta$ state} for $\alpha$ if it satisfies the twisted trace condition
\[
\varphi(a \, \alpha_{i\beta}(b)) = \varphi(ba)
\]
for all $a, b$ in a suitable dense $\alpha$-invariant $*$-subalgebra of $B$. If the action $\alpha$ is clear, we simply say that $\varphi$ is a $\KMS_\beta$ state.
We refer the reader to \cite{BR97b} for a comprehensive treatment.  In this article we are concerned with $\KMS$ states for the gauge action on Kajiwara--Watatani algebras.

In \cite{LN04}, Laca and Neshveyev established a general framework for characterising $\KMS$ states on Toeplitz--Pimsner and Cuntz--Pimsner algebras for a class of $\RR$-actions lifted from $\RR$-actions on the underlying $C^*$-correspondence. 
They showed that such $\KMS$ states are characterised entirely by certain dynamically-compatible traces on the coefficient algebra of the $C^*$-correspondence.
 
Let $(\lambda,X)$ be a countably generated, injective, nondegenerate $C^*$-correspondence over a $C^*$-algebra $A$. Given a  finite trace $\tau$ on $A$, there is an induced semifinite trace $\Tr_{\tau}$ on $\End_A(X)$ given by
\[
\Tr_{\tau}(T) = \sum_{i =1}^{\infty} \tau(\langle u_i \mid T u_i \rangle_A),
\]
whose value is independent of a choice of countable frame $(u_i)_{i=1}^\infty$ for $X$ \cite[Theorem 1.1]{LN04}. If $\Tr_{\tau}(\id_X) < \infty$, then $\Tr_{\tau} \circ \lambda$ is again a finite trace on $A$. Now suppose  that the dynamics on $\Tt_X$ arises from the gauge action $\alpha \colon \RR \to \Aut(\Tt_X)$. A key observation of Laca--Neshveyev is that there is a bijective correspondence between $\KMS_\beta$ states on $\Tt_X$ for $\alpha$ and tracial states $\tau$ on $A$ such that $\Tr_{\tau} (\lambda(a)) \le e^{\beta} \tau(a)$ for all $a \in A_+$. Moreover, the $\KMS_\beta$ state associated to such a trace $\tau$ descends to a $\KMS_\beta$ state on $\Oo_X$ if and only if $\Tr_{\tau}( \lambda(a) )= e^{\beta} \tau(a)$ for all $a \in J_X$.

 Izumi--Kajiwara--Watatani adapted the machinery of Laca--Neshveyev to the gauge action on $\Oo_{\Gamma}$ for iterated function systems in which the set $\{\gamma(y) \mid y \in \AA,\ \gamma(y) = \gamma'(y) \text{ for some } \gamma \ne \gamma'\}$ is finite and the system satisfies the following orbit escape condition.
\begin{dfn}[{cf.~\cite[Theorem~6.6]{IKW07}}]\label{dfn:orbit_escape_condition}
For each $x \in \AA$ let $O(x) \coloneqq \bigcup_{n=0}^{\infty} \Gamma ^n x$ denote the \emph{forward orbit} of $x$.
We say that an iterated function system $(\AA,\Gamma)$ satisfies the \emph{orbit escape condition} if one of the following conditions holds:
\begin{enumerate}
	\item whenever $\gamma(x) = \gamma'(x)$ for some $x \in \AA$, then $\gamma = \gamma'$; or 
	\item if $\gamma(x) = \gamma'(x)$ for some $x \in \AA$ and $\gamma \ne \gamma'$, then there exists $y \in O(x)$ such that for all $z \in O(y)$, we have $\gamma_1(z) \ne \gamma_2(z)$ for all $\gamma_1 \ne \gamma_2$ in $\Gamma$.
\end{enumerate}
\end{dfn}

\begin{example}\label{ex:clarence_no_oec}
	Let $([0,1], \Gamma)$ be the iterated function system of Example~\ref{ex:clarence}. Since $\gamma_2(\AA) = [1/2,1]$, and for every $x \in [1/2,1]$ we have $\gamma_1(x) = \gamma_3(x)$, the system does not satisfy the orbit escape condition. As such, the results of \cite{IKW07} do not apply to this system.
\end{example}
We aim to characterise $\KMS$ states on $\Oo_{\Gamma}$ for a more general class of iterated function systems that do not necessarily satisfy the orbit escape condition. Once again, let $A = C(\AA)$. A key observation of \cite{IKW07} is that for any $\mu \in T(A)$, the trace $F(\mu)$ agrees with the induced trace $\Tr_{\mu} \circ \lambda$ arising from the Kajiwara--Watatani correspondence $(\lambda,X_\Gamma)$. We include a streamlined proof for completeness and refer to \cite{FL02} for an introduction to the theory of frames for Hilbert $C^*$-modules, which we use in the proof. 

Since $\AA$ is second countable, compact and Hausdorff, $\gr(\Gamma)\subseteq \AA \times \AA$ is compact metrisable, so $C(\gr(\Gamma))$ is separable in the supremum norm. Since the module norm on $X_\Gamma$ is equivalent to the supremum norm, any countable dense subset of $X_{\Gamma}$ generates it as a Hilbert $A$-module. By Kasparov's Stabilisation Theorem \cite[Theorem~6.2]{Lan95}, there is a countable (normalised tight) frame $(u_i)_{i=1}^{\infty}$ for $X_\Gamma$ (see~\cite[Example~3.5]{FL02}).

\begin{prop}[{\cite[Theorem 4.2]{IKW07}}]\label{prop:IKW_simplification}
 	Let $(\AA,\Gamma)$ be an iterated function system and let $(u_i)_{i=1}^\infty$ be a countable (normalised tight) frame for $X_\Gamma$. 
 	Then for each $a \in A = C(\AA)$,
 	\[
 	\sum_{i=1}^\infty \langle u_i \mid \lambda(a) u_i \rangle_A(y) = \sum_{x \in \Gamma y} a(x) = \Gamma^*(a)(y), \quad \text{ for all } y \in \AA.
 	\]
 	Moreover, $
 	F(\mu) = \Tr_{\mu} \circ \lambda
 	$
 	for all $\mu \in T(A)$. 
 \end{prop}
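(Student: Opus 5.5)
The plan is to compute $\sum_i \langle u_i \mid \lambda(a)u_i\rangle_A(y)$ pointwise by localising the frame to the fibre $p_2^{-1}(y)$. For each $y$, the fibre carries the weighted counting measure $x\mapsto b(x,y)$. Evaluation at $y$ turns $X_\Gamma$ into a finite-dimensional Hilbert space, and the frame descends to a Parseval frame there. The trace of multiplication by $a$ on that space is then $\sum_{x\in\Gamma y} a(x)$.

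\textbf{Step 1: the localised space.} Fix $y\in\AA$. Define the Hilbert space $H_y\coloneqq \ell^2(\Gamma y, b(\cdot,y))$, with inner product $\langle f,g\rangle_y=\sum_{x\in\Gamma y} b(x,y)\overline{f(x)}g(x)$. Let $\epsilon_y\colon X_\Gamma\to H_y$ be restriction, $\epsilon_y(\xi)(x)=\xi(x,y)$. By the formula for the inner product, $\langle \xi\mid\eta\rangle_A(y)=\langle \epsilon_y\xi,\epsilon_y\eta\rangle_y$. Moreover, $\epsilon_y(a\cdot\xi)=M_a\epsilon_y(\xi)$, where $M_a$ is multiplication by $a|_{\Gamma y}$. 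The map $\epsilon_y$ is surjective: $\Gamma y$ is a finite subset of $\{(x,y)\}\subseteq \gr(\Gamma)$, so Tietze/Urysohn on the compact metrisable space $\gr(\Gamma)$ provides a continuous $\xi$ with any prescribed finitely many values.

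\textbf{Step 2: Parseval frame in $H_y$.} The tight frame identity $\xi=\sum_i u_i\cdot\langle u_i\mid\xi\rangle_A$ converges in norm in $X_\Gamma$. Applying the bounded map $\epsilon_y$ gives
\[
\epsilon_y\xi=\sum_i \langle \epsilon_y u_i,\epsilon_y\xi\rangle_y\,\epsilon_y u_i .
\]
Here we use $\epsilon_y(\eta\cdot c)=c(y)\epsilon_y\eta$. By surjectivity, $(\epsilon_y u_i)_i$ is a Parseval frame for the finite-dimensional space $H_y$. For any Parseval frame $(v_i)$ and any operator $T$, one has $\sum_i\langle v_i,Tv_i\rangle=\Tr(T)$. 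To see this, expand each $v_i$ in an orthonormal basis $(e_k)$ and use $\sum_i v_i\langle v_i,\cdot\rangle=\mathrm{id}$. All sums are finite in $k$, and the series in $i$ converges absolutely, since $\sum_i|\langle v_i,e_k\rangle|^2=1$. Hence
\[
\sum_i\langle u_i\mid\lambda(a)u_i\rangle_A(y)=\Tr(M_a)=\sum_{x\in\Gamma y}a(x),
\]
because $M_a$ is diagonal in the orthonormal basis $\{b(x,y)^{-1/2}\delta_x\}$. This equals $\Gamma^*(a)(y)$ by Lemma~\ref{lem:dual_Hutchinson_op}.

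\textbf{Step 3: the trace identity.} Let $\mu\in T(A)$ and first take $a\ge0$. The partial sums $\sum_{i\le N}\langle u_i\mid\lambda(a)u_i\rangle_A$ are nonnegative continuous functions, increasing pointwise to $\Gamma^*(a)$. By monotone convergence,
\[
\Tr_\mu(\lambda(a))=\sum_i\mu(\langle u_i\mid\lambda(a)u_i\rangle_A)=\int\Gamma^*(a)\,d\mu=F(\mu)(a).
\]
This quantity is finite, since $\Gamma^*(a)\le|\Gamma|\|a\|$. Linearity then extends the identity to all $a\in A$.

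\textbf{Main obstacle.} The delicate point is Step 2. Here one must verify that the frame reconstruction formula survives evaluation at a single fibre, which relies on norm convergence and the identity $\|\epsilon_y\xi\|\le\|\xi\|$. One must also verify surjectivity of $\epsilon_y$, which is needed to conclude that the image is a Parseval frame for all of $H_y$ rather than for a subspace. Everything else is bookkeeping.
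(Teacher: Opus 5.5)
Your proof is correct and is essentially the paper's argument in Hilbert-space form: the paper's Urysohn function $\psi$ with $\psi(x,y)=1$ and $\psi(x',y)=0$ for $x'\neq x$ is exactly a preimage under your $\epsilon_y$ of $\delta_x$, and its identity $\sum_i b(x,y)|u_i(x,y)|^2=1$ is the diagonal Parseval identity $\sum_i|\langle \epsilon_y u_i, e_x\rangle_y|^2=1$ for $e_x=b(x,y)^{-1/2}\delta_x$, which is all your trace formula needs since $M_a$ is diagonal. The only real difference is the last step: you use monotone convergence for $a\ge 0$ and extend by linearity, whereas the paper bounds the partial sums by $|\Gamma|\,\|a\|_\infty$ and applies dominated convergence directly to every $a\in A$.
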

 \begin{proof}

 	Fix $(x,y) \in \gr(\Gamma)$. Use Urysohn's Lemma to choose $\psi \in X_\Gamma$  such that $\psi(x,y) = 1$ and $\psi(x',y) = 0$ for all $x' \in \Gamma y$ with $x' \ne x$. Since $(u_i)_{i=1}^{\infty}$ is a frame, $\psi = \sum_{i=1}^{\infty} u_i \cdot \langle u_i\mid \psi \rangle_A$ with convergence in norm. It follows that $\psi(x,y) = 1$ is approximated, for large $n$, by
 	\begin{align*}
 		&\quad \sum_{i=1}^{n} u_i(x,y) \langle u_i \mid \psi \rangle_A(y)
 		= \sum_{i=1}^{n} \sum_{x' \in \Gamma y} b(x',y) {u_i(x,y)}\ol{ u_i(x',y)}\psi(x',y)\\
 		&\quad= \sum_{i=1}^{n}\Big( b(x,y)|u_i(x,y)|^2\psi(x,y) + \sum_{\substack{x' \in \Gamma y\\x' \ne x}} b(x',y){u_i(x,y)} \ol{u_i(x',y)}\psi(x',y) \Big)
 		= \sum_{i=1}^{n} b(x,y)|u_i(x,y)|^2.
 	\end{align*}
 	Consequently, $1 = \lim_{n \nearrow \infty} \sum_{i=1}^{n} b(x,y)|u_i(x,y)|^2$. The convergence is absolute, so for each $a \in A$ and $y \in \AA$,
 	\[
 	\sum_{x \in \Gamma y} a(x) = \sum_{x \in \Gamma y} \sum_{i=1}^{\infty} b(x,y)|u_i(x,y)|^2a(x) = \sum_{i=1}^{\infty} \langle u_i \mid \lambda(a)u_i \rangle_A(y).
 	\]
 	For the second statement, note that
	the partial sums $\sum_{i\le n}\langle u_i\mid\lambda(a)u_i\rangle_A(y)$ are bounded by $\sum_{x\in\Gamma y}|a(x)|\le|\Gamma|\|a\|_\infty$.
	The Dominated Convergence Theorem implies that for all $\mu \in T(A)$ and $a \in A$,
 	\[
 	\Tr_\mu(\lambda(a)) = \sum_{i=1}^\infty \int_\AA \langle u_i \mid \lambda(a) u_i \rangle_A(y) \, d\mu(y) = \int_\AA  \sum_{x \in \Gamma y} a(x) \, d\mu(y) = F(\mu)(a). \qedhere
 	\] 	
 \end{proof}

 By Proposition~\ref{prop:IKW_simplification}, $\Tr_\mu\circ\lambda = F(\mu)$, so for $\mu\in T(A)$ the Laca--Neshveyev subinvariance condition, $\Tr_\mu(\lambda(a))\le e^{\beta}\mu(a)$ for all $a\in A_+$, is precisely the condition $F(\mu)\le e^{\beta}\mu$ in the order on $A^*_+$. Writing $F_\beta\coloneqq e^{-\beta}F$, we may rewrite this as $F_\beta(\mu)\le\mu$.

The following characterisation of $\KMS$ states on $\Oo_{\Gamma}$ applies generally and is adapted from \cite[Theorem 3.2]{IKW07}; it follows from \cite{LN04}.

\begin{thm}[{cf.~\cite[Theorems~2.1 and 2.5]{LN04}}]\label{thm:LN_machine}
Let $(\lambda,X_\Gamma)$ be the Kajiwara--Watatani correspondence associated to an iterated function system $(\AA,\Gamma)$ and let $\beta > 0$. There is an affine isomorphism between $\KMS_\beta$ states on $\Tt_{\Gamma}$ for the gauge action and the set of $\mu \in T(A)$ satisfying
\begin{equation}\label{eq:subinvariance}
	F_\beta(\mu) \le \mu.
\end{equation}
Moreover, the $\KMS_\beta$ state on $\Tt_{\Gamma}$ associated to $\mu$ descends to a $\KMS_\beta$ state for the gauge action on $\Oo_{\Gamma}$ if and only if 
\begin{equation}\label{eq:trace_on_ideal}
F_{\beta}(\mu)(a) = \mu(a) \quad \text{ for all } a \in J_{\Gamma}.
\end{equation}
In either case, the correspondence associates to a $\KMS_\beta$ state $\varphi$ the tracial state $\mu\coloneqq \varphi|_A$. 
\end{thm}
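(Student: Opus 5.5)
The plan is to obtain Theorem~\ref{thm:LN_machine} by specialising the general Laca--Neshveyev results \cite[Theorems~2.1 and 2.5]{LN04} to the Kajiwara--Watatani correspondence $(\lambda,X_\Gamma)$, and then translating their trace conditions into the language of the Perron--Frobenius operator using Proposition~\ref{prop:IKW_simplification}. First I will check that the hypotheses of \cite{LN04} hold. The correspondence $(\lambda,X_\Gamma)$ is countably generated, since $C(\gr(\Gamma))$ is separable and the module norm is equivalent to the supremum norm. It is nondegenerate because $\lambda$ is unital, and injective because $p_1$ is surjective. The gauge action is the dynamics on $\Tt_\Gamma$ and $\Oo_\Gamma$ lifted from the one-parameter group $U_t\xi = e^{it}\xi$ on $X_\Gamma$, which fixes $A$ pointwise. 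This is exactly the setting of \cite{LN04}, with the positive operator $D$ generating $U$ equal to the identity, so that $e^{-\beta D}=e^{-\beta}$.

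Next I will invoke \cite[Theorem~2.1]{LN04}. It says that restriction $\varphi\mapsto\varphi|_A$ is an affine bijection from $\KMS_\beta$ states on $\Tt_X$ onto the tracial states $\tau$ on $A$ satisfying $\Tr_\tau(\lambda(a)e^{-\beta D})\le\tau(a)$ for $a\in A_+$. In our case this condition reads $e^{-\beta}\Tr_\tau(\lambda(a))\le\tau(a)$. A tracial state on the commutative algebra $A$ is just a Borel probability measure $\mu\in T(A)$, and Proposition~\ref{prop:IKW_simplification} gives $\Tr_\mu\circ\lambda=F(\mu)$. The condition therefore becomes $F_\beta(\mu)(a)\le\mu(a)$ for all $a\in A_+$, which is \eqref{eq:subinvariance} in the order on $A^*_+$. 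Finiteness of $\Tr_\mu(\lambda(1))=F(\mu)(1)\le|\Gamma|$ is automatic, so no extra integrability assumption arises.

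For the Cuntz--Pimsner part I will use \cite[Theorem~2.5]{LN04}. It says that such a state factors through the quotient $\Oo_X$ if and only if $\Tr_\tau(\lambda(a)e^{-\beta D})=\tau(a)$ for all $a$ in the covariance ideal $J_X$. With $J_{X_\Gamma}=J_\Gamma$ from Proposition~\ref{prop:covariance_ideal}, and the same identification $\Tr_\mu\circ\lambda=F(\mu)$, this is precisely \eqref{eq:trace_on_ideal}. Because the quotient map $q\colon\Tt_\Gamma\to\Oo_\Gamma$ is gauge equivariant and restricts to the identity on $A$, $\KMS_\beta$ states on $\Oo_\Gamma$ correspond to those on $\Tt_\Gamma$ that vanish on $\ker q$. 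The correspondence is therefore still given by restriction to $A$, and it is affine because restriction is affine.

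I expect the main obstacle to be matching conventions rather than any new argument. The sign and normalisation of $\beta$ and the direction of the twisted trace condition must agree with \cite{LN04}. One must also confirm that the Laca--Neshveyev induced trace, defined via $\langle\xi,\lambda(a)\xi\rangle$ on a frame, coincides with $\Tr_\mu$ as defined above. If their theorem is stated only for $\beta$ in a range or requires $\tau$ to extend to a trace on $\End^0_A(X)$, I would verify this directly using the frame $(u_i)$ and dominated convergence, as in the proof of Proposition~\ref{prop:IKW_simplification}.
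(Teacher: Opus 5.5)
Your proposal is correct and takes the same route as the paper. The paper also obtains Theorem~\ref{thm:LN_machine} directly from \cite[Theorems~2.1 and 2.5]{LN04} (as adapted in \cite[Theorem~3.2]{IKW07}), with the gauge action playing the role of $D=1$. It then translates the Laca--Neshveyev conditions $\Tr_\mu(\lambda(a))\le e^{\beta}\mu(a)$ on $A_+$, and equality on $J_\Gamma$, into \eqref{eq:subinvariance} and \eqref{eq:trace_on_ideal} using the identity $\Tr_\mu\circ\lambda = F(\mu)$ from Proposition~\ref{prop:IKW_simplification}.
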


\begin{dfn}
For each $\beta > 0$ we say that a finite positive trace $\nu \in A^*_+$ is $\beta$-\emph{subinvariant} if $F_{\beta}(\nu) \le \nu$, as in \eqref{eq:subinvariance}, and we denote the collection of $\beta$-subinvariant tracial states on $A = C(\AA)$ by $T_{\beta}(A)$. Following \cite[Definition~3.3]{IKW07}, we say that a finite positive trace $\nu \in A^*_+$ is of
\begin{enumerate}
\item \emph{infinite type} if $F_{\beta}(\nu) = \nu$, and of
\item \emph{finite type} if there exists $\nu' \in A^*_+$ such that $\nu = \sum_{n=0}^{\infty} F_{\beta}^n(\nu')$ with convergence in the weak-$*$ topology.
\end{enumerate}
We let $T_{\beta}^{\infty}(A)$ and $T_{\beta}^{f}(A)$ denote the collections of tracial states on $A$ of infinite and of finite type, respectively. A trace of either type is automatically $\beta$-subinvariant, so $T_{\beta}^{\infty}(A) \cup T_{\beta}^{f}(A) \subseteq T_{\beta}(A)$.
A $\KMS_\beta$ state on $\Tt_{\Gamma}$ or $\Oo_{\Gamma}$ is of infinite type (resp. finite type) if the associated $\beta$-subinvariant tracial state is of infinite type (resp. finite type). We write $\KMS_\beta^{\infty}(\Oo_{\Gamma})$ and $\KMS_\beta^f(\Oo_{\Gamma})$ for the collection of infinite-type and finite-type $\KMS_\beta$ states of $\Oo_{\Gamma}$ for the gauge action, and do similarly for $\Tt_{\Gamma}$.
\end{dfn}

In the proof of \cite[Proposition 2.4]{LN04} it is shown that each $\mu \in T_{\beta}(A)$ can be decomposed as $\mu = \mu^{\infty} + \mu^{f}$ for unique $\beta$-subinvariant traces $\mu^{\infty},\mu^{f} \in A^*_+$ of infinite and of finite type, respectively. This divides the problem of classifying $T_{\beta}(A)$ into classifying the $\beta$-subinvariant traces of infinite type and those of finite type. More explicitly (see \cite[p.~1894]{IKW07}), for $\mu \in T_{\beta}(A)$ we have 
\begin{equation}\label{eq:infinite_finite_decomposition}
\mu^{\infty} = \lim_{n \to \infty} F^{n}_\beta(\mu) \quad \text{ and } \quad
\mu^{f} =  \lim_{n \to \infty} \sum_{k=0}^{n} F^k_{\beta}(\mu_0) \quad \text{ where } \mu_0 \coloneqq \mu - F_\beta(\mu),
\end{equation}
with both limits taken in the weak-$*$ topology.

If $\mu \in T_{\beta}^{\infty}(A)$, then \eqref{eq:trace_on_ideal} holds automatically, so the associated $\KMS_\beta$ state descends to $\Oo_{\Gamma}$. For the $\KMS_\beta$ state associated to $\mu \in T_{\beta}^{f}(A)$ to descend to $\Oo_{\Gamma}$, the positive linear functional $\mu_0 = \mu - F_{\beta}(\mu)$ must vanish on $J_{\Gamma}$.
Following \cite{IKW07} we let $T(A/J_{\Gamma})$ denote the tracial states on $A$ that vanish on $J_{\Gamma}$, which correspond to tracial states on $A/J_{\Gamma}$, and let
\[
T(A/J_{\Gamma})_\beta \coloneqq \Big\{ \tau \in T(A/J_{\Gamma}) : \sum_{n=0}^\infty F_\beta^n(\tau) \text{ converges in the weak-$*$ topology}\Big\}.
\]
The following result parametrises finite-type $\KMS$ states on $\Oo_{\Gamma}$.
\begin{prop}[{\cite[Lemma~3.4]{IKW07}}]\label{prop:IKW_seed_bijection}
Let $\beta > 0$. For each $\tau \in T(A/J_{\Gamma})_\beta$ let $\varphi_{\tau,\beta}$ be the $\KMS_\beta$ state on $\Oo_{\Gamma}$ satisfying $\varphi_{\tau,\beta}|_A = m_{\tau,\beta}\sum_{n=0}^\infty F_\beta^n(\tau)$, where $m_{\tau,\beta} \coloneqq \big(\sum_{n=0}^\infty F_\beta^n(\tau)(1_\AA)\big)^{-1}$ is a normalising constant. Then $\tau \mapsto \varphi_{\tau,\beta}$ is a bijection between $T(A/J_{\Gamma})_\beta$ and $\KMS_\beta^f(\Oo_{\Gamma})$.
This bijection restricts to a bijection between the extreme points $\operatorname{ex}(T(A/J_{\Gamma})_\beta)=T(A/J_{\Gamma})_{\beta} \cap \operatorname{ex}(T(A/J_{\Gamma}))$ and $\operatorname{ex}(\KMS_\beta^f(\Oo_{\Gamma}))$.
\end{prop}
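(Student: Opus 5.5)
The plan is to deduce the proposition from the Laca--Neshveyev machinery of Theorem~\ref{thm:LN_machine}, together with the decomposition \eqref{eq:infinite_finite_decomposition}. First we check that the map is well defined. Take $\tau \in T(A/J_\Gamma)_\beta$ and set $\nu \coloneqq \sum_{n\ge0}F_\beta^n(\tau)$; this is a weak-$*$ limit of positive functionals. Its total mass $\nu(1_\AA)$ is finite by assumption and at least $\tau(1_\AA)=1$, so $m_{\tau,\beta}$ is well defined and $\mu \coloneqq m_{\tau,\beta}\nu$ is a tracial state.

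The next step is the telescoping identity $F_\beta(\nu)=\nu-\tau$. Since $F$ is not weak-$*$ continuous (Example~\ref{ex:not_weak_star_continuous}), it cannot simply be pushed through the limit. Instead I would use the fact that $F_\beta(\sigma)(a)=\sigma(e^{-\beta}\Gamma^*(a))$ makes sense for Borel $a$. By monotone convergence of the increasing partial sums, evaluated on the bounded Borel function $\Gamma^*(a)$ for $a\ge0$, we get $F_\beta(\nu)(a)=\sum_{n\ge1}F_\beta^n(\tau)(a)=\nu(a)-\tau(a)$. Here each $F_\beta^n(\tau)$ is viewed as a measure, and the partial sums converge setwise because they are monotone with bounded total mass. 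Consequently $F_\beta(\mu)\le\mu$, and $\mu-F_\beta(\mu)=m_{\tau,\beta}\tau$ vanishes on $J_\Gamma$, which gives \eqref{eq:trace_on_ideal}. Theorem~\ref{thm:LN_machine} then yields a $\KMS_\beta$ state $\varphi_{\tau,\beta}$ on $\Oo_\Gamma$. It is of finite type with $\nu'=m_{\tau,\beta}\tau$, and it is unique because the correspondence in Theorem~\ref{thm:LN_machine} is bijective.

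For injectivity, suppose $\varphi_{\tau,\beta}=\varphi_{\tau',\beta}$. Then $\mu$ determines $\mu-F_\beta(\mu)=m_{\tau,\beta}\tau$, and normalising recovers $\tau$. For surjectivity, let $\varphi\in\KMS_\beta^f(\Oo_\Gamma)$ with $\mu=\varphi|_A=\sum_n F_\beta^n(\nu')$. The telescoping argument above shows $\mu-F_\beta(\mu)=\nu'$. The descent condition of Theorem~\ref{thm:LN_machine} forces $\nu'$ to vanish on $J_\Gamma$. Moreover $\nu'\neq0$, since otherwise $\mu=0$. Setting $\tau=\nu'/\nu'(1_\AA)$ gives $\tau\in T(A/J_\Gamma)_\beta$ with $\varphi=\varphi_{\tau,\beta}$, because the normalisations agree. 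This uses the uniqueness of the finite/infinite decomposition from \cite{LN04}, which guarantees that the seed is $\mu-F_\beta(\mu)$.

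For extreme points, the map $\tau\mapsto\sum_nF_\beta^n(\tau)$ is affine and positive, but normalisation makes $\tau\mapsto\varphi_{\tau,\beta}$ only projectively affine. I would therefore work with the cones. The map $\tau\mapsto\nu$ is an additive, positively homogeneous bijection between the cone generated by $T(A/J_\Gamma)_\beta$ and the cone of finite-type subinvariant traces with seed vanishing on $J_\Gamma$; its inverse is $\nu\mapsto\nu-F_\beta(\nu)$, which is also additive. Extreme points of a base of a cone correspond to extreme rays, and additive bijections of cones preserve extreme rays. This gives $\operatorname{ex}(T(A/J_\Gamma)_\beta)\leftrightarrow\operatorname{ex}(\KMS^f_\beta(\Oo_\Gamma))$.

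It remains to show $\operatorname{ex}(T(A/J_\Gamma)_\beta)=T(A/J_\Gamma)_\beta\cap\operatorname{ex}(T(A/J_\Gamma))$. The key observation is that $T(A/J_\Gamma)_\beta$ is a face of $T(A/J_\Gamma)$. If $\tau=t\tau_1+(1-t)\tau_2$ with $\tau\in T(A/J_\Gamma)_\beta$ and $0<t<1$, then $F_\beta^n(\tau_i)\le t^{-1}F_\beta^n(\tau)$ or $(1-t)^{-1}F_\beta^n(\tau)$, so positivity gives convergence of each series. I expect the main obstacle to be the telescoping step, that is, justifying the interchange of $F_\beta$ with the infinite sum despite the lack of weak-$*$ continuity. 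The monotone convergence argument on Borel functions is where the care is needed.
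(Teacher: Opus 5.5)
Your proposal is correct. The paper does not prove this statement itself; it imports it from \cite[Lemma~3.4]{IKW07}. Your argument is a faithful, self-contained reconstruction of that standard Laca--Neshveyev route: telescoping to identify the seed as $\mu - F_\beta(\mu)$, Theorem~\ref{thm:LN_machine} for existence, uniqueness and descent, the cone/extreme-ray argument, and the face property of $T(A/J_\Gamma)_\beta$.

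The step you flag as the main obstacle is easier than you expect. Write $\nu_N \coloneqq \sum_{n=0}^N F_\beta^n(\tau)$. A weak-$*$ convergent series of positive functionals on the unital algebra $A$ converges in norm, because $\nu-\nu_N \ge 0$ and so $\|\nu - \nu_N\| = (\nu-\nu_N)(1_\AA) \to 0$. Since $F_\beta$ is norm-bounded by Lemma~\ref{lem:PF_operator}, you get $F_\beta(\nu) = \lim_N F_\beta(\nu_N) = \lim_N (\nu_{N+1}-\tau) = \nu - \tau$ directly, with no appeal to Borel functions. Your setwise-convergence argument is also valid, just longer.
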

The bijection of Proposition~\ref{prop:IKW_seed_bijection} need not be affine because the normalising constant $m_{\tau,\beta}$ depends on $\tau$.

\section{Markov partitions}\label{sec:markov_partitions}
 To facilitate the analysis of $\KMS$ states on $\Oo_{\Gamma}$ we introduce the notion of a Markov partition for an iterated function system. A Markov partition is a Borel partition of $\AA$ that is compatible with the dynamics of $\Gamma$. We use this compatibility to attain better control over the Perron--Frobenius operator.
\subsection{The fundamental partition} We first define the fundamental partition: the coarsest partition of $\AA$ such that, on each of its sets, any two maps $\gamma, \gamma' \in \Gamma$ either agree at every point or agree at no point.

Fix an iterated function system $(\AA,\Gamma)$. 
Let $\Pp$ denote the collection of partitions of $\Gamma$. Recall that if $P_1,P_2 \in \Pp$ then $P_1$ is a refinement of $P_2$ if for every $\Lambda_1 \in P_1$ there exists $\Lambda_2 \in P_2$ such that $\Lambda_1 \subseteq \Lambda_2$. Refinement induces a partial order on $\Pp$ by $P_1 \le P_2$ if $P_2$ refines $P_1$. With this ordering $\Pp$ is a lattice with maximal element $\{\{\gamma\} \mid \gamma \in \Gamma \}$ and minimal element $\{\Gamma\}$. 

We introduce the following strata of $\AA$ associated to each partition of $\Gamma$.

\begin{dfn}\label{dfn:strata}
For each $P \in \Pp$ define
\begin{align*}
S_P 
&\coloneqq \{ x \in \AA \mid \gamma(x) = \gamma'(x) \text{ for all } \gamma,\gamma' \in \Lambda \text{ and } \Lambda \in P \} \quad \text{and}\\
\AA_{P} &\coloneqq \{x \in \AA \mid \gamma(x) = \gamma'(x) \text{ if and only if } \gamma,\gamma' \in \Lambda
\text{ for some } \Lambda \in P\}.
\end{align*}
\end{dfn}

For $x \in \AA$ let $P_x \in \Pp$ denote the partition of $\Gamma$ into the classes
of the equivalence relation defined by $\gamma \sim_x \gamma'$ if and only if
$\gamma(x) = \gamma'(x)$. Then $x \in \AA_P$ if and only if $P_x = P$, and $x \in S_P$
if and only if every $\Lambda \in P$ is contained in a single $\sim_x$-class, that is,
if and only if $P_x \le P$. Hence,
\begin{equation}\label{eq:strata_via_Px}
\AA_P = \{x \in \AA \mid P_x = P\} \quad \text{and} \quad
S_P = \{x \in \AA \mid P_x \le P\}.
\end{equation}
In particular, the sets $\AA_P$ for $P \in \Pp$ are pairwise disjoint with union $\AA$.

\begin{lem}\label{lem:S_P}
Let $P \in \Pp$. Then
\begin{enumerate}[label=(\roman{enumi})]
\item\label{item:S_P} $S_P = \bigcap_{\Lambda \in P} p_2(G_\Lambda)$ and $S_P$ is compact;
\item\label{item:S_P_union} $S_P = \bigsqcup_{P' \le P} \AA_{P'}$, so if $P_1 \le P_2$, then $S_{P_1} \subseteq S_{P_2}$; and
\item\label{item:AA_P} $\AA_P = S_P \setminus \bigcup_{P' < P} S_{P'}$.
\end{enumerate}
\end{lem}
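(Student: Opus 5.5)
The plan is to work throughout with the characterisation \eqref{eq:strata_via_Px}, namely $\AA_P = \{x : P_x = P\}$ and $S_P = \{x : P_x \le P\}$, and to treat the three items in the order \ref{item:S_P}, \ref{item:S_P_union}, \ref{item:AA_P}. The only step with topological content is compactness in \ref{item:S_P}; the rest is bookkeeping in the lattice $\Pp$.

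For \ref{item:S_P}, I unpack the definition of $G_\Lambda$ from \eqref{eq:G_Lambda}. A point $y$ lies in $p_2(G_\Lambda)$ exactly when there is some $x$ with $x = \gamma(y)$ for every $\gamma \in \Lambda$, that is, when all maps in $\Lambda$ agree at $y$. Intersecting over $\Lambda \in P$ gives exactly the defining condition of $S_P$. For compactness, each $G_\Lambda$ is an intersection of the compact sets $\gr(\gamma)$, so it is compact. Since $p_2$ is continuous and $\AA$ is Hausdorff, $p_2(G_\Lambda)$ is compact, and hence closed. A finite intersection of closed subsets of the compact space $\AA$ is compact.

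Alternatively, one could observe directly that $\{y : \gamma(y) = \gamma'(y)\}$ is closed, being the preimage of the diagonal under $(\gamma,\gamma')$. I would mention this as the cleaner route to closedness.

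For \ref{item:S_P_union}, \eqref{eq:strata_via_Px} gives $S_P = \{x : P_x \le P\} = \bigcup_{P' \le P}\{x : P_x = P'\} = \bigcup_{P'\le P}\AA_{P'}$. The union is disjoint because the sets $\AA_{P'}$ are pairwise disjoint. Monotonicity then follows from transitivity of $\le$: if $P_1 \le P_2$ and $P' \le P_1$, then $P' \le P_2$.

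For \ref{item:AA_P}, take $x \in \AA_P$, so $P_x = P$. Then $x \in S_P$. If $x \in S_{P'}$ for some $P' < P$, we would have $P = P_x \le P' < P$, which is a contradiction. Conversely, suppose $x \in S_P$ and $x \notin S_{P'}$ for every $P' < P$. Then $P_x \le P$, and $P_x$ cannot be strictly below $P$ (otherwise taking $P' = P_x$ puts $x \in S_{P_x}$), so $P_x = P$ by antisymmetry of the partial order. None of these steps should be an obstacle; the only care needed is keeping the direction of the order $\le$ consistent with the paper's convention that $P_1 \le P_2$ means $P_2$ refines $P_1$.
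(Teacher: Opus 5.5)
Your proposal is correct and follows essentially the same route as the paper. You unpack $G_\Lambda$ to identify $p_2(G_\Lambda)$, get compactness from continuity of $p_2$ on the compact set $G_\Lambda$, and read off \ref{item:S_P_union} and \ref{item:AA_P} from \eqref{eq:strata_via_Px}; the only difference is that you write out the order-theoretic verification of \ref{item:AA_P} that the paper leaves to the reader.
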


\begin{proof}
For $\Lambda \subseteq \Gamma$ we have $(x,y) \in G_\Lambda$ if and only if $x = \gamma(y)$ for every $\gamma \in \Lambda$. Hence $p_2(G_\Lambda) = \{y \in \AA \mid \gamma(y) = \gamma'(y) \text{ for all } \gamma,\gamma' \in \Lambda\}$, and intersecting over $\Lambda \in P$ gives the description of $S_P$ in \ref{item:S_P}. Since $G_\Lambda$ is a closed subset of the compact space $\gr(\Gamma)$ and $p_2$ is continuous, each $p_2(G_\Lambda)$ is compact, so $S_P$ is compact.
Both \ref{item:S_P_union} and \ref{item:AA_P} follow from \eqref{eq:strata_via_Px}.
\end{proof}

\begin{prop}\label{prop:fundamental_partition}
The collection $\Aa \coloneqq \{\AA_{P} \mid P \in \Pp \text{ such that } \AA_{P} \ne \varnothing\}$
is a Borel partition of $\AA$.
\end{prop}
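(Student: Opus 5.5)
The claim has two parts: the nonempty sets $\AA_P$ partition $\AA$, and each $\AA_P$ is Borel. The partition part comes straight from \eqref{eq:strata_via_Px}. Each $x \in \AA$ determines a unique partition $P_x \in \Pp$, and $x \in \AA_P$ exactly when $P_x = P$. So the sets $\AA_P$, $P \in \Pp$, are pairwise disjoint and cover $\AA$. Discarding the empty ones leaves a family of nonempty, pairwise disjoint sets whose union is still $\AA$. Since $\Gamma$ is finite, $\Pp$ is finite, so $\Aa$ is a finite collection.

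For measurability, first note that each $S_P$ is compact by Lemma~\ref{lem:S_P}\ref{item:S_P}. As $\AA$ is Hausdorff, compact subsets are closed, so each $S_P$ is closed and hence Borel. Lemma~\ref{lem:S_P}\ref{item:AA_P} then gives
\[
\AA_P = S_P \setminus \bigcup_{P' < P} S_{P'}.
\]
The union on the right is finite because $\Pp$ is finite, so it is a closed set. Therefore $\AA_P$ is the difference of two closed sets; it is locally closed and in particular Borel.

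No step here is genuinely hard. The only care needed is to cite the compactness of $S_P$ from Lemma~\ref{lem:S_P}, which in turn rests on $p_2(G_\Lambda)$ being a continuous image of a compact set, and to observe that the finiteness of $\Pp$ keeps all the unions closed.
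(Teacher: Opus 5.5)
Your proof is correct and follows essentially the same route as the paper: the partition property comes from \eqref{eq:strata_via_Px}, and Borel measurability follows from the compactness of $S_P$ in Lemma~\ref{lem:S_P}\ref{item:S_P} together with the formula $\AA_P = S_P \setminus \bigcup_{P' < P} S_{P'}$ from Lemma~\ref{lem:S_P}\ref{item:AA_P}. Your remark that $\AA_P$ is locally closed is the same observation the paper uses later, in the base case of Lemma~\ref{lem:markov_locally_closed}.
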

\begin{proof}
The description \eqref{eq:strata_via_Px} implies $\Aa$ is a partition of $\AA$. Each $S_P$ is compact by Lemma~\ref{lem:S_P}\ref{item:S_P}, hence Borel, so
Lemma~\ref{lem:S_P}\ref{item:AA_P} shows that each $\AA_P$ is Borel.
\end{proof}

\begin{dfn} \label{dfn:fundamental_partition}
We call the partition $\Aa$ of Proposition~\ref{prop:fundamental_partition} the \emph{fundamental partition of $(\AA,\Gamma)$}. 
\end{dfn}

\subsection{Markov partitions} The following definition of a Markov partition is inspired by Markov partitions for single-map dynamical systems (cf.~\cite[\S~6.5]{LM21}). They allow us to approximate the dynamics of an iterated function system with a symbolic system. 
\begin{dfn}\label{dfn:markov_partition}
	A \emph{Markov partition} of $(\AA,\Gamma)$ is a Borel partition $\Mm = \{M_i\}_{i \in I}$ of $\AA$ such that
	\begin{enumerate}[label=(M\arabic{enumi})]
	 \item\label{itm:E_refines_A} $\Mm$ refines $\Aa$, and 
	 \item\label{itm:markov_property} for each $\gamma \in \Gamma$ and $i \in I$ there exists a necessarily unique $j \in I$ such that $\gamma(M_i) \subseteq M_j$.
	 \end{enumerate} 
\end{dfn}

Condition \ref{itm:E_refines_A} ensures that for each $M_i \in \Mm$ there exists $P \in \Pp$ such that $M_i \subseteq \AA_P$. Together, \ref{itm:E_refines_A} and \ref{itm:markov_property} ensure the Markov property. Specifically, let $(\gamma_1, \ldots, \gamma_{n+1})$ be a sequence in $\Gamma$ and let $x \in \AA$. If $\gamma_n \circ \cdots \circ \gamma_1 (x) \in M_{i_n}$ for some $i_n \in I$, then $M_{i_{n+1}}$---the unique element of $\Mm$ containing $\gamma_{n+1} \circ \cdots \circ \gamma_1(x)$---is determined by $i_n$ and $\gamma_{n+1}$ alone. 
Unlike the classical notion, we do not require the $M_i$ to have nonempty interior; such sets play an important role below.

 Trivially, the collection of singletons $\{\{x\} \}_{x \in \AA}$ is a Markov partition of $(\AA,\Gamma)$, but in practice this is too fine. It is more useful to consider Markov partitions that are as coarse as possible. 
\begin{dfn}
A Markov partition $\Mm$ of $(\AA,\Gamma)$ is said to be \emph{minimal} if it is not a strict refinement of another Markov partition. 
\end{dfn}

In Theorem~\ref{thm:finite_iff_Theta_const} below we show that $(\AA,\Gamma)$ always admits a unique minimal Markov partition, and that every Markov partition refines it. 

\begin{example}\label{ex:clarence1}
Consider the contractive iterated function system $([0,1],\Gamma = \{\gamma_1,\gamma_2,\gamma_3\})$ of Example~\ref{ex:clarence}.
The fundamental partition of $([0,1],\Gamma)$ is $\Aa = \{ [0,1/2), [1/2,1), \{1\}\}$. One can check that the refinement $\Mm \coloneqq \{ \{0\}, (0,1/2), [1/2,1), \{1\}\}$ is a Markov partition of $([0,1],\Gamma)$. In fact, $\Mm$ is the unique minimal Markov partition of $([0,1],\Gamma)$ as shown in Example~\ref{ex:clarence_minimal} below.
\end{example}

\begin{example}\label{ex:Markov_partition_trivial}
Suppose that $(\AA,\Gamma)$ is an iterated function system such that $B_{\Gamma}= \varnothing$. Then $\partial G_\Lambda = \varnothing$ for all $\Lambda \subseteq \Gamma$, so each $G_{\Lambda}$ is clopen in $\gr(\Gamma)$. By Lemmas~\ref{lem:p2_open} and~\ref{lem:S_P}\ref{item:S_P}, the sets $S_P$ are clopen for each $P \in \Pp$, so $\AA_P = S_P \setminus \bigcup_{P' < P} S_{P'}$ is clopen for each $P \in \Pp$ by Lemma~\ref{lem:S_P}\ref{item:AA_P}.

Consequently, if $\AA$ is connected, then $\Aa = \{\AA\}$ is the unique minimal Markov partition of $(\AA,\Gamma)$. If $\AA$ is disconnected (as in Example~\ref{ex:cantor_not_hutchinson} below) then the minimal Markov partition may contain more sets. 
\end{example} 

We record some fundamental properties of Markov partitions. 
The following is an immediate consequence of \ref{itm:E_refines_A}, \ref{itm:markov_property}, and the definition of $\AA_P$. 
\begin{lem}\label{lem:markov_unique_image}
Let $\Mm = \{M_i\}_{i \in I}$ be a Markov partition of $(\AA,\Gamma)$ and for each $i \in I$ let $P \in \Pp$ be such that $M_i \subseteq \AA_P$. Then for each $\Lambda \in P$ there is a unique $j \in I$ such that $\gamma(M_i) \subseteq M_j$ for all $\gamma \in \Lambda$. 
\end{lem}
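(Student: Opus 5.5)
Fix $i \in I$ and $P \in \Pp$ with $M_i \subseteq \AA_P$, which is possible by \ref{itm:E_refines_A}, and fix $\Lambda \in P$. Uniqueness of $j$ is immediate. Since $\Mm$ is a partition, its members are pairwise disjoint. If $M_i \ne \varnothing$, then $\gamma(M_i)$ is a nonempty set and can lie in at most one member of $\Mm$. (For $M_i = \varnothing$ the statement is vacuous; partitions are taken to consist of nonempty sets, so this case does not arise.) The substance is therefore existence: we must show that every $\gamma \in \Lambda$ sends $M_i$ into the \emph{same} $M_j$.

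By \ref{itm:markov_property}, each $\gamma \in \Lambda$ has a unique index $j_\gamma \in I$ with $\gamma(M_i) \subseteq M_{j_\gamma}$. Take $\gamma, \gamma' \in \Lambda$ and pick any point $x \in M_i$. Since $x \in \AA_P$, the definition of $\AA_P$ (equivalently $P_x = P$ by \eqref{eq:strata_via_Px}) gives $\gamma(x) = \gamma'(x)$, because $\gamma$ and $\gamma'$ lie in the common block $\Lambda$ of $P$. So this single point lies in both $M_{j_\gamma}$ and $M_{j_{\gamma'}}$. Disjointness of the partition then forces $j_\gamma = j_{\gamma'}$. Taking $j$ to be this common value gives $\gamma(M_i) \subseteq M_j$ for all $\gamma \in \Lambda$, and $j$ is unique by the first paragraph.

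There is no real obstacle here. The only point needing care is that a single witness $x \in M_i$ is enough: the coincidence $\gamma(x) = \gamma'(x)$ holds at every point of $\AA_P$, and \ref{itm:markov_property} already guarantees that each image lies in a single block.
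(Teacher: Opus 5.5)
Your proof is correct and is the argument the paper intends: the paper gives no written proof, only noting that the lemma is an immediate consequence of \ref{itm:E_refines_A}, \ref{itm:markov_property} and the definition of $\AA_P$. Your combination of \ref{itm:markov_property}, the coincidence $\gamma(x)=\gamma'(x)$ at a point $x \in M_i \subseteq \AA_P$, and disjointness of the (nonempty) members of $\Mm$ is exactly that argument.
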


The most important property of a Markov partition is that the number of paths of a given length issuing from a point is constant on each set.

\begin{lem}\label{lem:markov_same_paths}
 Let $\Mm = \{M_i\}_{i \in I}$ be a Markov partition of $(\AA,\Gamma)$. Then for all $i \in I$ and $x,y \in M_i$ we have $|\Gamma^{(n)} x| = |\Gamma^{(n)} y|$ for all $n \in \NN$.
\end{lem}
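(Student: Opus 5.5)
We argue by induction on $n$, after reducing $|\Gamma^{(n)}x|$ to a recursion over the first step. From Definition~\ref{dfn:n_step_forward_orbit}, a path of length $n+1$ ending at $x$ is obtained uniquely by choosing $x' \in \Gamma x$ and then a path of length $n$ ending at $x'$. Hence
\[
|\Gamma^{(n+1)}x| = \sum_{x' \in \Gamma x} |\Gamma^{(n)}x'|,
\]
which is \eqref{eq:n_step_dual} with $a = 1_\AA$. The base case $n=0$ is trivial, since $|\Gamma^{(0)}x| = 1$. For $n = 1$ we have $|\Gamma^{(1)}x| = |\Gamma x| = |P_x|$, the number of classes of $\sim_x$. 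By \ref{itm:E_refines_A}, there is $P \in \Pp$ with $M_i \subseteq \AA_P$, so $P_x = P_y = P$ and $|\Gamma x| = |P| = |\Gamma y|$.

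For the inductive step, assume that for every $j \in I$ the quantity $|\Gamma^{(n)}z|$ is constant for $z \in M_j$; write $c_n(j)$ for this value. Fix $x, y \in M_i$ and let $P$ be as above. The points of $\Gamma x$ are in bijection with the blocks $\Lambda \in P$ via $\Lambda \mapsto \gamma(x)$ for any $\gamma \in \Lambda$. This map is well defined because all $\gamma \in \Lambda$ agree at $x$, and it is injective because maps in distinct blocks disagree at $x$, since $P_x = P$. The same holds for $y$. By Lemma~\ref{lem:markov_unique_image}, each block $\Lambda$ determines a unique $j(\Lambda) \in I$ with $\gamma(M_i) \subseteq M_{j(\Lambda)}$ for all $\gamma \in \Lambda$. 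Consequently
\[
|\Gamma^{(n+1)}x| = \sum_{\Lambda \in P} c_n(j(\Lambda)) = |\Gamma^{(n+1)}y|,
\]
which completes the induction.

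The only point requiring care is the bijection between $\Gamma x$ and the blocks of $P$. This is exactly why \ref{itm:E_refines_A} is needed and not just \ref{itm:markov_property}: without it, the coincidence pattern of the maps could change within $M_i$, and the number of distinct images could differ between $x$ and $y$.
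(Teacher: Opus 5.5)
Your proof is correct and follows the same route as the paper. Both arguments use \ref{itm:E_refines_A} to get $|\Gamma x| = |P| = |\Gamma y|$, then use Lemma~\ref{lem:markov_unique_image} to send each block $\Lambda \in P$ into a single $M_{j_\Lambda}$, and then induct; the only difference is that you write out explicitly, via the recursion $|\Gamma^{(n+1)}x| = \sum_{x' \in \Gamma x} |\Gamma^{(n)}x'|$ and an inductive hypothesis quantified over all $j \in I$, the step the paper leaves as ``it follows inductively.''
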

\begin{proof}
Fix $i \in I$ and $x,y \in M_i$. By \ref{itm:E_refines_A} there exists $P \in \Pp$ such that $M_i \subseteq \AA_P$. Then 
\[
|\Gamma x| = |\{\gamma(x) \colon \gamma \in \Gamma\}| = |P| = |\Gamma y|.
\]
 We have $\gamma(x) = \gamma'(x)$ and $\gamma(y) = \gamma'(y)$ for all $\Lambda \in P$ and $\gamma, \gamma' \in \Lambda$. Hence, Lemma~\ref{lem:markov_unique_image} implies that for each $\Lambda \in P$ there is a unique $j_{\Lambda} \in I$ such that $\gamma(x), \gamma(y) \in M_{j_{\Lambda}}$ for all $\gamma \in \Lambda$. It follows inductively that $|\Gamma^{(n)} x| = |\Gamma^{(n)} y|$ for all $n \in \NN$.
\end{proof}
With Lemma~\ref{lem:markov_same_paths} in mind we abuse cardinality notation and define, for each $n \in \NN$,
\[
|\Gamma^{(n)}M_i| \coloneqq |\Gamma^{(n)}x|, \quad \text{ for some } x \in M_i.
\]

\subsection{Constructing Markov partitions}\label{subsec:constructing_markov_partitions}

Denote by $\Rfr_{\Aa}$ the collection of Borel partitions of $\AA$ refining $\Aa$.
For $\Mm \in \Rfr_{\Aa}$ we outline a process to construct a new partition $\Theta(\Mm) \in \Rfr_{\Aa}$ refining $\Mm$. In Theorem~\ref{thm:finite_iff_Theta_const} below, we show that if repeating this process stabilises, it results in a Markov partition.

Fix $\Mm = \{M_i\}_{i \in I} \in \Rfr_{\Aa}$.
For each $P \in \Pp$, $\Lambda \in P$, and $S \subseteq \AA_{P}$ let 
\[
\Theta_{P,\Lambda}(S) \coloneqq \{ S \cap \gamma^{-1}(M_j) \mid j \in I \text{ and } \gamma(S) \cap M_j \ne \varnothing \},
\]
where $\gamma$ is chosen to be any map belonging to $\Lambda$ (any two $\gamma,\gamma' \in \Lambda$ agree on $\AA_P \supseteq S$, so the definition does not depend on the choice). Since $\Mm$ is a partition of $\AA$, and intersections and unions distribute over preimages, $\Theta_{P,\Lambda}(S)$ is a partition of $S$. If $\gamma(S)$ meets a unique $M_j$, then $\gamma(S)\subseteq M_j$, and so $\Theta_{P,\Lambda}(S) = \{S\}$.

The construction of $\Theta_{P,\Lambda}(S)$ extends to partitions. Indeed, if $\Nn = \{N_k\}_{k \in K}$ is a partition of $S \subseteq \AA_{P}$, then
\[
\Theta_{P,\Lambda}(\Nn) \coloneqq \bigcup_{k \in K} \Theta_{P,\Lambda}(N_k)
\]
is also a partition of $S$ that refines $\Nn$. Observe that if $\Ll \ge \Nn$, then $\Theta_{P,\Lambda}(\Ll) \ge \Theta_{P,\Lambda}(\Nn)$.

\begin{lem}\label{lem:Theta_PL_commute} Fix $P \in \Pp$ and $\Lambda, \Lambda' \in P$.
If $\Nn$ is a partition of $S \subseteq \AA_P$, then $\Theta_{P,\Lambda} \circ \Theta_{P,\Lambda'}(\Nn) = \Theta_{P,\Lambda'} \circ \Theta_{P,\Lambda}(\Nn)$, and this partition is the common refinement of $\Theta_{P,\Lambda}(\Nn)$ and $\Theta_{P,\Lambda'}(\Nn)$.
\end{lem}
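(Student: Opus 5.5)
We compute both sides block by block and show that each equals the partition of each $N_k$ cut out by the sets $\gamma^{-1}(M_j)\cap\gamma'^{-1}(M_{j'})$. Fix representatives $\gamma\in\Lambda$ and $\gamma'\in\Lambda'$. Because $\Theta_{P,\Lambda}$ and $\Theta_{P,\Lambda'}$ act blockwise on a partition of $S$, it is enough to treat a single block $N=N_k$. The claimed identities for $\Nn$ then follow by taking unions over $k\in K$.

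First compute $\Theta_{P,\Lambda}\circ\Theta_{P,\Lambda'}(N)$. By definition, $\Theta_{P,\Lambda'}(N)$ consists of the nonempty sets $N\cap\gamma'^{-1}(M_{j'})$ for $j'\in I$. The condition $\gamma'(N)\cap M_{j'}\neq\varnothing$ says exactly that $N\cap\gamma'^{-1}(M_{j'})\neq\varnothing$. Each of these sets is again a subset of $\AA_P$. Applying $\Theta_{P,\Lambda}$ to such a block $T=N\cap\gamma'^{-1}(M_{j'})$ gives the nonempty sets $T\cap\gamma^{-1}(M_j)$. Hence
\[
\Theta_{P,\Lambda}\circ\Theta_{P,\Lambda'}(N)=\{N\cap\gamma^{-1}(M_j)\cap\gamma'^{-1}(M_{j'})\mid j,j'\in I\}\setminus\{\varnothing\}.
\]
This description is symmetric in $(\Lambda,j)$ and $(\Lambda',j')$, because intersection is commutative. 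The same computation with the roles swapped therefore produces the same collection, and so the two composites agree.

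Next, the common refinement of the partitions $\{N\cap\gamma^{-1}(M_j)\}$ and $\{N\cap\gamma'^{-1}(M_{j'})\}$ of $N$ is by definition the collection of nonempty pairwise intersections of their blocks. That collection is exactly the displayed set. Taking unions over the blocks $N_k$ then gives the statement for $\Nn$. Here one must check that the common refinement of two partitions of $S$, each refining $\Nn$, is computed blockwise on $\Nn$. This holds because any block of either partition lies inside a single $N_k$, so blocks lying in different $N_k$ have empty intersection.

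There is no real obstacle; the only points needing care are bookkeeping. The nonemptiness condition in $\Theta_{P,\Lambda}$ should be read as removing empty intersections. The choice of representatives $\gamma\in\Lambda$ and $\gamma'\in\Lambda'$ is harmless, because every intermediate block is still a subset of $\AA_P$, so the well-definedness remark preceding the lemma applies at each stage.
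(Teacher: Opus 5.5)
Your proposal is correct and follows essentially the same route as the paper. Both reduce to a single block, identify each composite with the nonempty sets $N\cap\gamma^{-1}(M_j)\cap(\gamma')^{-1}(M_{j'})$, and observe that this collection is symmetric in $(\Lambda,\Lambda')$ and is exactly the common refinement. Your extra check that the common refinement of two refinements of $\Nn$ is computed blockwise makes explicit what the paper's ``it suffices to verify the claim for $\Nn=\{S\}$'' leaves implicit.
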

\begin{proof}
It suffices to verify the claim for $\Nn = \{S\}$. Choose $\gamma \in \Lambda$ and $\gamma' \in \Lambda'$. The elements of $\Theta_{P,\Lambda} \circ \Theta_{P,\Lambda'}(\{S\})$ are the nonempty sets of the form $S \cap (\gamma')^{-1}(M_j) \cap \gamma^{-1}(M_k)$ for $j,k \in I$. This collection is symmetric in $(\Lambda, \Lambda')$ and is precisely the common refinement of $\Theta_{P,\Lambda}(\{S\})$ and $\Theta_{P,\Lambda'}(\{S\})$.
\end{proof}

\begin{dfn}\label{dfn:Theta}
Suppose that $\Mm = \{M_i\}_{i \in I} \in \Rfr_{\Aa}$, and for each $P \in \Pp$ enumerate $P = \{ \Lambda_1,\ldots,\Lambda_{|P|}\}$. For each $i \in I$ let $P_i \in \Pp$ be the unique partition such that $M_i \subseteq \AA_{P_i}$. Define
\begin{equation}\label{eq:Theta}
\Theta(\Mm) \coloneqq \bigcup_{i \in I} \Theta_{P_i, \Lambda_{|P_i|}} \circ \cdots \circ \Theta_{P_i,\Lambda_1} (M_i).
\end{equation}
\end{dfn}
By Lemma~\ref{lem:Theta_PL_commute}, the composition $\Theta_{P_i,\Lambda_{|P_i|}} \circ \cdots \circ \Theta_{P_i,\Lambda_1}(M_i)$ is the common refinement of the partitions $(\Theta_{P_i,\Lambda}(M_i))_{\Lambda \in P_i}$, so $\Theta(\Mm)$ is independent of the choice of enumerations of the $P_i$.
The idea is that $\Theta(\Mm)$ separates, within each $M_i$, points that some $\Lambda\in P_i$ sends into different elements of $\Mm$, bringing the partition closer to satisfying the Markov property.

\begin{lem}\label{lem:Theta}
The formula \eqref{eq:Theta} defines a map $\Theta \colon \Rfr_{\Aa} \to \Rfr_{\Aa}$ such that
\begin{enumerate}[label=(\roman{enumi})]
\item\label{itm:theta_refines} $\Theta(\Mm) \ge \Mm$ for each $\Mm \in \Rfr_{\Aa}$; and
\item\label{itm:theta_order} if $\Nn \ge \Mm \in \Rfr_{\Aa}$, then $\Theta(\Nn) \ge \Theta(\Mm)$. 
\end{enumerate}
Moreover, a partition $\Mm \in \Rfr_{\Aa}$ is Markov if and only if $\Theta(\Mm) = \Mm$. 

\end{lem}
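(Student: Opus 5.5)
We first check that $\Theta$ is well defined as a map $\Rfr_{\Aa} \to \Rfr_{\Aa}$.

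\textbf{Well-definedness.} Fix $\Mm = \{M_i\}_{i\in I} \in \Rfr_\Aa$. For each $i$, the set $M_i$ is Borel and lies in $\AA_{P_i}$. Each iterated $\Theta_{P_i,\Lambda}$ produces pieces of the form
\[
M_i \cap \gamma_1^{-1}(M_{j_1}) \cap \cdots \cap \gamma_{|P_i|}^{-1}(M_{j_{|P_i|}}),
\]
with $\gamma_k \in \Lambda_k$. These are Borel, because each $\gamma$ is continuous and hence Borel measurable. By the remarks preceding Lemma~\ref{lem:Theta_PL_commute}, each $\Theta_{P,\Lambda}$ turns a partition of $S$ into a partition of $S$ refining it. Hence $\Theta(\Mm)$ is the union over $i$ of Borel partitions of the $M_i$, and so it is a Borel partition of $\AA$ refining $\Mm$.

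\textbf{Property~\ref{itm:theta_refines}.} This is exactly what the previous step shows. Since $\Mm$ refines $\Aa$ and $\Theta(\Mm)$ refines $\Mm$, we also get $\Theta(\Mm) \in \Rfr_\Aa$.

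\textbf{Monotonicity~\ref{itm:theta_order}.} Suppose $\Nn = \{N_k\}_{k \in K} \ge \Mm$. Each $N_k$ lies in some $M_i$ and therefore in the same stratum $\AA_{P_i}$. A piece of $\Theta(\Nn)$ has the form $N_k \cap \bigcap_{\Lambda \in P_i} \gamma_\Lambda^{-1}(N_{l_\Lambda})$. Each $N_{l_\Lambda}$ lies in some $M_{j_\Lambda}$, so this piece is contained in $M_i \cap \bigcap_{\Lambda} \gamma_\Lambda^{-1}(M_{j_\Lambda})$, which is a piece of $\Theta(\Mm)$ (it is nonempty because it contains the nonempty smaller set). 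So the piecewise description makes monotonicity a direct containment check. This is the step needing the most bookkeeping, since both the partition being refined and the target partition change at once. One could alternatively phrase it via the monotonicity of $\Theta_{P,\Lambda}$ in its argument noted earlier together with monotonicity in the target, but the explicit description avoids this.

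\textbf{Markov characterisation.} First assume $\Mm$ is Markov. For every $i$ and every $\gamma \in \Gamma$ we have $\gamma(M_i) \subseteq M_j$ for a single $j$. Then, as remarked after the definition of $\Theta_{P,\Lambda}$, $\Theta_{P_i,\Lambda}(M_i) = \{M_i\}$ for every $\Lambda \in P_i$. Composing these gives $\{M_i\}$, so $\Theta(\Mm) = \Mm$.

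Conversely, assume $\Theta(\Mm) = \Mm$. Since $\Theta(\Mm)$ refines $\Mm$ piece by piece on each $M_i$, equality forces the partition of $M_i$ produced by $\Theta$ to be $\{M_i\}$ for each $i$. By Lemma~\ref{lem:Theta_PL_commute} this partition is the common refinement of the $\Theta_{P_i,\Lambda}(M_i)$, so each $\Theta_{P_i,\Lambda}(M_i) = \{M_i\}$. This means $\gamma(M_i)$ meets only one $M_j$ for $\gamma \in \Lambda$, so $\gamma(M_i) \subseteq M_j$. Every $\gamma \in \Gamma$ lies in some $\Lambda \in P_i$, which gives~\ref{itm:markov_property}. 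Condition~\ref{itm:E_refines_A} holds because $\Mm \in \Rfr_\Aa$. Uniqueness of $j$ holds since $M_i \neq \varnothing$ and $\Mm$ is a partition.
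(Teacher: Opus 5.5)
Your proof is correct and follows essentially the same route as the paper. You get Borel measurability and refinement from the piecewise description. You get monotonicity from the explicit containment $N_k \cap \bigcap_{\Lambda} \gamma_\Lambda^{-1}(N_{l_\Lambda}) \subseteq M_i \cap \bigcap_{\Lambda} \gamma_\Lambda^{-1}(M_{j_\Lambda})$, using Lemma~\ref{lem:Theta_PL_commute}. For the converse Markov direction you argue, as the paper does, that a trivial common refinement forces each $\Theta_{P_i,\Lambda}(M_i) = \{M_i\}$.
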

\begin{proof}

Let $\Mm = \{M_i\}_{i \in I} \in \Rfr_{\Aa}$. By the discussion preceding Lemma~\ref{lem:Theta_PL_commute} the set $\Theta_{P_i, \Lambda_{|P_i|}} \circ \cdots \circ \Theta_{P_i,\Lambda_1} (M_i)$ is a partition of $M_i$. Moreover, the partition is  Borel as $M_i$ is Borel. Since $\Mm$ partitions $\AA$, $\Theta(\Mm)$ is a Borel partition of $\AA$ refining $\Mm$.

Suppose that $\Nn = \{N_k\}_{k \in K} \in \Rfr_{\Aa}$ refines $\Mm$. For each $k \in K$ let $i(k) \in I$ be the unique index such that $N_k \subseteq M_{i(k)}$. Since $\Nn$ refines $\Mm$, for each $k \in K$, $N_k \subseteq \AA_{P_{i(k)}}$. 
Fix $N \in \Theta(\Nn)$, choose $k \in K$ such that $N \subseteq N_k$, and for each $\Lambda \in P_{i(k)}$ fix $\gamma_{\Lambda} \in \Lambda$. By Lemma~\ref{lem:Theta_PL_commute} the partition of $N_k$ contributing to $\Theta(\Nn)$ in \eqref{eq:Theta} is the common refinement of the partitions $\Theta_{P_{i(k)},\Lambda}(N_k)$ for $\Lambda \in P_{i(k)}$, so there exist $l_{\Lambda} \in K$ such that
\[
N = N_k \cap \bigcap_{\Lambda \in P_{i(k)}} \gamma_{\Lambda}^{-1}(N_{l_{\Lambda}}).
\]
Since $N_k \subseteq M_{i(k)}$, and $N_{l_{\Lambda}} \subseteq M_{i(l_{\Lambda})}$ for each $\Lambda \in P_{i(k)}$, we have
\[
\varnothing \ne N \subseteq M_{i(k)} \cap \bigcap_{\Lambda \in P_{i(k)}} \gamma_{\Lambda}^{-1}(M_{i(l_{\Lambda})}),
\]
and the right-hand side is an element of $\Theta(\Mm)$, again by Lemma~\ref{lem:Theta_PL_commute} and \eqref{eq:Theta}. Hence, every element of $\Theta(\Nn)$ is contained in an element of $\Theta(\Mm)$; that is, $\Theta(\Nn) \ge \Theta(\Mm)$.

For the final statement, fix a partition $\Mm = \{M_i\}_{i \in I} \in \Rfr_{\Aa}$.	
If $\Mm$ is a Markov partition, then for each $i \in I$ and $\Lambda \in P_i$ we have $\Theta_{P_i,\Lambda}(M_i) = \{M_i\}$, so $\Theta(\Mm) = \Mm$. 
For the converse, suppose that $\Theta(\Mm) = \Mm$. Since $\Mm$ is a partition and $\Theta_{P_i, \Lambda_{|P_i|}} \circ \cdots \circ \Theta_{P_i,\Lambda_1} (M_i)$ partitions $M_i$, for each $M_i \in \Mm$ we must have $\Theta_{P_i, \Lambda_{|P_i|}} \circ \cdots \circ \Theta_{P_i,\Lambda_1} (M_i) = \{M_i\}$. This can only occur if $\Theta_{P_i,\Lambda}(M_i) = \{M_i\}$ for all $\Lambda \in P_i$. That is, for each $\Lambda \in P_i$ there is a unique $j \in I$ such that $\gamma(M_i) \subseteq M_j$ for all $\gamma \in \Lambda$, so \ref{itm:markov_property} is satisfied. 
\end{proof}

 By Lemma~\ref{lem:Theta}\ref{itm:theta_refines}, the partitions $\Theta^k(\Aa)$ form an increasing sequence of refinements of $\Aa$, and any fixed point of this sequence is a Markov partition by Lemma~\ref{lem:Theta}. In general, the sequence $(\Theta^k(\Aa))_{k \ge 0}$ need not stabilise, but it always determines the unique minimal Markov partition of $(\AA,\Gamma)$.

\begin{dfn}\label{dfn:M_infty}
Let $\Mm_\infty$ be the partition of $\AA$ induced by the equivalence relation
\[
x \sim x' \iff \text{for every } k \ge 0 \text{ there exists } M \in \Theta^k(\Aa) \text{ such that } x,x' \in M.
\]
\end{dfn}

The element of $\Mm_\infty$ containing $x \in \AA$ is the intersection over $k \ge 0$ of the elements of $\Theta^k(\Aa)$ containing $x$, so $\Mm_\infty$ is a Borel partition of $\AA$ refining $\Theta^k(\Aa)$ for every $k \ge 0$. In particular, $\Mm_\infty \in \Rfr_{\Aa}$. 

\begin{thm}\label{thm:finite_iff_Theta_const}
Let $(\AA,\Gamma)$ be an iterated function system. Then $\Mm_\infty$ is the unique minimal Markov partition of $(\AA,\Gamma)$ and every Markov partition refines $\Mm_{\infty}$. Moreover, the following are equivalent:
\begin{enumerate}[label=(\roman{enumi})]
	\item\label{itm:finite_markov} $(\AA,\Gamma)$ admits a finite Markov partition;
	\item\label{itm:m_inf_finite} $\Mm_\infty$ is finite; and
	\item\label{itm:theta_eventually_constant} the sequence $(\Theta^k(\Aa))_{k \ge 0}$ is eventually constant.
\end{enumerate}
If these equivalent conditions hold, then $\Mm_\infty = \lim_{k \to \infty} \Theta^k(\Aa)$.
\end{thm}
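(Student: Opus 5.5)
The argument has three parts: every Markov partition refines $\Mm_\infty$, $\Mm_\infty$ is itself Markov, and the finiteness statements follow from comparing partitions by refinement.

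\emph{Every Markov partition refines each $\Theta^k(\Aa)$.} Let $\Mm$ be Markov. By \ref{itm:E_refines_A} we have $\Mm \ge \Aa$. Lemma~\ref{lem:Theta}\ref{itm:theta_order} together with $\Theta(\Mm)=\Mm$ then gives, by induction, $\Mm = \Theta^k(\Mm) \ge \Theta^k(\Aa)$ for every $k$. Now take $M \in \Mm$ and $x,x' \in M$. For each $k$ they lie in a common element of $\Theta^k(\Aa)$, so $x \sim x'$. Hence $\Mm$ refines $\Mm_\infty$.

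\emph{$\Mm_\infty$ is Markov.} It lies in $\Rfr_\Aa$, so it suffices to check \ref{itm:markov_property} directly. Let $[x]$ denote the $\Mm_\infty$-class of $x$, take $x' \sim x$, and let $\gamma \in \Gamma$. We must show $\gamma(x) \sim \gamma(x')$. Fix $k$ and let $M \in \Theta^{k+1}(\Aa)$ contain both $x$ and $x'$. By construction $M \subseteq M'$ for some $M' \in \Theta^k(\Aa)$ with $M' \subseteq \AA_P$, and $M$ sits inside the common refinement of the partitions $\Theta_{P,\Lambda}(M')$. In particular $M \subseteq \gamma^{-1}(N)$ for a single $N \in \Theta^k(\Aa)$, where $\Lambda$ is the block of $P$ containing $\gamma$. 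Thus $\gamma(x),\gamma(x') \in N$. Since $k$ is arbitrary, $\gamma(x) \sim \gamma(x')$, so $\gamma([x])$ lies in one class and $\Mm_\infty$ is Markov.

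Minimality and uniqueness now follow. Any Markov partition coarser than $\Mm_\infty$ must also refine it, so it equals $\Mm_\infty$. If $\Mm$ is any minimal Markov partition, it refines the Markov partition $\Mm_\infty$, and minimality forces $\Mm = \Mm_\infty$.

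\emph{The equivalences.} For \ref{itm:finite_markov}$\Rightarrow$\ref{itm:m_inf_finite}: a finite Markov partition refines $\Mm_\infty$, so $\Mm_\infty$ has at most as many elements. For \ref{itm:m_inf_finite}$\Rightarrow$\ref{itm:theta_eventually_constant}: $\Mm_\infty$ refines every $\Theta^k(\Aa)$, so $|\Theta^k(\Aa)| \le |\Mm_\infty| < \infty$. The sequence $|\Theta^k(\Aa)|$ is nondecreasing by Lemma~\ref{lem:Theta}\ref{itm:theta_refines} and bounded, hence eventually constant. A refinement of a finite partition with the same number of elements is equal to it, so once $|\Theta^{k+1}(\Aa)| = |\Theta^k(\Aa)|$ we get $\Theta^{k+1}(\Aa)=\Theta^k(\Aa)$, and thereafter the sequence is constant. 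For \ref{itm:theta_eventually_constant}$\Rightarrow$\ref{itm:finite_markov}: if $\Theta^{k+1}(\Aa)=\Theta^k(\Aa)$, this partition is Markov by Lemma~\ref{lem:Theta}. Its equivalence classes are exactly those of $\sim$, so it equals $\Mm_\infty$, which gives the final limit statement.

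Finiteness in this last implication needs an argument: a constant value of the sequence is not automatically finite. I would show by induction that each $\Theta^k(\Aa)$ is finite. The base case holds because $\Aa$ has at most $|\Pp|$ elements. For the inductive step, each finite $\Mm$ yields a finite $\Theta(\Mm)$, since each $M_i$ is split into at most $|\Mm|^{|P_i|}$ pieces. So the stable value is finite.

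The only step needing real care is the Markov property of $\Mm_\infty$, specifically extracting from Definition~\ref{dfn:Theta} and Lemma~\ref{lem:Theta_PL_commute} that elements of $\Theta^{k+1}(\Aa)$ are mapped by each $\gamma$ into single elements of $\Theta^k(\Aa)$.
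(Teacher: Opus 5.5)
Your proposal is correct and follows essentially the same route as the paper. Like the paper, you verify \ref{itm:markov_property} for $\Mm_\infty$ by noting that each $\gamma$ maps an element of $\Theta^{k+1}(\Aa)$ into a single element of $\Theta^k(\Aa)$, and you use $\Theta(\Nn)=\Nn$ with monotonicity of $\Theta$ to get $\Nn \ge \Theta^k(\Aa)$ for every Markov $\Nn$. The equivalences then come from the same counting argument. The only differences are that you take the first two steps in the opposite order, and you give an explicit bound $|\Mm|^{|P_i|}$ for why $\Theta$ preserves finiteness, which the paper simply asserts.
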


\begin{proof}

Since $\Mm_\infty \in \Rfr_{\Aa}$, it satisfies \ref{itm:E_refines_A}. Suppose that $x,x' \in \AA$ lie in the same element of $\Mm_\infty$ and fix $\gamma \in \Gamma$. For each $k \ge 0$ the points $x$ and $x'$ lie in the same element of $\Theta^{k+1}(\Aa) = \Theta(\Theta^k(\Aa))$, so \eqref{eq:Theta} and Lemma~\ref{lem:Theta_PL_commute} place $\gamma(x)$ and $\gamma(x')$ in the same element of $\Theta^k(\Aa)$. Hence, $\gamma(x)$ and $\gamma(x')$ lie in the same element of $\Mm_\infty$, so $\Mm_\infty$ satisfies \ref{itm:markov_property}. 

Let $\Nn$ be a Markov partition of $(\AA,\Gamma)$. Then $\Nn \ge \Aa$ by \ref{itm:E_refines_A} and $\Theta(\Nn) = \Nn$ by Lemma~\ref{lem:Theta}, so Lemma~\ref{lem:Theta}\ref{itm:theta_order} gives $\Nn = \Theta^k(\Nn) \ge \Theta^k(\Aa)$ for all $k \ge 0$. Hence, any two points in the same element of $\Nn$ lie in the same element of $\Theta^k(\Aa)$ for every $k$. That is, $\Nn \ge \Mm_\infty$. If $\Mm_\infty \ge \Nn$ as well, then $\Nn = \Mm_\infty$, so $\Mm_\infty$ is minimal. If $\Nn$ is minimal, then $\Nn \ge \Mm_\infty$ forces $\Nn = \Mm_\infty$, so $\Mm_\infty$ is the unique minimal Markov partition.

\ref{itm:finite_markov} $\implies$ \ref{itm:m_inf_finite}. 
 Suppose $\Mm$ is a finite Markov partition of $(\AA,\Gamma)$. Then $\Mm$ refines $\Mm_\infty$, so $|\Mm_\infty| \le |\Mm| < \infty$.
 
  \ref{itm:m_inf_finite} $\implies$ \ref{itm:theta_eventually_constant}.
 Suppose $\Mm_{\infty}$ is finite. For each $k \ge 0$ we have $\Mm_\infty \ge \Theta^{k+1}(\Aa) \ge \Theta^k(\Aa)$ by Lemma~\ref{lem:Theta}\ref{itm:theta_refines}, so $|\Theta^k(\Aa)|$ is nondecreasing in $k$ and bounded above by $|\Mm_\infty|$. Hence, $|\Theta^{k+1}(\Aa)| = |\Theta^k(\Aa)|$ for all sufficiently large $k$, so $\Theta^{k+1}(\Aa) = \Theta^k(\Aa)$ since $\Theta^{k+1}(\Aa)$ refines $\Theta^{k}(\Aa)$.
 
 \ref{itm:theta_eventually_constant} $\implies$ \ref{itm:finite_markov}.
 Suppose that there is some $k_0 \ge 0$ such that  $\Theta^k(\Aa) = \Theta^{k_0}(\Aa)$ for all $k \ge k_0$. Then $\Mm_\infty = \Theta^{k_0}(\Aa)$. This partition is finite since $\Aa$ is finite and $\Theta$ takes finite partitions to finite partitions. It is Markov by the first paragraph of this proof. 
\end{proof}

Theorem~\ref{thm:finite_iff_Theta_const} provides a convenient algorithm for producing a finite Markov partition when one exists. 
Unpacking the definition of $\Theta$, the algorithm proceeds as follows. 
\begin{algo}\label{algo:Markov_partition}
Initialise $\Mm_0 \coloneqq \Aa$. Given $\Mm_k = \{M_i\}_{i \in I_k}$, construct $\Mm_{k+1} \coloneqq \Theta(\Mm_k)$ by performing the following steps for each $i \in I_k$:
\begin{enumerate}[label=(\roman{enumi})]
	\item identify the unique $P_i \in \Pp$ with $M_i \subseteq \AA_{P_i}$;
	\item for each $\Lambda \in P_i$, choose any $\gamma \in \Lambda$ and partition $M_i$ into the nonempty preimages
	$M_i \cap \gamma^{-1}(M_j)$ for all $j \in I_k$;	
	\item replace $M_i$ in $\Mm_{k+1}$ by the common refinement of these partitions over all $\Lambda \in P_i$.
\end{enumerate}
If $\Mm_{k+1} = \Mm_k$, terminate. Otherwise, increment $k$ and repeat. 
\end{algo}
By Lemma~\ref{lem:Theta} if Algorithm~\ref{algo:Markov_partition} terminates at step $k$, then $\Mm_k$ is Markov, and Theorem~\ref{thm:finite_iff_Theta_const} guarantees it is the unique minimal Markov partition. The procedure terminates in finitely many steps if and only if $(\AA,\Gamma)$ admits a finite Markov partition.

\begin{example}\label{ex:clarence_minimal}
Applying Algorithm~\ref{algo:Markov_partition} to the system of Example~\ref{ex:clarence1}, one computes that $\Theta(\Aa) = \{ \{0\}, (0,1/2), [1/2,1), \{1\}\}$ and  $\Theta^2(\Aa) = \Theta(\Aa)$. Lemma~\ref{lem:Theta} shows that $\Theta(\Aa)$ is Markov, and Theorem~\ref{thm:finite_iff_Theta_const} shows it is the unique minimal Markov partition of $([0,1],\Gamma)$.
\end{example}

 In general, Algorithm~\ref{algo:Markov_partition} may not terminate as an iterated function system may not admit a finite Markov partition. As the following example shows, this can even occur when the iterated function system is contractive on the unit interval.

\begin{example}\label{ex:ikw_no_markov}
 Fix an irrational $y_0 \in (0,1)$ and consider the contractive system $(\AA = [0,1], \Gamma = \{\gamma_1,\gamma_2,\gamma_3\})$ with
\[
\gamma_1(x) = \frac{x}{2}, \qquad \gamma_2(x) = \frac{x+1}{2}, \text{ and} \qquad \gamma_3(x) = \frac{2x + y_0}{6}.
\]
 Since $\gamma_3([0,1]) \subseteq [0,1/2)$, and $\gamma_1(x) = \gamma_3(x)$ if and only if $x = y_0$, the only coincidence between distinct maps is $\gamma_1(y_0) = \gamma_3(y_0) = y_0/2$, so the fundamental partition is $\Aa = \{\{y_0\}, [0,1] \setminus \{y_0\}\}$.

Suppose that $(\AA,\Gamma)$ admits a finite Markov partition $\Mm$. By \ref{itm:E_refines_A}, $\{y_0\} \in \Mm$. For $z \in \AA$ let $d(z)$ be the least integer $k \ge 0$ for which there exist $\lambda_1,\ldots,\lambda_k \in \Gamma$ with $\lambda_1 \circ \cdots \circ \lambda_k(z) = y_0$, the empty composition being the identity, and set $d(z) = \infty$ if no such $k$ exists. We claim that $d$ is constant on sets in $\Mm$. 
For this, we argue by induction on $k \ge 0$, that if $z$ and $z'$ lie in the same element of $\Mm$ and $d(z) \le k$, then $d(z') \le k$. If $k = 0$, then $z = y_0$, so the element of $\Mm$ containing $z$ is $\{y_0\}$ and $z' = y_0$. Now suppose $k \ge 1$. We may assume that $j \coloneqq d(z) \ge 1$, and choose $\lambda_1,\ldots,\lambda_j \in \Gamma$ with $\lambda_1 \circ \cdots \circ \lambda_j(z) = y_0$. By \ref{itm:markov_property}, the points $\lambda_j(z)$ and $\lambda_j(z')$ lie in a common element of $\Mm$, and $d(\lambda_j(z)) \le j-1 \le k-1$, so $d(\lambda_j(z')) \le k-1$ by the inductive hypothesis. Hence, $d(z') \le k$. Consequently, $d$ is constant on each element of $\Mm$, so takes at most $|\Mm|$ values in $\NN \cup \{\infty\}$.

On the other hand, write $y_0 = \sum_{i \ge 1} b_i 2^{-i}$ in binary, put $\lambda^{(0)} \coloneqq \gamma_1$ and $\lambda^{(1)} \coloneqq \gamma_2$, and for each integer $k \ge 0$ let $z_k \coloneqq 2^k y_0 - \lfloor 2^k y_0 \rfloor$. Then $\lambda^{(b_1)} \circ \cdots \circ \lambda^{(b_k)}(z_k) = y_0$, so $d(z_k) \le k$. The points $z_k$ are pairwise distinct since $y_0$ is irrational, while for each $j$ at most $3^j$ points $z$ satisfy $d(z) = j$, since each composition of $j$ maps in $\Gamma$ is injective. If the values $d(z_k)$ were bounded by some $D$, then the infinitely many points $z_k$ would lie in a set with at most $\sum_{j=0}^{D} 3^j$ elements. Consequently, $d$ takes infinitely many finite values, contradicting the finiteness of $\Mm$.
\end{example}

Although Algorithm~\ref{algo:Markov_partition} does not terminate for the system of Example~\ref{ex:ikw_no_markov}, the minimal Markov partition $\Mm_\infty$ of Theorem~\ref{thm:finite_iff_Theta_const} is countable.

\begin{prop}\label{prop:countable_markov}
Let $(\AA,\Gamma)$ be an iterated function system in which the set 
\[
C \coloneqq \{y \in \AA \mid \gamma(y) = \gamma'(y) \text{ for some distinct } \gamma,\gamma' \in \Gamma\}
\]
is countable. Then the minimal Markov partition $\Mm_\infty$ of $(\AA,\Gamma)$ is countable.
\end{prop}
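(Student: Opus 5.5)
Since every Markov partition refines $\Mm_\infty$ (Theorem~\ref{thm:finite_iff_Theta_const}), it suffices to exhibit one countable Markov partition $\Nn$: then each element of $\Mm_\infty$ is a union of elements of $\Nn$, and distinct elements of $\Mm_\infty$ are disjoint, so $|\Mm_\infty| \le |\Nn|$. The natural choice mirrors Example~\ref{ex:ikw_no_markov}. Let $D \coloneqq \bigcup_{n \ge 0} \Gamma^{-n}(C)$, where $\Gamma^{-n}(C)$ is the set of $z$ such that $\lambda_1 \circ \cdots \circ \lambda_n(z) \in C$ for some $\lambda_1,\ldots,\lambda_n \in \Gamma$. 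Each composition $\lambda_1\circ\cdots\circ\lambda_n$ is injective, so its preimage of the countable set $C$ is countable. There are finitely many compositions of each length, so $D$ is countable. Define $\Nn \coloneqq \{\{z\} \mid z \in D\} \cup \{\AA \setminus D\}$, discarding $\AA\setminus D$ if it is empty. This is countable, and it is Borel because $D$ is a countable union of singletons, which are closed in the Hausdorff space $\AA$.

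Next I check that $\Nn$ is Markov. For \ref{itm:E_refines_A}: singletons lie in a single element of $\Aa$ trivially. Every $z \notin D$ satisfies $z \notin C$, so $P_z$ is the discrete partition $\{\{\gamma\}\mid\gamma\in\Gamma\}$. Hence $\AA \setminus D \subseteq \AA_{P}$ for $P$ the discrete partition, using \eqref{eq:strata_via_Px}.

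For \ref{itm:markov_property}: a singleton $\{z\}$ maps under any $\gamma$ to the singleton $\{\gamma(z)\}$, which lies in exactly one element of $\Nn$. For the remaining set, I need $\gamma(\AA\setminus D) \subseteq \AA \setminus D$. Equivalently, $\gamma(z)\in D$ should imply $z \in D$. This holds: if $\lambda_1\circ\cdots\circ\lambda_n(\gamma(z)) \in C$, then $z \in \Gamma^{-(n+1)}(C)$. So $D$ is backward-closed in the required sense, and $\gamma(\AA\setminus D)\subseteq \AA\setminus D$.

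The only point needing care is the direction of this invariance: $D$ must be closed under taking $\gamma$-preimages, not images. That is exactly why $D$ is defined via preimages of $C$. The rest is bookkeeping, and the conclusion follows from Theorem~\ref{thm:finite_iff_Theta_const}.
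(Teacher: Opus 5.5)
Your proof is correct. It uses the same countable set as the paper (your $D$ is the paper's $W$) and the same two facts: $\AA \setminus D$ is mapped into itself by every $\gamma \in \Gamma$, and it lies in the discrete stratum $\AA_{\{\{\gamma\} \mid \gamma \in \Gamma\}}$.

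The difference is how you pass from these facts to $\Mm_\infty$. The paper inducts on $k$ through the definition of $\Theta$ to show that $\AA \setminus W$ stays inside a single element of $\Theta^k(\Aa)$ for every $k$. It then concludes that all other elements of $\Mm_\infty$ are disjoint subsets of $W$. You instead write down the explicit countable Markov partition $\Nn$ and apply the refinement property of Theorem~\ref{thm:finite_iff_Theta_const}.

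That refinement property is itself proved via $\Nn = \Theta^k(\Nn) \ge \Theta^k(\Aa)$, so the two arguments agree once unwound. Your version costs the extra checks that $\Nn$ is Borel and satisfies \ref{itm:E_refines_A} and \ref{itm:markov_property}. You carry these out correctly, including the right direction of invariance: $\gamma(z) \in D$ implies $z \in D$. In exchange, you avoid reopening the definition of $\Theta$. Your argument also mirrors the finite-case equivalence in Theorem~\ref{thm:finite_iff_Theta_const}: $\Mm_\infty$ is countable as soon as some countable Markov partition exists.
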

\begin{proof}
Let $W$ be the set of $x \in \AA$ such that $\lambda_1 \circ \cdots \circ \lambda_k(x) \in C$ for some integer $k \ge 0$ and $\lambda_1,\ldots,\lambda_k \in \Gamma$. Then $W$ is countable, since each composition of maps in $\Gamma$ is injective. Each $\gamma \in \Gamma$ maps $\AA \setminus W$ into itself, and $\AA \setminus W$ is contained in the element $\AA_{\{\{\gamma\} \mid \gamma \in \Gamma\}}$ of $\Aa$. An induction on $k$ using \eqref{eq:Theta} shows that $\AA \setminus W$ lies in a single element of $\Theta^k(\Aa)$ for every $k$. Hence, at most one element of $\Mm_\infty$ meets $\AA \setminus W$, and every other element is a nonempty subset of the countable set $W$, so $\Mm_\infty$ is countable. 
\end{proof}

\subsection{The graph associated to a Markov partition}

To a Markov partition we associate a directed graph that captures the combinatorial aspects of the dynamics of the iterated function system. Let $\Mm = \{M_i\}_{i \in I}$  be a Markov partition of $(\AA,\Gamma)$. Define an equivalence relation on $\Gamma \times I$ by $(\gamma,i) \sim (\gamma',i)$ if and only if $\restr{\gamma}{M_i} = \restr{\gamma'}{M_i}$.  We write $[\gamma,i]$ for the equivalence class of $(\gamma,i)$. Observe that if $M_i \subseteq \AA_{P}$ then there is some $\Lambda \in P$ such that $[\gamma,i] = \{ (\gamma',i) \mid \gamma' \in \Lambda \}.$

\begin{dfn}\label{dfn:graph_of_markov_partition}
	Suppose that $\Mm = \{M_i\}_{i \in I}$ is a Markov partition of $(\AA,\Gamma)$. We associate to $\Mm$ a directed graph $E=  (E^0, E^1, r, s)$ with 
\emph{vertices} $E^0 \coloneqq I$;
 \emph{edges} $E^1 \coloneqq \{[\gamma,i] \mid \gamma \in \Gamma, i \in I\}$;
 \emph{source map} $s \colon E^1 \to E^0$ given by $s([\gamma,i]) = i$; and
 \emph{range map} $r \colon E^1 \to E^0$ given by $r([\gamma,i]) = j$ for the unique $j \in I$ such that $\gamma(M_i) \subseteq M_j$.
\end{dfn}

\begin{example}\label{ex:clarence_graph}
	Consider the contractive iterated function system $([0,1],\Gamma = \{\gamma_1,\gamma_2,\gamma_3\})$ of Example~\ref{ex:clarence}. 
The directed graph associated to the Markov partition $\Mm = \{M_i\}_{i=1}^4$ of Example~\ref{ex:clarence1} is described by the diagram
\begin{center}
\begin{tikzpicture}[
    >=Stealth, 
    vertex/.style={draw, circle, inner sep=1.5pt, minimum size=3mm, font=\scriptsize},
    every edge/.style={draw, -latex},
    lbl/.style={font=\scriptsize, inner sep=3pt}
]

\node[vertex] (v1) at (0,0) {$1$};
\node[vertex] (v2) at (0,2) {$2$};
\node[vertex] (v3) at (2,2) {$3$};
\node[vertex] (v4) at (2,0) {$4$};

\path
    (v1) edge[out=270, in=180, loop, distance=12mm] node[lbl, left=2pt] {$[\gamma_1,1]$} (v1)
    (v1) edge node[lbl, below] {$[\gamma_2,1]$} (v4)
    (v1) edge node[lbl, left] {$[\gamma_3,1]$} (v2)

    (v2) edge[out=225, in=140, loop, distance=12mm] node[lbl, left] {$[\gamma_1,2]$} (v2)
    (v2) edge[out=130, in=45, loop, distance=12mm] node[lbl, above] {$[\gamma_3,2]$} (v2)
    (v2) edge[bend left=20] node[lbl, above] {$[\gamma_2,2]$} (v3)

    (v3) edge[bend left=20] node[lbl, below] {$[\gamma_1,3]$} (v2)
    (v3) edge[out=90, in=0, loop, distance=12mm] node[lbl, right=2pt] {$[\gamma_2,3]$} (v3)

    (v4) edge node[lbl, right] {$[\gamma_1,4]$} (v3)
;
\end{tikzpicture} 
\end{center} 
in which $[\gamma_1,3] = [\gamma_3,3]$ and $[\gamma_1,4] = [\gamma_2,4] = [\gamma_3,4]$. 
\end{example}

A vertex $i \in E^0$ is a \emph{sink} if $|s^{-1}(i)| = 0$ and a \emph{source} if $|r^{-1}(i)| = 0$.

\begin{lem}\label{lem:graph_properties}
Let $E$ be the directed graph associated to a Markov partition $\Mm = \{M_i\}_{i \in I}$ of $(\AA,\Gamma)$. Then
\begin{enumerate}[label=(\roman{enumi})]
\item\label{itm:E_no_sinks_sources} $E$ has no sinks or sources; 
\item\label{itm:E_s_inverse} $|s^{-1}(i)| \le |\Gamma|$ for each $i \in I$; and
\item\label{itm:E_finite} $E$ is finite if and only if $\Mm$ is finite.
\end{enumerate}
\end{lem}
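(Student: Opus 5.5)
Each of the three claims comes directly from Definition~\ref{dfn:graph_of_markov_partition}, using the Markov property \ref{itm:markov_property} and the $\Gamma$-invariance \eqref{eq:Gamma_invariance}. I take the elements $M_i$ of the partition to be nonempty, as elements of a partition are.

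For \ref{itm:E_no_sinks_sources}, fix $i \in I$. Since $\Gamma$ is nonempty, I pick any $\gamma \in \Gamma$; then $[\gamma,i] \in E^1$ and $s([\gamma,i]) = i$, so $i$ is not a sink. For sources, I pick $x \in M_i$. By \eqref{eq:Gamma_invariance} there are $\gamma \in \Gamma$ and $y \in \AA$ with $x = \gamma(y)$. I let $j \in I$ be the index with $y \in M_j$. By \ref{itm:markov_property} there is a unique $k$ with $\gamma(M_j) \subseteq M_k$. Since $x = \gamma(y) \in M_k \cap M_i$ and $\Mm$ is a partition, $k = i$. Hence $r([\gamma,j]) = i$ and $i$ is not a source. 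The only step needing care is noting that the range is well defined on equivalence classes; this holds because equivalent pairs agree on $M_i$.

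For \ref{itm:E_s_inverse}, the map $\Gamma \to s^{-1}(i)$ given by $\gamma \mapsto [\gamma,i]$ is surjective by the definition of $E^1$ and $s$. Therefore $|s^{-1}(i)| \le |\Gamma|$. By the remark before Definition~\ref{dfn:graph_of_markov_partition}, this count equals $|P|$ when $M_i \subseteq \AA_P$.

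For \ref{itm:E_finite}, $E^0 = I$ indexes $\Mm$, so finiteness of $E$ forces finiteness of $\Mm$. Conversely, if $I$ is finite then $|E^1| = \sum_{i \in I} |s^{-1}(i)| \le |\Gamma|\,|I| < \infty$ by \ref{itm:E_s_inverse}. None of the three parts presents a real obstacle. The only nontrivial input is the source argument in \ref{itm:E_no_sinks_sources}, which is where $\Gamma$-invariance is used.
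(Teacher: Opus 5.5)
Your proposal is correct and follows the paper's proof essentially step for step. It uses $\Gamma$-invariance together with the Markov property to rule out sources, the surjection $\gamma \mapsto [\gamma,i]$ to get $|s^{-1}(i)| \le |\Gamma|$, and part (ii) to deduce finiteness of $E^1$ from finiteness of $I$. Your extra remarks, that the range map is well defined on equivalence classes and that $|s^{-1}(i)| = |P|$ when $M_i \subseteq \AA_P$, are correct but not needed.
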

\begin{proof}
That $|s^{-1}(i)|> 0$ is clear. For each $x \in M_i$  equation \eqref{eq:Gamma_invariance} gives some $\gamma \in \Gamma$ and $y \in \AA$ such that $x = \gamma(y)$. Let $j \in I$ be such that $y \in M_j$. Then $r([\gamma,j]) = i$, so \ref{itm:E_no_sinks_sources} holds. For \ref{itm:E_s_inverse} observe that for each $\gamma \in \Gamma$ there is at most one edge of the form $[\gamma,i]$, so $|s^{-1}(i)| \le |\Gamma|$. For \ref{itm:E_finite} note that if $\Mm$ is finite, then $E^0$ is finite, so $E^1$ is finite by \ref{itm:E_s_inverse}. Conversely, if $E$ is finite, then $E^0$ is finite, so $\Mm$ is finite.
\end{proof}

Let $E^n \coloneqq \{e_1e_2 \cdots e_n \mid e_i \in E^1,\,s(e_i) = r(e_{i+1})\}$ be the set of paths of length $n$ in $E$, and extend the range and source maps on $E$ to maps $r \colon E^n \to E^0$ and $s \colon E^n \to E^0$ by $r(e_1e_2 \cdots e_n) = r(e_1)$ and $s(e_1e_2 \cdots e_n) = s(e_n)$, respectively, following the conventions of \cite{Rae05}. Let $E^* = \bigcup_{n=0}^\infty E^n$ be the collection of all finite paths in $E$, where $E^0$ is identified with the collection of paths of length zero.
For each $i,j \in I$ and $n \in \NN \cup \{*\}$ we let
	\begin{align*}
	i E^n \coloneqq \{ e \in E^n \mid r(e) = i \}, \quad
	E^n j \coloneqq \{ e \in E^n \mid s(e) = j\}, \text{ and} \quad
	i E^n j \coloneqq i E^n \cap E^n j.
	\end{align*}
The next result follows immediately from the definition of $E$ and highlights how $E$ encodes the combinatorial structure of the iterated function system $(\AA,\Gamma)$.
\begin{lem} \label{lem:markov_partition_paths}
Let $E$ be the directed graph associated to a Markov partition $\Mm = \{M_i\}_{i \in I}$ of $(\AA,\Gamma)$. For each $e \in E^1$ choose a representative $\gamma_e \in \Gamma$ in the sense that $e = [\gamma_e, s(e)]$. Then
for each $n \in \NN$, $i \in I$, and $x \in M_i$ there is a bijection $E^n i \to \Gamma^{(n)} x$ given by 
\[
e_1 \cdots e_n \mapsto (\gamma_{e_1} \circ \cdots \circ \gamma_{e_n}(x),\ldots,\gamma_{e_n}(x),x)
\]
that is independent of the choice of representatives. 
In particular, $|E^n i| = |\Gamma^{(n)} M_i|$ for all $i \in I$ and $n \in \NN$.
\end{lem}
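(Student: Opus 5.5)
We argue by induction on $n$, using only Definition~\ref{dfn:graph_of_markov_partition}, the Markov property \ref{itm:markov_property} and Lemma~\ref{lem:markov_unique_image}. The plan is to show that the map $\Phi_n \colon E^n i \to \Gamma^{(n)}x$ in the statement is well defined, independent of representatives, and bijective.

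For well-definedness, fix a path $e_1\cdots e_n \in E^n i$, so that $s(e_n) = i$ and $s(e_k) = r(e_{k+1})$. Since $x \in M_i = M_{s(e_n)}$, the definition of the range map gives $\gamma_{e_n}(x) \in M_{r(e_n)} = M_{s(e_{n-1})}$. Inductively, $\gamma_{e_k}\circ\cdots\circ\gamma_{e_n}(x) \in M_{s(e_{k-1})}$. Thus each tuple produced is a path in $\Gamma^{(n)}x$, because consecutive entries are related by a map of $\Gamma$. Independence of representatives follows from the same membership: at each stage the point $z$ to which $\gamma_{e_k}$ is applied lies in $M_{s(e_k)}$. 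Any other representative $\gamma'$ of $e_k = [\gamma_{e_k}, s(e_k)]$ agrees with $\gamma_{e_k}$ on $M_{s(e_k)}$, so it produces the same point.

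For bijectivity we induct on $n$, the case $n=0$ being trivial. The key observation is the case $n=1$. For $i \in I$ and $x \in M_i$, the map $E^1 i \to \Gamma x$ given by $[\gamma,i] \mapsto \gamma(x)$ is a bijection. It is surjective because every $x' \in \Gamma x$ equals $\gamma(x)$ for some $\gamma$, hence is the image of $[\gamma,i]$. For injectivity, suppose $\gamma(x) = \gamma'(x)$. By \ref{itm:E_refines_A} we have $M_i \subseteq \AA_P$ for some $P$, and the definition of $\AA_P$ puts $\gamma,\gamma'$ in the same block $\Lambda \in P$. They therefore agree on all of $\AA_P \supseteq M_i$, so $[\gamma,i] = [\gamma',i]$.

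The inductive step then follows from a decomposition. A path in $\Gamma^{(n)}x$ is a path in $\Gamma^{(n-1)}x_{n-1}$ with $x_{n-1} \in \Gamma x$ appended, and correspondingly $E^n i = \bigsqcup_{e \in E^1 i} E^{n-1}r(e)\cdot e$. Since $x_{n-1} = \gamma_e(x) \in M_{r(e)}$, the inductive hypothesis applied at the vertex $r(e)$ and the point $\gamma_e(x)$ gives bijectivity on each piece. The $n=1$ case then matches the pieces bijectively. The only point requiring care is this $n=1$ injectivity, which is exactly where \ref{itm:E_refines_A} is needed. The final cardinality claim follows from the bijection together with the definition of $|\Gamma^{(n)}M_i|$ via Lemma~\ref{lem:markov_same_paths}.
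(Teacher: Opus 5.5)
Your proof is correct and takes essentially the paper's approach: the paper states only that the lemma follows immediately from the definition of $E$, and your induction unpacks that claim, mirroring the induction in the proof of Lemma~\ref{lem:markov_same_paths}. You also correctly isolate the one substantive point: by \ref{itm:E_refines_A}, a coincidence $\gamma(x)=\gamma'(x)$ at a single $x\in M_i\subseteq\AA_P$ forces $\gamma|_{M_i}=\gamma'|_{M_i}$, that is, $[\gamma,i]=[\gamma',i]$.
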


If $S \subseteq E^0$ is a subset of vertices, we write $E|_S$ for the subgraph of $E$ with vertices $E|_S^0 = S$ and edges $E|_S^1 = \{e \in E^1 \mid r(e), s(e) \in S\}$.

\begin{dfn}
A directed graph $E$ is \emph{strongly connected} if for every pair of vertices $v,w \in E^0$ there exists a path in  $vE^nw$ for some $n \ge 0$.
 A \emph{strongly connected component} of a directed graph $E$ is a maximal subset $S \subseteq E^0$, in the inclusion ordering, such that $E|_S$ is strongly connected. A nonempty subset $R \subseteq E^0$ is \emph{recurrent} if $E|_R$ is strongly connected and $s(e) \in R$ implies $r(e) \in R$ for every $e \in E^1$ (that is, $R$ is \emph{hereditary} in the sense of \cite{Rae05}). Such an $R$ is necessarily a strongly connected component, and we call it a \emph{recurrent component}.
 A vertex is \emph{transient} if it is not contained in any recurrent component. We write $T$ for the set of transient vertices of $E$.
\end{dfn}

Belonging to the same strongly connected component is an equivalence relation $\approx$ on $E^0$, so we write $E^0/{\approx}$ for the collection of strongly connected components of $E$.  The set $E^0/{\approx}$  admits a partial order $\le$ where $S \le S'$ for $S,S' \in E^0/{\approx}$ if there is a path $e \in E^*$ such that $s(e) \in S'$ and $r(e) \in S$. Reflexivity and transitivity of $\le$ are clear. Maximality of strongly connected components implies that if $S \le S'$ and $S' \le S$, then $S = S'$. A strongly connected component $R$ is recurrent if and only if $R$ is a minimal element in the partial order $\le$ on $E^0/{\approx}$.

We identify a checkable condition, in the setting of contractive iterated function systems, that guarantees the existence of a unique recurrent component in the graph of a Markov partition.
 \begin{dfn}[{cf.~\cite[Definition~A.1.15]{Kie02}}]
	A map $f \colon X \to Y$ between topological spaces is said to be \emph{weakly open} if $f(U)$ has nonempty interior for all open $U \subseteq X$. 
	An iterated function system $(\AA,\Gamma)$ is \emph{weakly open} if every $\gamma \in \Gamma$ is weakly open.
\end{dfn}

\begin{prop}\label{prop:weakly_open}
Suppose that $(\AA,\Gamma)$ is a contractive iterated function system with a Markov partition $\Mm = \{M_i\}_{i \in I}$ and associated directed graph $E$. If there exists $i \in I$ such that $\interior(M_i) \ne \varnothing$, then there is a unique recurrent component $R$ of $E$. Moreover, $
\{ i \in E^0 \mid \interior(M_i) \ne \varnothing\} \subseteq R,
$
with equality when $(\AA,\Gamma)$ is weakly open. In particular, $\bigcup_{j \in R} M_j$ is dense in $\AA$. 
\end{prop}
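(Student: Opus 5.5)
The plan is to show that every vertex whose set has nonempty interior can be reached from every vertex of $E$. This single reachability fact will give existence of a recurrent component, its uniqueness, the containment, and the density statement.

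\textbf{Reachability.} Fix $j \in I$ with $\interior(M_j) \ne \varnothing$, and let $i \in I$ be arbitrary with some $x \in M_i$. By Lemma~\ref{lem:eventually_in_open_sets} there are $\gamma_1,\ldots,\gamma_k$ with $\gamma_1\circ\cdots\circ\gamma_k(\AA) \subseteq \interior(M_j)$; in particular $\gamma_1\circ\cdots\circ\gamma_k(x) \in M_j$. By the Markov property \ref{itm:markov_property}, the points $\gamma_m \circ \cdots\circ \gamma_k(x)$ trace out a path $e_1\cdots e_k \in jE^k i$, where $e_m = [\gamma_m, i_m]$ and $i_m$ indexes the element of $\Mm$ containing $\gamma_{m+1}\circ\cdots\circ\gamma_k(x)$. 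Hence $j$ is reachable from every vertex.

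\textbf{Recurrent component and containment.} Consider the strongly connected component $S_j$ of $j$. The reachability step gives $S_j \le S$ for every component $S$ in the partial order on $E^0/{\approx}$, so $S_j$ is the least element. A least element is the unique minimal element, and minimal components are exactly the recurrent ones, so $R \coloneqq S_j$ is the unique recurrent component. Note that this does not need finiteness of $E$. Applying the same argument to any other $j'$ with nonempty interior gives $S_{j'} = R$, which proves $\{i \mid \interior(M_i)\neq\varnothing\}\subseteq R$.

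\textbf{Equality in the weakly open case.} Suppose $i \in R$. Since $j \in R$ and $R$ is strongly connected, there is a path from $j$ to $i$, namely $e_1\cdots e_n \in iE^nj$. Then $\gamma_{e_1}\circ\cdots\circ\gamma_{e_n}(M_j) \subseteq M_i$. The set $\gamma_{e_1}\circ\cdots\circ\gamma_{e_n}(\interior M_j)$ has nonempty interior, because each $\gamma$ is weakly open and we can iterate: if $V$ is open and nonempty, then $\gamma(V)$ contains a nonempty open set $V'$, and we apply the next map to $V'$. Therefore $\interior(M_i)\ne\varnothing$.

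\textbf{Density.} Let $U$ be a nonempty open set. By Lemma~\ref{lem:eventually_in_open_sets}, choose $\gamma_1,\ldots,\gamma_k$ with $\gamma_1\circ\cdots\circ\gamma_k(\AA)\subseteq U$, and pick $x \in M_j$ with $j \in R$. The path traced by $\gamma_m\circ\cdots\circ\gamma_k(x)$ starts at $j \in R$. Since $R$ is hereditary, every vertex along the path lies in $R$, so $\gamma_1\circ\cdots\circ\gamma_k(x) \in U \cap \bigcup_{l\in R}M_l$. Hence $\bigcup_{l \in R} M_l$ is dense in $\AA$.

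The only delicate points are the conventions: that the path runs from $i$ to $j$ (sources and ranges as in Definition~\ref{dfn:graph_of_markov_partition}), and that the hereditary property propagates along edges in the correct direction. Beyond that the argument is routine.
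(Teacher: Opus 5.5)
Your proof is correct and follows essentially the same route as the paper: every vertex with nonempty interior is reachable from every vertex via Lemma~\ref{lem:eventually_in_open_sets}, so its strongly connected component is the unique minimal (equivalently, recurrent) one, and weak openness is pushed along a path inside $R$ to get equality. The only cosmetic difference is the density step. You apply Lemma~\ref{lem:eventually_in_open_sets} directly to an arbitrary open set and use heredity of $R$, whereas the paper invokes density of the orbit $\bigcup_n \Gamma^n x$ (from Hutchinson's Theorem) for a point $x$ lying in a recurrent set.
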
 

\begin{proof}
Fix $i \in E^0$ such that $\interior(M_i) \ne \varnothing$ and let $R$ be the strongly connected component of $E$ containing $i$.
By Lemma~\ref{lem:eventually_in_open_sets}, for each $j \in E^0$ the set $iE^*j$ is nonempty. In particular, for each $e \in E^1$ with $s(e) \in R$, we have $iE^*r(e) \ne \varnothing$, so $r(e)$ is in the same strongly connected component as $i$. That is, $R$ is recurrent. For any other strongly connected component $S$ there is a path from some vertex in $S$ to $i$, so if $S$ is recurrent, then $S = R$. Hence, $R$ is the unique recurrent component of $E$.
Since the same argument can be applied to any other $k \in E^0$ such that $\interior(M_k) \ne \varnothing$ it follows that $\{ i \in E^0 \mid \interior(M_i) \ne \varnothing\} \subseteq R$.
Since $R$ is a strongly connected component, for each $j \in R$ there is a sequence of maps $\gamma_1,\ldots,\gamma_m \in \Gamma$ such that $\gamma_1 \circ \cdots \circ \gamma_m (\interior(M_i)) \subseteq M_j$. If $(\AA,\Gamma)$ is weakly open, then $\interior(M_j) \ne \varnothing$, so $\{ i \in E^0 \mid \interior(M_i) \ne \varnothing\} = R$.

For the final statement, Hutchinson's Theorem implies  the set	$\bigcup_{n \in \NN} \Gamma^n x$ is dense in $\AA$ for each $x \in \AA$. If $x \in \interior(M_i)$, then $\bigcup_{n \in \NN} \Gamma^n x \subseteq \bigcup_{j \in R} M_j$, so $\bigcup_{j \in R} M_j$ is dense in $\AA$.
\end{proof}

\begin{example}\label{ex:clarence_recurrent}
	Consider the contractive iterated function system $([0,1],\Gamma = \{\gamma_1,\gamma_2,\gamma_3\})$ of Example~\ref{ex:clarence}. The system is weakly open and the graph $E$ of Example~\ref{ex:clarence_graph} has a unique recurrent component $R = \{2,3\}$ corresponding to the sets with nonempty interior in $\Mm$.
\end{example}

The hypothesis of Proposition~\ref{prop:weakly_open} that at least one $\interior(M_i)$ is nonempty is satisfied when $\Mm$ is finite and  minimal. This relies on a topological property of the sets produced by the refinement operator $\Theta$ of Definition~\ref{dfn:Theta}.
Recall that a subset $L$ of a topological space $X$ is \emph{locally closed} if $L = C \cap U$ for some closed $C \subseteq X$ and some open $U \subseteq X$.

\begin{lem}\label{lem:markov_locally_closed}
Every element of $\Theta^k(\Aa)$ is locally closed for every $k \ge 0$.
\end{lem}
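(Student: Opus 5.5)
We argue by induction on $k$, using the fact that finite intersections of locally closed sets are locally closed (if $L_1 = C_1 \cap U_1$ and $L_2 = C_2 \cap U_2$, then $L_1 \cap L_2 = (C_1 \cap C_2) \cap (U_1 \cap U_2)$). We also use that preimages of locally closed sets under continuous maps are locally closed, since $\gamma^{-1}(C \cap U) = \gamma^{-1}(C) \cap \gamma^{-1}(U)$.

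\emph{Base case $k = 0$.} Every element of $\Aa$ has the form $\AA_P$ for some $P \in \Pp$. By Lemma~\ref{lem:S_P}\ref{item:AA_P},
\[
\AA_P = S_P \setminus \bigcup_{P' < P} S_{P'} = S_P \cap \Big(\AA \setminus \bigcup_{P'<P} S_{P'}\Big).
\]
By Lemma~\ref{lem:S_P}\ref{item:S_P}, each $S_{P'}$ is compact and hence closed, and $\Pp$ is finite. So the finite union $\bigcup_{P'<P} S_{P'}$ is closed, its complement is open, and $\AA_P$ is the intersection of the closed set $S_P$ with an open set. Thus $\AA_P$ is locally closed.

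\emph{Inductive step.} Assume every element of $\Mm \coloneqq \Theta^k(\Aa) = \{M_i\}_{i \in I}$ is locally closed. Take an element $N$ of $\Theta(\Mm)$. By Definition~\ref{dfn:Theta} and Lemma~\ref{lem:Theta_PL_commute}, there are $i \in I$, a choice $\gamma_\Lambda \in \Lambda$ for each $\Lambda \in P_i$, and indices $j_\Lambda \in I$ such that
\[
N = M_i \cap \bigcap_{\Lambda \in P_i} \gamma_\Lambda^{-1}(M_{j_\Lambda}).
\]
Each $\gamma_\Lambda$ is continuous and each $M_{j_\Lambda}$ is locally closed by hypothesis, so each $\gamma_\Lambda^{-1}(M_{j_\Lambda})$ is locally closed. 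Since $P_i$ is finite, $N$ is a finite intersection of locally closed sets and is therefore locally closed.

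The argument is routine. The only step needing care is the description of a general element of $\Theta(\Mm)$ as an explicit finite intersection, and this is exactly what Lemma~\ref{lem:Theta_PL_commute} supplies. No contractivity hypothesis is needed.
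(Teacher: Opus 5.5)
Your proof is correct and follows essentially the same route as the paper: induction on $k$, with the base case from $\AA_P = S_P \setminus \bigcup_{P'<P} S_{P'}$ (using compactness of the $S_{P'}$ and finiteness of $\Pp$), and the inductive step from writing each element of $\Theta(\Mm)$ as $M_i \cap \bigcap_{\Lambda \in P_i} \gamma_\Lambda^{-1}(M_{j_\Lambda})$ via Lemma~\ref{lem:Theta_PL_commute}. You also explicitly justify that locally closed sets are preserved by continuous preimages and finite intersections, which the paper uses without proof.
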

\begin{proof}
We induct on $k$. By Lemma~\ref{lem:S_P}\ref{item:S_P}, for each $P \in  \Pp$ the set $S_P$ is compact, hence closed. Since $\Pp$ is finite,  Lemma~\ref{lem:S_P}\ref{item:AA_P} implies that $\AA_P = S_P \setminus \bigcup_{P' < P} S_{P'}$ is locally closed for all $P \in \Pp$. Hence, every element of $\Aa = \Theta^0(\Aa)$ is locally closed.

Suppose that every element of $\Mm \coloneqq \Theta^k(\Aa) = \{M_i\}_{i \in I}$ is locally
closed and fix $i \in I$, with $P_i \in \Pp$ such that $M_i \subseteq \AA_{P_i}$. For each
$\Lambda \in P_i$ fix $\gamma_{\Lambda} \in \Lambda$. By Lemma~\ref{lem:Theta_PL_commute}
and \eqref{eq:Theta}, the elements of $\Theta(\Mm)$ contained in $M_i$ are the nonempty
sets of the form
\begin{equation} \label{eq:theta_sets}
	M_i \cap \bigcap_{\Lambda \in P_i} \gamma_{\Lambda}^{-1}(M_{j_{\Lambda}}),
\qquad (j_{\Lambda})_{\Lambda \in P_i} \in I^{P_i}.
\end{equation}
Each $M_{j_\Lambda}$ is locally closed by the inductive hypothesis and each $\gamma_\Lambda$
is continuous, so each $\gamma_{\Lambda}^{-1}(M_{j_{\Lambda}})$ is locally closed. Since
$P_i$ is finite and locally closed sets are closed under finite intersections, each set from \eqref{eq:theta_sets}
is locally closed. Hence, every element of $\Theta^{k+1}(\Aa)$ is locally closed.
\end{proof}

Recall that a subset $S$ of a topological space is \emph{nowhere dense} if $\interior(\ol{S}) = \varnothing$. We require the following folklore result.

\begin{lem}\label{lem:locally_closed}
Let $L$ be a locally closed subset of a topological space. If $\interior(L) = \varnothing$, then $L$ is nowhere dense.
\end{lem}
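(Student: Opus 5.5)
We write $L = C \cap U$ with $C$ closed and $U$ open, and show directly that $\interior(\ol{L}) = \varnothing$. The approach is to argue by contradiction: assume some nonempty open set $V$ satisfies $V \subseteq \ol{L}$, and produce a nonempty open subset of $L$ itself.

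First observe that $\ol{L} \subseteq C$, since $C$ is closed and contains $L$. Next, $V \subseteq \ol{L}$ with $V$ nonempty open means $V$ meets $L$. Since $L \subseteq U$, the set $V \cap U$ is a nonempty open set, and it is still contained in $\ol{L}$. Then
\[
V \cap U \subseteq \ol{L} \cap U \subseteq C \cap U = L,
\]
so $V \cap U$ is a nonempty open subset of $L$. This contradicts $\interior(L) = \varnothing$, and hence $\interior(\ol{L}) = \varnothing$.

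There is no real obstacle here. The only step needing care is checking that $V \cap U$ is nonempty. This uses the fact that a nonempty open subset of $\ol{L}$ must intersect $L$, which is the definition of closure.
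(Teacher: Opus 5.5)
Your proof is correct and rests on the same key observation as the paper's: $\overline{L} \cap U \subseteq C \cap U = L$, so an open subset of $\overline{L}$ intersected with $U$ is an open subset of $L$ and hence empty. The only difference is the last step. The paper deduces $L \subseteq \partial \overline{L}$ and cites that the boundary of a closed set is nowhere dense, whereas you observe directly that a nonempty open subset of $\overline{L}$ must meet $L \subseteq U$, which makes your argument self-contained.
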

\begin{proof}
Write $L = C \cap U$ with $C$ closed and $U$ open. Since $\overline{L} \subseteq C$, we have $\overline{L} \cap U = L$. Hence, $\interior(\overline{L}) \cap U$ is an open subset of $L$, so $\interior(\overline{L}) \cap U = \varnothing$ by assumption. As $L \subseteq U$, it follows that $L \cap \interior(\overline{L}) = \varnothing$; that is, $L \subseteq \partial \overline{L}$. The boundary of a closed set is nowhere dense (see~\cite[Example~11.5.3(e)]{NB11}), and every subset of a nowhere dense set is nowhere dense, so $L$ is nowhere dense.
\end{proof}

\begin{prop}\label{prop:minimal_markov_has_open_set}
Suppose that $(\AA,\Gamma)$ admits a finite Markov partition and let $\Mm = \{M_i\}_{i \in I}$ be its minimal Markov partition. Then each $M_i$ is locally closed, each $M_i$ with $\interior(M_i) = \varnothing$ is nowhere dense, and there exists $i \in I$ such that $\interior(M_i) \ne \varnothing$.
\end{prop}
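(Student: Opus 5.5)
By Theorem~\ref{thm:finite_iff_Theta_const}, since $(\AA,\Gamma)$ admits a finite Markov partition, the sequence $(\Theta^k(\Aa))_{k \ge 0}$ is eventually constant and the minimal Markov partition is $\Mm = \Mm_\infty = \Theta^{k_0}(\Aa)$ for some $k_0 \ge 0$. In particular $\Mm$ is finite.

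The topological claims then follow directly from the two lemmas on locally closed sets. By Lemma~\ref{lem:markov_locally_closed}, every element of $\Theta^{k_0}(\Aa)$ is locally closed, so each $M_i$ is locally closed. Lemma~\ref{lem:locally_closed} then shows that any $M_i$ with $\interior(M_i) = \varnothing$ is nowhere dense.

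For the existence of some $M_i$ with nonempty interior, I will argue by contradiction using the Baire Category Theorem. Suppose every $M_i$ has empty interior. Then each $M_i$ is nowhere dense by the previous step. Since $I$ is finite, $\AA = \bigcup_{i \in I} M_i$ is a finite union of nowhere dense sets. A finite union of nowhere dense sets is nowhere dense, since $\bigcup_i \ol{M_i}$ is a finite union of closed sets with empty interior; this already follows from Baire, or elementarily by induction on the number of sets. So $\AA$ would be nowhere dense in itself. But $\interior(\AA) = \AA \neq \varnothing$, because the attractor is nonempty, being a fixed point of the Hutchinson operator on $\Kk_0(\AA)$. This is a contradiction.

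The only step needing care is the finiteness of the union. I will note that the elementary fact suffices: if $\ol{A}$ and $\ol{B}$ have empty interior and $U \subseteq \ol{A} \cup \ol{B}$ is open, then $U \setminus \ol{A}$ is open and contained in $\ol{B}$, so it is empty. Hence $U \subseteq \ol{A}$, so $U$ is empty. I do not expect any genuine obstacle; the main content is already in Lemmas~\ref{lem:markov_locally_closed} and~\ref{lem:locally_closed}. Minimality enters only to identify $\Mm$ with some $\Theta^{k_0}(\Aa)$, which is what gives local closedness.
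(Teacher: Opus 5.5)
Your proposal is correct and matches the paper's proof essentially step for step: you identify $\Mm$ with some $\Theta^{k_0}(\Aa)$ via Theorem~\ref{thm:finite_iff_Theta_const}, apply Lemmas~\ref{lem:markov_locally_closed} and~\ref{lem:locally_closed}, and then use that the nonempty space $\AA$ cannot be a finite union of nowhere dense sets. Your elementary two-set argument already gives that last fact, so Baire is not needed.
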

\begin{proof}
By Theorem~\ref{thm:finite_iff_Theta_const} the sequence $(\Theta^k(\Aa))_{k \ge 0}$ is eventually constant with limit $\Mm$, so $\Mm = \Theta^k(\Aa)$ for some $k \ge 0$. Lemma~\ref{lem:markov_locally_closed} shows that each $M_i$ is locally closed, and Lemma~\ref{lem:locally_closed} shows that each $M_i$ with empty interior is nowhere dense. Since a finite union of nowhere dense sets is nowhere dense, the nonempty space $\AA$ is not a union of finitely many nowhere dense subsets. As $\Mm$ is a finite partition of $\AA$, at least one $M_i$ has nonempty interior.
\end{proof}

\begin{cor}\label{cor:minimal_unique_recurrent}
Suppose that $(\AA,\Gamma)$ is a contractive iterated function system admitting a finite Markov partition, let $\Mm = \{M_i\}_{i \in I}$ be its minimal Markov partition, and let $E$ be the associated directed graph. Then $E$ has a unique recurrent component $R$. If, in addition, $(\AA,\Gamma)$ is weakly open, then $\bigcup_{i \in I \setminus R} M_i$ is nowhere dense.
\end{cor}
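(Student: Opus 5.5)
The first claim should follow by combining two earlier results. By Proposition~\ref{prop:minimal_markov_has_open_set}, the minimal Markov partition $\Mm$ contains some $M_i$ with $\interior(M_i) \ne \varnothing$. Since $(\AA,\Gamma)$ is contractive, Proposition~\ref{prop:weakly_open} then gives a unique recurrent component $R$ of $E$. This part is immediate.

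For the second claim, assume that $(\AA,\Gamma)$ is also weakly open. Proposition~\ref{prop:weakly_open} then gives the equality $R = \{ i \in E^0 \mid \interior(M_i) \ne \varnothing\}$. Hence every $i \in I \setminus R$ has $\interior(M_i) = \varnothing$. By Proposition~\ref{prop:minimal_markov_has_open_set}, each such $M_i$ is locally closed, and so it is nowhere dense by Lemma~\ref{lem:locally_closed}. Because $\Mm$ is finite, the set $I \setminus R$ is finite. A finite union of nowhere dense sets is nowhere dense, since the closure of a finite union is the union of the closures, and a finite union of closed sets with empty interior has empty interior. Therefore $\bigcup_{i \in I \setminus R} M_i$ is nowhere dense.

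There is no real obstacle here. The only point to check is that Proposition~\ref{prop:weakly_open} applies to the minimal partition, and this is exactly what Proposition~\ref{prop:minimal_markov_has_open_set} supplies through the existence of a set with nonempty interior. The statement is essentially a repackaging of these two propositions.
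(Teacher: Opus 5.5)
Your proposal is correct and follows essentially the same route as the paper: Proposition~\ref{prop:minimal_markov_has_open_set} supplies an $M_i$ with nonempty interior, so Proposition~\ref{prop:weakly_open} gives the unique recurrent component. In the weakly open case, the equality $I \setminus R = \{i \mid \interior(M_i) = \varnothing\}$ together with the nowhere-density from Proposition~\ref{prop:minimal_markov_has_open_set} (which you re-derive via Lemma~\ref{lem:locally_closed}) and finiteness of $\Mm$ finishes the argument.
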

\begin{proof}
Proposition~\ref{prop:minimal_markov_has_open_set} provides $i \in I$ with $\interior(M_i) \ne \varnothing$, so Proposition~\ref{prop:weakly_open} gives a unique recurrent component $R$ of $E$. If $(\AA,\Gamma)$ is weakly open, then Proposition~\ref{prop:weakly_open} gives $I \setminus R = \{i \in I \mid \interior(M_i) = \varnothing\}$, so $\bigcup_{i \in I \setminus R}M_i$ is a finite union of nowhere dense sets by Proposition~\ref{prop:minimal_markov_has_open_set}.
\end{proof}

\begin{rmk}\label{rmk:minimality_needed}
Minimality cannot be dropped from Proposition~\ref{prop:minimal_markov_has_open_set} or Corollary~\ref{cor:minimal_unique_recurrent}. 
In a non-minimal finite Markov partition every set may have empty interior, the sets need not be locally closed, and the associated graph may have several recurrent components.

Consider the contractive system $([0,1],\Gamma)$ with $\Gamma = \{\gamma_1,\gamma_2\}$ given by $\gamma_1(x) = x/2$ and $\gamma_2(x) = (x+1)/2$. The maps agree at no point of $[0,1]$, so $\Aa = \{[0,1]\}$ and $\Theta(\Aa) = \Aa$; that is, $\{[0,1]\}$ is the minimal Markov partition. The associated directed graph has a single vertex with two loops, so it has a unique recurrent component.

On the other hand, 
$
\Nn \coloneqq \{\QQ \cap [0,1], \; [0,1] \setminus \QQ\}
$
is also a finite Markov partition, since both $\gamma_1$ and $\gamma_2$ send rationals to rationals and irrationals to irrationals. Both elements of $\Nn$ are dense with empty interior, so by Lemma~\ref{lem:locally_closed} neither is locally closed. The graph of $\Nn$ has two loops at each of its two vertices and no other edges, so it has two recurrent components. 
\end{rmk}

\subsection{The adjacency matrix}

If $\Mm = \{M_i\}_{i \in I}$ is a finite Markov partition of $(\AA,\Gamma)$ and $E$ is the associated directed graph, then we denote by $\AAA_E$ the \emph{adjacency matrix} of $E$: the $I \times I$ matrix with nonnegative integer entries
\[
\AAA_E(i,j) = \big|\{ e \in E^1 \mid r(e) = i \text{ and } s(e) = j\}\big|.
\]	
By Lemma~\ref{lem:markov_same_paths} the quantity $|\Gamma^{(n)}x|$ depends only on the element of $\Mm$ containing $x$, so
\begin{equation}\label{eq:path_growth}
h_n \coloneqq \sup_{x \in \AA}\big|\Gamma^{(n)}x\big| = \max_{i \in I}\big|\Gamma^{(n)}M_i\big|
\end{equation}
is attained, and $1 \le h_n \le |\Gamma|^n$ for every $n \in \NN$.

\begin{prop}\label{prop:rho_intrinsic}
Let $(\AA,\Gamma)$ be an iterated function system admitting a finite Markov partition $\Mm = \{M_i\}_{i \in I}$ and let $E$ be the associated directed graph. Then $\|\AAA_E^n\|_1 = h_n$ for every $n \ge 1$, where $\|\cdot\|_1$ denotes the operator $1$-norm. Consequently,
\[
\rho(\AAA_E) = \lim_{n \to \infty} h_n^{1/n} = \inf_{n \ge 1} h_n^{1/n} .
\]
In particular, $\rho(\AAA_E)$ is independent of the choice of finite Markov partition of $(\AA,\Gamma)$.
\end{prop}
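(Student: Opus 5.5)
The plan is to express the column sums of $\AAA_E^n$ as path counts in $E$, and then use Lemma~\ref{lem:markov_partition_paths} to identify these with $|\Gamma^{(n)}M_i|$.

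First, by induction on $n$, the entry $\AAA_E^n(i,j)$ equals $|iE^nj|$, the number of paths of length $n$ with range $i$ and source $j$. This is the standard identity for adjacency matrices, and it is compatible with the convention $s(e_k) = r(e_{k+1})$ together with $\AAA_E(i,j)$ counting edges from $j$ to $i$. Summing over $i$ gives $\sum_i \AAA_E^n(i,j) = |E^n j|$. Lemma~\ref{lem:markov_partition_paths} then gives $|E^n j| = |\Gamma^{(n)}M_j|$. The operator $1$-norm of a nonnegative matrix is its maximal column sum, so
\[
\|\AAA_E^n\|_1 = \max_{j \in I}|\Gamma^{(n)}M_j| = h_n
\]
by \eqref{eq:path_growth}.

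Second, Gelfand's formula gives $\rho(\AAA_E) = \lim_n \|\AAA_E^n\|_1^{1/n} = \lim_n h_n^{1/n}$. To replace the limit by an infimum, I will show that $(h_n)$ is submultiplicative. This follows either from submultiplicativity of the operator norm, $h_{m+n} = \|\AAA_E^{m+n}\|_1 \le \|\AAA_E^m\|_1\|\AAA_E^n\|_1$, or directly by splitting a path of length $m+n$ at its $n$-th vertex. Fekete's lemma then gives $\lim_n h_n^{1/n} = \inf_n h_n^{1/n}$, and this is consistent with the Gelfand limit. The bounds $h_n \ge 1$ guarantee that no logarithm degenerates.

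Finally, $h_n = \sup_{x}|\Gamma^{(n)}x|$ is defined purely in terms of $(\AA,\Gamma)$, with no reference to $\Mm$. Hence $\rho(\AAA_E)$ does not depend on which finite Markov partition is used.

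I do not expect a serious obstacle. The only care needed is in matching the range/source conventions, so that the path counts come out as column sums (matching the $1$-norm) rather than row sums. The other point to check is that the bijection of Lemma~\ref{lem:markov_partition_paths} is stated for paths ending at $j$, namely $E^n j$, which is exactly what the column sum computes.
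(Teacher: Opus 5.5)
Your proposal is correct and follows essentially the paper's argument: the column sums of $\AAA_E^n$ are $|E^n i| = |\Gamma^{(n)}M_i|$ by Lemma~\ref{lem:markov_partition_paths}, and the spectral radius formula then gives the limit. The only difference is in the infimum step. The paper simply uses $\rho(\AAA_E)^n \le \|\AAA_E^n\|_1 = h_n$ for every $n$, which together with the limit gives the infimum at once, so your route through submultiplicativity and Fekete's lemma is valid but not needed.
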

\begin{proof}
By Lemma~\ref{lem:markov_partition_paths}, the $i$-th column sum of $\AAA_E^n$ is $|E^n i| = |\Gamma^{(n)}M_i|$, so \eqref{eq:path_growth} gives
\[
\|\AAA_E^n\|_1 = \max_{i \in I} \sum_{j \in I} \AAA_E^n(j,i) = \max_{i \in I} \big|\Gamma^{(n)}M_i\big| = h_n.
\]
The spectral radius formula gives the limit, and $\rho(\AAA_E)^n \le \|\AAA_E^n\|_1$ gives the infimum.
\end{proof}

\begin{rmk}\label{rmk:path_growth_rate}
Proposition~\ref{prop:rho_intrinsic} exhibits $\ln(\rho(\AAA_E)) = \lim_{n \to \infty} \tfrac{1}{n}\ln h_n$ as the exponential growth rate of the largest number of $n$-step paths through a point of $\AA$, so $\rho(\AAA_E)$ may be thought of as the \emph{effective number of maps} of $\Gamma$.  Systems with $\rho(\AAA_E) < |\Gamma|$ appear in Subsection~\ref{subsec:examples}.
By Lemma~\ref{lem:graph_properties}\ref{itm:E_no_sinks_sources} the graph $E$ has no sinks and no sources, so $\ln(\rho(\AAA_E))$ is also the topological entropy of the shift of finite type determined by $E$ (see \cite[\S4.4]{LM21}).
\end{rmk}

For contractive systems in which some $M_i$ has nonempty interior, the number of paths of a given length with source at any vertex is bounded by the number with source at some recurrent vertex. Consequently, the transient part of $E$ does not affect the spectral radius of $\AAA_E$. 

\begin{prop}\label{prop:transient_subcritical}
Let $(\AA,\Gamma)$ be a contractive iterated function system admitting a finite Markov partition $\Mm = \{M_i\}_{i \in I}$ with associated directed graph $E$, and suppose that $\interior(M_{i_0}) \ne \varnothing$ for some $i_0 \in I$. Then $E$ has a unique recurrent component $R$ and $\rho(\AAA_R) = \rho(\AAA_E)$, where $\AAA_R$ is the adjacency matrix of $E|_R$. In particular, the conclusion holds for the minimal Markov partition of $(\AA,\Gamma)$, which is finite by Theorem~\ref{thm:finite_iff_Theta_const}.
\end{prop}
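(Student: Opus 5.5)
The unique recurrent component $R$ is supplied directly by Proposition~\ref{prop:weakly_open}, since $\interior(M_{i_0}) \ne \varnothing$. For the final sentence, Proposition~\ref{prop:minimal_markov_has_open_set} gives the required set with nonempty interior in the minimal Markov partition. The substantive claim is therefore $\rho(\AAA_R) = \rho(\AAA_E)$.

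The inequality $\rho(\AAA_R) \le \rho(\AAA_E)$ is standard. The matrix $\AAA_R$ is a principal submatrix of the nonnegative matrix $\AAA_E$, so $\AAA_R^n$ is entrywise dominated by the corresponding block of $\AAA_E^n$. Hence $\|\AAA_R^n\|_1 \le \|\AAA_E^n\|_1$, and the spectral radius formula gives the inequality.

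For the reverse inequality I would compare path counts. By Proposition~\ref{prop:rho_intrinsic}, $\rho(\AAA_E) = \lim_n h_n^{1/n}$ with $h_n = \max_{i}|E^n i|$. Fix a vertex $i \in I$ and $x \in M_i$. By Lemma~\ref{lem:eventually_in_open_sets} there is a composition $\lambda = \gamma_1\circ\cdots\circ\gamma_k$ with $\lambda(\AA) \subseteq \interior(M_{i_0})$, and $i_0 \in R$ by Proposition~\ref{prop:weakly_open}. I need to relate $|\Gamma^{(n)}x|$ to paths starting in $R$. The natural route is to look at $n$-step paths ending at the point $\lambda(x) \in M_{i_0}$ versus those ending at $x$, but the direction is wrong: paths in $\Gamma^{(n)}y$ are built by applying maps to $y$, so $\Gamma^{(n)}x$ does not map injectively into $\Gamma^{(n)}\lambda(x)$. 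Instead I would use an injection $\Gamma^{(n)}x \to \Gamma^{(n+k)}x$ and note that paths from $x$ do eventually enter $R$. The cleaner formulation is graph-theoretic. Every path $e_1\cdots e_n \in E^n i$ either stays in the transient set $T = I\setminus R$ for some initial segment and then enters $R$, or stays in $T$ throughout. Since $R$ is hereditary, once a path enters $R$ it remains there. Decomposing accordingly gives
\[
|E^n i| \le \sum_{m=0}^{n} |E_T^m|\cdot \max_{j\in R}|E^{n-m}_R j|\cdot C,
\]
where $C$ bounds the number of single edges from $T$ to $R$. Everything therefore reduces to showing that $\rho(\AAA_T) \le \rho(\AAA_R)$, or more simply that $\rho(\AAA_S) \le \rho(\AAA_R)$ for every strongly connected component $S \subseteq T$. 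Indeed, $\rho(\AAA_E) = \max_S \rho(\AAA_S)$ over strongly connected components, by the block-triangular Frobenius normal form.

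The key step, and the main obstacle, is showing that $\rho(\AAA_S) \le \rho(\AAA_R)$ for each transient component $S$. For this I would apply the contraction argument on an arbitrary vertex $j \in S$ and $y \in M_j$. Every edge composition along a cycle in $S$ gives an injective map from $M_j$ back into $M_j$. Counting cycles in $S$ of length $n$ amounts to counting distinct points $\gamma_{e_1}\circ\cdots\circ\gamma_{e_n}(y) \in M_j$, and these are distinct because Lemma~\ref{lem:markov_partition_paths} gives a bijection onto $\Gamma^{(n)}y$. Composing each such point with the fixed $\lambda$ places it in $\interior(M_{i_0})$; then choose $z = \lambda(y)$ and compare. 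This again runs into the directionality issue, so I would consider instead the points $w = \mu(z)$ with $z\in\interior(M_{i_0})$ and $\mu$ a path-map from $i_0$ to $j$. Such a map exists by Lemma~\ref{lem:eventually_in_open_sets}, because $\interior(M_{i_0})$ is open and, since $j\notin R$, there is a path from $R$ to $j$ only if $j\in R$. This yields a contradiction unless that path is avoided. The cleanest resolution I expect to use is the following. Because $\lambda(\AA)\subseteq\interior(M_{i_0})$ for a fixed composition of length $k$, every vertex $j$ admits a path of length $k$ ending at $i_0$. Then, for any $n$, prefixing this fixed path to the paths in $S$ gives $|E^{n+k} j| \ge |E^n_S j|$ with sources in $S$ and ranges passing through $i_0$. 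From this I would bound the cycle count in $S$ by the number of paths of length about $n$ through $R$ times a polynomial factor, obtaining $\rho(\AAA_S)\le\rho(\AAA_R)$. I would check the details of this last comparison carefully, as it is where the contractivity hypothesis genuinely enters.
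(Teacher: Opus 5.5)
\textbf{There is a genuine gap in the key step.} Your preliminary steps are fine. Propositions~\ref{prop:weakly_open} and~\ref{prop:minimal_markov_has_open_set} give the unique recurrent component and the final sentence, and $\rho(\AAA_R)\le\rho(\AAA_E)$ is correct. Your path decomposition also correctly reduces the problem to showing $\rho(\AAA_S)\le\rho(\AAA_R)$ for each transient strongly connected component $S$.

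That key inequality is never proved, however, and your closing sketch cannot prove it. The injection you obtain, $|i_0E^{n+k}j|\ge|E^n_Sj|$, bounds the number of paths with source in $S$ from \emph{below}. It says nothing about paths with source in $R$, so it cannot bound the growth in $S$ by the growth in $R$.

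More fundamentally, Lemma~\ref{lem:eventually_in_open_sets} only tells you that every vertex can reach $i_0$, and that fact is already encoded in the uniqueness of $R$. Reachability alone is not enough. Take a vertex $t$ with three loops and one edge with source $t$ and range $r$, where $r$ carries one loop. This graph has no sinks or sources and unique recurrent component $\{r\}$, yet $\rho(\AAA_E)=3>1=\rho(\AAA_R)$. So no argument using only the combinatorics of $E$ plus reachability can work. You need topological information about how path counts vary over $\AA$.

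\textbf{The missing ingredient} is Lemma~\ref{lem:path_count_lsc}: the map $y\mapsto|\Gamma^{(n)}y|$ is lower semicontinuous. Combine this with the density of $\bigcup_{j\in R}M_j$ from Proposition~\ref{prop:weakly_open}, which is where contractivity actually enters. The comparison then follows directly, with no need for the Frobenius normal form:
\begin{enumerate}
\item Given $i\in I$ and $x\in M_i$, choose an open $V\ni x$ with $|\Gamma^{(n)}y|\ge|\Gamma^{(n)}x|$ for all $y\in V$.
\item Pick $y_0\in V\cap M_j$ with $j\in R$.
\item Lemma~\ref{lem:markov_partition_paths} then gives $|E^nj|\ge|E^ni|$.
\end{enumerate}
Hence $h_n=\max_{j\in R}|E^nj|$. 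Since paths with source in $R$ stay in $R$, this equals $\|\AAA_R^n\|_1$. Proposition~\ref{prop:rho_intrinsic} then gives $\rho(\AAA_E)=\lim_n h_n^{1/n}=\rho(\AAA_R)$.
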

\begin{proof}
By Proposition~\ref{prop:weakly_open} the graph $E$ has a unique recurrent component $R$ and $\bigcup_{j \in R} M_j$ is dense in $\AA$. Fix $n \ge 1$ and $i \in I$, and choose $x \in M_i$. By Lemma~\ref{lem:path_count_lsc}, the function $y \mapsto |\Gamma^{(n)}y|$ is lower semicontinuous, so there exists an open neighbourhood $V$ of $x$ such that $|\Gamma^{(n)}y| \ge |\Gamma^{(n)}x|$ for all $y \in V$. Since $V$ is open it intersects $\bigcup_{j \in R} M_j$, so there exist $j \in R$ and $y_0 \in V \cap M_j$. Lemma~\ref{lem:markov_partition_paths} gives
\[
|E^n j| = \big|\Gamma^{(n)}y_0\big| \ge \big|\Gamma^{(n)}x\big| = |E^n i| .
\]
As $i \in I$ was arbitrary, \eqref{eq:path_growth} yields $h_n = \max_{i \in I}|E^n i| = \max_{j \in R}|E^n j|$ for every $n \ge 1$. 
The range of any path with source in $R$ lies in $R$, so for $j \in R$ the number $|E^n j|$ is the $j$-th column sum of $\AAA_R^n$, and so $h_n = \|\AAA_R^n\|_1$. Proposition~\ref{prop:rho_intrinsic} gives $\rho(\AAA_E) = \lim_{n \to \infty} h_n^{1/n}= \rho(\AAA_R)$.

Proposition~\ref{prop:minimal_markov_has_open_set} gives the final statement.
\end{proof}

\section{Classification of KMS states}\label{sec:classification}

This section applies the Markov partition machinery of Section~\ref{sec:markov_partitions} to the framework of Section~\ref{sec:KW} to give a complete classification of the $\KMS$ states for gauge actions on the Kajiwara--Watatani algebras. Throughout this section we fix an iterated function system $(\AA,\Gamma)$ and make the following standing assumption.

\begin{ass}\label{ass}
The system $(\AA,\Gamma)$ admits a finite \emph{minimal} Markov partition  $\Mm = \{M_i\}_{i \in I}$, and $\Gamma$ contains at least two maps that differ as functions on $\AA$.
\end{ass}

If $(\AA,\Gamma)$ admits any finite Markov partition then the first part of Assumption~\ref{ass} holds by Theorem~\ref{thm:finite_iff_Theta_const}. The second part only excludes a trivial case: if all maps in $\Gamma$ coincide on $\AA$, then \eqref{eq:Gamma_invariance} forces this map to be a homeomorphism of $\AA$, and if $(\AA,\Gamma)$ is contractive, then $\AA$ is a single point. We let $E$ be the directed graph associated to $\Mm$, and let $\AAA_E$ be its adjacency matrix. We write $\rho \coloneqq \rho(\AAA_E)$ for the spectral radius of $\AAA_E$. More generally, given $S \subseteq I$ we write $\AAA_S$ for the adjacency matrix of the subgraph $E|_S$. By Proposition~\ref{prop:rho_intrinsic}, $\rho$ depends only on $(\AA,\Gamma)$, and not on a choice of finite Markov partition.

The \emph{critical inverse temperature} $\beta_c \coloneqq \ln (\rho)$ divides the analysis of $\KMS$ states into three regimes. For $0 < \beta < \beta_c$ we show in Corollary~\ref{cor:lower_bound_contractive} that for contractive systems, there are no $\KMS_\beta$ states. We show in Theorem~\ref{thm:inf_type} that for contractive systems there is a unique infinite-type $\KMS_{\beta_c}$ state $\varphi_\infty$, whose associated measure $\mu_\infty$ is the intrinsic Hutchinson measure mentioned in the introduction. For $\beta > \beta_c$ we show in Theorem~\ref{thm:finite_type_classification} that every $\KMS_\beta$ state is of finite type, and the full simplex of $\KMS_\beta$ states is parametrised by probability measures on $p_1(B_\Gamma)$.
 In general, $\beta_c \le \ln|\Gamma|$. By Proposition~\ref{prop:rho_intrinsic}, $\beta_c = \ln|\Gamma|$ if and only if $\sup_{x \in \AA} |\Gamma^{(n)} x| = |\Gamma|^n$ for every $n \in \NN$, in which case we recover the critical inverse temperature of \cite{IKW07}. 

\begin{rmk}\label{rmk:beta_c_positive}
If $(\AA,\Gamma)$ is contractive, then $\rho > 1$, so $\beta_c > 0$. Indeed, by Assumption~\ref{ass} the set $U \coloneqq \{y \in \AA \mid |\Gamma y| \ge 2\}$ is nonempty, and it is open by Lemma~\ref{lem:path_count_lsc}. Lemma~\ref{lem:eventually_in_open_sets} gives $\gamma_1,\ldots,\gamma_k \in \Gamma$ with $\gamma_1 \circ \cdots \circ \gamma_k(\AA) \subseteq U$, so for every $y \in \AA$ we have $|\Gamma^{(k+1)}y| \ge 2$, and hence $|\Gamma^{(m(k+1))}y| \ge 2^m$ for all $m \in \NN$. Proposition~\ref{prop:rho_intrinsic} then gives $\rho \ge 2^{1/(k+1)} > 1$. In particular, the results of this section at $\beta = \beta_c$ lie within the range $\beta > 0$ of Theorem~\ref{thm:LN_machine}.
\end{rmk}

Our workhorse is the following lemma.
\begin{lem}\label{lem:PF_operator_markov_partition}
Suppose that $\Mm = \{M_i\}_{i \in I}$ is a finite Markov partition of $(\AA,\Gamma)$ and let $E$ be the associated directed graph with adjacency matrix $\AAA_E$. Let $\mu \in A_+^*$ and let $\vec{\mu} = (\mu(M_i))_{i \in I} \in \RR_+^I$. For each $n \ge 1$ and $i \in I$ we have
\[
F^n(\mu)(M_i) = \sum_{j \in I} \AAA_E^n(i,j) \mu(M_j) = (\AAA_E^n \vec{\mu})_i.
\]
In particular, $\|F^n(\mu)\| = \|\AAA_E^n \vec{\mu}\|_1$ for all $n \in \NN$.
\end{lem}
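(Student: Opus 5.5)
The proof is by induction on $n$, and the main work is the case $n=1$. Once we know $F(\mu)(M_i) = \sum_{j} \AAA_E(i,j)\mu(M_j)$ for every $\mu \in A^*_+$, we apply it to $F^{n-1}(\mu) \in A^*_+$; positivity is preserved by Lemma~\ref{lem:PF_operator}. This gives $F^n(\mu)(M_i) = (\AAA_E \overrightarrow{F^{n-1}(\mu)})_i = (\AAA_E^n\vec\mu)_i$. The norm statement then follows because a positive functional has norm equal to its value at $1_\AA$. Since $\Mm$ is a finite Borel partition, $1_\AA = \sum_i 1_{M_i}$, so
\[
\|F^n(\mu)\| = F^n(\mu)(1_\AA) = \sum_i F^n(\mu)(M_i) = \|\AAA_E^n\vec\mu\|_1,
\]
using nonnegativity of the entries. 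The case $n=0$ is trivial.

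For $n=1$ we start from \eqref{eq:PF_characteristic}, which gives $F(\mu)(M_i) = \int_\AA |\Gamma y \cap M_i|\, d\mu(y)$. We then split the integral over the partition as $\sum_{j \in I} \int_{M_j} |\Gamma y \cap M_i|\, d\mu(y)$. This is legitimate because the integrand is bounded and Borel: it equals $\Gamma^*(1_{M_i})(y)$, which is Borel by Lemma~\ref{lem:dual_Hutchinson_op}.

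The key step is to show that for $y \in M_j$ the integrand equals $\AAA_E(i,j)$, the number of edges $e$ with $s(e)=j$ and $r(e)=i$. This is the $n=1$ case of Lemma~\ref{lem:markov_partition_paths}. Fix $y \in M_j$ and let $P$ be the partition with $M_j \subseteq \AA_P$. The distinct points of $\Gamma y$ correspond bijectively to the blocks $\Lambda \in P$, since maps in different blocks disagree at $y$. The blocks in turn correspond bijectively to the edges $[\gamma,j]$ with source $j$. For each such edge, $\gamma(y) \in M_i$ exactly when $\gamma(M_j) \subseteq M_i$, that is, when $r([\gamma,j]) = i$, by \ref{itm:markov_property}. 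Hence $|\Gamma y \cap M_i| = |iE^1j| = \AAA_E(i,j)$ is constant on $M_j$, and integrating gives $\AAA_E(i,j)\mu(M_j)$.

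The only real care needed is this bookkeeping of distinct images versus edges, and it is supplied by Lemma~\ref{lem:markov_unique_image} together with the definition of the equivalence classes $[\gamma,j]$. Everything else is formal.
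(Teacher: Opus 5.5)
Your proposal is correct and follows the paper's proof. You start from \eqref{eq:PF_characteristic}, split the integral over the partition, show that $|\Gamma y \cap M_i| = \AAA_E(i,j)$ for $y \in M_j$, iterate using positivity of $F$, and sum over $i$ for the norm identity. Your block-to-edge bookkeeping just spells out the step the paper summarises as ``the definition of $E$ implies''.
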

\begin{proof}
By \eqref{eq:PF_characteristic},
\begin{align*}
F(\mu)(M_i) = \int_\AA |\Gamma y \cap M_i|\, d\mu(y)
= \sum_{j \in I} \int_{M_j} |\Gamma y \cap M_i|\, d\mu(y).
\end{align*}
The definition of $E$ implies that for each $j \in I$ and $y \in M_j$ we have $|\Gamma y \cap M_i| = \AAA_E(i,j)$. Hence,
\[
F(\mu)(M_i) = \sum_{j \in I} \AAA_E(i,j) \mu(M_j) = (\AAA_E \vec{\mu})_i.
\]
Iterating yields the formula for $F^n(\mu)(M_i)$.
For the norm equality we compute \[\|F^n(\mu)\| = F^n(\mu)(\AA) = \sum_{i \in I} F^n(\mu)(M_i) = \sum_{i \in I} (\AAA_E^n \vec{\mu})_i = \|\AAA_E^n \vec{\mu}\|_1. \qedhere
\]
\end{proof}

By Theorem~\ref{thm:LN_machine} classifying $\KMS_\beta$ states on $\Oo_{\Gamma}$ reduces to classifying $\beta$-subinvariant tracial states $\mu \in T_\beta(A)$. By Lemma~\ref{lem:PF_operator_markov_partition}, $F_\beta(\mu)\le\mu$ implies $\AAA_E\vec\mu\le e^\beta\vec\mu$ for the mass vector $\vec\mu=(\mu(M_i))_i$, which brings Perron--Frobenius theory to bear. 

Recall that a nonnegative matrix $Q$ is irreducible if for every pair of indices $i,j$ there exists $n \ge 1$ such that $Q^n(i,j) > 0$, and that a directed graph with no sinks is strongly connected if and only if its adjacency matrix is irreducible. If $R$ is a recurrent component of $E$, then every edge of $E$ with source in $R$ has range in $R$, so $E|_R$ inherits from Lemma~\ref{lem:graph_properties}\ref{itm:E_no_sinks_sources} the property of having no sinks. Since $E|_R$ is strongly connected, it follows that $\AAA_R$ is irreducible.

We first establish a lower bound on the values of $\beta$ for which $T_\beta(A)$ is nonempty.
\begin{prop}\label{prop:lower_bound}
	Let $\beta > 0$. For each recurrent component $R$ of $E$, let $\AAA_R$ be the adjacency matrix of the subgraph $E|_R$. If $\mu \in T_{\beta}(A)$, then
	\[
	\beta \ge \min \big\{\ln(\rho(\AAA_R)) \mid R \text{ is a recurrent component of } E\big\}.
	\]
\end{prop}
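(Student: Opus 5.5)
The plan is to pass from $\mu$ to its mass vector and then apply Perron--Frobenius theory to finite nonnegative matrices. Given $\mu \in T_\beta(A)$, put $\vec\mu = (\mu(M_i))_{i\in I} \in \RR_+^I$. By Lemma~\ref{lem:PF_operator_markov_partition}, evaluating $F_\beta(\mu)\le\mu$ on each Borel set $M_i$ gives
\[
\AAA_E\vec\mu \le e^\beta\vec\mu
\]
entrywise. Since $\mu$ is a state, $\|\vec\mu\|_1 = \mu(\AA) = 1$, so $\vec\mu$ is a nonzero nonnegative vector. The task is to extract from it a recurrent component $R$ on which $\vec\mu$ restricts to a nonzero subinvariant vector for $\AAA_R$.

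First step: show that $S \coloneqq \{i \in I \mid \mu(M_i)>0\}$ contains a recurrent component. Note that $S$ is closed under following edges forward. If $j\in S$ and $e$ is an edge with $s(e)=j$, $r(e)=i$, then $e^\beta\mu(M_i) \ge (\AAA_E\vec\mu)_i \ge \AAA_E(i,j)\mu(M_j) > 0$, so $i \in S$. Now take any $j\in S$. Because $E$ is finite and has no sinks by Lemma~\ref{lem:graph_properties}, following edges from $j$ eventually enters a strongly connected component $R$ that is minimal in the partial order $\le$ on $E^0/{\approx}$. Such an $R$ is recurrent, and by forward closure $R\subseteq S$.

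Second step: restrict the inequality to $R$. For $i\in R$ we have $(\AAA_R\vec\mu|_R)_i \le (\AAA_E\vec\mu)_i \le e^\beta\mu(M_i)$, because dropping the terms with $j\notin R$ only decreases the sum. Thus $v\coloneqq\vec\mu|_R$ is a strictly positive vector with $\AAA_R v\le e^\beta v$. The matrix $\AAA_R$ is irreducible, as noted just before the statement. Let $w$ be a positive left Perron eigenvector of $\AAA_R$, so $w^{T}\AAA_R = \rho(\AAA_R)w^{T}$. Pairing with $v$ gives
\[
\rho(\AAA_R)\, w^{T}v = w^{T}\AAA_R v \le e^\beta\, w^{T}v .
\]
Since $w^{T}v>0$, this yields $\rho(\AAA_R)\le e^\beta$, hence $\beta \ge \ln\rho(\AAA_R)$, which is at least the stated minimum. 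If $\rho(\AAA_R)$ could be $0$ the logarithm would be problematic, but $E|_R$ has no sinks, so $\rho(\AAA_R)\ge 1$.

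The main obstacle is the first step: guaranteeing that the support of $\vec\mu$ actually reaches a recurrent component. This is exactly where the entrywise inequality must propagate positivity forward along edges, and where finiteness and the absence of sinks are used. The remaining steps are the standard Perron--Frobenius subinvariance argument.
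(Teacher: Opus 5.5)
Your proof is correct and follows the paper's argument essentially step for step. Both use the mass vector $\vec\mu$ and its support $S$, which is closed under following edges (the paper phrases this contrapositively: there are no edges from $S$ to $I\setminus S$). Both then locate a recurrent component $R\subseteq S$ and bound $\rho(\AAA_R)\le e^\beta$ for the irreducible restriction. The only difference is that you prove this last bound directly by pairing with a positive left Perron eigenvector, where the paper simply cites Seneta's subinvariance theorem.
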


\begin{proof}
Suppose that $\mu$ is a Borel probability measure on $\AA$ satisfying $ F_{\beta}(\mu) \le \mu$. Let $\vec{\mu} = (\mu(M_i))_{i \in I}$, so $\vec{\mu}_i = \mu(M_i)$. Then $\vec{\mu} \ge 0$ entrywise and $\|\vec{\mu}\|_1 = 1$. 
By Lemma~\ref{lem:PF_operator_markov_partition}, for each $j \in I$ we have
$
\vec{\mu}_j \ge F_{\beta}(\mu)(M_j) = e^{-\beta} (\AAA_E \vec{\mu})_j.
$
The set $S \coloneqq \{i \in I \mid \vec{\mu}_i > 0\}$ is nonempty since $\|\vec{\mu}\|_1 = 1$, and the subgraph $E|_S$ of $E$ contains at least one recurrent component $R$. If $j \in I \setminus S$, then
\[
0 \le \sum_{i \in I} \AAA_E(j,i) \vec{\mu}_i = (\AAA_E \vec{\mu})_j \le e^{\beta} \vec{\mu}_j = 0.
\]
Since each term of the sum is nonnegative, $\AAA_E(j,i) = 0$ for all $i \in S$. Hence, there are no edges in $E$ with source in $S$ and range in $I \setminus S$, so $R$ is also a recurrent component of $E$.

Let $\vec{\mu}_R \coloneqq (\vec{\mu}_i)_{i \in R}$. Since $R \subseteq S$ we have $\vec{\mu}_R > 0$ entrywise. Let $\AAA_R$ be the  adjacency matrix of $E|_R$. For each $j \in R$,
\[
(\AAA_R \vec{\mu}_R)_j = \sum_{i \in R} \AAA_E(j,i) \vec{\mu}_i \le \sum_{i \in S} \AAA_E(j,i) \vec{\mu}_i = (\AAA_E \vec{\mu})_j \le e^{\beta} \vec{\mu}_j.
\]
Since $\AAA_R$ is irreducible \cite[Theorem~1.6]{Sen06} implies that $\rho(\AAA_R) \le e^{\beta}$. The claim follows. 
\end{proof}
\begin{cor}\label{cor:lower_bound_contractive}
Suppose that $(\AA,\Gamma)$ is contractive and let $\beta > 0$. If $\mu \in T_{\beta}(A)$, then $\beta \ge \beta_c$. In particular, if $0 < \beta < \beta_c$, then $T_{\beta}(A) = \varnothing$ and there are no $\KMS_\beta$ states on $\Oo_{\Gamma}$ or on $\Tt_{\Gamma}$.
\end{cor}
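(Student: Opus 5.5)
The corollary follows by feeding Proposition~\ref{prop:transient_subcritical} into Proposition~\ref{prop:lower_bound}. Under Assumption~\ref{ass} we work with the minimal Markov partition $\Mm$, which is finite, and its graph $E$. Since $(\AA,\Gamma)$ is contractive, Corollary~\ref{cor:minimal_unique_recurrent} shows that $E$ has a unique recurrent component $R$. Proposition~\ref{prop:transient_subcritical} then gives $\rho(\AAA_R) = \rho(\AAA_E) = \rho$.

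Now let $\mu \in T_\beta(A)$. Proposition~\ref{prop:lower_bound} gives
\[
\beta \ge \min\{\ln \rho(\AAA_{R'}) \mid R' \text{ a recurrent component of } E\}.
\]
The minimum is over the single component $R$, so it equals $\ln \rho(\AAA_R) = \ln \rho = \beta_c$. Hence $\beta \ge \beta_c$.

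For the second assertion, take $0 < \beta < \beta_c$. The first part forces $T_\beta(A) = \varnothing$. Theorem~\ref{thm:LN_machine} identifies the $\KMS_\beta$ states on $\Tt_\Gamma$ affinely with $T_\beta(A)$, so $\Tt_\Gamma$ has no $\KMS_\beta$ states. Every $\KMS_\beta$ state on $\Oo_\Gamma$ comes from one on $\Tt_\Gamma$, either by the descent statement in that theorem or by composing with the equivariant quotient map $q$. So $\Oo_\Gamma$ has none either.

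The argument is bookkeeping once the earlier results are in place. The one point to check is that the hypotheses match. Proposition~\ref{prop:lower_bound} is stated for the graph of the fixed finite Markov partition, and Proposition~\ref{prop:transient_subcritical} needs some $M_i$ with nonempty interior. Both are available for the minimal partition by Proposition~\ref{prop:minimal_markov_has_open_set}, so I would apply both results to that same partition $\Mm$.
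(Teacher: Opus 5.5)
Your proposal is correct and follows the paper's proof: Proposition~\ref{prop:transient_subcritical} gives the unique recurrent component $R$ with $\rho(\AAA_R)=\rho$, so the minimum in Proposition~\ref{prop:lower_bound} is $\ln(\rho)=\beta_c$. Theorem~\ref{thm:LN_machine} then rules out $\KMS_\beta$ states on $\Tt_\Gamma$ and $\Oo_\Gamma$ for $0<\beta<\beta_c$.
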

\begin{proof}
By Proposition~\ref{prop:transient_subcritical} the graph $E$ has a unique recurrent component $R$ and $\rho(\AAA_R) = \rho$, so the minimum in Proposition~\ref{prop:lower_bound} is $\ln(\rho) = \beta_c$. The final statement follows from Theorem~\ref{thm:LN_machine}.
\end{proof}

When $\beta > \beta_c$ the powers of $F_\beta$ decay geometrically in norm and we use this to show that every $\KMS_\beta$ state is of finite type in this regime. 
\begin{lem}\label{lem:spectral_decay}
Let $\beta > \beta_c$. There exist $C \ge 1$ and $0 < q < 1$ such that
	\begin{equation}\label{eq:spectral_decay}
	\|F_\beta^n(\mu)\| \le C\, q^n\, \|\mu\| 
	\end{equation}
	for all $\mu \in A_+^*$ and $n \in \NN$. 
	In particular, $\sum_{n=0}^\infty F_\beta^n(\mu)$ converges absolutely in norm for all $\mu \in A_+^*$. 
\end{lem}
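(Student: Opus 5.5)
The plan is to reduce the estimate to a statement about powers of the finite matrix $\AAA_E$, using Lemma~\ref{lem:PF_operator_markov_partition}. For $\mu \in A_+^*$ the measure $F_\beta^n(\mu)$ is positive. Its total mass satisfies
\[
\|F_\beta^n(\mu)\| = e^{-n\beta}\|\AAA_E^n\vec\mu\|_1 \le e^{-n\beta}\|\AAA_E^n\|_1\,\|\vec\mu\|_1,
\]
where $\vec\mu = (\mu(M_i))_{i\in I}$ and $\|\cdot\|_1$ is the operator $1$-norm. Because $\Mm$ is a finite Borel partition, $\|\vec\mu\|_1 = \mu(\AA) = \|\mu\|$. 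By Proposition~\ref{prop:rho_intrinsic} we also have $\|\AAA_E^n\|_1 = h_n$. It therefore suffices to show that $e^{-n\beta}h_n \le C q^n$ for some $C \ge 1$ and $q \in (0,1)$.

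For this, fix $q$ with $\rho e^{-\beta} < q < 1$, which is possible since $\beta > \beta_c = \ln\rho$. Set $r \coloneqq q e^{\beta}$, so that $r > \rho$. By the spectral radius formula in Proposition~\ref{prop:rho_intrinsic}, $h_n^{1/n} \to \rho < r$. Hence $h_n \le r^n$ for all $n \ge N$, for some $N$. For the finitely many $n < N$, put
\[
C \coloneqq \max\bigl(1, \max_{n<N} h_n r^{-n}\bigr).
\]
Then $h_n \le C r^n$ for every $n \in \NN$, including $n = 0$ where $h_0 = 1$ and $F_\beta^0 = \id$. This gives
\[
e^{-n\beta}h_n \le C (r e^{-\beta})^n = C q^n .
\]
Alternatively, one can simply invoke Gelfand's formula for the matrix $\AAA_E$ directly.

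For the final assertion, the geometric bound gives $\sum_n \|F_\beta^n(\mu)\| \le C\|\mu\|/(1-q) < \infty$. Since $A^*$ is a Banach space, the series $\sum_n F_\beta^n(\mu)$ converges absolutely in norm.

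The only point needing care is the identity $\|\nu\| = \nu(\AA) = \sum_i \nu(M_i)$ for positive $\nu$. It holds because positive functionals have norm $\nu(1_\AA)$ and the $M_i$ are finitely many disjoint Borel sets covering $\AA$. No step is a real obstacle here; contractivity is not even needed, only finiteness of the Markov partition.
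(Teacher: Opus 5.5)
Your proposal is correct and is essentially the paper's proof: both use Lemma~\ref{lem:PF_operator_markov_partition} and $\|\vec\mu\|_1 = \mu(\AA) = \|\mu\|$ to reduce the claim to bounding $e^{-n\beta}\|\AAA_E^n\|_1$, and then apply the spectral radius formula with an intermediate growth rate. That rate is $r = qe^{\beta}$ in your notation and $e^{\beta'}$ with $\beta' \in (\beta_c,\beta)$ in the paper's; rewriting $\|\AAA_E^n\|_1$ as $h_n$ and making $C$ explicit are only cosmetic differences, and you are right that contractivity is not used.
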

\begin{proof}
	Fix $\mu \in A_+^*$ and let $\vec{\mu} = (\mu(M_i))_{i \in I}$. Then $\|\vec{\mu}\|_1 = \mu(\AA) = \|\mu\|$. By Lemma~\ref{lem:PF_operator_markov_partition}, 
	\[
	\|F_\beta^n(\mu)\| = e^{-n\beta}\|\AAA_E^n \vec{\mu}\|_1 \le e^{-n\beta}\|\AAA_E^n\|_1\,\|\mu\|,
	\]
where $\|\AAA_E^n\|_1 = \max_{i \in I} \sum_{j \in I} \AAA_E^n(j,i)$ is the operator $1$-norm of $\AAA_E^n$. The spectral radius formula gives $\lim_{n \to \infty} \|\AAA_E^n\|_1^{1/n} = \rho(\AAA_E) = e^{\beta_c}$, so for any $\beta' \in (\beta_c,\beta)$ there exists $C \ge 1$ such that $\|\AAA_E^n\|_1 \le C e^{n\beta'}$ for all $n \in \NN$. Setting $q \coloneqq e^{\beta' - \beta} < 1$ gives \eqref{eq:spectral_decay}. Now,
\[
\sum_{n=0}^\infty \|F_\beta^n(\mu)\| \le C \|\mu\| \sum_{n=0}^\infty q^n = \frac{C}{1-q}\, \|\mu\| < \infty. \qedhere
\]
\end{proof}

\begin{prop}\label{prop:above_critical_finite_type}
Let $\beta > \beta_c$. Then $T_\beta^\infty(A) = \varnothing$ and $T_\beta(A) = T_\beta^f(A)$. In particular, every $\KMS_\beta$ state on $\Oo_{\Gamma}$ and on $\Tt_{\Gamma}$ is of finite type.
\end{prop}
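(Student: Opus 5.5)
The plan is to feed Lemma~\ref{lem:spectral_decay} into the Laca--Neshveyev decomposition \eqref{eq:infinite_finite_decomposition}, handling the infinite and finite parts in turn.

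\emph{Infinite part.} Let $\mu \in T_\beta(A)$ and write $\mu = \mu^\infty + \mu^f$. By \eqref{eq:infinite_finite_decomposition}, $\mu^\infty = \lim_n F_\beta^n(\mu)$ weak-$*$. Lemma~\ref{lem:spectral_decay} gives $\|F_\beta^n(\mu)\| \le C q^n \|\mu\| \to 0$, so this weak-$*$ limit is $0$. Hence $\mu^\infty = 0$ and $\mu = \mu^f$, which is of finite type.

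\emph{Emptiness of $T_\beta^\infty(A)$.} Suppose $\nu \in T^\infty_\beta(A)$. Then $F_\beta(\nu) = \nu$, so $\nu = F_\beta^n(\nu)$ for every $n$. This gives $1 = \|\nu\| = \|F_\beta^n(\nu)\| \le C q^n$ for all $n$, which is impossible as $q < 1$. So $T_\beta^\infty(A) = \varnothing$.

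\emph{Equality $T_\beta(A) = T_\beta^f(A)$.} The inclusion $T^f_\beta(A) \subseteq T_\beta(A)$ was already noted after the definition of the types. The reverse inclusion follows from the infinite-part step. One can also see it directly: set $\mu_0 = \mu - F_\beta(\mu) \ge 0$. Then $\sum_{k=0}^n F_\beta^k(\mu_0) = \mu - F_\beta^{n+1}(\mu)$ telescopes, and the right side tends to $\mu$ in norm by Lemma~\ref{lem:spectral_decay}. So $\mu = \sum_k F_\beta^k(\mu_0)$, which is exactly the finite-type condition.

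\emph{KMS states.} By Theorem~\ref{thm:LN_machine}, every $\KMS_\beta$ state on $\Tt_\Gamma$ or $\Oo_\Gamma$ restricts to some $\mu \in T_\beta(A)$; for $\Oo_\Gamma$ this uses that its $\KMS_\beta$ states correspond to a subset of $T_\beta(A)$. Each such $\mu$ lies in $T_\beta^f(A)$, so by definition every such state is of finite type.

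The only point needing care is that norm convergence implies weak-$*$ convergence. This is immediate, so I expect no real obstacle.
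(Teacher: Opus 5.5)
Your proposal is correct and follows essentially the same route as the paper. Like the paper, you use Lemma~\ref{lem:spectral_decay} to kill $\mu^\infty$ in the decomposition \eqref{eq:infinite_finite_decomposition}, derive the contradiction $1 = \|F_\beta^n(\nu)\| \le Cq^n$ for a fixed point, and conclude via Theorem~\ref{thm:LN_machine}. Your telescoping identity $\sum_{k=0}^{n} F_\beta^k(\mu_0) = \mu - F_\beta^{n+1}(\mu)$ is a valid extra check that gets the finite-type representation directly, without appealing to the Laca--Neshveyev decomposition.
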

\begin{proof}
Fix $\mu \in T_\beta(A)$ and write $\mu = \mu^{\infty} + \mu^{f}$ as in \eqref{eq:infinite_finite_decomposition}.
Lemma~\ref{lem:spectral_decay} gives $\mu^{\infty} = \lim_n F_\beta^n(\mu) = 0$. Hence $\mu = \mu^{f}$ is a tracial state of finite type, so $T_\beta(A) = T_\beta^f(A)$. If $\mu \in T_\beta^\infty(A)$, then $\mu = F_\beta^n(\mu) \to 0$ in norm by Lemma~\ref{lem:spectral_decay}, contradicting $\mu(1_\AA) = 1$, so $T_\beta^{\infty}(A) = \varnothing$.  By Theorem~\ref{thm:LN_machine} every $\KMS_\beta$ state on $\Oo_{\Gamma}$ and on $\Tt_{\Gamma}$ arises from a $\beta$-subinvariant tracial state on $A$, so every $\KMS_\beta$ state is of finite type.
\end{proof}

\subsection{Infinite-type measures}\label{subsec:inf_type}

This subsection establishes the existence and uniqueness of an infinite-type $\KMS$ state at the critical inverse temperature $\beta_c = \ln(\rho)$. The central object is a unique $F_{\beta_c}$-invariant Borel probability measure $\mu_\infty$ on $\AA$, that we call the \emph{intrinsic Hutchinson measure}. The existence and uniqueness of $\mu_{\infty}$ depend on $(\AA,\Gamma)$ being \emph{contractive}, so we assume that $\AA$ admits a compatible metric $d$ such that each $\gamma \in \Gamma$ is a contraction with respect to $d$.

\begin{thm}[Existence and uniqueness of an invariant measure]\label{thm:inf_type}
	Let $(\AA,\Gamma)$ be a contractive iterated function system admitting a finite Markov partition. Then $\beta_c \coloneqq \ln(\rho)$ is the unique value of $\beta$ such that there exists a Borel probability measure $\mu$ on $\AA$ satisfying $F_{\beta}(\mu) = \mu$. Moreover,
	\begin{enumerate}[label=(\roman{enumi})]
		\item there exists a unique probability measure $\mu_\infty$ on $\AA$ satisfying $F_{\beta_c}(\mu_\infty) = \mu_\infty$; and
		\item if $\Mm = \{M_i\}_{i \in I}$ is the minimal Markov partition, then the vector $(\mu_\infty(M_i))_{i \in R}$ is the unique right Perron--Frobenius eigenvector of $\AAA_R$ with unit $1$-norm, and $\mu_\infty(M_i) = 0$ for all $i \in I \setminus R$.
	\end{enumerate}
	In particular, the $\KMS_{\beta_c}$ state that Theorem~\ref{thm:LN_machine} associates to $\mu_\infty$ is the unique $\KMS$ state for the gauge action on $\Oo_{\Gamma}$ at inverse temperature $\beta_c$, and it is of infinite type.
\end{thm}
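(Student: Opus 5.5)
Throughout, $\Mm = \{M_i\}_{i\in I}$ is the minimal Markov partition, which is finite by Theorem~\ref{thm:finite_iff_Theta_const}, $E$ is its graph, and $R$ is the unique recurrent component from Corollary~\ref{cor:minimal_unique_recurrent}, with $\rho(\AAA_R)=\rho$ by Proposition~\ref{prop:transient_subcritical}.

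\textbf{Only $\beta_c$ is possible, and the mass vector is forced.} Let $\mu$ be a probability measure with $F_\beta(\mu)=\mu$. By Lemma~\ref{lem:PF_operator_markov_partition} its mass vector $\vec\mu=(\mu(M_i))_{i\in I}$ satisfies $\AAA_E\vec\mu=e^\beta\vec\mu$. The argument of Proposition~\ref{prop:lower_bound} shows that the support $S$ of $\vec\mu$ is closed under ranges of edges. Since $R$ is the unique recurrent component, $S\supseteq R$. Next I restrict to the transient set $T=I\setminus R$. No edge goes from $R$ into $T$, so $\AAA_T\vec\mu_T=e^\beta\vec\mu_T$. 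Because $\vec\mu_R>0$ on the irreducible block, $\AAA_R\vec\mu_R\le e^\beta\vec\mu_R$, and the subinvariance theorem \cite[Theorem~1.6]{Sen06} gives $\rho\le e^\beta$. Corollary~\ref{cor:lower_bound_contractive} gives the same inequality. For the reverse direction I want $\rho(\AAA_T)<\rho$.

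I would derive this strictness from Proposition~\ref{prop:transient_subcritical}. Every path counted from a vertex of $T$ is dominated by a path count from some vertex of $R$. If $\rho(\AAA_T)=\rho$, then a transient strongly connected class $C$ with spectral radius $\rho$ has an edge into $R$ or into a further class. The path counts from $C$ would then grow like $n\rho^n$, while the counts from $R$ are $O(\rho^n)$ by Perron--Frobenius for irreducible $\AAA_R$. This contradicts the domination. Granting this, $\vec\mu_T$ is an eigenvector of $\AAA_T$ with eigenvalue $e^\beta\ge\rho>\rho(\AAA_T)$, so $\vec\mu_T=0$. Then $\vec\mu_R$ is a positive eigenvector of irreducible $\AAA_R$, which forces $e^\beta=\rho$ and makes $\vec\mu_R$ the normalised Perron vector. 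This proves the first claim and (ii), given existence.

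\textbf{Existence.} I would construct $\mu_\infty$ in the spirit of \cite[\S4]{Hut81}. Let $v$ be the Perron vector of $\AAA_R$. Pick points $x_i\in M_i$ for $i\in R$ and set $\nu_0=\sum_{i\in R}v_i\delta_{x_i}$. Then $F_{\beta_c}^n(\nu_0)$ has the same mass vector $v$ for every $n$, and it is supported on endpoints of paths. I then need a limit. The plan is to show that $F_{\beta_c}^n(\nu_0)$ is Cauchy in the dual Lipschitz (Kantorovich) metric. For $a$ Lipschitz, $F_{\beta_c}^n(\nu_0)(a)$ is a sum over $n$-paths from $x_i$ of $a(x_0)$, weighted by $\rho^{-n}$. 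Changing the base points $x_i$ to other points $x_i'\in M_i$ moves each path endpoint by at most $c_{\max}^n\diam\AA$, because Lemma~\ref{lem:markov_partition_paths} matches paths bijectively through the same maps. Since $\rho^{-n}|E^ni|$ is bounded, the change is $O(c_{\max}^n)$. Comparing $F^{n+m}_{\beta_c}(\nu_0)$ with $F^n_{\beta_c}(\nu_0)$, I write the former as $F^n_{\beta_c}$ applied to $F^m_{\beta_c}(\nu_0)$. The latter has mass vector $v$ but spreads its mass inside each $M_i$, so the same base-point estimate gives Cauchyness.

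Let $\mu_\infty$ be the weak-$*$ limit. \textbf{The main obstacle} is that $F$ is not weak-$*$ continuous (Example~\ref{ex:not_weak_star_continuous}), so $F_{\beta_c}(\mu_\infty)=\mu_\infty$ is not automatic. I would handle it by proving that $\mu_\infty(M_i)=v_i$ and that $\mu_\infty$ charges the nowhere dense sets in $\partial M_i$ with zero mass. On each $M_i$ the map $y\mapsto\Gamma^*(a)(y)$ is continuous, being a fixed finite sum of $a\circ\gamma$; discontinuities occur only across the boundaries of the locally closed sets $M_i$. For the mass, I would use the estimate above: $F^n_{\beta_c}(\nu_0)$ of a thin neighbourhood of $\partial M_i$ is controlled by the Perron weights of the finitely many vertices of $E$ whose images land near the boundary. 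Finally, uniqueness of the $\KMS_{\beta_c}$ state follows. By Theorem~\ref{thm:LN_machine} and the decomposition \eqref{eq:infinite_finite_decomposition}, a finite-type component at $\beta_c$ would require $\sum_n F_{\beta_c}^n$ to converge on a nonzero seed. Its mass vector grows without bound under the irreducible $\AAA_R$ at spectral radius $e^{\beta_c}$, so this fails. Hence every $\KMS_{\beta_c}$ state is of infinite type, so it equals $\mu_\infty$ by (i).
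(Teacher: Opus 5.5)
\textbf{Gap: invariance of the limit measure.} Your combinatorial step is fine. You reach $\vec\mu_T=0$ through the spectral gap $\rho(\AAA_T)<\rho$ rather than the paper's structural Lemma~\ref{lem:transient_vanishing}, which is a legitimate alternative. Your base-point Cauchy estimate is essentially the paper's $d_{\Mm}$-contraction. It also gives the uniqueness in (i), which you never state: any $F_{\beta_c}$-invariant $\tau$ has mass vector $v$, so $d_{\Lip}(\tau,F^n_{\beta_c}(\nu_0))=d_{\Lip}(F^n_{\beta_c}(\tau),F^n_{\beta_c}(\nu_0))=O(c_{\max}^n)$.

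The problem is the step you flag as the main obstacle. Your plan is to prove $\mu_\infty(M_i)=v_i$ and $\mu_\infty(\partial M_i)=0$, and then use continuity of $\Gamma^*(a)$ on each $M_i$. The boundary claim is false under the theorem's hypotheses. If $k\in R$ and $\interior(M_k)=\varnothing$, then $M_k\subseteq\partial M_k$, so (ii) itself gives $\mu_\infty(\partial M_k)\ge v_k>0$. Proposition~\ref{prop:weakly_open} only guarantees $\{i\mid\interior(M_i)\ne\varnothing\}\subseteq R$, and this inclusion can be strict.

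Here is a system where it is strict. Take $\AA=\{0,1,2\}^{\NN}$ with $d(x,y)=2^{-\min\{n : x_n\ne y_n\}}$, and the maps
\begin{itemize}
\item $\gamma_j(x)=jx$ for $j=0,1,2$;
\item $\gamma_3(x)=0\,c(x_0)c(x_1)\cdots$, where $c(0)=00$, $c(1)=01$, $c(2)=10$;
\item $\gamma_4(x)=1\psi(x)$, where $\psi$ fixes $N\coloneqq\{0,1\}^{\NN}$ and sends $w2x'$ (with $w\in\{0,1\}^*$) to $w2x''$, where $x''$ is $x'$ with $1$ added mod $3$ to its first letter.
\end{itemize}
The only coincidences are $\gamma_0=\gamma_3$ at $0^\infty$ and $\gamma_1=\gamma_4$ exactly on the closed nowhere dense set $N$. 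The fundamental partition $\{\{0^\infty\},\,N\setminus\{0^\infty\},\,\AA\setminus N\}$ is already Markov. The maps $\gamma_2$ and $\gamma_3$ connect the last two sets in both directions, so $\AAA_R=\begin{psmallmatrix}3&1\\1&4\end{psmallmatrix}$ and $\mu_\infty(N)>0$.

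Meanwhile $\Gamma^*(1_\AA)(y)=|\Gamma y|$ equals $4$ on $N\setminus\{0^\infty\}$ and $5$ on the dense open set $\AA\setminus N$. So the discontinuity set of $\Gamma^*(1_\AA)$ has positive $\mu_\infty$-measure. Weak-$*$ limits can also move mass from the open set $\AA\setminus N$ onto the closed set $N$. Hence neither $\mu_\infty(M_i)=v_i$ nor $F_{\beta_c}(\mu_\infty)=\mu_\infty$ can be obtained by a continuity or thin-neighbourhood argument.

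\textbf{The missing idea: a one-sided inequality plus rigidity.} Since $b$ is upper semicontinuous, $\Gamma^*(a)$ is lower semicontinuous for $a\in A_+$ (Lemma~\ref{lem:dual_Hutchinson_op}). The Portmanteau theorem then gives
\[
\mu_\infty(a)=\lim_{n\to\infty}F^n_{\beta_c}(\nu_0)\big(\rho^{-1}\Gamma^*(a)\big)\ge\mu_\infty\big(\rho^{-1}\Gamma^*(a)\big)=F_{\beta_c}(\mu_\infty)(a),
\]
so $\mu_\infty$ is $\beta_c$-subinvariant. Your own final-paragraph argument, or equivalently Proposition~\ref{prop:inf_type_beta}, then forces $F_{\beta_c}(\mu_\infty)=\mu_\infty$: a nonzero seed $\mu_\infty-F_{\beta_c}(\mu_\infty)$ would have non-summable masses. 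For a seed carried by $T$ you should add that every transient vertex has a path into $R$. Your first step then yields $\mu_\infty(M_i)=v_i$ for free.

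This is exactly how the paper closes the argument (Lemma~\ref{lem:mu_inf_subinvariant_and_mass}). No boundary estimates are needed, and in general none are available.
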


The proof of Theorem~\ref{thm:inf_type} proceeds in two stages. First we determine that $\ln(\rho)$ is the only admissible value of $\beta$ using Perron--Frobenius theory. We then analytically establish the existence and uniqueness of $\mu_\infty$ using an argument inspired by the proof of the existence and uniqueness of Hutchinson measures \cite[\S~4.4]{Hut81}.

We begin with the combinatorial stage. Since $(\AA,\Gamma)$ is contractive, $R$ is the unique recurrent component of $E$ by Proposition~\ref{prop:transient_subcritical}. So $T = I \setminus R$ and there are no edges from $R$ to $T$. As such, we fix a basis of $\RR^I$ such that the adjacency matrix $\AAA_E$ is written in the block lower-triangular form
\begin{equation}\label{eq:block}\begin{pmatrix}
		\AAA_T & 0\\
		\BBB & \AAA_R
	\end{pmatrix},
\end{equation}
where $\AAA_R$ and $\AAA_T$ are the adjacency matrices of $E|_R$ and $E|_T$, respectively, and $\BBB$ is some matrix with nonnegative entries that describes the edges from $T$ to $R$. 

We record the following technical lemmas.
\begin{lem}\label{lem:transient_vanishing}
	If $\vec{x} \in \RR^T$ entrywise satisfies $\vec{x} \ge 0$, 
	$\BBB \vec{x} = 0$, and $\AAA_T \vec{x} \le \lambda \vec{x}$ for some $\lambda \ge 0$,  then $\vec{x} = 0$.
\end{lem}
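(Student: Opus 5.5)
Write $S \coloneqq \{t \in T \mid \vec{x}_t > 0\}$ for the support of $\vec{x}$. The proof shows $S$ cannot be nonempty: a nonempty $S$ would be closed under following edges, and so would contain a recurrent component of $E$ inside $T$. That is impossible, because $R$ is the only recurrent component.

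First we show that no edge leaves $S$. Take an edge $e$ with $s(e) \in S$. Suppose $r(e) \in R$. Then $\BBB(r(e),s(e)) \ge 1$, so $(\BBB\vec{x})_{r(e)} \ge \vec{x}_{s(e)} > 0$, which contradicts $\BBB\vec{x} = 0$. So $r(e) \in T$. Now suppose $r(e) \in T \setminus S$. Then
\[
0 < \vec{x}_{s(e)} \le (\AAA_T\vec{x})_{r(e)} \le \lambda \vec{x}_{r(e)} = 0,
\]
again a contradiction. So every edge with source in $S$ has range in $S$. Note this uses $\lambda \ge 0$ only to make the second inequality meaningful; the contradiction comes from $\vec{x}_{r(e)} = 0$.

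Next we find a recurrent component inside $S$. Assume $S \ne \varnothing$. By Lemma~\ref{lem:graph_properties}\ref{itm:E_no_sinks_sources}, $E$ has no sinks, so every vertex of $S$ is the source of some edge. By the previous step, that edge stays in $S$. Hence $E|_S$ is a finite graph with no sinks. Its strongly connected components are partially ordered as in the discussion before Proposition~\ref{prop:weakly_open}. Choose a component $C$ of $E|_S$ that is minimal in this order, meaning no edge leaves $C$ to another component of $E|_S$.

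We check that $C$ is a recurrent component of $E$. Every edge of $E$ with source in $C$ has range in $S$, and by minimality its range lies in $C$. The graph $E|_C$ is strongly connected. So $C$ is recurrent in $E$. But $C \subseteq S \subseteq T$, whereas by Proposition~\ref{prop:transient_subcritical} (as set up before \eqref{eq:block}) the unique recurrent component is $R$, which is disjoint from $T$. This contradiction gives $S = \varnothing$, that is, $\vec{x} = 0$.

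The only delicate point is the existence of the minimal component $C$. One needs $C$ to be strongly connected in the paper's sense. Paths of length $0$ are allowed there, so a single vertex with no loop counts as a strongly connected component, and a minimal such vertex would have all its outgoing edges staying inside $C$, hence forming loops. Since $S$ has no sinks, a singleton minimal component necessarily carries a loop. Either way $C$ qualifies as recurrent, so no case breaks the argument.
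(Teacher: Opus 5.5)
Your proposal is correct and is essentially the paper's proof. Both arguments take the support $S$ of $\vec{x}$ and show that no edge leaves $S$: edges into $R$ are ruled out by $\BBB\vec{x}=0$, and edges into $T\setminus S$ by $\AAA_T\vec{x}\le\lambda\vec{x}$. Both then conclude that a nonempty $S\subseteq T$ contains a recurrent component other than $R$, which contradicts uniqueness.

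Your extra step, choosing a minimal strongly connected component of $E|_S$, justifies explicitly the existence of that recurrent component. The paper only asserts this point.
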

\begin{proof}
	Let $S \coloneqq \{i \in T \mid \vec{x}_i > 0\}$ and suppose for contradiction 
	that $S \ne \varnothing$. Since $\BBB \vec{x} = 0$ and $\BBB$ has nonnegative 
	entries, $\BBB(j,i)\vec{x}_i = 0$ for all $j \in R$ and $i \in T$. In 
	particular, $\BBB(j,i) = 0$ for all $j \in R$ and $i \in S$, so there are no 
	edges from $S$ to $R$.
	For $i \in T \setminus S$ we have $\vec{x}_i = 0$, so $(\AAA_T\vec x)_i \le \lambda \vec x_i = 0$ gives 
	$\sum_{k \in T} \AAA_T(i,k) \vec{x}_k = 0$. Since each term of the sum is 
	nonnegative, $\AAA_T(i,k) = 0$ for all $k \in S$. Hence, there are no edges 
	from $S$ to $T \setminus S$.
	Thus, $S \subseteq T$ is a nonempty set with no outgoing edges in $E$, so $S$ 
	contains a recurrent component distinct from $R$, 
	contradicting the uniqueness of $R$. Hence, $S = \varnothing$ and $\vec{x} = 0$. 
\end{proof}

\begin{lem}\label{lem:PF_rigidity}
Suppose that $\vec{x} \in \RR^I$ satisfies $\vec{x} \ge 0$, $\vec{x} \ne 0$, and $\AAA_E \vec{x} \le \rho\, \vec{x}$ entrywise. Write $\vec{x} = (\vec{x}_T, \vec{x}_R)$, where $\vec{x}_T \in \RR^T$ and $\vec{x}_R \in \RR^R$. Then $\vec{x}_T = 0$ and $\vec{x}_R$ is a right Perron--Frobenius eigenvector of $\AAA_R$ with eigenvalue $\rho$.
\end{lem}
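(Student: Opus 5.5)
Split $\AAA_E\vec{x} \le \rho\vec{x}$ along the block form \eqref{eq:block} into two inequalities,
\[
\AAA_T \vec{x}_T \le \rho\, \vec{x}_T \quad\text{and}\quad \BBB\vec{x}_T + \AAA_R \vec{x}_R \le \rho\, \vec{x}_R .
\]
We will use the first together with Lemma~\ref{lem:transient_vanishing} once we know $\BBB\vec{x}_T = 0$. To get there we first show that $\vec{x}_R$ is a Perron--Frobenius eigenvector. Throughout, $\AAA_R$ is irreducible, as observed before Proposition~\ref{prop:lower_bound}, and $\rho(\AAA_R) = \rho$ by Proposition~\ref{prop:transient_subcritical}.

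\textbf{Step 1: $\vec{x}_R$ is a Perron--Frobenius eigenvector.} Let $\vec{v} > 0$ be a left Perron--Frobenius eigenvector of $\AAA_R$, so $\vec{v}^{\,t}\AAA_R = \rho\,\vec{v}^{\,t}$. Set $\vec{w} \coloneqq \rho\vec{x}_R - \AAA_R\vec{x}_R$; by the second inequality, $\vec{w} \ge \BBB\vec{x}_T \ge 0$. Pairing with $\vec{v}$ gives
\[
\vec{v}^{\,t}\vec{w} = \rho\,\vec{v}^{\,t}\vec{x}_R - \vec{v}^{\,t}\AAA_R\vec{x}_R = 0 .
\]
Since $\vec{v}$ is strictly positive and $\vec{w} \ge 0$, this forces $\vec{w} = 0$. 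Hence $\AAA_R\vec{x}_R = \rho\vec{x}_R$, and also $\BBB\vec{x}_T \le \vec{w} = 0$, so $\BBB\vec{x}_T = 0$.

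\textbf{Step 2: $\vec{x}_T = 0$.} Now $\vec{x}_T \ge 0$, $\BBB\vec{x}_T = 0$ and $\AAA_T\vec{x}_T \le \rho\vec{x}_T$ with $\rho \ge 0$, so Lemma~\ref{lem:transient_vanishing} applies and gives $\vec{x}_T = 0$.

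\textbf{Step 3: conclusion.} Since $\vec{x} \ne 0$ and $\vec{x}_T = 0$, we get $\vec{x}_R \ne 0$. By Step 1, $\vec{x}_R$ is then a nonnegative nonzero eigenvector of the irreducible matrix $\AAA_R$ for the eigenvalue $\rho = \rho(\AAA_R)$. By Perron--Frobenius theory it is therefore a (strictly positive) Perron--Frobenius eigenvector.

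The only genuinely delicate point is the left-eigenvector trick in Step 1, because it must produce both $\AAA_R\vec{x}_R = \rho\vec{x}_R$ and $\BBB\vec{x}_T = 0$ simultaneously. It relies on $\rho(\AAA_R)$ being exactly $\rho$, which is where contractivity enters through Proposition~\ref{prop:transient_subcritical}. If instead $\rho(\AAA_R) < \rho$, the pairing would only give $\vec{v}^{\,t}\vec{w} \ge 0$ and the argument would not close.
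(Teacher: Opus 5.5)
Your proof is correct and follows essentially the same route as the paper. Both split along the block form, pair $\rho\vec{x}_R - \AAA_R\vec{x}_R \ge \BBB\vec{x}_T \ge 0$ against a strictly positive left Perron--Frobenius eigenvector of $\AAA_R$ to get $\AAA_R\vec{x}_R = \rho\vec{x}_R$ and $\BBB\vec{x}_T = 0$ at once, and then apply Lemma~\ref{lem:transient_vanishing}. The one difference is that the paper first proves $\vec{x}_R \ne 0$ by a separate contradiction argument, while you deduce it at the end from $\vec{x}_T = 0$ and $\vec{x} \ne 0$; this is slightly more economical, since the pairing step does not need $\vec{x}_R \ne 0$.
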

\begin{proof}
The block decomposition \eqref{eq:block} gives
\begin{equation}\label{eq:rigidity_block}
\AAA_T \vec{x}_T \le \rho\, \vec{x}_T \quad \text{and} \quad \AAA_R \vec{x}_R + \BBB \vec{x}_T \le \rho\, \vec{x}_R.
\end{equation}
For contradiction, suppose that $\vec{x}_R = 0$. The second inequality forces $\BBB \vec{x}_T = 0$ (since $\BBB\vec{x}_T \ge 0$). The first inequality and Lemma~\ref{lem:transient_vanishing} yield $\vec{x}_T = 0$, contradicting $\vec{x} \ne 0$. Hence $\vec{x}_R \ne 0$.

Since $\BBB \vec{x}_T \ge 0$, the second inequality of \eqref{eq:rigidity_block} gives $\AAA_R \vec{x}_R \le \rho\, \vec{x}_R$. Since $R$ is a recurrent component, $\AAA_R$ is irreducible, and since $\rho(\AAA_R) = \rho$ by Proposition~\ref{prop:transient_subcritical}, \cite[Theorem~1.5]{Sen06} yields a strictly positive left Perron--Frobenius eigenvector $\vec{u}_R^{\top}$ for $\AAA_R$ with eigenvalue $\rho$. Then
\[
0 \le \vec{u}_R^\top (\rho\, \vec{x}_R - \AAA_R \vec{x}_R) = \rho\, \vec{u}_R^\top \vec{x}_R - \vec{u}_R^\top \AAA_R \vec{x}_R = 0,
\]
so $\AAA_R \vec{x}_R = \rho\, \vec{x}_R$, and the second inequality of \eqref{eq:rigidity_block} forces $\BBB \vec{x}_T = 0$. The first inequality of \eqref{eq:rigidity_block} together with Lemma~\ref{lem:transient_vanishing} then gives $\vec{x}_T = 0$. Since $\vec{x}_R$ is an eigenvector of $\AAA_R$ with eigenvalue $\rho$, it is a Perron--Frobenius eigenvector of $\AAA_R$ by \cite[Theorem~1.5]{Sen06}. 
\end{proof}

\begin{prop}\label{prop:inf_type_beta}
If $\mu\in T_\beta^{\infty}(A)$, then $\beta = \beta_c$ and the vector $\vec{w} = (\mu(M_i))_{i \in I}$ is the unique nonnegative right eigenvector of $\AAA_E$ with eigenvalue $\rho$ and $\|\vec{w}\|_1 =1$.
Moreover, $T_{\beta_c}(A) = T_{\beta_c}^\infty(A)$.
\end{prop}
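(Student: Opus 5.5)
For $\mu \in T_\beta^\infty(A)$ we have $F_\beta(\mu) = \mu$, so Lemma~\ref{lem:PF_operator_markov_partition} gives $\AAA_E \vec{w} = e^{\beta}\vec{w}$ for the mass vector $\vec{w} = (\mu(M_i))_{i \in I}$. Here $\vec{w} \ge 0$ and $\|\vec{w}\|_1 = \mu(\AA) = 1$. The plan is to pin down $\beta$ by comparing $e^\beta$ with $\rho$ from both sides, and then to apply the rigidity Lemma~\ref{lem:PF_rigidity}.

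\textbf{Identifying $\beta$.} Since $\mu$ is $\beta$-subinvariant, Corollary~\ref{cor:lower_bound_contractive} gives $\beta \ge \beta_c$. (Contractivity is the standing assumption of this subsection.) Conversely, if $\beta > \beta_c$ then Proposition~\ref{prop:above_critical_finite_type} gives $T_\beta^\infty(A) = \varnothing$, a contradiction. Hence $\beta = \beta_c$, so $e^\beta = \rho$. Alternatively, a nonnegative nonzero eigenvector has eigenvalue at most $\rho$, which gives the same upper bound directly.

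\textbf{Uniqueness of $\vec{w}$.} Let $\vec{x} \ge 0$ be any vector with $\AAA_E\vec{x} = \rho\vec{x}$ and $\|\vec{x}\|_1 = 1$. In particular $\AAA_E\vec{x} \le \rho\vec{x}$, so Lemma~\ref{lem:PF_rigidity} gives $\vec{x}_T = 0$ and shows that $\vec{x}_R$ is a Perron--Frobenius eigenvector of the irreducible matrix $\AAA_R$. By the Perron--Frobenius theorem for irreducible matrices \cite[Theorem~1.5]{Sen06}, the $\rho$-eigenspace of $\AAA_R$ is one-dimensional. The normalisation $\|\vec{x}\|_1 = 1$ therefore determines $\vec{x}$ uniquely. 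The vector $\vec{w}$ is such an eigenvector, so it is the unique one.

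\textbf{The equality $T_{\beta_c}(A) = T_{\beta_c}^\infty(A)$.} This is the step I expect to need the most care. Take $\mu \in T_{\beta_c}(A)$ and put $\mu_0 \coloneqq \mu - F_{\beta_c}(\mu) \ge 0$. The aim is to show $\mu_0 = 0$, i.e.\ $\mu$ is $F_{\beta_c}$-invariant. It suffices to show that the total mass $\mu_0(\AA) = \mu(\AA) - F_{\beta_c}(\mu)(\AA)$ vanishes, since a positive measure of zero mass is zero.

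Subinvariance gives $\AAA_E\vec{\mu} \le \rho\vec{\mu}$ with $\vec{\mu} \ne 0$. Lemma~\ref{lem:PF_rigidity} then gives $\AAA_E\vec{\mu} = \rho\vec{\mu}$: we get $\vec{\mu}_T = 0$ and $\AAA_R\vec{\mu}_R = \rho\vec{\mu}_R$, and the block form \eqref{eq:block} yields $(\AAA_E\vec{\mu})_T = \AAA_T\vec{\mu}_T = 0$. Summing the entries,
\[
F_{\beta_c}(\mu)(\AA) = \rho^{-1}\|\AAA_E\vec{\mu}\|_1 = \|\vec{\mu}\|_1 = \mu(\AA).
\]
Hence $\mu_0(\AA) = 0$, so $\mu_0 = 0$ and $\mu \in T_{\beta_c}^\infty(A)$. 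The reverse inclusion $T_{\beta_c}^\infty(A) \subseteq T_{\beta_c}(A)$ holds by definition.
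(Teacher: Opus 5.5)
Your proof is correct and follows the paper's argument essentially step for step. You get $\beta=\beta_c$ from Corollary~\ref{cor:lower_bound_contractive} together with Proposition~\ref{prop:above_critical_finite_type}, uniqueness of $\vec{w}$ from Lemma~\ref{lem:PF_rigidity} and the simplicity of the Perron--Frobenius eigenvalue of $\AAA_R$, and $T_{\beta_c}(A)=T_{\beta_c}^\infty(A)$ by showing that $\mu-F_{\beta_c}(\mu)$ is a positive measure of zero total mass. Two of your choices are slightly more economical than the paper's but change nothing essential: you observe that $e^{\beta}\le\rho$ holds simply because $e^{\beta}$ is an eigenvalue of $\AAA_E$, and you sum the total mass directly rather than checking each $M_j$.
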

\begin{proof}
Suppose that $\mu \in T_{\beta}^{\infty}(A)$. Since $(\AA,\Gamma)$ is contractive, Corollary~\ref{cor:lower_bound_contractive} gives $\beta \ge \ln(\rho)$. Proposition~\ref{prop:above_critical_finite_type} forces $\beta \le \ln(\rho)$, so $\beta = \ln(\rho)$.

By Lemma~\ref{lem:PF_operator_markov_partition}, if $F_\beta(\mu) = \mu$, then $\AAA_E \vec{w} = \rho\, \vec{w}$. Since $\mu$ is a probability measure, $\vec{w} \ge 0$, $\|\vec{w}\|_1 = 1$, and $\vec{w} \ne 0$. By Lemma~\ref{lem:PF_rigidity}, $\vec{w}_T = 0$ and $\vec{w}_R$ may be identified with the Perron--Frobenius eigenvector of $\AAA_R$ satisfying $\|\vec{w}_R\|_1 = \|\vec{w}\|_1 = 1$, which is unique by \cite[Theorem~1.5]{Sen06}. This gives uniqueness of $\vec{w}$ among nonnegative unit-$1$-norm eigenvectors of $\AAA_E$ with eigenvalue $\rho$. 

For the second statement, suppose that $\mu \in T_{\beta_c}(A)$. Then $F_{\beta_c}(\mu) \le \mu$, and Lemma~\ref{lem:PF_operator_markov_partition} implies $\AAA_E \vec{\mu} \le \rho \vec{\mu}$, where $\vec{\mu} = (\mu(M_i))_{i \in I}$. Since $\vec{\mu} \ge 0$ and $\|\vec{\mu}\|_1 = 1$, Lemma~\ref{lem:PF_rigidity} gives $\vec{\mu}_T = 0$ and $\AAA_R \vec{\mu}_R = \rho\, \vec{\mu}_R$, so \eqref{eq:block} gives
\[
\AAA_E \vec{\mu} = \begin{pmatrix} \AAA_T & 0 \\ \BBB & \AAA_R \end{pmatrix} \begin{pmatrix} 0 \\ \vec{\mu}_R \end{pmatrix} = \begin{pmatrix} 0 \\ \rho\, \vec{\mu}_R \end{pmatrix} = \rho\, \vec{\mu}.
\]
Hence, $\mu_0 \coloneqq \mu - F_{\beta_c}(\mu) \ge 0$ satisfies $\mu_0(M_j) = \vec{\mu}_j - \rho^{-1}(\AAA_E \vec{\mu})_j = 0$ for every $j \in I$. Then $\mu_0(\AA) = \sum_{j \in I} \mu_0(M_j) = 0$, so $\mu_0 = 0$. That is, $\mu \in T^{\infty}_{\beta_c}(A)$.
\end{proof}

\begin{rmk}
Contractivity of $(\AA,\Gamma)$ was used critically in the proof of Proposition~\ref{prop:inf_type_beta} to invoke Proposition~\ref{prop:transient_subcritical}. If $E$ admitted multiple recurrent components, then it is possible that there are additional values of $\beta$ for which $T_\beta^\infty(A)$ is nonempty.
\end{rmk}

We have deduced that \emph{if} there is an invariant probability measure for $F_{\beta}$ then it must occur when $\beta = \beta_c$. What remains is to show the existence and uniqueness of such a measure.

For the remainder of this section we fix $\beta = \beta_c$, assume that $(\AA,\Gamma)$ is contractive, and let $\vec{w} = (w_i)_{i \in I}$ be the unique nonnegative right eigenvector of $\AAA_E$ with eigenvalue $\rho$ and $\|\vec{w}\|_1 = 1$. Consider the set of probability measures 
\[
T_{\vec{w}} \coloneqq \{ \mu \in T(A) \mid \mu(M_i) = w_i \text{ for all }i \in I\}.
\]
This set is nonempty: it contains the sum of point masses $\sum_{i \in I} w_i \delta_{x_i}$ for arbitrary $x_i \in M_i$. The operator $F_\beta$ also maps $T_{\vec{w}}$ into itself: by Lemma~\ref{lem:PF_operator_markov_partition}, for each $\mu \in T_{\vec{w}}$ and $j \in I$,
\[
F_\beta(\mu)(M_j) = \rho^{-1}\sum_i \AAA_E(j,i) w_i = w_j.
\]

Let $\Lip_1(\AA)$ denote the collection of Lipschitz functions $f \colon \AA \to \RR$ with Lipschitz constant at most $1$. 
The \emph{Monge--Kantorovich--Rubinstein} distance between finite Borel measures $\mu$ and $\nu$ on $\AA$ is defined by
\[
d_{\Lip}(\mu,\nu) = \sup_{a \in \Lip_1(\AA)} \big|\mu(a) - \nu(a)\big|
\]
with values in $[0,\infty]$. Since $\AA$ is compact and metrisable, the distance $d_{\Lip}$ defines a metric on $T(A)$ that metrises the {weak-$*$} topology (cf.~\cite[Theorem~6.9]{Vil09}). The distance satisfies $d_{\Lip}(k\mu,k\nu) = k\, d_{\Lip}(\mu,\nu)$ for all $k \ge 0$, so by rescaling we see that $d_{\Lip}$ is also finite if $\mu(\AA) = \nu(\AA)$. Conversely, if $\mu(\AA) \ne \nu(\AA)$, then $d_{\Lip}(\mu,\nu) = \infty$ as  
 $\mu(a+k1_\AA) - \nu(a+k1_\AA) = \mu(a) - \nu(a) + k\,(\mu(\AA) - \nu(\AA))$ for every $k \in \RR$. Hence, care must be taken when applying $d_{\Lip}$ to ensure that measures being compared have equal total mass.

For each $\mu \in T_{\vec{w}}$ let $\mu_i$ be the measure on $\AA$ given by $\mu_i(S) \coloneqq \mu(S \cap M_i)$. Since each $M_i$ is a Borel set, each $\mu_i$ is a finite Borel measure on $\AA$ with $\mu_i(M_i) = \mu(M_i) = w_i$. 
If $\mu,\nu \in T_{\vec{w}}$, then $d_{\Lip}(\mu_i,\nu_i)$ is finite for each $i$ since $\mu_i(\AA) = w_i = \nu_i(\AA)$.
The map $d_{\Mm} \colon T_{\vec{w}} \times T_{\vec{w}} \to [0,\infty)$ given by
\[
	d_{\Mm}(\mu,\nu) \coloneqq \big\| (d_{\Lip}(\mu_i,\nu_i))_{i \in I}\big\|_1  = \sum_{i \in I} d_{\Lip}(\mu_i,\nu_i),
\]
defines a metric on $T_{\vec{w}}$. Indeed, symmetry and the triangle inequality are inherited from $d_{\Lip}$, and definiteness holds because $\Mm$ is a partition of $\AA$: if $d_{\Mm}(\mu,\nu) = 0$ then $\mu_i = \nu_i$ for all $i$, so $\mu = \sum_{i \in I} \mu_i = \sum_{i \in I} \nu_i = \nu$. 
Observe that for all $\mu,\nu \in T_{\vec{w}}$ we have,
\begin{equation}\label{eq:lip_bound_M}
d_{\Lip}(\mu,\nu) = \sup_{a \in \Lip_1(\AA)} \Big| \sum_{i \in I} \big(\mu_i(a) - \nu_i(a)\big) \Big| \le d_{\Mm}(\mu,\nu).
\end{equation}
We use this metric to prove the existence of our invariant measure $\mu_\infty$ for $F_\beta$.
We adapt Hutchinson's argument \cite[\S~4.4]{Hut81}. Unlike Hutchinson's operator $\mu \mapsto \sum_{\gamma \in \Gamma} \omega_\gamma \gamma_*(\mu)$ (see Subsection~\ref{subsec:PF}), the operator $F_\beta$ is neither $d_{\Lip}$-contractive nor weak-$*$ continuous (see Example~\ref{ex:not_weak_star_continuous}), so the Banach Fixed-Point Theorem does not apply directly. Instead, we show that $(F_\beta^n(\mu))_{n \in \NN}$ is Cauchy with respect to $d_{\Lip}$, and then that its limit $\mu_\infty$ is the unique fixed point of $F_\beta$.

\begin{prop}\label{prop:mu_inf_exists} 
	Let $\beta = \ln(\rho)$. Then there exists $N \in \NN$ such that $F_{\beta}^n$ is $d_{\Mm}$-contractive on $T_{\vec{w}}$ for all $n \ge N$. Moreover, for any $\mu \in T_{\vec{w}}$ the sequence $(F^n_{\beta}(\mu))_{n \in \NN}$ converges to a probability measure $\mu_\infty$ on $\AA$ in the weak-$*$ topology.
\end{prop}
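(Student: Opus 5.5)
The plan is to turn $F_\beta$ into a finite "graph-directed" combination of push-forwards along the partition $\Mm$, and then run Hutchinson's contraction estimate cell by cell. Each map is a contraction with constant $c_\gamma \le c \coloneqq \max_{\gamma} c_\gamma < 1$. The mismatch between the growth of $\AAA_E$ and the factor $\rho^{-n}$ is absorbed by Proposition~\ref{prop:rho_intrinsic}.

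\emph{Step 1 (decomposition).} For each $e \in E^1$ fix a representative $\gamma_e$ with $e = [\gamma_e, s(e)]$. By Lemma~\ref{lem:markov_partition_paths} with $n=1$, for $y \in M_i$ the map $e \mapsto \gamma_e(y)$ is a bijection from $E^1 i$ onto the set of distinct points $\Gamma y$. Moreover, $\gamma_e(y) \in M_{r(e)}$. Hence for $\mu \in T_{\vec w}$, $j \in I$ and bounded Borel $a$,
\[
F_\beta(\mu)_j(a) = F_\beta(\mu)(a 1_{M_j}) = \rho^{-1} \sum_{e \in jE^1} \int_{M_{s(e)}} a(\gamma_e(y))\, d\mu(y) = \rho^{-1} \sum_{e \in jE^1} (\gamma_e)_*(\mu_{s(e)})(a).
\]
This works because $\Gamma^*$ sums over distinct image points, so the branch index weights disappear. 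I expect getting this identity exactly right to be the main point requiring care; everything afterwards is routine. Iterating the identity gives $F_\beta^n(\mu)_j = \rho^{-n}\sum_{\lambda \in jE^n} (\gamma_\lambda)_*(\mu_{s(\lambda)})$, where $\gamma_\lambda \coloneqq \gamma_{e_1}\circ\cdots\circ\gamma_{e_n}$ for $\lambda = e_1\cdots e_n$. Note that $\gamma_\lambda$ is $c^n$-Lipschitz.

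\emph{Step 2 (contraction).} Let $\mu,\nu \in T_{\vec w}$ and $a \in \Lip_1(\AA)$. Then $a\circ\gamma_\lambda$ is $c^n$-Lipschitz, and $\mu_i,\nu_i$ have equal mass $w_i$. So $|(\gamma_\lambda)_*\mu_i(a) - (\gamma_\lambda)_*\nu_i(a)| \le c^n d_{\Lip}(\mu_i,\nu_i)$. Summing over $j$ and $\lambda$, each $i$ is counted $|E^n i|$ times, so
\[
d_{\Mm}(F^n_\beta\mu, F^n_\beta\nu) \le c^n\rho^{-n} \max_i |E^n i|\, d_{\Mm}(\mu,\nu) = c^n \rho^{-n} h_n\, d_{\Mm}(\mu,\nu),
\]
using Proposition~\ref{prop:rho_intrinsic}. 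Since $h_n^{1/n}\to\rho$, choose $\rho' \in (\rho, \rho/c)$. Then $h_n \le C\rho'^n$ for some $C \ge 1$, so $c^n\rho^{-n}h_n \le C (c\rho'/\rho)^n \to 0$. Hence there is $N$ with $L_n \coloneqq c^n\rho^{-n}h_n < 1$ for all $n \ge N$, which is the first claim. Here $F_\beta$ preserves $T_{\vec w}$, as noted before the statement, and $d_{\Mm} \le \diam(\AA)\sum_i w_i < \infty$ on $T_{\vec w}$.

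\emph{Step 3 (convergence).} Fix $\mu \in T_{\vec w}$ and write $n = kN + r$ with $0 \le r < N$. Applying Step 2 to the pair $F^r_\beta(F_\beta\mu)$, $F^r_\beta(\mu)$ and then $k$ times with $N$ gives
\[
d_{\Mm}(F^{n+1}_\beta\mu, F^n_\beta\mu) \le L_N^k \max_{r<N} L_r \, d_{\Mm}(F_\beta\mu,\mu),
\]
where $L_r \coloneqq c^r \rho^{-r} h_r$ is finite for each $r$. This bound is summable in $n$. Thus $(F^n_\beta\mu)$ is $d_{\Mm}$-Cauchy, and hence $d_{\Lip}$-Cauchy by \eqref{eq:lip_bound_M}. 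Since $d_{\Lip}$ metrises the weak-$*$ topology on the weak-$*$ compact, hence complete, set $T(A)$, the sequence converges weak-$*$ to a Borel probability measure $\mu_\infty$.
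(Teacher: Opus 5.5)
Your proposal is correct and follows the paper's proof essentially step for step. It uses the same path decomposition $F_\beta^n(\mu)_j=\rho^{-n}\sum_{\lambda\in jE^n}(\gamma_\lambda)_*\mu_{s(\lambda)}$, the same estimate $d_{\Mm}(F_\beta^n\mu,F_\beta^n\nu)\le c^n\rho^{-n}\|\AAA_E^n\|_1\,d_{\Mm}(\mu,\nu)$ with $\|\AAA_E^n\|_1=h_n$, and the same $d_{\Mm}$-Cauchy plus weak-$*$ compactness conclusion. The differences are cosmetic: you iterate a one-step identity rather than quoting the $n$-step formula \eqref{eq:n_step_PF}, and you organise the Cauchy estimate in blocks of length $N$, where your bound $L_n\le C(c\rho'/\rho)^n$ would already give summability directly.
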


\begin{proof}
Given a finite Borel measure $\mu$ on $\AA$ and $i \in I$, we write $\mu_i$ for its restriction to $M_i$, so that $\mu = \sum_{i  \in I}\mu_i$. Given a continuous map $g \colon \AA \to \AA$ we let $g_*\mu$ denote the pushforward measure, given by $g_* \mu(a) = \mu (a \circ g)$ for all $a \in C(\AA)$, and recall that $g_* \mu(S) = \mu(g^{-1}(S))$ for all Borel $S \subseteq \AA$.

Using Lemma~\ref{lem:markov_partition_paths} we choose representatives $\gamma_e \in \Gamma$ for each $e \in E^1$, so that for all $n \in \NN$, $i \in I$, and $y \in M_i$ there is a bijection between $E^n i$ and $\Gamma^{(n)}y$ given by taking $e_1 \cdots e_n \in E^n i$ to $(\gamma_{e_1} \circ \cdots \circ \gamma_{e_n}(y), \ldots , \gamma_{e_n}(y), y) \in \Gamma^{(n)}y$. If $\alpha = e_1 \cdots e_n \in E^n$ we write $\gamma_\alpha$ for the composition $\gamma_{e_1} \circ \cdots \circ \gamma_{e_n}$. By \eqref{eq:n_step_PF}, for any finite Borel measure $\mu$ on $\AA$, $n \in \NN$, $i \in I$, and $a \in C(\AA)$, we have
\begin{align*}
F_\beta^n(\mu_i)(a)
&= \frac{1}{\rho^n}\int_{M_i} \sum_{x \in \Gamma^{(n)}y} a(x_0)\, d\mu(y)
= \frac{1}{\rho^n}\int_{M_i} \sum_{\alpha \in E^n i} a(\gamma_\alpha(y))\, d\mu(y)\\
&= \frac{1}{\rho^n}\sum_{\alpha \in E^n i}\mu_i(a \circ \gamma_\alpha) = \frac{1}{\rho^n}\sum_{\alpha \in E^n i}(\gamma_\alpha)_*\mu_i(a).
\end{align*}
That is,  $F_\beta^n(\mu_i) = \tfrac{1}{\rho^n}\sum_{\alpha \in E^n i}(\gamma_\alpha)_*\mu_i$ and by linearity, 
\[
F_\beta^n(\mu) = \frac{1}{\rho^n} \sum_{i \in I} \sum_{\alpha \in E^n i} (\gamma_\alpha)_*\mu_i = \frac{1}{\rho^n}  \sum_{j \in I} \sum_{i \in I} \sum_{\alpha \in jE^n i} (\gamma_\alpha)_*\mu_i.
\]
 Since $\gamma_{\alpha}(M_i) \subseteq M_{r(\alpha)}$ by \ref{itm:markov_property}, for each $\alpha \in j E^n i$ we have $(\gamma_\alpha)_*\mu_i(\AA \setminus M_j) = 0$. Hence, the restriction of $F_\beta^n(\mu)$ to $M_j$ is
\begin{equation*}
F_\beta^n(\mu)_j = \frac{1}{\rho^n}\sum_{i \in I}\sum_{\alpha \in jE^n i}(\gamma_\alpha)_*\mu_i.
\end{equation*}
Fix $\mu,\nu \in T_{\vec{w}}$. Then
\begin{align*}
d_{\Mm}(F_\beta^n(\mu),F_\beta^n(\nu))
&= \sum_{j \in I} d_{\Lip}\big(F_\beta^n(\mu)_j,F_\beta^n(\nu)_j\big)\\
&= \frac{1}{\rho^n} \sum_{j \in I} d_{\Lip}\Big( \sum_{i \in I}\sum_{\alpha \in jE^n i}(\gamma_\alpha)_*\mu_i, \sum_{i \in I}\sum_{\alpha \in jE^n i}(\gamma_\alpha)_*\nu_i\Big)\\
&\le \frac{1}{\rho^n}\sum_{j \in I}\sum_{i \in I}\sum_{\alpha \in jE^n i} d_{\Lip}\big((\gamma_\alpha)_*\mu_i,(\gamma_\alpha)_*\nu_i\big)\\
&= \frac{1}{\rho^n}\sum_{i \in I}\sum_{\alpha \in E^n i} d_{\Lip}\big((\gamma_\alpha)_*\mu_i,(\gamma_\alpha)_*\nu_i\big).
\end{align*}
Let $c_\gamma < 1$ denote the Lipschitz constant of $\gamma \in \Gamma$, and let $C \coloneqq \max\{c_\gamma \mid \gamma \in \Gamma\}$. We may assume $C>0$, else $\AA$ is a single point. For each $\alpha \in E^n$, $\gamma_\alpha$ is a composition of $n$ maps in $\Gamma$, so $\gamma_\alpha$ has Lipschitz constant bounded by $C^n$. Hence, $C^{-n}(a \circ \gamma_\alpha) \in \Lip_1(\AA)$ for every $a \in \Lip_1(\AA)$. Therefore
\[
d_{\Lip}\big((\gamma_\alpha)_*\mu_i,(\gamma_\alpha)_*\nu_i\big)
= \sup_{a \in \Lip_1(\AA)}\big|\mu_i(a \circ \gamma_\alpha)-\nu_i(a \circ \gamma_\alpha)\big|
\le C^n\, d_{\Lip}(\mu_i,\nu_i),
\]
and so
\[
d_{\Mm}(F_\beta^n(\mu),F_\beta^n(\nu))
\le \Big(\frac{C}{\rho}\Big)^n \sum_{i \in I} |E^n i|\, d_{\Lip}(\mu_i,\nu_i).
\]
Let $\vec{u} \coloneqq (d_{\Lip}(\mu_i,\nu_i))_{i \in I}$. Then
\[
\sum_{i \in I}|E^n i|\, \vec{u}_i = \sum_{i \in I}\sum_{j \in I}| j E^n i|\, \vec{u}_i = \sum_{i \in I}\sum_{j \in I}\AAA_E^n(j,i)\vec{u}_i = \sum_{j \in I}(\AAA_E^n\vec{u})_j = \|\AAA_E^n\vec{u}\|_1.
\]
Consequently,
\begin{equation}\label{eq:contractive_bound}
d_{\Mm}(F_\beta^n(\mu),F_\beta^n(\nu))
\le C^n\rho^{-n}\|\AAA_E^n\vec{u}\|_1
\le C^n\rho^{-n}\|\AAA_E^n\|_1\,\|\vec{u}\|_1
= C^n\rho^{-n}\|\AAA_E^n\|_1\, d_{\Mm}(\mu,\nu),
\end{equation}
where $\|\AAA_E^n\|_1 = \max_{i} \sum_{j} \AAA_E^n(j,i)$ is the operator $1$-norm of $\AAA_E^n$. 
	
	Fix $0< \varepsilon < \rho(C^{-1} - 1)$.
	Since $\lim_{k \to \infty}\|\AAA_E^k\|_1^{1/k} = \rho$ there exists $N \in \NN$ such that for all $n \ge N$, we have $\|\AAA^n_{E}\|_1^{1/n} < \rho + \varepsilon$. So for $n \ge N$,
	\[
	C^n \rho^{-n} \|\AAA^n_{E}\|_1 \le C^n \rho^{-n} (\rho + \varepsilon)^n = C^n(1+\varepsilon\rho^{-1})^n < 1.
	\]
	Let $K \coloneqq C(1+\varepsilon\rho^{-1}) < 1$. 
	By \eqref{eq:contractive_bound}, for  $n\ge N$, $F_\beta^n \colon T_{\vec{w}} \to T_{\vec{w}}$ is $d_\Mm$-contractive with Lipschitz constant bounded by $K^n$. 

	Fix $\mu \in T_{\vec{w}}$.
For convergence of $F_\beta^n(\mu)$ observe that for $m \ge n \ge N$, 
\begin{align*}
d_{\Mm}(F^{m}_\beta(\mu),F^{n}_\beta(\mu)) &\le d_{\Mm}(F^{m}_\beta(\mu),F^{m-1}_\beta(\mu)) + \cdots + d_\Mm (F^{n+1}_\beta(\mu),F^{n}_\beta(\mu))\\
&\le (K^{m-1} + \cdots +  K^n) \, d_{\Mm} (F_\beta(\mu),\mu)\\
&\le \frac{K^n}{1-K} \, d_{\Mm} (F_\beta(\mu),\mu). 
\end{align*}
Since $K < 1$ the sequence $(F_\beta^n(\mu))_{n \in \NN}$ is $d_{\Mm}$-Cauchy, so by \eqref{eq:lip_bound_M} it is also $d_{\Lip}$-Cauchy.
Since $d_{\Lip}$ metrises the weak-$*$ topology on $T(A)$, which is weak-$*$ compact (hence complete) by the Banach--Alaoglu Theorem, there is a probability measure $\mu_\infty$ on $\AA$ such that $F_\beta^n(\mu) \to \mu_\infty$ in the weak-$*$ topology. 
\end{proof}

Although the limit $\mu_\infty$ of Proposition~\ref{prop:mu_inf_exists} is a well-defined Borel probability measure, it is not immediately clear that $\mu_\infty \in T_{\vec{w}}$, nor is it clear that $\mu_{\infty}$ is a fixed point of $F_\beta$. 

\begin{lem}\label{lem:mu_inf_subinvariant_and_mass}
	Let $\beta = \ln(\rho)$, let $\mu \in T_{\vec{w}}$, and let $\mu_\infty$ be the weak-$*$ limit of $(F_\beta^n(\mu))_{n \in \NN}$, which exists by Proposition~\ref{prop:mu_inf_exists}. Then $F_{\beta}(\mu_\infty) = \mu_\infty$ and $\mu_\infty \in T_{\vec{w}}$. 
\end{lem}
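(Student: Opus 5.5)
The plan is to avoid passing masses of the non-closed sets $M_i$ through the weak-$*$ limit, and also to avoid any continuity of $F_\beta$ in the limit. Instead I will show only the one-sided inequality $F_\beta(\mu_\infty) \le \mu_\infty$, which lower semicontinuity makes available. Then Proposition~\ref{prop:inf_type_beta} will upgrade this inequality to equality and identify the masses $\mu_\infty(M_i)$.

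Write $\mu^{(n)} \coloneqq F_\beta^n(\mu)$, so that $\mu^{(n)} \to \mu_\infty$ weak-$*$ by Proposition~\ref{prop:mu_inf_exists}. Since $\mu^{(n+1)} = F_\beta(\mu^{(n)})$, we also have $\mu^{(n+1)} \to \mu_\infty$.

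Fix $a \in C(\AA)$ with $a \ge 0$. By Lemma~\ref{lem:dual_Hutchinson_op}, $\Gamma^*(a)$ is bounded, nonnegative and lower semicontinuous. The portmanteau inequality for bounded-below lower semicontinuous functions under weak-$*$ convergence of probability measures on the compact metrisable space $\AA$ then gives
\[
F(\mu_\infty)(a) = \int_\AA \Gamma^*(a)\, d\mu_\infty \le \liminf_{n\to\infty} \int_\AA \Gamma^*(a)\, d\mu^{(n)} = \liminf_{n\to\infty} \rho\, \mu^{(n+1)}(a) = \rho\, \mu_\infty(a).
\]
To justify the inequality concretely, I would write $\Gamma^*(a)$ as an increasing limit of continuous functions (Baire) and apply monotone convergence. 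Hence $F_\beta(\mu_\infty) \le \mu_\infty$ as positive functionals on $A$, so $\mu_\infty \in T_{\beta_c}(A)$.

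Proposition~\ref{prop:inf_type_beta} states $T_{\beta_c}(A) = T_{\beta_c}^\infty(A)$, so $F_\beta(\mu_\infty) = \mu_\infty$. The same proposition shows that $(\mu_\infty(M_i))_{i\in I}$ is the unique nonnegative unit-$1$-norm right eigenvector of $\AAA_E$ for $\rho$, namely $\vec w$. Therefore $\mu_\infty \in T_{\vec w}$.

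The only delicate step is the semicontinuity inequality. One must check that it goes in the right direction (lower semicontinuous integrands give $\int f\,d\mu_\infty \le \liminf$), and that Lemma~\ref{lem:dual_Hutchinson_op} applies to continuous $a \ge 0$; everything else is direct citation.
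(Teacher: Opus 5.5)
Your proposal is correct and follows essentially the same route as the paper. The paper also applies the portmanteau inequality to the bounded lower semicontinuous function $\Gamma^*(a)$ from Lemma~\ref{lem:dual_Hutchinson_op} to obtain $F_\beta(\mu_\infty) \le \mu_\infty$, and then invokes Proposition~\ref{prop:inf_type_beta} both to upgrade this to $F_\beta(\mu_\infty) = \mu_\infty$ and to identify $(\mu_\infty(M_i))_{i \in I}$ with $\vec{w}$.
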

\begin{proof}
	Since $F_{\beta}^n(\mu) \overset{w^*}{\to} \mu_{\infty}$ the Portmanteau Theorem \cite[Theorem~1.3.4]{vdVW96} says that $\liminf F_{\beta}^n(\mu)(h) \ge \mu_{\infty}(h)$ for all bounded lower semicontinuous $h \colon \AA \to \RR$. By Lemma~\ref{lem:dual_Hutchinson_op} the map $\Gamma^*(a)$ is bounded lower semicontinuous for all $a \in A_+$. So
	\begin{align*}
	\mu_\infty(a) 
	= \lim_{n \to \infty} F_\beta^{n+1}(\mu)(a) 
	= \lim_{n \to \infty} F_\beta^{n}(\mu)\big(\rho^{-1}\Gamma^*
	(a)\big) 
	\ge  \mu_{\infty}\big(\rho^{-1}\Gamma^*(a)\big)
	= F_\beta(\mu_\infty)(a)
	\end{align*}
	for all $a \in A_+$. That is, $F_\beta(\mu_\infty) \le \mu_\infty$, so $\mu_\infty \in T_\beta(A)$. Proposition~\ref{prop:inf_type_beta} gives $\mu_\infty \in T^\infty_\beta(A)$, that is, $F_\beta(\mu_\infty) = \mu_\infty$, and $(\mu_\infty(M_i))_{i \in I} = \vec{w}$, so $\mu_\infty \in T_{\vec{w}}$.
\end{proof}

We are now in a position to prove  Theorem~\ref{thm:inf_type}.
\begin{proof}[Proof of Theorem~\ref{thm:inf_type}]
	By Proposition~\ref{prop:inf_type_beta}, $\beta = \ln(\rho)$ is the only inverse temperature for which there can exist an $F_\beta$-invariant probability measure. 
	Fix a measure $\mu \in T_{\vec{w}}$ and let $\mu_{\infty} = \lim_{n \to \infty} F^n_{\beta}(\mu)$ as in Proposition~\ref{prop:mu_inf_exists}.	
	Lemma~\ref{lem:mu_inf_subinvariant_and_mass} gives $F_{\beta}(\mu_\infty) = \mu_\infty$ and $\mu_\infty \in T_{\vec{w}}$.	
	
	For uniqueness suppose that $\tau \in T(A)$ is another fixed point of $F_{\beta}$. Then $\tau \in T_{\vec{w}}$ by Proposition~\ref{prop:inf_type_beta}.	
	By Proposition~\ref{prop:mu_inf_exists} there exist $K < 1$ and $N \in \NN$ such that
	\[
	d_{\Mm}(\mu_{\infty},\tau) = d_{\Mm}(F_\beta^N(\mu_{\infty}),F_\beta^N(\tau)) \le K d_{\Mm}(\mu_{\infty},\tau).
	\]
	Then $d_{\Mm}(\mu_{\infty},\tau) = 0$, so $\mu_{\infty} = \tau$. 

	 Proposition~\ref{prop:inf_type_beta} and the affine isomorphism of Theorem~\ref{thm:LN_machine} give the final statement.
\end{proof}

We recall from \cite[\S~4]{Hut81} that for any contractive iterated function system $(\AA,\Gamma)$ there is a unique Borel probability measure $\mu_H$ on $\AA$ satisfying
\begin{equation}\label{eq:Hutchinson_identity}
\mu_H = \frac{1}{|\Gamma|}\sum_{\gamma \in \Gamma} \gamma_*\mu_H,
\end{equation}
namely Hutchinson's self-similar measure for the uniform weights $1/|\Gamma|$. Since it depends on weights assigned to the labelled maps in $\Gamma$, and changes if a map is duplicated, we call $\mu_H$ the \emph{extrinsic Hutchinson measure} of $(\AA,\Gamma)$, to distinguish it from the intrinsic Hutchinson measure $\mu_\infty$.

In \cite[Theorem~6.6]{IKW07} it is shown that if $(\AA,\Gamma)$ is a contractive iterated function system in which the set of points at which distinct maps in $\Gamma$ agree is finite, and which satisfies the orbit escape condition (Definition~\ref{dfn:orbit_escape_condition}), then there is a unique infinite-type $\KMS$ state at $\beta = \ln|\Gamma|$, and its associated measure is $\mu_H$. For contractive iterated function systems satisfying the open set condition, the first author's thesis \cite{Mam19} shows, with no assumption on the size of $p_1(B_\Gamma)$, that there is a unique $\KMS$ state at $\beta = \ln|\Gamma|$, that it is of infinite type, and that its associated measure is $\mu_H$. The following corollary recovers this in our framework.

\begin{cor}[{cf.~\cite[Proposition~6.36 and Corollary~6.49]{Mam19}}]\label{cor:OSC_Hutchinson}
Let $(\AA,\Gamma)$ be a contractive iterated function system satisfying the open set condition and admitting a finite Markov partition. Then $\beta_c = \ln|\Gamma|$ and $\mu_{\infty} = \mu_H$. 
\end{cor}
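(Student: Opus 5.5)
The plan is to show that $\mu_H$ is a fixed point of $F_\beta$ for $\beta = \ln|\Gamma|$, and then let Theorem~\ref{thm:inf_type} do the rest. That theorem says $\beta_c$ is the \emph{only} $\beta$ admitting an $F_\beta$-invariant probability measure, and that such a measure is unique. So once $F_{\ln|\Gamma|}(\mu_H) = \mu_H$ is established, we get both $\beta_c = \ln|\Gamma|$ and $\mu_\infty = \mu_H$ at once.

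To compare $F(\mu_H)$ with $\sum_\gamma \gamma_*\mu_H$, write, for $a \in C(\AA)$,
\[
F(\mu_H)(a) = \int_\AA \sum_{\gamma\in\Gamma} \frac{a(\gamma(y))}{b(\gamma(y),y)}\, d\mu_H(y),
\qquad
\sum_{\gamma} \gamma_*\mu_H(a) = \int_\AA \sum_{\gamma\in\Gamma} a(\gamma(y))\, d\mu_H(y).
\]
These agree as soon as $b(\gamma(y),y) = 1$ for $\mu_H$-almost every $y$ and every $\gamma$. That is, the coincidence set $C = \{y : \gamma(y) = \gamma'(y) \text{ for some } \gamma\ne\gamma'\}$ must be $\mu_H$-null. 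Granting this, \eqref{eq:Hutchinson_identity} gives $F(\mu_H) = \sum_\gamma \gamma_*\mu_H = |\Gamma|\,\mu_H$, which is exactly $F_{\ln|\Gamma|}(\mu_H) = \mu_H$.

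The main obstacle is proving $\mu_H(C) = 0$ under the open set condition. Let $U$ witness the open set condition. Since the sets $\gamma(U)$ are pairwise disjoint, $U \cap C = \varnothing$: if $y \in U$ and $\gamma(y) = \gamma'(y)$ then $\gamma(U)\cap\gamma'(U) \neq \varnothing$. So it suffices to show $\mu_H(U) = 1$, i.e.\ $\mu_H(\AA\setminus U)=0$. I would argue as follows.

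First, $U$ has positive measure. By Lemma~\ref{lem:eventually_in_open_sets} some composition satisfies $\gamma_{w}(\AA)\subseteq U$. Iterating \eqref{eq:Hutchinson_identity} gives $\mu_H(\gamma_w(\AA)) \ge |\Gamma|^{-k}$, so $\mu_H(U)\ge |\Gamma|^{-k}>0$.

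Second, set $V \coloneqq \bigcup_\gamma \gamma(U) \subseteq U$, a disjoint union. By \eqref{eq:Hutchinson_identity} and injectivity,
\[
\mu_H(\gamma(U)) = |\Gamma|^{-1}\sum_{\gamma'}\mu_H(\gamma'^{-1}(\gamma(U))) \ge |\Gamma|^{-1}\mu_H(U),
\]
so $\mu_H(U) \ge \mu_H(V) \ge \mu_H(U)$ and all these inequalities are equalities. In particular $\mu_H(\gamma'^{-1}\gamma(U)) = 0$ for $\gamma'\neq\gamma$, so $\mu_H(\gamma^{-1}(U)\setminus U)=0$ for each $\gamma$.

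Third, define $Z \coloneqq \AA\setminus U$. Then
\[
\mu_H(Z) = |\Gamma|^{-1}\sum_\gamma \mu_H(\gamma^{-1}(Z)) = |\Gamma|^{-1}\sum_\gamma\mu_H(Z\setminus\gamma^{-1}\text{-exceptional}) \le \mu_H(Z).
\]
More usefully, iterating shows $\mu_H(Z) = |\Gamma|^{-k}\sum_{|w|=k}\mu_H(\gamma_w^{-1}(Z))$. On the other hand, up to null sets $\gamma_w^{-1}(Z) \subseteq Z$, and for the word $w$ with $\gamma_w(\AA)\subseteq U$ we have $\gamma_w^{-1}(Z)=\varnothing$. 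Combining these gives $\mu_H(Z)\le(1-|\Gamma|^{-k})\mu_H(Z)$, hence $\mu_H(Z)=0$.

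If the ``up to null sets'' bookkeeping in the third step becomes messy, the fallback is the standard fact that under the open set condition the $\mu_H$-measure of $\bigcup_{\gamma}\gamma(\AA)$ overlaps is zero. I would cite it from the fractal literature (e.g.\ Bandt--Graf type arguments) rather than reprove it.
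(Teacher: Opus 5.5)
Your proposal is correct, and its measure-theoretic core matches the paper's argument. The paper also reduces to $\mu_H(\AA\setminus U)=0$. It picks a word $u$ with $\gamma_u(\AA)\subseteq U$ via Lemma~\ref{lem:eventually_in_open_sets} and iterates \eqref{eq:Hutchinson_identity} to get $\mu_H(K)\le(1-|\Gamma|^{-n})\,\mu_H(K)$ for $K=\AA\setminus U$. Your ``up to null sets'' hedge is unnecessary: $\gamma_w^{-1}(\AA\setminus U)\subseteq\AA\setminus U$ holds exactly, since it is just a restatement of $\gamma_w(U)\subseteq U$. So your first two steps, the garbled display in the third, and the fallback citation can all be dropped.

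Where you genuinely differ is in how you obtain $\beta_c=\ln|\Gamma|$. You read it off from the clause of Theorem~\ref{thm:inf_type} stating that $\beta_c$ is the only $\beta$ admitting an $F_\beta$-invariant probability measure. A single verification of $F_{\ln|\Gamma|}(\mu_H)=\mu_H$ then yields both conclusions, and this is legitimate since $\ln|\Gamma|>0$.

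The paper instead proves $\beta_c=\ln|\Gamma|$ first and independently, by a combinatorial argument. For $y\in U$, the $|\Gamma|^n$ points $\lambda_1\circ\cdots\circ\lambda_n(y)$ are pairwise distinct: split two words at the first differing letter, then use disjointness of $\lambda_k(U)$ and $\lambda_k'(U)$ together with injectivity of the common prefix. Hence $h_n=|\Gamma|^n$, and Proposition~\ref{prop:rho_intrinsic} gives $\rho=|\Gamma|$. Only then does the paper check $F_{\beta_c}(\mu_H)=\mu_H$ and invoke uniqueness.

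Your route is shorter and leans entirely on Theorem~\ref{thm:inf_type}. The paper's route exhibits the critical temperature directly as the orbit-growth rate, showing concretely that under the open set condition no intrinsic orbits are lost. That is the interpretation emphasised in Remark~\ref{rmk:path_growth_rate}.
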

\begin{proof}

Let $U$ be a nonempty open set witnessing the open set condition. Since $\gamma(U) \subseteq U$ for every $\gamma \in \Gamma$, every composition of maps from $\Gamma$ carries $U$ into $U$. Fix $n \ge 1$ and let $(\lambda_1,\ldots,\lambda_n)$ and $(\lambda_1',\ldots,\lambda_n')$ be distinct elements of $\Gamma^n$. Take the least $k$ such that $\lambda_k \ne \lambda_k'$ and put $\psi \coloneqq \lambda_1 \circ \cdots \circ \lambda_{k-1}$, interpreted as the identity when $k = 1$. Then
\[
\lambda_1 \circ \cdots \circ \lambda_n(U) \subseteq \psi\big(\lambda_k(U)\big)
\quad \text{ and } \quad
\lambda_1' \circ \cdots \circ \lambda_n'(U) \subseteq \psi\big(\lambda_k'(U)\big),
\]
while $\lambda_k(U) \cap \lambda_k'(U) = \varnothing$ by the open set condition. As $\psi$ is injective, the sets $\lambda_1 \circ \cdots \circ \lambda_n(U)$ and $\lambda_1' \circ \cdots \circ \lambda_n'(U)$ are disjoint. Hence, for any $y \in U$ the $|\Gamma|^n$ points $\lambda_1 \circ \cdots \circ \lambda_n(y)$, for $(\lambda_1,\ldots,\lambda_n) \in \Gamma^n$, are pairwise distinct, so $|\Gamma^{(n)}y| = |\Gamma|^n$. Consequently, $\sup_{x \in \AA}|\Gamma^{(n)}x| = |\Gamma|^n$ for every $n \in \NN$, so by Proposition~\ref{prop:rho_intrinsic} we have $\beta_c = \ln|\Gamma|$. 

For the equality of measures, fix a labelling $\Gamma = \{\gamma_1,\ldots,\gamma_N\}$ and for each $k \ge 1$ and $w = w_1\cdots w_k \in \{1,\ldots,N\}^k$ let $\gamma_w \coloneqq \gamma_{w_1} \circ \cdots \circ \gamma_{w_k}$. Let $K \coloneqq \AA \setminus U$. We claim that $\mu_H(K) = 0$.
Since $U$ is a nonempty open set, Lemma~\ref{lem:eventually_in_open_sets} provides $n \ge 1$ and $u \in \{1,\ldots,N\}^n$ such that $\gamma_u(\AA) \subseteq U$, and hence $\gamma_u^{-1}(K) = \varnothing$. Iterating \eqref{eq:Hutchinson_identity} gives
\[
\mu_H = \frac{1}{|\Gamma|^n}\sum_{w \in \{1,\ldots,N\}^n}(\gamma_w)_*\mu_H .
\]
For every $w \in \{1,\ldots,N\}^n$ the open set condition yields $\gamma_w(U) \subseteq U$, so $\gamma_w^{-1}(K) \subseteq K$. Evaluating on $K$ and discarding the vanishing term indexed by $u$ gives
\[
\mu_H(K) = \frac{1}{|\Gamma|^n}\sum_{w \in \{1,\ldots,N\}^n}\mu_H\bigl(\gamma_w^{-1}(K)\bigr) \le \frac{|\Gamma|^n - 1}{|\Gamma|^n}\,\mu_H(K).
\]
Since $\mu_H(K)$ is finite and $(|\Gamma|^n - 1)/|\Gamma|^n < 1$, this forces $\mu_H(K) = 0$.

As $\mu_H(U) = 1$, and $b(\gamma(y),y) = 1$ for all $y \in U$ and $\gamma \in \Gamma$, it follows that
\[
F_{\beta_c}(\mu_H)(a) = \frac{1}{|\Gamma|}\int_\AA \sum_{\gamma \in \Gamma} \frac{a(\gamma(y))}{b(\gamma(y),y)}\,d\mu_H(y) = \frac{1}{|\Gamma|}\int_\AA \sum_{\gamma \in \Gamma} a(\gamma(y))\,d\mu_H(y) = \mu_H(a).
\]
The uniqueness of the $F_{\beta_c}$-invariant measure in Theorem~\ref{thm:inf_type} forces $\mu_\infty = \mu_H$.
\end{proof}

When $B_\Gamma = \varnothing$, the finite Markov partition of Assumption~\ref{ass} is unnecessary. The weights $y \mapsto b(\gamma(y),y)^{-1}$ of $\Gamma^*$ in this case are locally constant, so \cite[Theorem~1.1]{FL99} applies directly.

\begin{prop}\label{prop:unique_kms_no_branch}
Let $(\AA,\Gamma)$ be a contractive iterated function system with $B_\Gamma = \varnothing$, and let $\rho$ be the spectral radius of $\Gamma^*$ on $C(\AA)$. Then $\rho > 1$, there is a unique probability measure $\nu$ on $\AA$ such that $F(\nu) = \rho\nu$, and the $\KMS_{\ln(\rho)}$ state associated to $\nu$ by Theorem~\ref{thm:LN_machine} is the unique $\KMS$ state for the gauge action on $\Oo_\Gamma$. If $(\AA,\Gamma)$ also admits a finite Markov partition, then $\ln(\rho) = \beta_c$ and $\nu = \mu_\infty$.
\end{prop}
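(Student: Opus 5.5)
When $B_\Gamma = \varnothing$ I would rewrite $\Gamma^*$ as a Ruelle operator with continuous weights, apply the Fan--Lau Ruelle--Perron--Frobenius theorem \cite[Theorem~1.1]{FL99} to get a unique conformal measure, and then pass to $\KMS$ states via Theorem~\ref{thm:LN_machine}.

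\textbf{Step 1 (Fan--Lau setting).} By Lemma~\ref{lem:branch_iff_discontinuous}, $b$ is continuous on all of $\gr(\Gamma)$. Since $b$ is integer-valued, it is locally constant. Hence each weight $p_\gamma(y) \coloneqq b(\gamma(y),y)^{-1}$ is locally constant on $\AA$. Compactness then makes each $p_\gamma$ Lipschitz for the contracting metric $d$, and in particular Dini continuous. The weights are also strictly positive, and by Lemma~\ref{lem:dual_Hutchinson_op} we have
\[
\Gamma^*(a)(y) = \sum_{\gamma \in \Gamma} p_\gamma(y)\, a(\gamma(y)).
\]
So $\Gamma^*$ is exactly the operator treated in \cite[Theorem~1.1]{FL99}, for a contractive system with positive Dini weights. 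That theorem gives $\rho = \rho(\Gamma^*) > 0$, a strictly positive continuous eigenfunction, and a unique probability measure $\nu$ with $(\Gamma^*)^{\mathrm{t}}\nu = \rho\nu$. Since $F$ is the transpose of $\Gamma^*$, this reads $F(\nu) = \rho\nu$.

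\textbf{Step 2 ($\rho > 1$).} The argument of Remark~\ref{rmk:beta_c_positive} works here without a Markov partition. Assumption~\ref{ass} (at least two distinct maps) makes $U = \{|\Gamma y| \ge 2\}$ nonempty, and Lemma~\ref{lem:path_count_lsc} makes it open. Lemma~\ref{lem:eventually_in_open_sets} then gives $|\Gamma^{(m(k+1))}y| \ge 2^m$ for all $y$. Because $(\Gamma^*)^n 1 = |\Gamma^{(n)}\cdot|$, we get $\|(\Gamma^*)^n\| \ge 2^{\lfloor n/(k+1)\rfloor}$, so $\rho > 1$.

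\textbf{Step 3 ($\KMS$ classification).} By Theorem~\ref{thm:LN_machine}, $\KMS_\beta$ states on $\Oo_\Gamma$ correspond to probability measures $\mu$ with $F_\beta(\mu) \le \mu$ and equality on $J_\Gamma$. Since $p_1(B_\Gamma) = \varnothing$, Proposition~\ref{prop:covariance_ideal} gives $J_\Gamma = A$, so the condition is $F(\mu) = e^\beta\mu$.

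Pairing with the positive eigenfunction $h$ of $\Gamma^*$ gives
\[
e^\beta \mu(h) = \mu(\Gamma^* h) = \rho\,\mu(h).
\]
Since $\mu(h) > 0$, this forces $e^\beta = \rho$. Uniqueness of $\nu$ from Fan--Lau then forces $\mu = \nu$. So the state attached to $\nu$ is the only $\KMS$ state, at $\beta = \ln\rho$.

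\textbf{Step 4 (compatibility with Markov partitions).} If a finite Markov partition also exists, Theorem~\ref{thm:inf_type} shows that $\mu_\infty$ is a probability measure with $F(\mu_\infty) = e^{\beta_c}\mu_\infty$. Applying Step~3 to $\mu_\infty$ gives $e^{\beta_c} = \rho$ and $\mu_\infty = \nu$. As a cross-check, Gelfand's formula gives $\|(\Gamma^*)^n\| = h_n$, matching Proposition~\ref{prop:rho_intrinsic}.

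\textbf{Main obstacle.} The delicate step is confirming that the hypotheses of \cite[Theorem~1.1]{FL99} hold exactly as stated there. In particular, Fan--Lau may require the weights to be defined on an ambient space, or use a different normalisation. If so, I would use local constancy to extend the weights continuously, which is harmless because everything is restricted to the attractor $\AA$.
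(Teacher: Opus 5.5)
Your proposal is correct and follows essentially the same route as the paper: you apply Fan--Lau to $\Gamma^*$ with locally constant (hence Lipschitz) weights, use the path bound of Remark~\ref{rmk:beta_c_positive} for $\rho>1$, note that $J_\Gamma = C(\AA)$ so that $\KMS$ states are exactly the probability eigenmeasures, and pair with the strictly positive eigenfunction. The differences are cosmetic. You index the weights by the domain point $y$ rather than by $\gamma(y)$, which avoids the paper's extension step. In the Markov case you get $e^{\beta_c}=\rho$ by pairing $\mu_\infty$ against the eigenfunction and then invoking Fan--Lau uniqueness; the paper instead uses $\|(\Gamma^*)^n\| = h_n$ with Proposition~\ref{prop:rho_intrinsic} (your cross-check), followed by uniqueness in Theorem~\ref{thm:inf_type}.
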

\begin{proof}
Since $B_\Gamma = \varnothing$, by Lemma~\ref{lem:branch_iff_discontinuous} the branch index is continuous, so locally constant on $\gr(\Gamma)$. Each $\gamma \in \Gamma$ is a homeomorphism onto the compact set $\gamma(\AA)$, so $p_\gamma(x) \coloneqq b(x,\gamma^{-1}(x))^{-1}$ is locally constant on $\gamma(\AA)$. The function $p_\gamma$ takes finitely many values on pairwise disjoint compact sets so extends to a Lipschitz function on $\AA$ with values in $[1/|\Gamma|,1]$. As $\Gamma^*(a)(y) = \sum_{\gamma \in \Gamma} p_\gamma(\gamma(y))\,a(\gamma(y))$ for all $a \in C(\AA)$ and $y \in \AA$, the operator $\Gamma^*$ is the Ruelle operator of \cite{FL99} for these weights, each $\log p_\gamma$ satisfies the Dini condition, and $F(\mu)(a) = \mu(\Gamma^*(a))$ for $a \in C(\AA)$. Hence, \cite[Theorem~1.1]{FL99} gives a strictly positive $g \in C(\AA)$ and a unique probability measure $\nu$ with $\Gamma^*(g) = \rho g$ and $F(\nu) = \rho\nu$.
As $\|(\Gamma^*)^n\| = \sup_{y \in \AA}|\Gamma^{(n)}y|$ by \eqref{eq:n_step_dual}, the path bound $|\Gamma^{(m(k+1))}y| \ge 2^m$ of Remark~\ref{rmk:beta_c_positive} and the spectral radius formula give $\rho \ge 2^{1/(k+1)} > 1$.

Since $B_\Gamma = \varnothing$, Proposition~\ref{prop:covariance_ideal} gives $J_\Gamma = C(\AA)$, so by Theorem~\ref{thm:LN_machine} and \eqref{eq:trace_on_ideal} the $\KMS_\beta$ states on $\Oo_\Gamma$ correspond to the $\mu \in T(A)$ with $F(\mu) = e^{\beta}\mu$. For such $\mu$ we have $e^{\beta}\mu(g) = \mu(\Gamma^*(g)) = \rho\,\mu(g)$ with $\mu(g) > 0$, so $\beta = \ln(\rho)$ and $\mu = \nu$. Conversely, $F_{\ln(\rho)}(\nu) = \nu$, so $\nu$ determines a $\KMS_{\ln(\rho)}$ state on $\Oo_\Gamma$. If $(\AA,\Gamma)$ admits a finite Markov partition, then $\rho = \rho(\AAA_E)$ by Proposition~\ref{prop:rho_intrinsic}, so $\ln(\rho) = \beta_c$, and $\nu = \mu_\infty$ by the uniqueness in Theorem~\ref{thm:inf_type}.
\end{proof}

If no two maps in $\Gamma$ coincide at any point of $\AA$, then $B_\Gamma = \varnothing$ and the orbit escape condition holds trivially, so in this case Proposition~\ref{prop:unique_kms_no_branch} is covered by \cite[Theorem~6.6]{IKW07} (see also \cite[Theorem~4.18]{deC23}), with $\rho = |\Gamma|$ and $\nu = \mu_H$. When $\AA$ is connected this is the only way that $B_\Gamma = \varnothing$ can occur; Example~\ref{ex:cantor_not_hutchinson} below shows that Proposition~\ref{prop:unique_kms_no_branch} goes beyond \cite[Theorem~6.6]{IKW07} for disconnected attractors.

\subsection{Finite-type measures}

We now classify the finite-type $\beta$-subinvariant measures. By Proposition~\ref{prop:IKW_seed_bijection}, the finite-type $\KMS_\beta$ states correspond to the seeds $\tau \in T(A/J_\Gamma)$ for which $\sum_{n=0}^\infty F_\beta^n(\tau)$ converges. For $\beta > \ln|\Gamma|$ this is automatic for every iterated function system, since $\|F_\beta^n\| \le (e^{-\beta}|\Gamma|)^n$ by Lemma~\ref{lem:PF_operator}; this is the regime treated in \cite{IKW07}. The point of the following theorem is that summability persists for all $\beta > \beta_c$, which is a strictly larger range whenever $\rho < |\Gamma|$. Proposition~\ref{prop:covariance_ideal} identifies $A/J_\Gamma$ with $C(p_1(B_\Gamma))$, so $T(A/J_\Gamma)$ may be identified with $T(C(p_1(B_\Gamma)))$, the set of probability measures on $p_1(B_\Gamma)$.

\begin{thm}\label{thm:finite_type_classification}
Let $(\AA,\Gamma)$ be an iterated function system admitting a finite Markov partition, and let $\beta > \beta_c$. Then the following hold. 
\begin{enumerate}[label=\textup{(\arabic*)}]
	\item\label{item:all_finite_type} Every $\KMS_\beta$ state for the gauge action on $\Oo_{\Gamma}$ is of finite type.
	\item\label{item:T_beta} $T(A/J_\Gamma)_{\beta} = T(C(p_1(B_\Gamma)))$.
	\item\label{item:KMS_bijection} The map $\tau \mapsto \varphi_{\tau,\beta}$ of Proposition~\ref{prop:IKW_seed_bijection} is a bijection
	\[
	T(C(p_1(B_\Gamma))) \;\longleftrightarrow\; \KMS_\beta(\Oo_{\Gamma}),
	\]
	which restricts to a bijection on extreme points.
	\item\label{item:resolvent_formula} Let $\tau \in T(C(p_1(B_\Gamma)))$ and $\vec{\tau} = (\tau(M_i))_{i \in I}$. Then
	\begin{equation}\label{eq:resolvent_v2}
		\big(\varphi_{\tau,\beta}|_A(M_j)\big)_{j \in I} = m_{\tau,\beta}\,(\Id - e^{-\beta} \AAA_E)^{-1}\, \vec{\tau},
	\end{equation}
	where $m_{\tau,\beta} = \big\| (\Id - e^{-\beta} \AAA_E)^{-1} \vec{\tau} \big\|_1^{-1}$. 
\end{enumerate}
\end{thm}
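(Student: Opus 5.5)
Part \ref{item:all_finite_type} follows from Proposition~\ref{prop:above_critical_finite_type}: for $\beta > \beta_c$ every $\beta$-subinvariant tracial state is of finite type. By Theorem~\ref{thm:LN_machine}, every $\KMS_\beta$ state on $\Oo_\Gamma$ comes from some $\mu \in T_\beta(A)$, so it is of finite type. This uses only the finite Markov partition, through Lemma~\ref{lem:spectral_decay}, and not contractivity. One caveat: $\beta_c$ must be defined here via $\rho(\AAA_E)$ for an arbitrary finite Markov partition, which Proposition~\ref{prop:rho_intrinsic} permits. Lemma~\ref{lem:spectral_decay} as proved uses only the finite Markov partition, so it applies verbatim.

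For \ref{item:T_beta}, the inclusion $T(A/J_\Gamma)_\beta \subseteq T(C(p_1(B_\Gamma)))$ holds by definition, using the identification $A/J_\Gamma \cong C(p_1(B_\Gamma))$ from Proposition~\ref{prop:covariance_ideal}. For the reverse inclusion, take $\tau \in T(A/J_\Gamma)$, viewed as a positive functional on $A$. Lemma~\ref{lem:spectral_decay} gives $\|F_\beta^n(\tau)\| \le C q^n$. Hence $\sum_n F_\beta^n(\tau)$ converges absolutely in norm, so in particular weak-$*$, and $\tau \in T(A/J_\Gamma)_\beta$.

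Part \ref{item:KMS_bijection} then follows by combining \ref{item:all_finite_type} and \ref{item:T_beta} with Proposition~\ref{prop:IKW_seed_bijection}. Because every $\KMS_\beta$ state is of finite type, $\KMS_\beta^f(\Oo_\Gamma) = \KMS_\beta(\Oo_\Gamma)$, and the bijection $\tau \mapsto \varphi_{\tau,\beta}$ maps onto all of $\KMS_\beta(\Oo_\Gamma)$. For extreme points, Proposition~\ref{prop:IKW_seed_bijection} identifies $\operatorname{ex}(T(A/J_\Gamma)_\beta)$ with $T(A/J_\Gamma)_\beta \cap \operatorname{ex}(T(A/J_\Gamma))$, and by \ref{item:T_beta} this is exactly the set of extreme points of $T(C(p_1(B_\Gamma)))$, namely the point masses.

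For \ref{item:resolvent_formula}, apply Lemma~\ref{lem:PF_operator_markov_partition} termwise: $F_\beta^n(\tau)(M_j) = e^{-n\beta}(\AAA_E^n\vec\tau)_j$. Since the series converges in norm, evaluation on the Borel sets $M_j$ passes through the sum. Norm convergence means convergence in total variation, so it holds for bounded Borel functions, not just continuous ones; this is the one point needing care, because weak-$*$ convergence alone would not justify evaluating on $M_j$. This gives $\sum_n e^{-n\beta}\AAA_E^n\vec\tau$. Because $e^{-\beta}\rho < 1$, the Neumann series converges to $(\Id - e^{-\beta}\AAA_E)^{-1}\vec\tau$. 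Multiplying by $m_{\tau,\beta}$ gives \eqref{eq:resolvent_v2}. For the normalisation, the $M_j$ partition $\AA$, so $\sum_n F_\beta^n(\tau)(1_\AA)$ equals the $1$-norm of this nonnegative vector, and $m_{\tau,\beta}$ is its reciprocal as claimed. The main obstacle is this Borel-set evaluation, and it is resolved by the norm convergence from Lemma~\ref{lem:spectral_decay}.
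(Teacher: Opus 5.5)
Your proposal is correct and follows the paper's proof essentially step for step. Part (1) comes from Proposition~\ref{prop:above_critical_finite_type}, part (2) from the norm summability in Lemma~\ref{lem:spectral_decay}, part (3) from Proposition~\ref{prop:IKW_seed_bijection}, and part (4) from Lemma~\ref{lem:PF_operator_markov_partition} together with the Neumann series; your observation that it is norm (total variation) convergence that justifies evaluating the series on the Borel sets $M_j$ makes explicit a point the paper leaves implicit.
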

\begin{proof}
 Proposition~\ref{prop:above_critical_finite_type} gives \ref{item:all_finite_type}.
Lemma~\ref{lem:spectral_decay} implies that $\sum_{n=0}^{\infty} F_\beta^n(\tau)$ converges absolutely in norm on $A^*$ for every $\tau \in T(C(p_1(B_\Gamma)))$ ensuring \ref{item:T_beta}. Proposition~\ref{prop:IKW_seed_bijection} now gives \ref{item:KMS_bijection}. 

For \ref{item:resolvent_formula}, Proposition~\ref{prop:IKW_seed_bijection} identifies $\varphi_{\tau,\beta}|_A$ with the normalised sum $m_{\tau,\beta}\sum_{n=0}^\infty F_\beta^n(\tau)$. By Lemma~\ref{lem:PF_operator_markov_partition}, $F_\beta^n(\tau)(M_j) = e^{-n\beta}(\AAA_E^n \vec{\tau})_j$ for each $j \in I$. Since $\rho(e^{-\beta}\AAA_E) = e^{-\beta}\rho < 1$, the Neumann series $(\Id - e^{-\beta}\AAA_E)^{-1} = \sum_{n=0}^\infty e^{-n\beta}\AAA_E^n$ converges, giving
\[
\sum_{n=0}^\infty F_\beta^n(\tau)(M_j) = \big((\Id - e^{-\beta}\AAA_E)^{-1}\vec{\tau}\big)_j.
\]
Summing over $j$ gives $m_{\tau,\beta}^{-1} = \|(\Id - e^{-\beta}\AAA_E)^{-1}\vec{\tau}\|_1$.
\end{proof}

\begin{rmk}[Affine reformulation]\label{rmk:un_normalised_seeds}
The bijection of Theorem~\ref{thm:finite_type_classification} need not be affine because of the data-dependent normalisation $m_{\tau,\beta}$. Absorbing the normalisation into the seed by setting $\mu' \coloneqq m_{\tau,\beta}\,\tau$ replaces $T(C(p_1(B_\Gamma)))$ by the convex set
\[
	\Sigma_{\beta} \coloneqq \Big\{\mu' \in A^*_+ \mid \supp(\mu') \subseteq p_1(B_{\Gamma}),\; \sum_{n=0}^{\infty} F_{\beta}^n(\mu')(1_\AA) = 1\Big\}.
\]
The affine map $\mu' \mapsto \sum_{n=0}^\infty F_\beta^n(\mu')$ has affine inverse $\mu \mapsto \mu - F_\beta(\mu)$, giving an affine isomorphism between $\Sigma_\beta$ and $\KMS_\beta(\Oo_{\Gamma})$.
\end{rmk}

Under the orbit escape condition and finiteness of the set of points at which distinct maps in $\Gamma$ agree, \cite[Theorem~6.6]{IKW07} parametrises the extreme finite-type $\KMS_\beta$ states for $\beta > \ln|\Gamma|$ by atomic seeds on the common values $\gamma(y) = \gamma'(y)$ of distinct maps $\gamma,\gamma' \in \Gamma$. Theorem~\ref{thm:finite_type_classification} instead assumes a finite Markov partition, replaces $\ln|\Gamma|$ by $\beta_c=\ln(\rho(\AAA_E))$, and  makes no assumptions on the size of $B_\Gamma$.

Examples~\ref{ex:clarence_no_oec} and~\ref{ex:clarence_minimal} show that systems admitting a finite Markov partition do not necessarily satisfy the hypotheses of \cite[Theorem~6.6]{IKW07}. Conversely, systems satisfying the hypotheses of \cite[Theorem~6.6]{IKW07} do not necessarily admit finite Markov partitions, as the following example demonstrates. 

\begin{example}\label{ex:ikw_no_finite_markov}
The system of Example~\ref{ex:ikw_no_markov} satisfies the hypotheses of \cite[Theorem~6.6]{IKW07}, but admits no finite Markov partition. Its only coincidence is $\gamma_1(y_0) = \gamma_3(y_0) = y_0/2$, so $y_0$ is the only point at which distinct maps in $\Gamma$ agree.
To see that it satisfies the orbit escape condition (Definition~\ref{dfn:orbit_escape_condition}), note that every point of $O(y_0/2)$ has the form $s y_0 + d$ with $s$ and $d$ rational and $0 \le s \le 1/2$, since this holds for $y_0/2$ and is preserved by each of $\gamma_1$, $\gamma_2$, and $\gamma_3$. As $y_0$ is irrational, $y_0 \notin O(y_0/2)$, so no point of $O(y_0/2)$ is a point at which distinct maps in $\Gamma$ agree. Since $y_0/2 = \gamma_1(y_0) \in O(y_0)$, the second condition of Definition~\ref{dfn:orbit_escape_condition} holds with $x = y_0$ and $y = y_0/2$. So the system satisfies the orbit escape condition. 
\end{example}

\begin{rmk}\label{rmk:countable_markov_possible}
If $(\AA,\Gamma)$ satisfies the hypotheses of \cite[Theorem~6.6]{IKW07}, then the set $C$ of Proposition~\ref{prop:countable_markov} is finite. Hence, $(\AA,\Gamma)$ admits a countable minimal Markov partition. An extension of our results to countable Markov partitions would therefore apply to the systems considered in \cite{IKW07}; we do not pursue this here. We note only that for a countable Markov partition $\Mm = \{M_i\}_{i \in I}$ the matrix $\AAA_E(i,j) = |\{e \in E^1 \mid r(e) = i \text{ and } s(e) = j\}|$ is still defined, and its column sums $|s^{-1}(j)|$ are bounded by $|\Gamma|$ by Lemma~\ref{lem:graph_properties}\ref{itm:E_s_inverse}, so $\AAA_E$ is a bounded operator on $\ell^1(I)$ acting by $(\AAA_E x)_i = \sum_{j \in I} \AAA_E(i,j)\,x_j$. The proof of Proposition~\ref{prop:rho_intrinsic} shows that its spectral radius is $\lim_{n \to \infty} h_n^{1/n}$, with $h_n$ as in \eqref{eq:path_growth}. The Perron--Frobenius arguments of Section~\ref{sec:classification}, however, rely on $I$ being finite.
\end{rmk}

To compare to \cite{IKW07} directly, we separately record the special case of finite $p_1(B_\Gamma)$.
\begin{cor}\label{cor:finite_branch_simplex}
Suppose that $(\AA,\Gamma)$ admits a finite Markov partition, that $\beta > \beta_c$, and that $p_1(B_\Gamma) = \{x_1,\ldots,x_m\}$ is finite. For each $1 \le k \le m$ let $i(k) \in I$ be such that $x_k \in M_{i(k)}$. Let $\vec{e}_i$ be the $i$-th standard basis vector in $\RR^I$ and let
$
C_{k,\beta} \coloneqq \big\| (\Id - e^{-\beta} \AAA_E)^{-1} \vec{e}_{i(k)} \big\|_1.
$
Then, in the affine reformulation of Remark~\ref{rmk:un_normalised_seeds},
\[
\Sigma_\beta = \Big\{ \sum_{k=1}^m \alpha_k \delta_{x_k} \Bigm| \alpha_k \ge 0 \text{ and } \sum_{k=1}^m \alpha_k C_{k,\beta} = 1 \Big\},
\]
which is a simplex with extreme points $C_{k,\beta}^{-1} \delta_{x_k}$ for $k = 1,\ldots,m$. Consequently, the extreme $\KMS_\beta$ states of $\Oo_{\Gamma}$ are the states $\varphi_{\delta_{x_k},\beta}$, and
\[
\varphi_{\delta_{x_k},\beta}|_A = C_{k,\beta}^{-1} \sum_{n=0}^\infty F_\beta^n(\delta_{x_k}), \qquad k = 1,\ldots,m.
\]
\end{cor}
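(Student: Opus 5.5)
The corollary follows from Remark~\ref{rmk:un_normalised_seeds} and Theorem~\ref{thm:finite_type_classification} once measures supported on a finite set are identified with atomic measures. First, since $p_1(B_\Gamma) = \{x_1,\ldots,x_m\}$ is finite, every positive measure $\mu'$ with $\supp(\mu') \subseteq p_1(B_\Gamma)$ has the form $\mu' = \sum_{k=1}^m \alpha_k \delta_{x_k}$ with $\alpha_k = \mu'(\{x_k\}) \ge 0$. So the only remaining content in the definition of $\Sigma_\beta$ is the normalisation $\sum_{n} F_\beta^n(\mu')(1_\AA) = 1$. This sum converges by Lemma~\ref{lem:spectral_decay}, since $\beta > \beta_c$.

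Second, I would compute $\sum_n F_\beta^n(\delta_{x_k})(1_\AA)$ via Lemma~\ref{lem:PF_operator_markov_partition}. The mass vector of $\delta_{x_k}$ is $\vec{e}_{i(k)}$, because $\Mm$ is a partition and $x_k \in M_{i(k)}$. Hence $F_\beta^n(\delta_{x_k})(1_\AA) = e^{-n\beta}\|\AAA_E^n \vec e_{i(k)}\|_1$. Since all entries are nonnegative, the $1$-norm is additive over the sum, and the Neumann series from the proof of Theorem~\ref{thm:finite_type_classification}\ref{item:resolvent_formula} gives $\sum_n F_\beta^n(\delta_{x_k})(1_\AA) = \|(\Id - e^{-\beta}\AAA_E)^{-1}\vec e_{i(k)}\|_1 = C_{k,\beta}$. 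Note that $C_{k,\beta} \ge 1$, since the $n=0$ term already contributes $1$. By linearity, together with termwise summation of nonnegative convergent series, the normalisation becomes $\sum_k \alpha_k C_{k,\beta} = 1$. This establishes the displayed description of $\Sigma_\beta$.

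Third, the parametrisation $(\alpha_k) \mapsto \sum_k \alpha_k \delta_{x_k}$ is an affine bijection, because the $\delta_{x_k}$ are linearly independent for distinct $x_k$. Under this bijection, $\Sigma_\beta$ is the image of the set $\{\alpha \ge 0 : \sum_k \alpha_k C_{k,\beta} = 1\}$. Rescaling by $\alpha_k \mapsto \alpha_k C_{k,\beta}$ identifies that set with the standard simplex. Its extreme points are therefore $C_{k,\beta}^{-1}\delta_{x_k}$.

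Finally, Remark~\ref{rmk:un_normalised_seeds} gives an affine isomorphism $\mu' \mapsto \sum_n F_\beta^n(\mu')$ from $\Sigma_\beta$ onto $\KMS_\beta(\Oo_\Gamma)$, identified via restriction to $A$ as in Theorem~\ref{thm:LN_machine}. Affine isomorphisms preserve extreme points, so the extreme $\KMS_\beta$ states are the images of $C_{k,\beta}^{-1}\delta_{x_k}$, namely $C_{k,\beta}^{-1}\sum_n F_\beta^n(\delta_{x_k})$. It remains to check that this agrees with $\varphi_{\delta_{x_k},\beta}|_A$ from Proposition~\ref{prop:IKW_seed_bijection}. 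This holds because $m_{\delta_{x_k},\beta} = C_{k,\beta}^{-1}$ by the second step. One should also note that $\delta_{x_k} \in T(C(p_1(B_\Gamma)))$. No step is a real obstacle; the only care needed is justifying termwise summation, which is immediate from nonnegativity and the norm convergence in Lemma~\ref{lem:spectral_decay}.
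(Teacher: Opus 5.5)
Your proposal is correct and follows the paper's proof essentially step for step. Both arguments write every seed supported on the finite set $p_1(B_\Gamma)$ as an atomic measure, identify $\sum_{n} F_\beta^n(\delta_{x_k})(1_\AA) = m_{\delta_{x_k},\beta}^{-1} = C_{k,\beta}$ using the mass vector $\vec{e}_{i(k)}$, rescale $\alpha_k \mapsto \alpha_k C_{k,\beta}$ onto the standard simplex, and carry extreme points through the affine isomorphism of Remark~\ref{rmk:un_normalised_seeds}. The only cosmetic difference is that you rederive $C_{k,\beta}$ from Lemma~\ref{lem:PF_operator_markov_partition} and the Neumann series rather than quoting Theorem~\ref{thm:finite_type_classification}\ref{item:resolvent_formula}, which was itself proved in exactly that way.
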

\begin{proof}
Since $p_1(B_\Gamma)$ is finite, every $\mu' \in A^*_+$ with $\supp(\mu') \subseteq p_1(B_\Gamma)$ satisfies $\mu' = \sum_{k=1}^m \alpha_k \delta_{x_k}$, where $\alpha_k \coloneqq \mu'(\{x_k\}) \ge 0$. The seed $\delta_{x_k}$ has coordinate vector $\vec{e}_{i(k)}$, since $\delta_{x_k}(M_i) = 1$ if $i = i(k)$ and $\delta_{x_k}(M_i) = 0$ otherwise. Hence, by the definition of $m_{\tau,\beta}$ in Proposition~\ref{prop:IKW_seed_bijection} and Theorem~\ref{thm:finite_type_classification}\ref{item:resolvent_formula},
\[
\sum_{n=0}^\infty F_\beta^n(\delta_{x_k})(1_\AA) = m_{\delta_{x_k},\beta}^{-1} = \big\|(\Id - e^{-\beta} \AAA_E)^{-1} \vec{e}_{i(k)}\big\|_1 = C_{k,\beta}.
\]
 As $\mu' \mapsto \sum_{n=0}^\infty F_\beta^n(\mu')(1_\AA)$ is linear, the normalisation constraint defining $\Sigma_\beta$ becomes $\sum_{k=1}^m \alpha_k C_{k,\beta} = 1$, which gives the stated description of $\Sigma_\beta$. The map $\sum_{k=1}^m \alpha_k \delta_{x_k} \mapsto (\alpha_k C_{k,\beta})_{k=1}^m$ is an affine isomorphism from $\Sigma_\beta$ onto the standard $(m-1)$-simplex in $\RR^m$, so the extreme points of $\Sigma_\beta$ are the preimages of the standard basis vectors, namely $C_{k,\beta}^{-1} \delta_{x_k}$. The affine isomorphism of Remark~\ref{rmk:un_normalised_seeds} carries $C_{k,\beta}^{-1} \delta_{x_k}$ to $C_{k,\beta}^{-1} \sum_{n=0}^\infty F_\beta^n(\delta_{x_k}) = \varphi_{\delta_{x_k},\beta}|_A$, giving the final claim.
\end{proof}

\subsection{Examples}\label{subsec:examples}
In this subsection we give examples of iterated function systems and their associated $\KMS$ states that are not covered by \cite{IKW07} due to the presence of overlaps and infinite branch sets. 
\begin{example}\label{ex:clarence_kms}
Let $(\AA,\Gamma)$ be the iterated function system of Example~\ref{ex:clarence}. In Example~\ref{ex:clarence_minimal} we saw that $\Mm = \{ \{0\}, (0,1/2), [1/2,1), \{1\}\}$ is the unique minimal Markov partition, and in Example~\ref{ex:clarence_recurrent} we saw that $R = \{2,3\}$ is the unique recurrent component of $E$. One checks that $\rho(\AAA_E) = \rho(\AAA_R) = (3+\sqrt{5})/2$ and that the unique unit-$1$-norm eigenvector of $\AAA_E$ for $\rho$ is $\vec{w} = 1/2 \cdot (0, \sqrt{5}-1, 3 - \sqrt{5}, 0)$. 

By Theorem~\ref{thm:inf_type}, $\Oo_\Gamma$ has a unique $\KMS_{\beta_c}$ state $\varphi_\infty$.  It is of infinite type, and $\varphi_\infty|_A = \mu_\infty$, where $\mu_\infty$ is the unique $F_{\beta_c}$-invariant probability measure, with $\mu_\infty(M_1) = \mu_\infty(M_4) = 0$, $\mu_\infty(M_2) = (\sqrt{5}-1)/2$ and $\mu_\infty(M_3) = (3-\sqrt{5})/2$. For $\beta > \beta_c$, Theorem~\ref{thm:finite_type_classification} gives a bijection between $\KMS_\beta(\Oo_{\Gamma})$ and probability measures supported on $p_1(B_\Gamma) = \{1/4, 1/2\}$.
\end{example}

\begin{example}\label{ex:four_maps}
	Consider the contractive iterated function system $([0,1],\Gamma = \{\gamma_1,\gamma_2,\gamma_3,\gamma_4\})$ defined by
		\begin{align*}
			\gamma_1(x) &=  \frac{x}{2}, &
			\gamma_2(x) &= 1- \frac{x}{2},\\ 
			\gamma_3(x) &= 
			\begin{cases}
				\frac{x}{4} + \frac{1}{8} &\text{if } x \le 1/2\\
				\frac{x}{2} & \text{if } x > 1/2
			\end{cases} &
		\gamma_4(x) &= 
		\begin{cases}
			1 - \frac{x}{2} & \text{if } x \le 1/2\\
			\frac{7}{8} - \frac{x}{4} &\text{if } x > 1/2.
		\end{cases}
		\end{align*}
The graph of $\Gamma$ is 
\begin{center}
 \begin{tikzpicture}[vertex/.style={circle, fill=red, inner sep=1.5pt},
scale=4.5]
 	\begin{scope}[font=\scriptsize]
 		
 		\draw [thick] (0,0) node [below left] {$0$} -- (1,0);
 		\draw [thick] (0,0) -- (0,1);
 		
 		\draw[thick] (1,0.02) -- (1,-0.02) node [below]  {$1$};
 		\draw[thick] (0.5,0.02) -- (0.5,-0.02) node [below]  {$1/2$};
 		\draw[thick] (0.02,1) -- (-0.02,1) node [left]  {$1$};
 		\draw[thick] (0.02,0.5) -- (-0.02,0.5) node [left]  {$1/2$};
		\draw[thick] (0.02,0.75) -- (-0.02,0.75) node [left]  {$3/4$};
 		\draw[thick] (0.02,0.25) -- (-0.02,0.25) node [left]  {$1/4$};
 		
 		\draw (0,0) --  (0.5,0.25) -- (1,0.5) --  (0,1) (0,0.125) -- (0.5,0.25) (0.5,0.75) -- (1,0.625);
 		
		\node[vertex] at (1,0.5) {};
		\node[vertex] at (0.5,0.25) {};
		\node[vertex] at (0.5,0.75) {};

 	\end{scope}
 \end{tikzpicture}
\end{center}
where the branch set $B_\Gamma = \{(1/4,1/2), (1/2,1), (3/4,1/2)\}$ is highlighted in red. A Markov partition is $\Mm = \{\{0\}, (0,1/2), \{1/2\}, (1/2,1), \{1\}\}$ and using Theorem~\ref{thm:finite_iff_Theta_const} one checks that it is minimal. It has a unique recurrent component corresponding to the intervals $M_2 = (0,1/2)$ and $M_4 = (1/2,1)$, and $\AAA_R = \begin{psmallmatrix} 2 & 1 \\ 1 & 2 \end{psmallmatrix}$. One checks that $\rho(\AAA_E) = \rho(\AAA_R) = 3$. So $\Gamma$ behaves like a system of $3$ maps rather than $4$.

By Theorem~\ref{thm:inf_type}, there is a unique $\KMS_{\ln(3)}$ state on $\Oo_{\Gamma}$, and it is of infinite type. For $\beta > \ln(3)$, Theorem~\ref{thm:finite_type_classification} gives a bijection between $\KMS_\beta(\Oo_{\Gamma})$ and probability measures supported on $p_1(B_\Gamma) = \{1/4, 1/2, 3/4\}$.
\end{example}

 The following example shows that even if $B_{\Gamma} = \varnothing$ it is not necessarily the case that $\beta_c = \ln|\Gamma|$ and $\mu_\infty = \mu_H$.

\begin{example} \label{ex:cantor_not_hutchinson}
Let $\Sigma_2 = \{0,1\}^\NN$ be the Cantor space with metric $d(x,y) = 2^{-\min\{n \ge 0  \mid x_n \ne y_n\}}$. For $i = 0,1$ let $\gamma_i  \colon \Sigma_2 \to \Sigma_2$ be given by $\gamma_i(x_0 x_1 x_2 \cdots) = i x_0 x_1 x_2 \cdots$. For each $i = 0,1$ let $\Zz(i) = \{x \in \Sigma_2 \mid x_0 = i\}$ and  let $\ol{i} = 1-i$. 
Define $\gamma_2 \colon \Sigma_2 \to \Sigma_2$ by
\[
\gamma_2(x) = \begin{cases} 0x & \text{if }x\in\Zz(0)\\  01\ol{x_1}\ol{x_2}\ol{x_3}\cdots & \text{if } x = 1x_1x_2x_3\cdots \in \Zz(1).\end{cases}
\]
Then $(\Sigma_2, \Gamma=\{\gamma_0,\gamma_1,\gamma_2\})$ is a contractive iterated function system. For $i \ne j$ we have $\gamma_i(x) = \gamma_j(x)$ if and only if $x \in \Zz(0)$ and $i,j \in \{0,2\}$. The set $G_{\{\gamma_0,\gamma_2\}} = \{(0y,y)  \mid y \in \Zz(0)\}$ is clopen in $\gr(\Gamma)$, so $B_\Gamma = \varnothing$. 
The fundamental partition is a Markov partition consisting of the sets $M_1 = \AA_{\{\{\gamma_0,\gamma_2\},\{\gamma_1\}\}} = \Zz(0)$ and $M_2 = \AA_{\{\{\gamma_0\},\{\gamma_1\},\{\gamma_2\}\}} = \Zz(1)$. Both vertices of the associated directed graph $E$ belong to the unique recurrent component, and $\AAA_E = \begin{psmallmatrix} 1 & 2 \\ 1 & 1 \end{psmallmatrix}$ has spectral radius $\rho = 1 + \sqrt{2}$ with corresponding positive unit-$1$-norm eigenvector $(2-\sqrt{2}, \sqrt{2}-1)$. 

Since $B_{\Gamma} = \varnothing$, the algebra $\Oo_\Gamma$ has a unique $\KMS$ state by Proposition~\ref{prop:unique_kms_no_branch}, and it occurs at inverse temperature $\beta_c = \ln(1 + \sqrt{2})$. The corresponding measure $\mu_\infty$ is not the Hutchinson measure $\mu_H$ since, for instance, $\mu_\infty(M_1) = 2-\sqrt{2} \ne 2/3 = \mu_H(M_1)$.
\end{example}

In every example so far, the branch set $B_\Gamma$ has been zero-dimensional. The following example---which arises from a 2-dimensional version of the tent map---shows that this is not always the case, even for systems satisfying the open set condition.
\begin{example}\label{ex:squarebranched}
Consider the iterated function system $(\AA \coloneqq [0,1]^2,\Gamma = \{\gamma_1,\gamma_2, \gamma_3,\gamma_4\})$, where
\begin{align*}
\gamma_1(s,t)       & = \left(\frac{s}{2},\, \frac{t}{2} \right),               & 
\quad \gamma_2(s,t) & = \left(1-\frac{s}{2},\, \frac{t}{2} \right),               \\
\quad \gamma_3(s,t) & = \left(\frac{s}{2},\,1- \frac{t}{2} \right), \text{ and} & 
\gamma_4(s,t)       & = \left(1-\frac{s}{2},\,1-\frac{t}{2}\right).               
\end{align*}
This system is contractive and satisfies the open set condition with witness $U \coloneqq (0,1)^2$. For $u \in [0,1) \times [0,1)$ we have $|p_2^{-1}(u)| = 4$, for $u\in (\{1\} \times [0,1)) \cup ([0,1) \times \{1\})$ we have $|p_2^{-1}(u)| = 2$, and $|p_2^{-1}(1,1)| = 1$. By Proposition~\ref{prop:open-set_branch}, $B_\Gamma = \{(x,y) \in \gr(\Gamma) \mid y \in (\{1\} \times [0,1]) \cup ([0,1] \times \{1\})\}$. 

Using dashed lines to represent the images of $B_\Gamma$ under the coordinate projections $p_1$ (blue) and $p_2$ (red), we can visualise $(\AA,\Gamma)$ by
\vspace{0.5em}
\begin{center}
		\tikzset{
			dot/.style = {circle, fill, minimum size=#1,
				inner sep=0pt, outer sep=0pt},
			dot/.default = 3pt 
		}
		\begin{tikzpicture}[scale=3]
		\begin{scope}[]
		
		\tikzstyle{thickarrow} = [line width=2mm, draw=black, -triangle 60, fill=white,postaction={draw, line width=4mm, shorten >=6mm, -}]
		
		\pgfmathsetmacro{\shift}{2}
		
		\draw[] (0,1) -- (0,0) -- (1,0);
		\draw[thick,red, densely dashdotted] (1,0) node[dot,fill = red] {} -- (1,1) -- (0,1) node[dot,fill = red, dashed] {};

		\draw[->] (0.2,0.8) -- (0.8,0.8);
		
		\node (A) at (0.5,0.5){$\AA$};
		
		\draw[]  (\shift,0) -- (1+\shift,0) -- (1+\shift,1) -- (\shift,1) -- (\shift,0);
		\draw[-stealth,->] (0.2,0.8) -- (0.8,0.8);

		\draw[thick,blue,densely dashdotted] (\shift,0.5) node[dot,fill = blue] {} -- (\shift + 1, 0.5) node[dot,fill = blue] {};
		\draw[thick,blue,densely dashdotted] (\shift+0.5,0) node[dot,fill = blue] {} -- (\shift + 0.5, 1) node[dot,fill = blue] {};
		
		\draw[->] (\shift+0.1,0.4) -- (\shift+0.4,0.4);
		\draw[->] (\shift+0.1,0.6) -- (\shift+0.4,0.6);
		\draw[->] (\shift+0.9,0.4) -- (\shift+0.6,0.4);
		\draw[->] (\shift+0.9,0.6) -- (\shift+0.6,0.6);
		
		\node (A) at (\shift+0.25, 0.25){$\gamma_1(\AA)$};
		\node (A) at (\shift+0.75, 0.25){$\gamma_2(\AA)$};
		\node (A) at (\shift+0.25, 0.75){$\gamma_3(\AA)$};
		\node (A) at (\shift+0.75, 0.75){$\gamma_4(\AA)$};
		
		\draw[double, double distance=2pt, -Implies] (1.1,0.5) to[bend left] node[above=2pt]{$\Gamma$} (\shift-0.1,0.5);

		\end{scope}
		\end{tikzpicture}
\end{center}
where we orient the square $\AA$ with an arrow so that the orientation-changing behaviour of the maps is apparent. Its minimal Markov partition is $\Mm = \{M_i\}_{i=1}^9$, where $M_1 = U$, $M_2 = \{0\} \times (0,1)$, $M_3 = (0,1) \times \{0\}$, $M_4 = \{1\} \times (0,1)$, $M_5 = (0,1) \times \{1\}$, $M_6 = \{(0,0)\}$, $M_7 = \{(0,1)\}$, $M_8 = \{(1,0)\}$ and $M_9 = \{(1,1)\}$. 

We have $\beta_c = \ln(4)$ by Corollary~\ref{cor:OSC_Hutchinson}, and by Theorem~\ref{thm:inf_type} the unique $\KMS_{\beta_c}$ state restricts to
$\mu_\infty = \mu_H$ on $A$. 
By Theorem~\ref{thm:finite_type_classification} the finite-type $\KMS_\beta$ states for $\beta > \beta_c$ are parametrised by the simplex of probability measures on the cross $p_1(B_\Gamma)$. 
\end{example}

\end{document}